\newtheorem{thm}{Theorem}[section]
\newtheorem{prop}[thm]{Proposition}
\newtheorem{lem}[thm]{Lemma}
\newtheorem{lem-def}[thm]{Lemma-Definition}
\newtheorem{cor}[thm]{Corollary}
\newtheorem*{prop*}{Proposition}
\theoremstyle{remark}
\newtheorem{rmk}[thm]{Remark}
\newtheorem{ex}[thm]{Example}
\theoremstyle{definition}
\newtheorem{dfn}[thm]{Definition}
\numberwithin{equation}{section}
\newcommand{\frake}{{\mathfrak
e}}
\newcommand{\frakf}{{\mathfrak
f}}
\newcommand{\frakp}{{\mathfrak
p}}
\newcommand{\fraks}{{\mathfrak
s}}
\newcommand{\bbA}{{\mathbb
A}}
\newcommand{\bbC}{{\mathbb
C}}
\newcommand{\bbF}{{\mathbb
F}}
\newcommand{\bbG}{{\mathbb
G}}
\newcommand{\bbK}{{\mathbb
K}}
\newcommand{\bbL}{{\mathbb
L}}
\newcommand{\bbN}{{\mathbb
N}}
\newcommand{\bbT}{{\mathbb
T}}
\newcommand{\bbU}{{\mathbb
U}}
\newcommand{\bbZ}{{\mathbb
Z}}
\newcommand{\calC}{{\mathcal
C}}
\newcommand{\calD}{{\mathcal
D}}
\newcommand{\calF}{{\mathcal
F}}
\newcommand{\calG}{{\mathcal
G}}
\newcommand{\calH}{{\mathcal
H}}
\newcommand{\calL}{{\mathcal
L}}
\newcommand{\calO}{{\mathcal
O}}
\newcommand{\calP}{{\mathcal
P}}
\newcommand{\calQ}{{\mathcal
Q}}
\newcommand{\calT}{{\mathcal
T}}
\newcommand{\calZ}{{\mathcal
Z}}
\newcommand{\End}{{\mathrm{End}}}
\newcommand{\Hom}{{\mathrm{Hom}}}
\newcommand{\ind}{{\mathrm{Ind}}}
\newcommand{\pic}{{\mathrm{Pic}}}
\newcommand{\rep}{{\mathrm{Rep}}}
\newcommand{\res}{{\mathrm{Res}}}
\newcommand{\spec}{{\mathrm{Spec}}}
\newcommand{\tr}{{\mathrm{Tr}}}
\newcommand{\Cl}{{\mathrm{Cl}}}
\newcommand{\Gr}{{\mathrm{Gr}}}
\newcommand{\Mod}{{\mathrm{-Mod}}}
\newcommand{\GL}{{\mathbb
G\mathbb L}}
\newcommand{\gl}{{\mathfrak
g\mathfrak l}}
\newcommand{\nc}{\newcommand}
\nc{\on}{\operatorname} \nc{\ch}{\mbox{ch}} \nc{\Z}{{\mathbb Z}}
\nc{\C}{{\mathbb C}} \nc{\pone}{{\mathbb P}^1} \nc{\pa}{\partial}
\nc{\F}{{\mathcal F}} \nc{\arr}{\rightarrow}
\nc{\larr}{\longrightarrow} \nc{\al}{\alpha} \nc{\ri}{\rangle}
\nc{\lef}{\langle} \nc{\W}{{\mathcal W}} \nc{\la}{\lambda}
\nc{\ep}{\epsilon} \nc{\su}{\widehat{{\mathfrak s}{\mathfrak
l}}_2} \nc{\sw}{{\mathfrak s}{\mathfrak l}} \nc{\g}{{\mathfrak g}}
\nc{\h}{{\mathfrak h}} \nc{\n}{{\mathfrak n}}
\nc{\N}{\widehat{\n}} \nc{\G}{\widehat{\g}} \nc{\De}{\Delta}
\nc{\gt}{\widetilde{\g}} \nc{\Ga}{\Gamma} \nc{\one}{{\mathbf 1}}
\nc{\z}{{\mathfrak Z}} \nc{\La}{\Lambda} \nc{\wt}{\widetilde}
\nc{\wh}{\widehat} \nc{\cri}{_{\kappa_c}} \nc{\kk}{h^\vee}
\nc{\sun}{\widehat{\sw}_N} \nc{\si}{\sigma} \nc{\el}{\ell}
\nc{\bi}{\bibitem} \nc{\om}{\omega} \nc{\ol}{\overline}
\nc{\ds}{\displaystyle} \nc{\dzz}{\frac{dz}{z}}
\nc{\Res}{\on{Res}} \nc{\mc}{\mathcal} \nc{\Cal}{\mathcal}
\nc{\bb}{{\mathfrak b}} \nc{\ot}{\otimes} \nc{\R}{{\mc R}}
\nc{\yy}{{\mc Y}} \nc{\ga}{\gamma}
\nc{\us}{\underset} \nc{\opl}{\oplus} \nc{\beq}{\begin{equation}}
\nc{\Fq}{{\mathcal F}} \nc{\Mq}{{\mathcal M}} \nc{\Rep}{\on{Rep}}
\nc{\sssec}{\subsubsection} \nc{\ssec}{\subsection}
\nc{\lan}{\langle} \nc{\ran}{\rangle}
\nc{\D}{\mathcal D} \nc{\Vect}{\on{Vect}} \nc{\ghat}{\G}
\nc{\T}{\mc T} \nc{\Tloc}{\T^\g_{\on{loc}}} \nc{\vac}{|0\ran}
\nc{\Wick}{{\mb :}} \nc{\mb}{\mathbf} \nc{\delz}{\partial_z}
\nc{\K}{{\cali K}} \nc{\cali}{\mathcal} \nc{\li}{\mathfrak l}
\nc{\lt}{\widetilde{\li}} \nc{\astar}{a^*} \nc{\cA}{{\mc A}}
\nc{\ka}{\kappa}
\nc{\OO}{{\mc O}} \nc{\AutO}{\on{Aut}\OO} \nc{\DerO}{\on{Der}\OO}
\nc{\DerpO}{\on{Der}_+\OO} \nc{\Au}{{\mc A}ut} \nc{\mf}{\mathfrak}
\nc{\V}{{\mathbb V}} \nc{\hh}{\wh{\h}}
\nc{\pp}{{\mathfrak p}} \nc{\mm}{{\mathfrak m}}
\nc{\rr}{{\mathfrak r}} \nc{\ket}{\rangle} \nc{\zz}{{\mathfrak z}}
\nc{\gr}{\on{gr}} \nc{\Spe}{\on{Spec}} \nc{\rv}{\crho}
\nc{\can}{\on{can}} \nc{\CC}{\on{Op}_G(D))} \nc{\Op}{\on{Op}_G(D)}
\nc{\MOp}{\on{MOp}_G(D)} \nc{\Db}{{\mathbb D}} \nc{\ww}{w}
\nc{\af}{{\mathbb A}^1} \nc{\bs}{\backslash} \nc{\laa}{(\la_i)}
\nc{\zn}{(z_i)}
\nc{\cla}{\check{\la}} \nc{\cmu}{\check{\mu}}
\nc{\crho}{\check{\rho}} \nc{\chal}{\check{\al}}
\nc{\cc}{{\mathfrak c}}
\nc{\M}{{\mathbb M}}
\nc{\ZZ}{{\mc Z}}
\nc{\UU}{{\mathbb U}}
\nc{\Conn}{\on{Conn}(\Omega^{\crho})}
\nc{\Con}{\on{Conn}(\Omega^{-\rho})}
\nc{\Co}{\on{Conn}(\Omega^{\rho})}
\nc{\ppart}{(\!(t)\!)} \nc{\zpart}{(\!(z)\!)}
\nc{\ppzi}{(\!(t-z_i)\!)} \nc{\ppinf}{(\!(t^{-1})\!)}
\nc{\Ind}{\on{Ind}} \nc{\I}{{\mathbb I}} \nc{\ppars}{(\!(s)\!)}
\nc{\QCoh}{\on{QCoh}}
\begin{document}
\title{Gerbal
representations of double loop groups}\thanks{Supported by DARPA
and AFOSR through the grant FA9550-07-1-0543}

\author{Edward Frenkel}

\address{Department of
Mathematics, University of California, Berkeley, CA 94720, USA}

\author{Xinwen
Zhu}

\date{October 2008}

\begin{abstract}
A crucial role in representation theory of loop groups of reductive
Lie groups and their Lie algebras is played by their non-trivial
second cohomology classes which give rise to their central extensions
(the affine Kac--Moody groups and Lie algebras). Loop groups embed
into the group $GL_\infty$ of continuous automorphisms of $\C\ppart$,
and these classes come from a second cohomology class of
$GL_\infty$. In a similar way, double loop groups embed into a group
of automorphisms of $\C\ppart\ppars$, denoted by $GL_{\infty,\infty}$,
which has a non-trivial third cohomology. In this paper we explain how
to realize a third cohomology class in representation theory of a
group: it naturally arises when we consider representations on
categories rather than vector spaces. We call them ``gerbal
representations.'' We then construct a gerbal representation of
$GL_{\infty,\infty}$ (and hence of double loop groups), realizing its
non-trivial third cohomology class, on a category of modules over an
infinite-dimensional Clifford algebra. This is a two-dimensional
analogue of the fermionic Fock representations of the ordinary loop
groups.
\end{abstract}

\maketitle

\tableofcontents

\section*{Introduction}

The main motivation for this paper and its sequel \cite{next} is to
understand representation theory of double loop groups and the
corresponding Lie algebras and the role played in it by their third
cohomology.

To explain this, we revisit representation theory of the ordinary loop
groups. Its algebraic version is the formal loop group $G\ppart$ of a
reductive complex algebraic group $G$. The corresponding Lie algebra is
the formal loop algebra $\g\ppart$, where $\g=\on{Lie} G$. A great
discovery made over thirty years ago was the realization that in order
to treat its representation theory ``in the right way'', we need to
pass from $\g\ppart$ to its universal {\em central extension}, the
affine Kac--Moody algebra $\ghat$ (and similarly for the loop
group). These central extensions (for different $\g$) may be obtained
in a unified way from the central extension of the ``master'' Lie
algebra $\gl_\infty$ of continuous endomorphisms of $\C\ppart$. It is
known \cite{FT1} that $H^\bullet(\gl_\infty,\C) = \C[c_2,c_4,...]$
with $\on{deg} c_{2n} = 2n$, and so $H^2(\gl_\infty,\C) = \C c_2$.
The corresponding two-cocycle defines the universal central extension
$\wh\gl_\infty$ of $\gl_\infty$. For each finite-dimensional
representation $V$ of $\g$ we have a natural embedding of $\g\ppart$
into $\gl_\infty$ (as endomorphisms of $V\ppart \simeq \C\ppart$),
and the pull-back of the universal central extension of $\gl_\infty$
gives rise to $\ghat$. In the same way one obtains the Virasoro
algebra and its semi-direct products with $\ghat$.

Representation theory of these groups and Lie algebras begins in
earnest when we give an example of a natural representation with a
non-zero action of the central element. The representation that is the
easiest to construct is the {\em fermionic Fock representation} of
$\wh\gl_\infty$ \cite{KP}. Its restriction to $\ghat$ gives rise to
important integrable representations \cite{IFrenkel,KP}, and we have
the beginnings of the general theory \cite{Kac}.

The fermionic Fock representation may also be constructed
geometrically, as the space of global sections of the {\em determinant
line bundle} on the infinite Grassmannian $\on{Gr}$ which parametrizes
``lattices'' in the infinite-dimensional topological vector space
$\C\ppart$. The Lie algebra $\gl_\infty$ and the corresponding group
$GL_\infty$ naturally act on $\on{Gr}$. The important point is that
this action does not lift to the determinant line bundle; it is only
the central extension that acts on this line bundle.

\medskip

Now let us consider the formal double loop group $G\ppart\ppars$ and
its Lie algebra $\g\ppart\ppars$.\footnote{These objects have
polynomial versions in which $\C\ppart\ppars$ is replaced by
$\C[t^{\pm 1},s^{\pm 1}]$, and most of our results have analogues for
these polynomial versions. However, we focus in this paper on the case
of the formal power series, because the corresponding theory seems
more natural.} Just like ordinary loop algebras, these Lie algebras
naturally embed into another ``master'' Lie algebra,
$\gl_{\infty,\infty}$, which consists of continuous endomorphisms of
$\C\ppart\ppars$. Hence in order to develop representation theory of
double loop algebras it is natural to start with $\gl_{\infty,\infty}$
and see what kinds of representations we can construct. By analogy
with loop algebras, we look at the cohomology of
$\gl_{\infty,\infty}$. It is now the exterior algebra
$\bigwedge(e_3,e_5,...)$ with $\on{deg} e_{2n+1} = 2n+1$
\cite{FT1}. In particular, the second cohomology that is responsible
for central extensions vanishes, and the first non-trivial cohomology
class $e_3$ occurs in degree three.

This cohomology class, in turn, gives rise to third cohomology classes
of all double loop algebras. One can show that this cohomology class
is non-zero for $\g={\mathfrak s}{\mathfrak l}_n, n>2$, as well as for
$\g={\mathfrak g}{\mathfrak l}_1$ (see \cite{Feigin}). We also obtain
third cohomology classes of the Lie algebra of derivations of
$\C\ppart\ppars$ and its semi-direct products with $\g\ppart\ppars$,
which also embed naturally into $\gl_{\infty,\infty}$. We wish to
interpret these cohomology classes from the representation theoretic
point of view.\footnote{Note that double loop algebras also have
non-trivial (actually, infinite-dimensional) second cohomology.
Representations of the corresponding universal central extension have
been studied in the literature; see, e.g., \cite{MRY,La,BBS,B}.}

To contrast representations of $\gl_\infty$ and $\gl_{\infty,\infty}$
in more concrete terms, let us recall that the naive action of
$\gl_\infty$ on the fermionic Fock representation creates infinite
expressions, which need to be regularized by imposing ``normal
ordering'' (see, e.g., \cite{FB}). This normal ordering
creates an ``anomaly'': commutation relations get distorted by
additional terms which are interpreted as a central extension of
$\gl_\infty$. If we try to imitate this procedure in the case of
$\gl_{\infty,\infty}$, the result is much worse: not only the
commutation relations get distorted, but the additional terms
themselves turn out to be infinite. The anomaly is thus much more
severe in the case of $\gl_{\infty,\infty}$ than in the case of
$\gl_\infty$. It cannot possibly be cured by normal ordering alone. In
fact, the cohomology class related to this anomaly has now migrated
from $H^2$ to $H^3$.

How can we possibly interpret this third cohomology class in
representation theory?  The idea is that these classes are naturally
realized when groups (or Lie algebras) act on {\em categories} rather
than vector spaces. (Informally, one can say that the structure of a
category is needed to absorb the $H^3$-anomaly discussed above.) To
see how this works, we first recall how the second cohomology class is
realized.

Suppose that $G$ is a group and $V$ is a complex vector space. To
define a representation of $G$ on $V$ (here, for simplicity, we
consider $G$ as an abstract group; but we extend this to algebraic
groups and Lie algebras below and in \cite{next}), we need to
assign to each $g \in G$ a linear operator $T_g$ on $V$, so that
$1 \in G$ goes to the identity, and for each pair $g,h \in G$ we
have the equality $T_{gh} = T_g T_h$. We generalize this by
relaxing the last condition and demanding only that
$$
T_{gh} = \al_{g,h} T_g T_h,
$$
where $\al_{g,h} \in \C^\times$. Thus, we arrive at the notion of
projective representation of $G$, or equivalently, a
representation of the central extension of $G$ corresponding to
$\al_{g,h}$ (one checks easily that it defines a two-cocycle of
$G$ with coefficients in $\C^\times$).

Now we generalize this as follows. We replace a complex vector
space $V$ by an abelian category ${\mc C}$ over $\C$. A
representation of $G$ on ${\mc C}$ is a rule that assigns to each
$g \in G$ a functor $F_g$ so that $1 \in G$ goes to the identity
functor. For each pair $g,h \in G$ we then have two functors,
$F_{gh}$ and $F_g \circ F_h$. Functors are objects of a category
(rather than a set), and therefore it is not a good idea to demand
that they be equal.  Rather, we should demand that they are
isomorphic. So we include the data of isomorphisms $i_{g,h}: F_g
\circ F_h \overset\sim\longrightarrow F_{gh}$ in our definition.
Suppose now that we have three elements $g,h,k \in G$. Then we
have two different isomorphisms between $F_g \circ F_h \circ F_k$
and $F_{ghk}$; namely, $i_{g,hk} \circ i_{h,k}$ and $i_{gh,k}
\circ i_{g,h}$. These are already elements of a set (that of
morphisms from $F_g \circ F_h \circ F_k$ to $F_{ghk}$). Demanding
that
$$
i_{g,hk} \circ i_{h,k} = i_{gh,k}\circ i_{g,h},
$$
we obtain an analogue of an ordinary representation of $G$ on a
vector space. Alternatively, we may demand that this equality is
only satisfied up to a non-zero scalar:
$$
i_{g,hk} \circ i_{h,k} = \al_{g,h,k} i_{gh,k} \circ i_{g,h},
\qquad \al_{g,hk} \in \C^\times.
$$

It is easy to check that $\al_{g,h,k}$ defines a three-cocycle of $G$
with coefficients in $\C^\times$. It is non-trivial if and only if our
representation of $G$ on ${\mc C}$ is not equivalent to an ordinary
representation (that is, one with $\al_{g,h,k} \equiv 1$), in the
obvious sense. Moreover, one can check that if two representations of
$G$ on ${\mc C}$ are isomorphic, in a natural sense, then
the corresponding two cocycles differ by a coboundary.

Thus, we obtain a natural realization of the third cohomology when we
consider representations of groups on categories. This notion may also
be generalized to Lie algebras, as we explain in \cite{next}. We call
representations of this type {\em gerbal representations}.

\medskip

The idea that a group or a Lie algebra should act on a category
rather than a vector space is not new. In recent years
representations of groups on categories have naturally arisen in
different contexts. For instance, in the work of D. Gaitsgory and
one of the authors \cite{FG} (see \cite{F:book} for an exposition)
categories with an action of the loop group of a reductive
algebraic group $G$ naturally arise in the framework of local
geometric Langlands correspondence (in this case the third
cohomology class is trivial). Another class of examples is
presumably provided by various categories of branes arising in
topological field theory. The group of symmetries of the theory
should act on such a category. Moreover, the data of
non-trivial third cohomology classes may be naturally included in
this context (see, e.g., \cite{Freed} and references therein). One
can probably use these data to construct gerbal representations of
groups on categories of branes realizing these cohomology classes,
though so far we have not seen examples of such representations
discussed in the literature.

\medskip

The goal of this paper is to construct explicitly non-trivial examples
of gerbal representations of the group $GL_{\infty,\infty}$. In
\cite{next} we will construct gerbal representations of the Lie
algebra $\gl_{\infty,\infty}$. One of the gerbal representations of
$GL_{\infty,\infty}$ that we construct may be thought of as an
analogue of the fermionic Fock representation of $\gl_\infty$. The
corresponding category may be realized as the category of Fock
representations of a Clifford algebra built from the vector space
$\C\ppart[[s]]$ plus its dual. We compute explicitly the third
cohomology class of $GL_{\infty,\infty}$ corresponding to this
representation and show that it is non-zero. It gives rise to a
non-trivial central extension of $GL_{\infty,\infty}$ by $B\C^\times$
(see Definition \ref{2-group of central extensions}) considered
previously by S. Arkhipov and K. Kremnizer in \cite{AK}, following
ideas of M. Kapranov. Thus, we obtain a genuine representation of this
extension on a category of representations of this Clifford algebra.

For any reductive group $G$ and a finite-dimensional representation
$V$ of $G$ we have a natural embedding of the double loop group
$G\ppart\ppars$ into $GL_{\infty,\infty}$. Hence our gerbal
representation of $GL_{\infty,\infty}$ gives rise to gerbal
representations of $G\ppart\ppars$. The corresponding cohomology class
is the restriction of the third cohomology class of
$GL_{\infty,\infty}$ to $G\ppart\ppars$. We expect that this
restriction is non-zero if and only if $H^6(BG,\Z)$ is non-zero.  This
is the case, for example, for the groups $GL_1, SL_n, n>2$, and
$GSp_{2n}, n>1$. (This is analogous to the fact that the central
extensions of $G\ppart$ are classified by $H^4(BG,\Z)$.)

\medskip

We expect that gerbal representations of $GL_{\infty,\infty}$ may also
be constructed geometrically, using a ``2-infinite Grassmannian''.
Set-theoretically, this is just the set of lattices in
$\C\ppart\ppars$. The natural transitive action of
$GL_{\infty,\infty}$ on this set lifts to the action of the above
$B\C^\times$-extension of $GL_{\infty,\infty}$ on a $\C^\times$-gerbe
over it (see also \cite{AK}). This is analogous to the action of
$\wh{GL}_\infty$ on the $\C^\times$-bundle over the infinite
Grassmannian. Recall that we obtain a representation of
$\wh{GL}_\infty$ by taking the vector space of global sections of the
corresponding determinant line bundle. If one could define the
2-infinite Grassmannian and the $\C^\times$-gerbe over it
algebro-geometrically, then the category of ``global sections'' of the
sheaf of abelian categories corresponding to the $\C^\times$-gerbe
(the way a line bundle corresponds to a $\C^\times$-bundle) would give
us a gerbal representation of $GL_{\infty,\infty}$. This is still an
open question, but we make some comments on how to answer it \S
\ref{2-Gr}.

\medskip

We hope that the gerbal representations of $GL_{\infty,\infty}$
that we construct in this paper are the tip of the iceberg of a
rich and interesting representation theory of this group and the
double loop groups.

As for possible applications of this theory, note that the
infinite Grassmannian and the fermionic Fock representation of
$\wh{GL}_\infty$ play an important role in the study of the KP
hierarchy and closely related integrable hierarchies, such as KdV
equations (see \cite{DJKM,SW}). The gerbal representations of
$GL_{\infty,\infty}$ which we construct here and their possible links
to the 2-infinite Grassmannian may give us some clues as to what
two-dimensional analogues of integrable hierarchies of soliton
equations might look like. In particular, one can ask what is the
two-dimensional analogue of the boson--fermion correspondence which is
important in the study of the KP hierarchy.

\medskip

The paper is organized as follows. In \S \ref{central extensions} we
review the ``1-dimensional story'', that is, representations of the
``master'' group $GL_\infty$, from different points of view.  We
explain the construction of its central extension and describe a
natural representation on a Fock module over an infinite-dimensional
Clifford algebra. The goal of this paper is to develop an analogous
representation theory for the ``2-dimensional'' group
$GL_{\infty,\infty}$. The key difference is that representations of
the latter are realized in categories rather than vector space. Hence
we need to develop the formalism of actions of groups on
categories. This is done in \S \ref{generalities}. In particular, we
show that to a gerbal representation of a group $G$ on an abelian
category $\calC$ corresponds a cohomology class in
$H^3(G,\calZ(\calC)^\times)$, where $\calZ(\calC)$ is the center of
$\calC$.

In \S \ref{Gerbal extensions} we introduce the group
$GL_{\infty,\infty}$ and related groups and Lie algebras. We construct
a natural gerbal representation of $GL_{\infty,\infty}$ on an abelian
category $\calC_{\bbL}^{\on{ss}}$ of modules over a Clifford algebra.
We show that it lifts to a genuine representation of the 2-group
$\bbG\bbL_{\infty,\infty}$ which is an extension of
$GL_{\infty,\infty}$ by determinantal gerbes.

Next, we wish to calculte the third cohomology class corresponding to
this gerbal representation of $GL_{\infty,\infty}$. We introduce the
relevant cohomology groups in \S \ref{coh}. In particular, we discuss
the universal $\bbZ$-central extension of $GL_{\infty,\infty}$, which
is interesting on its own right. Computing the cohomology class of the
gerbal representation $\calC_{\bbL}^{\on{ss}}$ directly seems like a
daunting, if not impossible, task. In \S \ref{coh cl} we devise a
different method. Namely, we develop the formalism of what we call
``gerbal pairs of groups'' which allows us to calculate in a regular
way the third cohomology classes of gerbal representations in some
situations. We then apply this formalism to a natural gerbal pair
associated to the group $GL_{\infty,\infty}$. This alows us to
calculate the desired cohomology class of the gerbal representation
$\calC_{\bbL}^{\on{ss}}$. At the end of \S \ref{coh cl} we discuss
representations of the group $\wh{GL}_{\frakf,\infty}$ on modules over
a completion of the Clifford algebra $\Cl_{\calO_\bbK}$, which may be
of independent interest.

\medskip

\noindent{\bf Acknowledgments.} We thank D. Ben-Zvi, B. Feigin, D.
Gaitsgory, V. Kac, D. Kazhdan and B. Tsygan for useful
discussions. This paper was finished while E.F. visited
Universit\'e Paris VI as Chaire d'Excellence of Fondation des
Sciences Math\'ematiques de Paris. He thanks the Foundation for
its support and the group ``Algebraic Analysis'' at Universit\'e
Paris VI, and especially P. Schapira, for hospitality.

\section{1-dimensional story: Projective
  representations}\label{central extensions}

In this section, we review the constructions of central extensions
of the ``master'' group $GL_\infty$ and the Lie algebra
$\gl_\infty$ and their natural representations from different
points of view. This objects are ``1-dimensional'' in the sense
that they are attached to the algebra $\C\ppart$ of formal Laurent
power series in one variable. This section serves as a motivation
of the main constructions of this paper concerning the
``2-dimensional'' case, when $\C\ppart$ is replaced by power
series in two variables. However, we will only consider the
``2-dimensional" theory for groups in this paper and leave the Lie
algebra case to \cite{next}.

\subsection{Homological constructions}

We first explain the construction for Lie algebras.

\subsubsection{Tate vector spaces} \label{Tate}

Let $K=\bbC\ppart$, regarded as a topological $\bbC$-vector space,
with the usual $t$-adic topology. This is an example of the
so-called Tate vector space.

We recall the following standard definitions (see, e.g.,
\cite{Kap}).

\begin{dfn} A topological vector space
is called {\em linearly compact} if it is the topological dual of
a discrete vector space. A topological space is called {\em
linearly locally compact}, or {\em Tate}, if it admits a basis of
neighborhoods of 0 consisting of linearly compact subspaces. A
{\em lattice} in a Tate vector space $V$ is a linearly compact
open subspace of $V$.
\end{dfn}

Any two lattices $L_1, L_2$ in a Tate vector space are {\em
commensurable} with each other; that is, the quotients $L_1/(L_1
\cap L_2)$ and $L_2/(L_1 \cap L_2)$ are finite-dimensional.

Tate vector spaces form a category, whose Hom's are the continuous
linear maps. This is an exact category in the sense of Quillen
(see \cite{Quillen} \S 2). A sequence
\[0 \to V' \to V \to V'' \to 0\]
is an exact sequence in this category if it is an exact sequence
of vector spaces, $V'\to V$ is a closed embedding, and $V\to V''$
is an open surjective map. In the standard terminology of exact
categories, $V'\to V$ is called admissible monomorphism and
denoted by $V'\rightarrowtail V$, and $V\to V''$ is called
admissible epimorphism and denoted by $V\twoheadrightarrow V''$.
Note that in this case the topology on $V''$ coincides with the
quotient topology.

\begin{rmk}\label{exact
str} We could identify the category of Tate vector spaces as a
subcategory of pro-vector spaces by assigning $V$ the projective
system $\{V/V_\alpha\}$, where $V_\alpha$ range over the set of
open subspaces of $V$. Then a sequence $V'\rightarrowtail
V\twoheadrightarrow V''$ is exact if and only if there is an index
set $I$ such that $V=\lim\limits_{\overleftarrow{\alpha\in
I}}V_\alpha$ (resp. $V'=\lim\limits_{\overleftarrow{\alpha\in
I}}V'_\alpha$. resp. $V''=\lim\limits_{\overleftarrow{\alpha\in
I}}V''_\alpha$), and for each $\alpha\in I$, the sequence
$V'_\alpha\to V_\alpha\to V''_\alpha$ is exact. Therefore, we see
that if $V$ has a countable basis of neighborhood 0, then any
short exact sequence $V'\rightarrowtail V\twoheadrightarrow V''$
splits.
\end{rmk}

We remark that the following discussion remains unchanged if we
replace $K$ by any other non-compact Tate vector space.

\subsubsection{The Lie algebra $\gl_\infty$}

Let $\End K$ be the algebra of continuous endomorphism of $K$ and
$\gl_\infty$ the associated Lie algebra. It is well-known that
$H^2(\gl_\infty)$ has a non-zero class $[c_2]$ and therefore there is
a non-trivial $\bbC$-central extension of $\gl_\infty$ corresponding
to this class. Let us review the construction of this central
extension following Kac--Peterson \cite{KP} and Arbarello--De
Concini--Kac \cite{ACK}.

Recall that a lattice in $K$ is a compact open subspace of $K$.
For example, $\calO_K=\bbC[[t]]\subset K$ is a lattice. Let
$\gl^+_\infty$ be the Lie algebra of continuous endomorphisms of
$\calO_K$, and $\gl_\frakf$ the two-sided ideal of discrete
endomorphisms, i.e., endomorphisms which have open kernels.
Observe that there is a canonical trace functional
$\tr:\gl_\frakf\to\bbC$.

We write $K=\calO_K\oplus\calO^-$, where $\calO^-$ is a discrete
vector space which could be chosen as $t^{-1}\bbC[t^{-1}]$, and
denote by $\pi:K\to\calO_K$ the projection onto the first factor.
The projection induces a map $\gl_\infty\to\gl^+_\infty$, which is
also denoted by $\pi$, as follows: for $g\in\gl_\infty$, define
$\pi(X)\in\gl_\infty^+$ by $\pi(X)v=\pi(X(v))$ for any
$v\in\calO_K$. Observe that $\pi:\gl_\infty\to\gl^+_\infty$ is not
a Lie algebra homomorphism.

We also introduce
\[\widetilde{\gl}_\infty=\{(A,X)\in\gl^+_\infty\times \gl_\infty \, |
\, A-\pi(X)\in \gl_\frakf\}.\] One then has the following exact
sequence of associative algebras (and therefore Lie algebras)
\begin{equation}\label{e.s.of
Lie algebras}
0\to\gl_\frakf\stackrel{i}{\rightarrow}\widetilde{\gl}_\infty
\stackrel{p}{\rightarrow}\gl_\infty\to 0,
\end{equation}
where $i(A)=(A,0)$ and $p(A,X)=X$. One also has a section of $p$
given by $X\mapsto(\pi(X),X)$.

Since $\tr:\gl_\frakf\to\bbC$ is a Lie algebra homomorphism, one
can push out the exact sequence above to get a central extension
of $\gl_\infty$,
\begin{equation}
0\to\bbC\to\wh{\gl}_\infty\to\gl_\infty\to 0.
\end{equation}
It is clear that this central extension does not depend on the
choice of a lattice $\calO_K$ and the splitting
$K=\calO_K\oplus\calO^-$. It is the universal central extension of
$\gl_\infty$. Observe that there is a spectral sequence of Lie
algebra cohomology associated to (\ref{e.s.of Lie algebras}), and
the above construction shows that the class $[c_2]\in
H^2(\gl_\infty)$ corresponding to this central extension is
obtained by $[\tr]\in H^1(\gl_\frakf)$ by transgression.

\subsubsection{The group $GL_\infty$}

We will denote by $GL_\infty$ the group of continuous
automorphisms of $K$. Then $\gl_\infty$ could be viewed as the Lie
algebra of $GL_\infty$. In fact, $GL_\infty$ is a group space
(i.e., a sheaf of groups over $(\mathbf{Aff}/\bbC)_{fppf}$) which
assigns to every commutative $\bbC$-algebra $R$ the group of
continuous automorphisms of $R\ppart$. We will return to this
algebro-geometrical structure of $GL_\infty$ in \S \ref{nonlinear
version}. For the moment, we just regard $GL_\infty$ as an
abstract group.

Like in the case of Lie algebras, $H^2(GL_\infty,\bbC^\times)$ has
a non-trivial class which is obtained by transgression from
$[\det]\in H^1(GL_\frakf,\bbC^\times)$ (what this means precisely
is explained below). However, unlike the case of Lie algebra, it
is difficult (and may be impossible) to write down the cocycle
explicitly.

We will imitate the case of Lie algebras to define this central
extension, as in \cite{PS}, Chapter 6. So we let $GL_\infty^+$ be
the group of continuous automorphisms of $\calO_K$ and $GL_\frakf$
be the normal subgroup consisting of automorphisms fixing some
open subspaces of $\calO_K$. Remark that we have a canonical
homomorphism $\det:GL_\frakf\to\bbC^\times$. Furthermore, let
$\widetilde{GL}_\infty$ be the group of invertible elements in
$\widetilde{\gl}_\infty$ (regarded as an associative algebra).
Unlike the case of Lie algebras, where we have a short exact
sequence, here we only obtain a left exact sequence of groups
\[1\to
GL_\frakf\stackrel{i}{\to}\widetilde{GL}_\infty
\stackrel{p}{\to}GL_\infty,\] where
$i:GL_\frakf\to\widetilde{GL}_\infty$ is given by $i(a)=(a,1)$.
(We recall that $p(a,g)=g$.)

There is a surjective group homomorphism $\deg:GL_\infty\to\bbZ$
defined as follows: for any $g\in GL_\infty$,
$$
\deg(g)=\dim(\frac{\calO_K}{\calO_K\cap
g\calO_K})-\dim(\frac{g\calO_K}{\calO_K\cap g\calO_K})
$$
(see Proposition \ref{degree map} for a more general discussion).

Denote $GL_\infty^0=\ker(\deg)$. Then we have
\begin{equation}\label{e.s.}
1\to GL_\frakf\to\widetilde{GL}_\infty\to GL_\infty^0\to 1
\end{equation}
(See Proposition \ref{e.s. of groups} for the general case.)
Pushing-out this sequence by $\det$, we obtain a central extension
of $GL_\infty^0$,
\begin{equation}\label{central extension of
GLinfty0} 1\to\bbC^\times\to\widehat{GL}_\infty^0\to
GL_\infty^0\to 1.
\end{equation}

As in the case of Lie algebras, this central extension does not
depend on the choice of the splitting. There is a spectral
sequence associated to (\ref{e.s.}). According to the
construction, the cohomology class in
$H^2(GL_\infty^0,\bbC^\times)$ corresponding to the central
extension (\ref{central extension of GLinfty0}) comes by
transgression from $[\det]\in H^1(GL_\frakf,\bbC^\times)$ (which
is non-trivial, see \S \ref{H2 and central extensions}). We denote
it by $[C^0_2]$ in what follows.

We would like to extend the above central extension to a central
extension of $GL_\infty$.

\begin{prop}\label{uniqueness of central extension}
There is a unique (up to isomorphism) central extension
\begin{equation}\label{central
extension of GLinfty} 1\to\bbC^\times\to\widehat{GL}_\infty\to
GL_\infty\to 1
\end{equation}
whose restriction to $GL_\infty^0$ is (\ref{central extension of
GLinfty0}).
\end{prop}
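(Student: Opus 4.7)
The plan is to exploit the splitting $GL_\infty \cong GL_\infty^0 \rtimes \bbZ$ coming from the section $n \mapsto t^n \cdot$ of the degree homomorphism $\deg$ (note $\deg(t\cdot) = \dim(\calO_K/t\calO_K) - 0 = 1$). Under this decomposition the problem of extending the central extension $\widehat{GL}_\infty^0$ across $GL_\infty$ becomes the problem of lifting the conjugation action of the generator $\sigma := t\cdot$ to an automorphism of $\widehat{GL}_\infty^0$ that is trivial on the central $\bbC^\times$.

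\textbf{Existence.} Conjugation by $\sigma$ sends the lattice $\calO_K$ to $t\calO_K$ and hence carries the triple $(GL_\frakf,\widetilde{GL}_\infty,\calO_K)$ isomorphically to the analogous triple built from $t\calO_K$. Because the excerpt has already noted that (\ref{central extension of GLinfty0}) does not depend on the choice of lattice (nor on the splitting $K=\calO_K\oplus\calO^-$), this transport produces a canonical automorphism $\tilde\sigma$ of $\widehat{GL}_\infty^0$, trivial on the center, covering conjugation by $\sigma$. I would then form
\[
\widehat{GL}_\infty \;:=\; \widehat{GL}_\infty^0 \rtimes_{\tilde\sigma}\bbZ,
\]
whose restriction to $GL_\infty^0$ is tautologically (\ref{central extension of GLinfty0}). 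There is no cocycle obstruction to iterating $\tilde\sigma$ to a genuine $\bbZ$-action, since $\bbZ$ is free and $H^2(\bbZ,-)=0$.

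\textbf{Uniqueness.} Given another central extension $\widehat{G}'$ restricting to (\ref{central extension of GLinfty0}), I would compare it with $\widehat{GL}_\infty$ via the Hochschild--Serre spectral sequence for $1 \to GL_\infty^0 \to GL_\infty \to \bbZ \to 1$ with coefficients $\bbC^\times$. Since $\bbZ$ has cohomological dimension $1$, the spectral sequence degenerates to a short exact sequence
\[
0 \to H^1(\bbZ,\Hom(GL_\infty^0,\bbC^\times)) \to H^2(GL_\infty,\bbC^\times) \to H^2(GL_\infty^0,\bbC^\times)^{\bbZ} \to 0.
\]
The difference of the classes of $\widehat{G}'$ and $\widehat{GL}_\infty$ lands in the kernel term $H^1(\bbZ,\Hom(GL_\infty^0,\bbC^\times))$, and uniqueness reduces to the vanishing of this group.

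\textbf{Main obstacle.} The hard part is the vanishing of $H^1(\bbZ,\Hom(GL_\infty^0,\bbC^\times))$. The cleanest way to secure it is to show $\Hom(GL_\infty^0,\bbC^\times)=0$, i.e.\ that $GL_\infty^0$ is (abstractly) perfect. I expect to prove this by explicit manipulations with elementary transvections in the Tate-linear setting, in the spirit of Whitehead's lemma on the commutativity of elementary matrices: every element of $GL_\infty^0$ should be expressible as a product of commutators of elementary operators adapted to the filtration by lattices commensurable with $\calO_K$. Granted this, $\Hom(GL_\infty^0,\bbC^\times)=0$ and uniqueness is immediate.
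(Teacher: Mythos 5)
Your overall route is the one the paper itself takes: decompose $GL_\infty \cong GL_\infty^0 \rtimes \bbZ$, run the Lyndon--Hochschild--Serre spectral sequence for this extension, and reduce both existence and uniqueness to (a) the $\sigma$-invariance of $[C_2^0]$ and (b) the vanishing of $H^1(GL_\infty^0,\bbC^\times)=\Hom(GL_\infty^0,\bbC^\times)$. Your spectral-sequence bookkeeping for uniqueness is correct. The genuine gap is exactly the one you flag as the ``main obstacle'': you assert that $GL_\infty^0$ admits no nontrivial character and say you ``expect to prove this by explicit manipulations with elementary transvections,'' but you never carry this out, so the proof is incomplete at its load-bearing step. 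The paper handles this lemma by a different and cleaner route: the surjection $\widetilde{GL}_\infty \twoheadrightarrow GL_\infty^0$ coming from (\ref{e.s.}) gives an injection $H^1(GL_\infty^0,\bbC^\times)\hookrightarrow H^1(\widetilde{GL}_\infty,\bbC^\times)$, and Proposition~\ref{group acyclic} shows the latter vanishes. That proposition is proved not by Whitehead-type commutator identities but by an Eilenberg swindle (construct $\tau$ with $a\oplus\tau(a)=\tau(a)$, forcing $x=0$ for every primitive $x$), which kills all of $H_*(\widetilde{GL}_\infty,k)$ at once. Your commutator route may well work, but it is harder to execute in the infinite-rank Tate setting and is here left unverified.

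On the existence side, your transport-of-structure argument skates past a subtlety the paper addresses directly. The natural lift of conjugation-by-$\sigma$ to $\widetilde{GL}_\infty$ (built from a splitting $\calO_K=\sigma\calO_K\oplus\bbC$) is an injective but \emph{not} surjective endomorphism of $\widetilde{GL}_\infty$, hence not an automorphism there; the paper then checks, via elementary row/column operations, that it nonetheless induces a surjection --- and hence an automorphism $\wh\sigma$ --- of the pushout $\widehat{GL}_\infty^0$. Your abstract appeal to lattice-independence does produce \emph{some} automorphism of $\widehat{GL}_\infty^0$ covering $\sigma$-conjugation, which is all existence needs, but the word ``canonical'' already invokes $H^1(GL_\infty^0,\bbC^\times)=0$; so both halves of your argument route through the same unproved vanishing lemma, and it must be supplied.
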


\begin{proof} Observe that $GL_\infty$ is the
semi-direct product of $GL_\infty^0$ and $\bbZ$. Choose any
element of degree one, e.g., $\sigma\in GL_\infty$ sending $t^i
\mapsto t^{i+1}$ if we identify $K$ with $\bbC\ppart$. Then
$\langle\sigma\rangle\subset GL_\infty$ splits the map $\deg$. The
Lyndon-Hochschild-Serre spectral sequence implies that
$H^2(GL_\infty,\bbC^\times)\cong
H^2(GL_\infty^0,\bbC^\times)^{\langle\sigma\rangle}$, where
$\sigma$ acts on $GL_\infty^0$ by conjugation. Indeed, observe
that $H^1(GL_\infty^0,\bbC^\times)=0$ since there is an injection
$H^1(GL_\infty^0,\bbC^\times)\to
H^1(\widetilde{GL}_\infty,\bbC^\times)$, and Proposition
\ref{group acyclic} claims that
$H^1(\widetilde{GL}_\infty,\bbC^\times)=0$. On the other hand,
$H^i(\bbZ,\bbC^\times)=0$ for $i\geq 2$.

We claim that the class $[C^0_2]$ is $\sigma$-invariant, hence in
$H^2(GL_\infty^0,\bbC^\times)^{\langle\sigma\rangle}$, and
therefore the proposition follows.

To prove this, we only need to give an automorphism
$\wh{\sigma}:\widehat{GL}_\infty^0\to\widehat{GL}_\infty^0$
lifting the automorphism $\sigma: GL_\infty^0\to GL_\infty^0, b\to
\sigma b\sigma^{-1}$. In fact, we will give a group homomorphism
$\tilde{\sigma}:\widetilde{GL}_\infty\to\widetilde{GL}_\infty$
which lifts $\sigma$.

Namely, we choose a splitting $\calO_K=\sigma\calO_K\oplus\bbC$.
For any $a\in GL^+_\infty$, let $a_\sigma=\sigma
a\sigma^{-1}\oplus\mathrm{id}:\sigma\calO_K\oplus\bbC\to\sigma\calO_K\oplus
\bbC$. Then set $\tilde{\sigma}(a,g)=(a_\sigma,\sigma
g\sigma^{-1})$.

It is clear that $\tilde{\sigma}$ is an injective homomorphism
$\widetilde{GL}_\infty\to\widetilde{GL}_\infty$ and gives an
injective homomorphism
$\wh{\sigma}:\widehat{GL}_\infty^0\to\widehat{GL}_\infty^0$.
Although $\tilde{\sigma}$ is not a group automorphism (since it is
not surjective), $\wh{\sigma}$ is surjective and gives a group
automorphism of $\widehat{GL}_\infty^0$ covering $\sigma$. This is
because for any $a\in GL_\infty^+$ one can find $e,e'\in
GL_\frakf, \det(e)=\det(e')=1$ such that $eae'=a'_\sigma$ for some
$a'\in GL_\infty^+$ (by means of elementary transformations).
Therefore, any element in $\widehat{GL}_\infty^0$ may be
represented by an element
$(a_\sigma,b)=\tilde{\sigma}(a,\sigma^{-1}b\sigma)\in
\widetilde{GL}_\infty$. This completes the proof that $[C^0_2] \in
H^2(GL_\infty^0,\bbC^\times)^{\langle\sigma\rangle}$.

The corresponding central extension (\ref{central extension of
GLinfty}) is just obtained by forming the semi-direct product
$\widehat{GL}_\infty=\widehat{GL}_\infty^0\rtimes\langle\sigma\rangle$,
where $\sigma$ acts on $\widehat{GL}_\infty^0$ via $\wh{\sigma}$
constructed above.
\end{proof}

\begin{rmk} One can show that there is
\emph{no} automorphism
$\tilde{\sigma}:\widetilde{GL}_\infty\to\widetilde{GL}_\infty$
covering $\sigma$. Therefore, there is \emph{no} extension of
$GL_\infty$ by $GL$ whose restriction to $GL_\infty^0$ gives
(\ref{e.s.}).
\end{rmk}

Observe that while the automorphism group of the sequence
(\ref{central extension of GLinfty0}) is trivial, the automorphism
group of (\ref{central extension of GLinfty}) is
$H^1(GL_\infty,\bbC^\times)=\bbC^\times$.

\subsection{Geometric
constructions: non-linear version} \label{nonlinear version}

In the previous section we have constructed central extensions of
$GL_\infty$ and $\gl_\infty$ corresponding to given cohomology
classes by homological methods. While the homological construction
tells us where the cohomology classes come from, it will be more
meaningful if we could see such central extensions come into life
more naturally. We will justify the ``naturality'' in this section
from the geometric point of view and in the next section from the
algebraic point of view.

\subsubsection{The infinite
Grassmannian}    \label{1-Gr}

Recall that $K$ is a Tate vector space. The infinite Grassmannian
(i.e. the Sato Grassmannian) is defined as the moduli space of
lattices in $K$. More precisely, for any commutative
$\bbC$-algebra $R$, we set
\[\Gr(R)=\left\{\begin{array}{l}L\subset
R\wh{\otimes}K \mbox{ a } R\mbox{-submodule},
t^N(R\wh{\otimes}\calO_K)\subset L\subset
t^{-N}(R\wh{\otimes}\calO_K), \\ \mbox{ for some } N \in \Z_+,
L/t^N(R\wh{\otimes}\calO_K) \mbox{ is a projective }
R\mbox{-module}\end{array}\right\}.\]

We show in \S \ref{det lb} below that the corresponding functor is
represented by an ind-scheme over $\bbC$. Denote by $L_0$ the
standard lattice $\calO_K \subset K$. The virtual dimension of $L$
is defined as
\[\dim L=\dim\frac{L}{L\cap
L_0}-\dim\frac{L_0}{L\cap L_0}.\] Then the connected components of
$\Gr$ are labeled by the virtual dimensions. Namely, we set
\[\Gr^n(\bbC)=\{L\in \Gr(\bbC), \dim L=n\}.\]
Then
\[\Gr=\bigsqcup_{n \in
\Z} \Gr^n. \]

The group $GL_\infty$ acts on $\Gr(\bbC)$ by sending $L$ to $gL$
for any $g\in GL_\infty$. This action is transitive. (More
generally, the group space $GL_\infty$ acts on $\Gr$, and $\Gr$ is
a homogeneous space of $GL_\infty$.) It follows from the
definition that $GL_\infty^0$ fixes each component, and $\sigma$
sends $\Gr^n$ to $\Gr^{n-1}$. Similarly, $\gl_\infty$ also acts on
$\Gr$, i.e., each element $X\in\gl_\infty$ gives rise a vector
field on $\Gr$.

A great discovery of the Sato school was that the action of an
infinite-dimensional abelian Lie subalgebra of $\gl_\infty$ gives
rise to the KP hierarchy \cite{DJKM,SW}, which explains the
importance of the infinite Grassmannian in soliton theory.

\subsubsection{The determinant line bundle} \label{det lb}

Let $R$ be a commutative $\bbC$-algebra. Given $L\in\Gr^n(R)$,
one could find $N \in \Z_+$ such that
$t^N(R\wh{\otimes}\calO_K)\subset L \subset
t^{-N}(R\wh{\otimes}\calO_K)$. Then $M =
L/t^N(R\wh{\otimes}\calO_K)$ defines a projective $R$-module of
rank $(N+n)$ in $$R^{2N}\cong
t^{-N}(R\wh{\otimes}\calO_K)/t^N(R\wh{\otimes}\calO_K).$$ This way,
we obtain that
\[\Gr^n=\lim\limits_{\stackrel{\longrightarrow}{N}}G(N+n,2N),\]
where $G(k,n)$ is the usual Grassmannian of $k$-planes in the
$n$-dimensional vector space. The closed embedding $G(N+n,2N) \to
G(N+n+1,2N+2)$ is obtained by sending $M \subset R^{2N}$ to $M
\oplus R \subset R^{2N+2}$. This gives $\Gr^n$ the structure of an
ind-scheme.

The Grassmannian $G(k,n)$ carries a determinant  line bundle,
which assigns to each $k$-plane the line of its top exterior
power.  This is the negative generator of the Picard group of
$G(k,n)$ (recall that $\pic(G(k,n)\cong\bbZ$). Under the embedding
$G(N+n,2N)\to G(N+n+1,2N+2)$, the determinant  line bundle on
$G(N+n+1,2N+2)$ restricts to the determinant line bundle on
$G(N+n,2N)$. Therefore, $\pic(\Gr^n)\cong\bbZ$ and there is a
determinant line bundle on $\Gr^n$, which is the negative
generator of its Picard group, and restricts to the determinant
line bundle on each $G(N+n,2N)$. We denote by $\calL$ the line
bundle on $\Gr$ that restricts to the determinant line bundle on
each connected component. We call $\calL$ the determinant line
bundle on $\Gr$.

More invariantly, if $L,L'\in\Gr(R)$ are two lattices, then there
is a positive integer $N$ such that $L,L'\supset
t^N(R\wh{\otimes}\calO_K)$ and both $L/(R\wh{\otimes}\calO_K),
L'/(R\wh{\otimes}\calO_K)$ are projective. Then one defines an
invertible $R$-module by
\begin{equation}\label{determinant}
\det(L|L'):=\bigwedge{}^{\mathrm{top}}
(L/t^N(R\wh{\otimes}\calO_K))\otimes(\bigwedge{}^{\mathrm{top}}
(L'/t^N(R\wh{\otimes}\calO_K))^{-1}.
\end{equation}

It is clear that this $R$-module is independent of the choice of
$N$ up to a canonical isomorphism. Therefore, we obtain a line
bundle over $\Gr\times\Gr$. If we identify
$\Gr\cong\Gr\times\{L_0\}\hookrightarrow\Gr\times\Gr$, the above
line bundle restricts to the determinant line bundle on $\Gr$.

\begin{rmk}\label{composition of determinantal lines} The
important feature of the determinant lines is that for
$L,L',L''\in\Gr(R)$, there is a canonical isomorphism
\begin{equation}\label{tensor of determinantal lines}
\gamma_{L,L',L''}:\det(L|L')\otimes\det(L'|L'')\cong\det(L|L'')
\end{equation}
such that for any $L,L',L'',L'''\in\Gr(R)$,
$\gamma_{L,L',L'''}\gamma_{L',L'',L'''}=\gamma_{L,L'',L'''}
\gamma_{L,L',L''}$.
\end{rmk}

In what follows, $\det(L|L')^\times$ will denote the set of
nowhere vanishing sections of $\det(L|L')$, regarded as a line
bundle on $\spec R$.

\subsubsection{Central
extensions}    \label{hat}

Now, we regard $GL_\infty$ as a group space. Let $\calL$ be the
determinant line bundle on $\Gr$. Since $\pic(\Gr^n)\cong\bbZ$,
for any $g\in GL_\infty(\bbC)$, we have $g^*\calL\cong\calL$. So
it is natural to ask whether the action of $GL_\infty(\bbC)$ on
$\Gr(\bbC)$ could be lifted to an action on $\calL$. That is,
whether we can choose isomorphisms $i_g:g^*\calL\cong\calL$,
such that $i_{gh}=i_h\circ h^*(i_g):(gh)^*\calL\cong\calL$.
Unfortunately (or fortunately!), this is not the case.

There is a canonical $\bbG_m$-central extension $\widehat{GL}'_\infty$
of $GL_\infty$ which tautologically acts on $\calL$ (see, e.g.,
\cite{Kap}). The $R$-points of $\widehat{GL}'_\infty$ are pairs
$(c,g)$ with $g\in GL_\infty(R)$ and $c\in\det(g L_0|L_0)^\times$. The
multiplication is $(c,g)(c',g')=(cg(c'),gg')$. Here
$g(c)\in\det(gg'L_0|gL_0)^\times$, and therefore by (\ref{tensor of
determinantal lines})
$$
cg(c')\in\det(gL_0|L_0)^\times\otimes(gg'L_0|gL_0)^\times\cong
\det(gg'L_0|L_0)^\times.
$$
Furthermore, Remark \ref{composition of determinantal lines}
guarantees that the group structure of $\wh{GL}'_\infty$ is
well-defined.

The natural action of $\widehat{GL}'_\infty$ on $\calL$ is defined
as follows. We observe that the diagonal action of $GL_\infty$ on
$\Gr\times\Gr$ lifts to an action on the line bundle over it
simply via the canonical identification $$ \det(L|L') =
\det(gL|gL'). $$ Observe that the fiber of $\calL$ over
$L\in\Gr(R)$ is $\calL_L=\det(L|L_0)$. Therefore, for given
$(c,g)\in \widehat{GL}'_\infty(R)$,
\[\calL_L=\det(L|L_0)=\det(gL|gL_0)\stackrel{\otimes
c}{\to}
\det(gL|gL_0)\otimes\det(gL_0|L_0)\cong\det(gL|L_0)=\calL_{gL}\]
defines an action of $\widehat{GL}'_\infty$ on $\calL$.

\begin{prop}\label{identification
of two central extensions} The group of $\bbC$-points of the
central extension $\widehat{GL}'_\infty$ constructed above is
isomorphic to the extension (\ref{central extension of GLinfty}).
\end{prop}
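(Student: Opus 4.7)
The plan is to construct an explicit homomorphism lifting the surjection $\widetilde{GL}_\infty \twoheadrightarrow GL_\infty^0$ to the geometric extension $\widehat{GL}'_\infty(\bbC)$, verify that it intertwines the $\det$ character on $GL_\frakf$ with the $\bbC^\times$-action on the geometric extension, and then invoke the uniqueness Proposition \ref{uniqueness of central extension} to extend the match to all of $GL_\infty$.

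\textbf{Step 1 (the lift).} For $(A,X) \in \widetilde{GL}_\infty$ I define $c(A,X) \in \det(X L_0 \mid L_0)^\times$ as follows. Since $A - \pi X \in \gl_\frakf$ has open kernel, and since the continuous map $(1-\pi)X|_{L_0} : L_0 \to \calO^-$ into the discrete vector space $\calO^-$ also has open kernel, there exists $N\gg 0$ such that $A|_{t^N L_0} = X|_{t^N L_0}$. Put $M := A(t^N L_0) = X(t^N L_0)$, a common sublattice of $L_0$ and $XL_0$. The composition $X \circ A^{-1}$ then descends to a linear isomorphism $L_0/M \xrightarrow{\sim} XL_0/M$ of finite-dimensional vector spaces of equal dimension (using $\deg X = 0$, which holds on all of $\widetilde{GL}_\infty$ by (\ref{e.s.})), and $c(A,X)$ is defined as its canonical element of $\bigwedge{}^{\mathrm{top}}(XL_0/M) \otimes \bigwedge{}^{\mathrm{top}}(L_0/M)^{-1} = \det(XL_0\mid L_0)^\times$. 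Independence of $N$ follows from the standard compatibility of top exterior powers under nested sublattices: passing from $M$ to a smaller $M'$ contributes tensor factors $\det(M/M') \otimes \det(M/M')^{-1}$ that cancel.

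\textbf{Step 2 (homomorphism and the kernel).} Taking a common large $N$ for both factors, the cocycle identity
\[
c(AA', XX') \;=\; c(A,X) \cdot X\bigl(c(A',X')\bigr) \qquad \text{in } \det(XX' L_0 \mid L_0)
\]
reduces to multiplicativity of finite-dimensional determinants combined with the associativity (\ref{tensor of determinantal lines}) of the gluing isomorphisms $\gamma_{L,L',L''}$. This shows that $\Phi:(A,X) \mapsto (c(A,X), X)$ is a group homomorphism $\widetilde{GL}_\infty \to \widehat{GL}'_\infty(\bbC)$ compatible with the projections to $GL_\infty^0$. Restricted to $i(GL_\frakf)$ (where $X=1$), the construction yields $c(A,1) = \det(A^{-1}|_{L_0/t^N L_0}) = \det(A)^{-1}$ under the canonical trivialization $\det(L_0\mid L_0) \cong \bbC$, so $\Phi|_{GL_\frakf} = \det^{-1}$; this convention-dependent inversion is absorbed into the identification of push-outs and does not affect the isomorphism class of the resulting central extension.

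\textbf{Step 3 (conclusion via uniqueness).} By the universal property of the push-out defining $\widehat{GL}_\infty^0$, the homomorphism $\Phi$ descends to an isomorphism $\widehat{GL}_\infty^0 \xrightarrow{\sim} \widehat{GL}'_\infty(\bbC)|_{GL_\infty^0}$ of central extensions of $GL_\infty^0$ by $\bbC^\times$. Since $\widehat{GL}'_\infty(\bbC)$ is, by its geometric construction, already a central extension of the \emph{entire} group $GL_\infty$ (the fibre $\det(gL_0|L_0)^\times$ makes sense for every $g$), we obtain a central extension of $GL_\infty$ whose restriction to $GL_\infty^0$ matches $\widehat{GL}_\infty^0$. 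The uniqueness Proposition \ref{uniqueness of central extension} then forces $\widehat{GL}'_\infty(\bbC)$ to coincide with $\widehat{GL}_\infty$. The main obstacle is the multiplicativity check in Step 2: one must simultaneously enlarge the a priori distinct sublattices $M, M'$ adapted to $(A,X)$ and $(A',X')$ to a single common sublattice compatible with the product $(AA', XX')$, and then trace through the gluing isomorphism $\det(XL_0|L_0) \otimes \det(XX'L_0|XL_0) \xrightarrow{\sim} \det(XX'L_0|L_0)$ to match the $X$-twisting in the group law $(c,g)(c',g') = (c\cdot g(c'), gg')$ of $\widehat{GL}'_\infty$; everything else is either routine from the definitions or a formal consequence of the uniqueness already proved.
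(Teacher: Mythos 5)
Your approach matches the paper's: both construct the explicit homomorphism $\widetilde{GL}_\infty \to \widehat{GL}'_\infty(\bbC)$ over $GL_\infty^0$ by taking the determinant of the induced finite-rank map on quotients by a common sublattice $M = A(t^N L_0) = X(t^N L_0)$, check that the restriction to $GL_\frakf$ recovers a power of $\det$, and then pass to the pushout and invoke Proposition \ref{uniqueness of central extension}. The only structural difference is the direction of the comparison map: the paper uses $a g^{-1}: gL_0/gL \to L_0/gL$ while you use $X A^{-1}: L_0/M \to X L_0/M$, and these are mutually inverse.

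One point to tighten, though: your claim that the appearance of $\det^{-1}$ rather than $\det$ on $GL_\frakf$ ``does not affect the isomorphism class of the resulting central extension'' is not automatic. Pushing out by $\det$ versus $\det^{-1}$ yields central extensions in cohomology classes $[C_2^0]$ and $-[C_2^0]$, and these are not isomorphic as extensions of $GL_\infty^0$ by $\bbC^\times$ unless one supplies an additional argument (such as the transpose-inverse automorphism identifying the extension with its dual). The simplest repair is to build $c(A,X)$ from the inverse map $A X^{-1}: X L_0/M \to L_0/M$ so that one lands on $\det$ directly; however, this then places $c$ in $\det(L_0|XL_0)$ rather than $\det(XL_0|L_0)$, which requires a careful reading of the convention in formula \eqref{determinant}. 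You are in good company here: the paper's own displayed $\Hom$-space $\Hom\bigl(\bigwedge^{\mathrm{top}}(gL_0/gL),\bigwedge^{\mathrm{top}}(L_0/gL)\bigr)$ is, if one applies the paper's own formula \eqref{determinant} literally, isomorphic to $\det(L_0|gL_0)$ rather than the asserted $\det(gL_0|L_0)$. So both arguments leave this orientation convention implicit; apart from this loose end, your steps and the paper's agree.
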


\begin{proof} By
Proposition \ref{uniqueness of central extension}, it is enough to
prove that the above central extension, when restricted to
$GL_\infty^0$, is isomorphic to (\ref{central extension of
GLinfty0}). Recall the definition of $\widetilde{GL}_\infty$. We
will show that there is a homomorphism
$\widetilde{GL}_\infty\to\widehat{GL}'_\infty$ sending
$(a,g)\in\widetilde{GL}_\infty$ to $(c,g)\in\widehat{GL}'_\infty$,
where $g\in GL_\infty^0$, $a\in GL_\infty^+$ and
$c\in\det(gL_0|L_0)^\times$, such that if $(a,g)=(a,1)\in
GL_\frakf$ and $\det(a)=1$, then $c=1$. It is clear that this
implies that the "neutral" component of
$\widehat{GL}'_\infty(\bbC)$ is obtained by pushing-out
$\widetilde{GL}_\infty$ via $\det$ and therefore is isomorphic to
(\ref{central extension of GLinfty0}).

Indeed, regard $a\in GL_\infty^+$ as a continuous automorphism of
$L_0=\calO_K$. We know that there is a sublattice $L\subset
L_0\cap g^{-1}L_0$ such that $a|_L=\pi(g)|_L$ since
$a-\pi(g)\in\gl_\frakf$.  Since $g(L)\subset L_0$,
$\pi(g)|_L=g|_L$. Therefore $a(L)=g(L)$ is a lattice in $L_0$.
Then we obtain the following isomorphism
\[\frac{gL_0}{g(L)}\stackrel{g^{-1}}{\longrightarrow}\frac{L_0}{L}
\stackrel{a}{\longrightarrow}\frac{L_0}{a(L)}=\frac{L_0}{g(L)}.\]
The top exterior power of the above isomorphism gives us the
desired element
\[c\in\Hom(\bigwedge{}^{\mathrm{top}}(g(L_0)/g(L)),
\bigwedge{}^{\mathrm{top}}(L_0/g(L)))\cong\det(gL_0|L_0)^\times,\]
and it is clear that if $g=1$ (then $a\in GL_\frakf$), then $c$
constructed above is just
$\det(a)\in\det(L_0|L_0)^\times=\bbC^\times$. This completes the
proof of the proposition.
\end{proof}

So in what follows we will not distinguish between
$\widehat{GL}_\infty$ and $\widehat{GL}'_\infty$.

\subsubsection{The
gerbe of determinantal theories}    \label{The
    determinantal gerbes}

We rephrase the above construction, using the determinantal gerbe
$\calD_K$ associated to the Tate vector space $K$. We first recall
some basic facts about the determinantal gerbe associated to a
Tate vector space here for reader's convenience. The following
definition is due to M. Kapranov \cite{Kap}.

\begin{dfn}
\label{def det th} Let $V$ be a Tate vector space. A {\em
determinantal theory} $\Delta$ on $V$ is a rule that assigns to
every lattice $L\subset V$ a line $\Delta(L)$ and to every
$L_1\subset L_2$ an isomorphism
\[\Delta_{L_1,L_2}:\Delta(L_1)\otimes\det(L_2/L_1)\to\Delta(L_2)\]
such that for any $L_1\subset L_2\subset L_3$ three lattices, the
obvious diagram
\[\begin{CD}
\Delta(L_1)\otimes\det(L_2/L_1)\otimes\det(L_3/L_2)@>>>
\Delta(L_1)\otimes\det(L_3/L_1)\\
@VVV@VVV\\
\Delta(L_2)\otimes\det(L_3/L_2)@>>>\Delta(L_3)
\end{CD}\]
is commutative. The category whose objects are determinantal
theories on $V$ and morphisms are isomorphisms of determinantal
theories (defined in the obvious way) is a $\bbC^\times$-gerbe,
called the {\em determinantal gerbe} of $V$ and denoted by
$\calD_V$.
\end{dfn}

There is another $\bbC^\times$-gerbe that is described in
\cite{ACK}. Namely, objects are lattices in $V$ and
$\Hom(L,L')=\det(L|L')^\times$. It is clear that the latter gerbe
is equivalent to the former under the functor that sends a lattice
$L\subset V$ to the determinantal theory $\Delta$ which assigns
$\Delta(L)=\bbC$ (this assignment uniquely determines $\Delta$).

The following lemma is the content of \cite{Kap}, Proposition
1.4.5, and \cite{AK}, Lemma 7. Let $A$ be an abelian group. Recall
that if $\calF$, $\calF'$ are two $A$-gerbes, the tensor product
$\calF\otimes\calF'$ is also an $A$-gerbe, defined as follows. The
objects in $\calF\otimes\calF'$ are $(x,x')$ with $x$ an object in
$\calF$ and $x'$ an object in $\calF'$. The morphisms between
$(x,x')$ and $(y,y')$ are
\[\Hom_{\calF\otimes\calF'}((x,x'),(y,y')):=\Hom_\calF(x,y)\otimes_A
\Hom_{\calF'}(x',y').\]

\begin{lem}\label{determinantal
gerbe} Let $V'\rightarrowtail V$ be an admissible monomorphism.
Then there is a canonical equivalence of $\bbC^\times$-gerbes
\[\calD_{V'}\otimes\calD_{V/V'}\cong\calD_{V}\]
and for $V_1\rightarrowtail V_2\rightarrowtail V_3$ we have a
natural transformation
\[\xymatrix{\calD_{V_1}\otimes\calD_{V_2/V_2}\otimes
\calD_{V_3/V_2}\ar[r]\ar[d]&\calD_{V_1}\otimes\calD_{V_3/V_1}
\ar[dl]\ar[d]\\
\calD_{V_2}\otimes\calD_{V_3/V_2}\ar[r]&\calD_{V_3}\\
}\] such that if given $V_1\rightarrowtail V_2\rightarrowtail
V_3\rightarrowtail V_4$, the cubical diagram of natural
transformations commutes.
\end{lem}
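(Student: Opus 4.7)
The plan is to construct the functor $\calD_{V'} \otimes \calD_{V/V'} \to \calD_V$ explicitly on objects via the nine lemma, then bootstrap the higher coherences from the determinant-on-exact-sequences structure. Given determinantal theories $\Delta'$ on $V'$ and $\Delta''$ on $V/V'$, I define $\Delta$ on $V$ by
\[\Delta(L) := \Delta'(L \cap V') \otimes \Delta''(L/(L \cap V')).\]
The preliminary observations are that $L \cap V'$ is a lattice in $V'$ (closed, hence linearly compact, in $L$; open in $V'$ because $L$ is open in $V$) and that $L/(L \cap V')$ is a lattice in $V/V'$ (the image of an open linearly compact subspace under the open surjection $V \twoheadrightarrow V/V'$).

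The structural isomorphism $\Delta_{L_1, L_2}$ for $L_1 \subset L_2$ is built from the snake lemma applied to the rows $L_i \cap V' \rightarrowtail L_i \twoheadrightarrow L_i/(L_i \cap V')$, which supplies a short exact sequence
\[0 \to (L_2 \cap V')/(L_1 \cap V') \to L_2/L_1 \to \frac{L_2/(L_2 \cap V')}{L_1/(L_1 \cap V')} \to 0\]
and hence a canonical factorization of $\det(L_2/L_1)$; combining this with $\Delta'_{L_1 \cap V', L_2 \cap V'}$ and the analogous structural isomorphism for $\Delta''$ yields $\Delta_{L_1, L_2}$. The axiom of Definition \ref{def det th} for a chain $L_1 \subset L_2 \subset L_3$ then follows from the $3 \times 3$ lemma and the associativity of the determinant on exact sequences of finite-dimensional vector spaces, together with the known coherence of $\Delta'$ and $\Delta''$. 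To see that this functor is an equivalence of $\bbC^\times$-gerbes, one notes that both sides have all automorphism groups equal to $\bbC^\times$ and the functor acts by the identity on these; essential surjectivity is verified by a normalization argument, fixing a lattice $L_0 \subset V$ and declaring $\Delta'(L_0 \cap V') = \Delta''(L_0/(L_0 \cap V')) = \bbC$, which uniquely produces a preimage of any prescribed $\Delta$.

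For $V_1 \rightarrowtail V_2 \rightarrowtail V_3$, the two iterated constructions both assign to $L \subset V_3$ the triple tensor product
\[\Delta_1(L \cap V_1) \otimes \Delta_{21}((L \cap V_2)/(L \cap V_1)) \otimes \Delta_{32}(L/(L \cap V_2)),\]
whose last two factors are the values on the natural images of $L$ in $V_2/V_1$ and $V_3/V_2$; the identification of their structural isomorphisms reduces to the snake lemma for a three-step filtration, providing the required natural transformation. The cube for $V_1 \rightarrowtail \cdots \rightarrowtail V_4$ is then a coherence among the parsings of the four-step tensor product, each face of which reduces to the three-step case applied to a sub-filtration. \textbf{The main obstacle} is purely bookkeeping: all identifications are formal consequences of the snake and nine lemmas together with the compatibility of the determinant with exact sequences, but verifying that the cube commutes on the nose (rather than merely up to a canonically specified scalar) requires careful tracking of many canonical isomorphisms, best handled via a normalization consistent across the entire four-step filtration.
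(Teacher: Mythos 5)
The paper does not prove this lemma but defers to \cite{Kap}, Proposition 1.4.5, and \cite{AK}, Lemma 7; your proposal supplies exactly the expected construction $\Delta(L) = \Delta'(L \cap V') \otimes \Delta''(L/(L\cap V'))$ and correctly verifies the coherences via the snake/nine lemmas and the modular law. Two small remarks: first, once you have shown the functor is $\bbC^\times$-equivariant on the $\Hom$-torsors (hence fully faithful), essential surjectivity is automatic because a fully faithful functor between non-empty connected groupoids is an equivalence, so the normalization step can be dropped; second, in checking that $L/(L\cap V')$ is a lattice it is worth noting explicitly that the quotient topology from $L$ agrees with the subspace topology from $V/V'$ (since $V\twoheadrightarrow V/V'$ is open), which is what makes the quotient linearly compact.
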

Observe that if $\varphi:V\to V'$ is an isomorphism, we obtain a
natural equivalence of categories
$\calD_\varphi:\calD_V\cong\calD_{V'}$.

It would be possible to define $\calD_K$ as a sheaf of groupoids over
$\mathbf{Aff}/\bbC$ with appropriate topology, if one could make sense
of a family of Tate vector spaces (or Tate modules).  This has been
done by V. Drinfeld in \cite{Dr}, and it is a non-trivial fact from
\emph{loc. cit.} that the determinantal gerbe of a Tate $R$-module is
a $\bbG_m$-gerbe over $\spec R$ in Nisnevich topology (see
\emph{loc. cit.} \S3.6 and \S5.2).

Now, the group $GL_\infty$ acts on the gerbe $\calD_K$. Moreover,
there is a $GL_\infty$-equivariant covering $\Gr\to\calD_K$. At
the level of $\bbC$-points, this map sends $L \in \Gr(\bbC)$ to
the determinantal theory $\Delta$ which assigns $\Delta(L)=\bbC$.
Recall that we denote by $\calL$ the determinant line bundle on
$\Gr\times\Gr$. We will denote by $\mathrm{Tot}(\calL)^\times$ its
total space with zero section deleted. Then
\[\Gr\times_{\calD_K}\Gr\cong\mathrm{Tot}(\calL)^\times\]

In fact, we can recover the central extension of $GL_\infty$ just
from $\calD_K$. Given a map $L:\spec\bbC\to\calD_K$, we obtain a
morphism $act_L:GL_\infty\to\calD_K$. By \cite{AK}, Theorem 1,
which goes back to Brylinski,
$$\widehat{GL}_{\infty,L}:=\spec\bbC\times_{\calD_K}GL_\infty\to
GL_\infty$$ is a $\bbG_m$-central extension of $GL_\infty$. In
particular, if $L=L_0$,
$\widehat{GL}_{\infty,L_0}=\widehat{GL}_\infty$ is the central
extension we constructed in previous subsection.

\subsection{Algebraic constructions:
linear version}

At this point, it would be desirable to produce a representation
of the central extensions of $GL_\infty$ and $\gl_\infty$
constructed above, on which the central elements act
non-trivially. This is the fermionic Fock representation introduced
in \cite{KP}.

\subsubsection{The space of global sections
of the determinant line bundle}

Recall that the determinant line bundle $\calL$ on $\Gr$ is
$\widehat{GL}_\infty$-equivariant. We define
\begin{equation}
\label{geom Fock} \bigwedge := \Gamma(\Gr,\calL^*)^*.
\end{equation}
This is a representation of $\widehat{GL}_\infty$, on which the
central subgroup $\bbC^\times$ acts by the identity. We now give a
more concrete description of $\bigwedge$.

\subsubsection{Clifford modules}
\label{Clifford modules}

Let $K=\bbC\ppart$. This is a topological vector space (with the
usual $t$-adic topology) whose topological dual is (defined to be)
$K^*=\bbC\ppart dt$. There is a natural symmetric bilinear form on
$K\oplus K^*$ induced by the residue pairing. Let
$\Cl_K=\Cl(K\oplus K^*)$ be the corresponding completed
topological Clifford algebra. If we set $\phi_n=t^n$ and
$\phi_n^*=t^n\frac{dt}{t}$ (note that in this notation, the dual
of $\phi_n$ is $\phi^*_{-n}$!), then the algebra is topologically
generated by $\phi_n,\phi_m^*$ subject to the following relations
\[[\phi_n,\phi_m]_+=0, \qquad \
[\phi_n^*,\phi_m^*]_+=0, \qquad
[\phi_n,\phi_m^*]_+=\delta_{n,-m}.\]

Since $\calO_K\oplus\calO_K dt$ is a Lagrangian subspace of
$K\oplus K^*$, $\bigwedge(\calO_K\oplus\calO_K dt)$ is a
subalgebra of $\Cl_K$. Let $\ind_{\bigwedge(\calO_K\oplus\calO_K
dt)}^{\Cl_K}(\bbC|0\rangle)$ be the $\Cl_K$-module generated be
the (vacuum) vector $|0\rangle_{\calO_K}$, on which
$\bigwedge(\calO_K\oplus\calO_K dt)$ acts by 0. This is a discrete
$\Cl_K$-module, called the fermionic Fock module. The following
lemma can be proved by reducing to the finite-dimensional case.

\begin{lem}$\ind_{\bigwedge(\calO_K\oplus\calO_K
dt)}^{\Cl_K}(\bbC|0\rangle_{\calO_K})\cong\Gamma(\Gr,\calL^*)^*=
\bigwedge$.
\end{lem}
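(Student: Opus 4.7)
The plan is to reduce to the finite-dimensional Grassmannian, where the Pl\"ucker embedding identifies the dual of global sections of the dual determinant bundle with an exterior power, and then pass to the semi-infinite limit on each charge component. To begin, both sides carry a natural $\bbZ$-grading: the Fock module $F := \ind_{\bigwedge(\calO_K\oplus\calO_K dt)}^{\Cl_K}(\bbC|0\rangle_{\calO_K})$ inherits a charge grading $F = \bigoplus_n F_n$ from $\deg \phi_n = 1$, $\deg \phi_n^* = -1$, and $\Gr = \bigsqcup_n \Gr^n$ is indexed by virtual dimension, so it suffices to exhibit a charge-preserving isomorphism $F_n \cong \Gamma(\Gr^n, \calL^*)^*$ for each $n$.

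On the geometric side, I would use the presentation $\Gr^n = \varinjlim_N G(N+n, 2N)$ from \S\ref{det lb}, where $G(N+n,2N)$ is the ordinary Grassmannian of $(N+n)$-planes in $t^{-N}\calO_K/t^N\calO_K$, and the fact that $\calL$ restricts to the standard determinant line bundle on each finite-dimensional piece. The Pl\"ucker embedding identifies $\Gamma(G(N+n, 2N), \calL^*)^* \cong \bigwedge^{N+n}(t^{-N}\calO_K/t^N\calO_K)$, with dual restriction maps induced by $M \mapsto M \oplus \bbC$, i.e., by wedging in the new basis vector. Since $\Gamma(\Gr^n, \calL^*) = \varprojlim_N \Gamma(G(N+n,2N), \calL^*)$ as a pro-vector space, dualizing yields
\[
\Gamma(\Gr^n, \calL^*)^* \;\cong\; \varinjlim_N \bigwedge{}^{N+n}\!\bigl(t^{-N}\calO_K/t^N\calO_K\bigr),
\]
which by definition is the charge-$n$ component of the semi-infinite wedge $\bigwedge^{\infty/2} K$, with ``vacuum'' the class of $1 \wedge t \wedge t^2 \wedge \cdots$.

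For the Clifford side, let $\Cl_K$ act on $\bigwedge^{\infty/2} K$ by letting $\phi_n$ act as exterior multiplication by $t^n$ and $\phi_m^*$ act as contraction against $t^{-m}$ via the residue pairing; these are well-defined on semi-infinite wedges because only finitely many sea indices are disturbed, and $[\phi_n, \phi_m^*]_+ = \delta_{n,-m}$ follows directly from $\langle \phi_n, \phi_m^*\rangle = \delta_{n,-m}$. The vacuum $1 \wedge t \wedge t^2 \wedge \cdots$ is killed by $\phi_n$ for $n \geq 0$ and by $\phi_m^*$ for $m \geq 1$, so by the universal property of induction it receives a $\Cl_K$-linear map from $F$ sending $|0\rangle_{\calO_K}$ to the vacuum. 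A PBW argument for the Clifford algebra exhibits a basis of $F$ of the form $\phi_{i_1}\cdots\phi_{i_a}\phi_{j_1}^*\cdots\phi_{j_b}^*|0\rangle_{\calO_K}$ with strictly ordered creation indices, which bijects with the standard semi-infinite monomial basis (the $\phi_{j_k}^*$ remove and the $\phi_{i_k}$ insert basis vectors into the Dirac sea), so the map is a charge-preserving isomorphism. The main obstacle is bookkeeping: the transition maps in $\varinjlim_N \bigwedge^{N+n} \bbC^{2N}$ must be matched with the ``insert one basis vector'' maps on the semi-infinite wedge side in a sign-compatible way, which requires fixing a coherent ordering convention (say, ordering basis vectors by the exponent $i$ in $t^i$) before invoking Pl\"ucker on each finite Grassmannian.
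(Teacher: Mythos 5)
Your proof is correct and fleshes out precisely the approach the paper indicates; the paper offers no proof of this lemma beyond the single sentence that it ``can be proved by reducing to the finite-dimensional case,'' and your argument via the ind-presentation $\Gr^n = \varinjlim_N G(N+n,2N)$, the Pl\"ucker identification $\Gamma(G(N+n,2N),\calL^*)^* \cong \bigwedge^{N+n}(t^{-N}\calO_K/t^N\calO_K)$, and the matching of the semi-infinite monomial basis with the PBW basis of the Fock module is the standard way to carry out that reduction.
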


Thus, $\ind_{\bigwedge(\calO\oplus\calO
dt)}^{\Cl_K}(\bbC|0\rangle)$ is a concrete realization of the
representation $\bigwedge$ of $\widehat{GL}_\infty$.

\subsubsection{Another description of the action
of $\widehat{GL}_\infty$ on $\bigwedge$}    \label{another descr}

We may also discover this representation of $\widehat{GL}_\infty$
from the representation-theoretic point of view.  The starting
point is, as in the finite-dimensional case, the following
statement:

\begin{lem}\label{equi}
The category of discrete $\Cl_K$-modules $\calC_K$ is equivalent
to the category of vector spaces. Any non-zero discrete
irreducible $\Cl_K$-module is isomorphic to $\bigwedge$.
\end{lem}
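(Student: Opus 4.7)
The plan is to construct a pair of inverse functors $F\colon\calC_K\to\Vect$ and $G\colon\Vect\to\calC_K$ realizing the equivalence. Fix a Lagrangian complement $L^-\subset K\oplus K^*$ to $L^+=\calO_K\oplus\calO_K dt$, so $K\oplus K^*=L^+\oplus L^-$ and both $\bigwedge L^+$ and $\bigwedge L^-$ are subalgebras of $\Cl_K$. Define
\[ F(M)=\{v\in M : L^+\cdot v=0\}, \qquad G(V)=\Cl_K\otimes_{\bigwedge L^+}V,\]
where $V$ is regarded as a trivial $\bigwedge L^+$-module. By construction $G(\bbC)$ is the Fock module $\bigwedge$, so once the equivalence is established the second assertion of the lemma follows automatically from the fact that $\bbC$ is the unique irreducible object of $\Vect$.

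First I would check $F\circ G\cong\mathrm{id}$ and that $G$ lands in $\calC_K$. The Clifford analogue of PBW, applied to the polarization $L^+\oplus L^-$, gives a vector space isomorphism $\Cl_K\cong\bigwedge L^-\otimes\bigwedge L^+$, whence $G(V)\cong\bigwedge L^-\otimes V$. Any element lies in $\bigwedge^{\le k}L^-\otimes V$ for some $k$ and is thus annihilated by the open sublattice of $L^+$ perpendicular (under the residue pairing) to the finite-dimensional subspace of $L^-$ it involves; this proves discreteness. Moving an element of $L^+$ past $\bigwedge L^-$ via the Clifford relations strictly lowers the $\bigwedge L^-$-degree, so the $L^+$-invariants of $G(V)$ are exactly $1\otimes V$, giving $F(G(V))\cong V$.

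The main step is showing that the counit $G(F(M))\to M$, $\xi\otimes v\mapsto\xi\cdot v$, is an isomorphism for every discrete $M$. Exhaust $L^-$ by finite-dimensional subspaces $W_1\subset W_2\subset\cdots$, and let $W_n^\perp\subset L^+$ be the residue-pairing annihilator of $W_n$; this is an open sublattice. Choose a splitting $L^+=W_n^\perp\oplus U_n$; then $U_n\oplus W_n$ is non-degenerate and finite-dimensional, and the finite-dimensional Clifford algebra $\Cl(U_n\oplus W_n)$ is a simple matrix algebra whose unique irreducible module is $\bigwedge W_n$ with $U_n$ killing the vacuum. Since $W_n^\perp$ pairs to zero with $W_n$, it anticommutes with elements of $W_n$ in $\Cl_K$, and hence annihilates $\bigwedge W_n\cdot M^{L^+}$; this shows the $\Cl_K$-action on $M^{W_n^\perp}$ factors through $\Cl(U_n\oplus W_n)$. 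The classical structure theorem then supplies a canonical isomorphism
\[ M^{W_n^\perp}\cong\bigwedge W_n\otimes M^{L^+}, \qquad \xi\otimes v\mapsto\xi\cdot v,\]
using the identity $(M^{W_n^\perp})^{U_n}=M^{L^+}$. Given $m\in M$, discreteness yields $n$ with $m\in M^{W_n^\perp}$, so $m$ lies in $\bigwedge W_n\cdot F(M)$, proving surjectivity of the counit; injectivity follows from the same decomposition after passing to the colimit $\bigwedge L^-\otimes F(M)=\varinjlim_n\bigwedge W_n\otimes F(M)$.

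The main obstacle is verifying carefully that the action of $\Cl_K$ on the ``depth-$n$'' subspace $M^{W_n^\perp}$ genuinely descends to the finite-dimensional quotient $\Cl(U_n\oplus W_n)$, and that the resulting $\Cl(U_n\oplus W_n)$-module is of the right multiplicity type so as to match $\bigwedge W_n\otimes M^{L^+}$. Once this local reduction is in place, the passage to the colimit and the identification $M\cong\bigwedge L^-\otimes F(M)$ are formal, and functoriality in $M$ and $V$ is immediate from the definitions of $F$ and $G$.
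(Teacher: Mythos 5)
Your proof is correct and uses the same adjoint pair $F$, $G$ as the paper, together with the same essential ingredient — reduction to finite-dimensional Clifford algebras — but it organizes the verification of $G\circ F\cong\mathrm{id}$ differently, and more directly. The paper proceeds in three separate steps: it first proves $\bigwedge$ is irreducible via a $\Z$-grading argument, uses irreducibility to deduce injectivity of the counit $F(M)\otimes\bigwedge\to M$, then shows $F(M)\neq 0$ whenever $M\neq 0$, and finally handles surjectivity via a cokernel argument. You instead prove the stronger local statement that $M^{W_n^\perp}\cong\bigwedge W_n\otimes F(M)$ canonically as a module over the simple finite-dimensional algebra $\Cl(U_n\oplus W_n)$, and pass to the colimit; this yields injectivity and surjectivity of the counit in one stroke, and irreducibility of $\bigwedge$ drops out as a corollary of the equivalence rather than being a prerequisite. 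Your argument is cleaner in that respect.

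Two small points you should make explicit. First, for the colimit to exhaust $M$ you need the subspaces $W_n^\perp$ to be cofinal among the open sublattices of $L^+$; this is true because $L^+$ and $L^-$ are complementary Lagrangians, so the residue pairing is a topological perfect pairing between the linearly compact space $L^+$ and the discrete space $L^-$, and any open $U\subset L^+$ of finite codimension satisfies $W_n^\perp\subset U$ once $W_n\supset U^\perp\cap L^-$. Second, the phrase ``the $\Cl_K$-action on $M^{W_n^\perp}$ factors through $\Cl(U_n\oplus W_n)$'' is not quite well-posed, since $\Cl_K$ does not preserve $M^{W_n^\perp}$; what you mean, and what your anticommutation computation actually shows, is that the subalgebra $\Cl(U_n\oplus W_n)\subset\Cl_K$ preserves $M^{W_n^\perp}$, which then lets you apply the structure theory of modules over a finite-dimensional simple Clifford algebra.
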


\begin{proof} Consider the functor $F$ from the
category $\calC_K$ to the category of vector spaces taking a
$\Cl_K$-module $M$ to the space of invariants of the subalgebra
$\bigwedge(\calO_K\oplus \calO_K dt) \subset \Cl_K$ in $M$. Let
$G$ be the left adjoint functor which sends a vector space $V$ to
$V \otimes \bigwedge$, where $\Cl_K$ acts on the second factor. It
is easy to see that the space of $\bigwedge(\calO_K\oplus \calO_K
dt)$-invariants in $\bigwedge$ is spanned by the vacuum vector
$|0\rangle_{\calO_K}$. For this we identify $\bigwedge$ with
$\bigwedge(t^{-1}\C[t^{-1}] \oplus t^{-1}\C[t^{-1}]dt)$ and
construct a basis of monomials in the generators $\phi_n, n<0$,
and $\phi^*_n, n \leq 0$. One readily checks that a non-constant
linear combination of such monomials cannot possibly be
annihilated by $\phi_m, m\geq 0$, and $\phi^*_m, m>0$. This shows
that $F \circ G$ is the identity functor. Let us show that $G
\circ F$ is also isomorphic to the identity functor.

Thus, we have a homomorphism $F(M) \otimes \bigwedge \to M$ and we
have to prove that it is an isomorphism. First of all, $\bigwedge$
is an irreducible $\Cl_K$-module. To see that, we use the above
explicit realization to give $\bigwedge$ a $\Z$-grading such that
$\deg \phi_n = \deg \phi^*_n = -n$. Any non-zero submodule is
graded. This implies that it must be generated by a vector that is
invariant under $\bigwedge(\calO_K\oplus \calO_K dt) \subset
\Cl_K$, hence by $|0\rangle_{\calO_K}$. This implies that the map
$F(M) \otimes \bigwedge \to M$ is injective. Next, let us show
that if $M \neq 0$, then $F(M) \neq 0$. To see that, observe that
for any vector $v$ in a discrete $\Cl_K$-module $M$ there exists
$N \in \Z_+$ such that $\bigwedge(t^{N+1}\calO_K\oplus t^N\calO_K
dt) \cdot v = 0$. Consider the vector space $V_N = t^{-N}
\calO_K/t^{N+1} \calO_K$ and its dual vector space $V_N^* \simeq$
$t^{-N} \calO_K \frac{dt}{t}/t^{N+1} \calO_K \frac{dt}{t}$. Let
$\Cl_{V_N}$ be the Clifford algebra associated to $V_N \oplus
V_N^*$. Then $\Cl_{V_N} \cdot v \subset \Cl_K \cdot v $. It
follows from the theory of modules over finite-dimensional
Clifford algebras that the former has a non-zero vector invariant
under $\bigwedge(\calO_K/t^{N+1} \calO_K \oplus \calO_K dt/t^N
\calO_K dt)$. Viewed as a vector of $M$, it is then
$\bigwedge(\calO_K\oplus \calO_K dt)$-invariant, and so $F(M) \neq
0$. Now suppose that $F(M) \otimes \bigwedge \to M$ is not
surjective, and $M' \neq 0$ is the cokernel. Then $M'$ contains a
non-zero $\bigwedge(\calO_K\oplus \calO_K dt)$-invariant vector.
Reducing to finite-dimensional Clifford algebras, as above, we
obtain that it can be lifted to a $\bigwedge(\calO_K\oplus \calO_K
dt)$-invariant vector in $M$ itself. This is a contradiction,
which completes the proof.
\end{proof}

Thus, the category of $\Cl_K$-modules is very simple. On the other
hand, we can construct many "natural" non-trivial discrete
irreducible $\Cl_K$-modules. Namely, for any $L\in\Gr(\bbC)$,
$L\oplus L^\perp$ is a Lagrangian in $K\oplus K^*$ and therefore
$$
M_L:=\ind_{\bigwedge(L\oplus L^\perp)}^{\Cl_K}(\bbC|0\rangle_L)
$$
is a discrete irreducible $\Cl_K$-module. Therefore, all of them
are isomorphic to $M_{L_0}=\bigwedge$. We have the following
simple observation.

\begin{lem}\label{Hom set} For any $L,L'\in\Gr(\bbC)$, there is
a canonical isomorphism
\[\Hom(M_L,M_{L'})\cong\det(L|L').\]
such that for $L,L',L''$, the following diagram commutes.
\[\begin{CD}\Hom(M_L,M_{L'})@.\otimes@.\Hom(M_{L'},M_{L''})@>>>
\Hom(M_L,M_{L''})\\@VVV@. @VVV@VVV
\\
\det(L|L')@.\otimes@.\det(L'|L'')@>\cong
>>\det(L|L'')
\end{CD}\]
\end{lem}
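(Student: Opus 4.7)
Each $M_L$ is a nonzero discrete irreducible $\Cl_K$-module, so by Lemma \ref{equi} the $M_L$ are pairwise isomorphic and $\Hom(M_L,M_{L'})$ is one-dimensional with every nonzero element an isomorphism. Such a morphism is determined by the image of $|0\rangle_L$, which must lie in the subspace of $M_{L'}$ annihilated by $L\oplus L^\perp\subset\Cl_K$. So my task is to produce, functorially in $\det(L|L')$, such an annihilated vector in $M_{L'}$.

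I first treat the comparable case $A\subset B$. Pick lifts $w_1,\dots,w_n\in A^\perp$ of a basis of $A^\perp/B^\perp$ and form $w_1\cdots w_n|0\rangle_B\in M_B$. A short Clifford computation---using that $K$ and $K^*$ are each isotropic in $K\oplus K^*$ under the residue pairing, that $\langle u,w_i\rangle=0$ for $u\in A$, and that $w_i^2=0$ in $\Cl_K$---shows this vector is annihilated by $A\oplus A^\perp$. It depends multilinearly and antisymmetrically on the $w_i$, giving a canonical isomorphism $\det(A^\perp/B^\perp)\overset{\sim}{\to}\Hom(M_A,M_B)$; combined with the residue-induced identification $A^\perp/B^\perp\cong(B/A)^*$, this reads $\det(A|B)\cong\Hom(M_A,M_B)$. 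The symmetric construction, multiplying $|0\rangle_A$ by a product $v_1\cdots v_n$ of lifts of a basis of $B/A$, yields $\det(B|A)\cong\Hom(M_B,M_A)$.

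For arbitrary $L,L'$, choose a common sublattice $L_0\subset L\cap L'$. Composition in the category of $\Cl_K$-modules is a nondegenerate pairing $\Hom(M_L,M_{L_0})\otimes\Hom(M_{L_0},M_{L'})\to\Hom(M_L,M_{L'})$, which combined with the two comparable-case identifications and the gluing isomorphism $\gamma_{L,L_0,L'}$ of Remark \ref{composition of determinantal lines} produces
\[
\Hom(M_L,M_{L'})\;\cong\;\det(L|L_0)\otimes\det(L_0|L')\;\cong\;\det(L|L').
\]

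It remains to verify that this identification is independent of $L_0$ and that the composition square of the lemma commutes. Both reduce to a single coherence: for a chain $L_0''\subset L_0\subset L$, the composition pairing of Homs matches $\gamma_{L,L_0,L_0''}$ under the explicit formulas of the second step. Granting this, independence of $L_0$ follows by comparing two choices $L_0$ and $L_0'$ through $L_0\cap L_0'$ and applying the cocycle identity for $\gamma$, and the composition square is then a formal consequence of associativity of Hom-composition together with the same cocycle. The main obstacle is therefore the bookkeeping of signs and of the dualities $A^\perp/B^\perp\cong(B/A)^*$ under tensor products; a conceptually cleaner route is to package the entire construction as an equivalence between the $\bbC^\times$-gerbe with objects $\{M_L\}$ and morphism sets $\Hom(M_L,M_{L'})$ and the determinantal gerbe $\calD_K$ of \S\ref{The determinantal gerbes}, which reduces all coherences to a single statement.
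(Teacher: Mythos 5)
Your core construction is correct and yields a genuinely different route from the paper's. The paper proves the lemma by observing that $\Hom(M_L,M_{L'})$ equals the $\bigwedge(L\oplus L^\perp)$-invariants in $M_{L'}$, then choosing $N$ with $t^N\calO_K\subset L,L'\subset t^{-N}\calO_K$ so that the computation collapses to one over the \emph{finite-dimensional} Clifford algebra $\Cl_{V_N}$, where the identification with $\det(L|L')$ and all compositional compatibilities come for free from classical finite-dimensional Clifford theory. Your proof instead handles only the comparable case directly, by exhibiting the invariant vector as an explicit wedge product (this is essentially the paper's formula \eqref{image}, together with the dual construction using $A^\perp/B^\perp$), and then bootstraps to general $L,L'$ through a common sublattice $L_0\subset L\cap L'$, relying on the cocycle properties of $\gamma$ from Remark \ref{composition of determinantal lines}. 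What the paper's route buys you is that there is nothing left to check: the finite-dimensional reduction verifies the composition square automatically, and the choice of $N$ (playing the role of your $L_0$) is shown to be irrelevant by \eqref{determinant}. What your route buys is concreteness --- a visibly canonical generator of each $\Hom$-space --- at the cost of the coherence bookkeeping you flag at the end.

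Be aware, though, that the coherence you defer is not quite ``a single statement.'' Composing through $L_0\subset L\cap L'$ mixes morphisms built from the primal wedge ($\det(L/L_0)$) and the dual wedge ($\det(L_0^\perp/L'^\perp)$), so you need to verify that these two constructions are compatible with each other under composition, not just that the primal ones compose among themselves along a chain $L_0''\subset L_0\subset L$. In particular you need (at minimum) that the composite $\Hom(M_L,M_{L_0})\otimes\Hom(M_{L_0},M_L)\to\End(M_L)=\bbC$ matches the canonical pairing $\det(L/L_0)\otimes\det(L/L_0)^{-1}\to\bbC$, and the analogous mixed statements for chains that go down and back up. These are all provable by the same kind of short Clifford computation, but they are additional checks, and your final sentence proposing the gerbe-equivalence reformulation is a pointer to a cleaner framework rather than a proof. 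So: the mathematics you set up is right and agrees with the paper, but the coherence argument as written is a sketch, whereas the paper's finite-dimensional reduction discharges it in one stroke.
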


\begin{proof}
By Schur's lemma, $\Hom(M_L,M_{L'})$ is one-dimensional. The
statement of the lemma identifies this one-dimensional space
canonically.

Since $M_L$ is generated by a vector $|0\rangle_L$ annihilated by
the subalgebra $\bigwedge(L\oplus L^\perp)$, defining a
homomorphism $M_L \to M_{L'}$ is equivalent to choosing a
$\bigwedge(L\oplus L^\perp)$-invariant vector in $M_{L'}$. Thus,
$\Hom(M_L,M_{L'})$ is canonically identified with the space of
$\bigwedge(L\oplus L^\perp)$-invariants in $M_{L'}$. The latter
module is, in turn, generated by a vector $|0\rangle_{L'}$
annihilated by the subalgebra $\bigwedge(L'\oplus L'{}^\perp)$.

Now, there is a positive integer $N$ such that
$$
t^{N} \calO_K \subset L,L' \subset t^{-N} \calO_K, \qquad t^{N}
\calO_K \frac{dt}{t} \subset L^\perp,L'{}^\perp \subset t^{-N}
\calO_K \frac{dt}{t}.
$$
We have the Clifford algebra $\Cl_{V_N}$ introduced in the proof
of Lemma \ref{equi} and for any subspace $U \subset V_N$ its
irreducible module $M_U$ defined in the same way as above. It is
clear that the space of $\bigwedge(L\oplus L^\perp)$-invariants in
$M_{L'}$ is equal to the space of $\bigwedge((L/t^N \calO_K)
\oplus (L/t^N \calO_K)^\perp)$-invariants in $M_{L'/t^N\calO_K}$.
The latter space is canonically identified with $\det(L|L')$ (see
formula (\ref{determinant})).
\end{proof}

More explicitly, these homomorphisms may be constructed as
follows. Suppose for simplicity that $L' \subset L$. Let us choose
a basis $\{ v_i \}_{i \in I}$, of $L/L'$. Choose a lifting
$\wt{v}_i$ of $v_i$ to $L \subset K$. Then the vector
\begin{equation}    \label{image}
\wedge_{i \in I} \wt{v}_i \vac_{L'} \in M_{L'}
\end{equation}
is independent of the choice of the liftings and is annihilated by
$\bigwedge(L\oplus L^\perp)$. All homomorphisms $M_L \to M_{L'}$
are defined by sending $\vac_L \subset M_L$ to a multiple of this
vector, and we have a canonical identification of
$\Hom(M_L,M_{L'})$ with $\on{det}(L/L') = \det(L|L')$.

\medskip

Now we define an action of $\wh{GL}_\infty$ on $\bigwedge$. Since
$GL_\infty$ acts on $K$ via continuous automorphisms, it acts on
$\Cl_K$ via continuous automorphisms as well.  However, this
action does not necessarily lift to an action of any given
$\Cl_K$-module. Rather, what we have is an action of $GL_\infty$
on the the category of discrete $\Cl_K$-modules (see Example
\ref{example of genuine action} for more details). Explicitly, for
any $g\in GL_\infty$ and any $\Cl_K$-module $M$, we obtain a new
$\Cl_K$-module $M^g$, whose underlying vector space is the same as
$M$, but for any $a\in\Cl_K, m\in M^g$, the action is given by the
formula $a\cdot m=g^{-1}(a)m$. Clearly then,
$\bigwedge^g=M_{gL_0}$. This module is isomorphic to $\bigwedge$.
However, there is no canonical isomorphism, and choosing
particular isomorphisms for different $g$ we necessarily obtain a
projective action of $GL_\infty$.

More precisely, we obtain a representation of
$\widehat{GL}_\infty$ by the formula
\begin{equation}
\label{cg}
(c,g):\bigwedge\stackrel{\mathrm{id}}{\to}\bigwedge\nolimits^g
\stackrel{c}{\to}\bigwedge.
\end{equation}
Here, $\mathrm{id}:\bigwedge\to\bigwedge^g$ is the identity map as
vector spaces, and
$$
c \in \Hom(\bigwedge\nolimits^g,\bigwedge)^\times =
\Hom(M_{gL_0},M_{L_0})^\times = \det(gL_0|L_0)^\times.
$$
Recalling the definition of the central extension $\wh{GL}'_\infty
= \wh{GL}_\infty$ given in \S \ref{hat}, we see this
representation of $\widehat{GL}_\infty$ on $\bigwedge$ is the same
as the one we have constructed in the previous section.

\section{Generalities on the actions of
groups on categories}\label{generalities}

In the previous section we have given a brief account of the
1-dimensional story. We explained how to construct representations
of the central extensions of the 1-dimensional ``master'' Lie
algebra $\gl_\infty$ and the corresponding group $GL_\infty$, both
algebraically and geometrically. Algebraically, they are realized
by using Fock representations of a Clifford algebra.
Geometrically, this representation may be obtained by using the
determinant line bundle on the infinite Grassmannian.

Now we begin the study of the 2-dimensional story. In the next
section we will define the corresponding ``master'' Lie algebra
$\gl_{\infty,\infty}$ and the group $GL_{\infty,\infty}$. Our goal
is to construct an abelian category on which $\gl_{\infty,\infty}$
and $GL_{\infty,\infty}$ act gerbally, realizing their non-trivial
third cohomology classes. (We will calculate the corresponding
cohomology class of $GL_{\infty,\infty}$ in \S \ref{classes
corr}.)

In this section we develop the necessary general formalism of
group actions on categories. There is a similar theory for gerbal
actions of Lie algebras on abelian categories, but it involves
more sophisticated machinery.  Roughly speaking, we need to
develop the theory of gerbal actions of groups in ``families'',
that is, over arbitrary bases. We may then define a gerbal action
of a Lie algebra as the action of the corresponding formal group.
This material will be discussed in the follow-up paper \cite{next}
where we will also give examples of gerbal representations of the
Lie algebra $\gl_{\infty,\infty}$.

\subsection{2-groups}\label{2-groups}

\subsubsection{Definition
of 2-groups}

We recall the definition of 2-groups. A good introduction for this
subject is \cite{BL}.

\begin{dfn}    \label{2-grp}
A 2-{\em group} is a monoidal groupoid $\calG$ such that the set of
isomorphism classes of objects of $\calG$, denoted by $\pi_0(\calG)$,
is a group under the multiplication induced from the monoidal
structure. Let $I$ denote the unit object of $\calG$. We set
$\pi_1(\calG)=\End_\calG I$.
\end{dfn}

In the literature, these objects often appear under different
names. For example, they are called weak 2-groups in \cite{BL},
and are called gr-categories in \cite{S}.

It is clear that any group (in the usual sense) can be regarded as
a 2-group with the trivial $\pi_1$. All 2-groups form a (strict)
2-category, with objects being 2-groups, 1-morphism being the
homomorphisms between 2-groups (i.e. monoidal functors), and
2-morphism being the monoidal natural transformations of monoidal
functors.

We recall that if the monoidal structure of a 2-group is upgraded
to a tensor category structure (i.e., there exists a commutativity
constraint whose square is the identity), then this 2-group is
called a Picard groupoid. Therefore, Picard groupoids should be
regarded as commutative 2-groups.

It is easy to prove that $\pi_1(\calG)$ is always an abelian
group. The following simple observation will be important to us.

\begin{lem}\label{action of G on A}
Given a 2-group $\calG$, there is a natural action of
$\pi_0(\calG)$ on $\pi_1(\calG)$ by automorphisms, i.e., there is
a natural group homomorphism
$\pi_0(\calG)\to\mathrm{Aut}(\pi_1(\calG))$.
\end{lem}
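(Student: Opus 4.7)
The plan is to build the action by categorical conjugation using weak inverses. Since $\pi_0(\calG)$ is a group, for each object $X$ of $\calG$ one can choose a weak inverse $X^{-1}$ together with an isomorphism $\epsilon_X : X \otimes X^{-1} \to I$. Given such a choice and $\alpha \in \pi_1(\calG) = \End_\calG(I)$, define $X \cdot \alpha \in \End_\calG(I)$ as the composition
$$I \xrightarrow{\epsilon_X^{-1}} X \otimes X^{-1} \cong X \otimes I \otimes X^{-1} \xrightarrow{\mathrm{id}_X \otimes \alpha \otimes \mathrm{id}_{X^{-1}}} X \otimes I \otimes X^{-1} \cong X \otimes X^{-1} \xrightarrow{\epsilon_X} I,$$
where the inner identifications use the unit constraints of $\calG$. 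This is the categorical analog of the conjugation $g \mapsto X g X^{-1}$.

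I would then verify in turn: (i) the construction is independent of the choice of $\epsilon_X$, since any two choices differ by some $\gamma \in \pi_1(\calG)$, and because $\pi_1(\calG)$ is abelian (noted just before the lemma) conjugation by $\gamma$ acts trivially on $\End_\calG(I)$; (ii) it is independent of the choice of weak inverse $X^{-1}$, as any two are related by a canonical isomorphism which cancels in the sandwich; (iii) if $f : X \to X'$ is an isomorphism, the data for $X$ transport to data for $X'$ and produce the same element of $\pi_1(\calG)$, by naturality of the unit constraint and coherence of $\otimes$; (iv) the map $\alpha \mapsto X \cdot \alpha$ is a group homomorphism, by the interchange identity
$$\mathrm{id}_X \otimes (\alpha \circ \beta) \otimes \mathrm{id}_{X^{-1}} = (\mathrm{id}_X \otimes \alpha \otimes \mathrm{id}_{X^{-1}}) \circ (\mathrm{id}_X \otimes \beta \otimes \mathrm{id}_{X^{-1}});$$
(v) for objects $X, Y$, one has $(X \otimes Y) \cdot \alpha = X \cdot (Y \cdot \alpha)$, using $(X \otimes Y)^{-1} \cong Y^{-1} \otimes X^{-1}$ together with associator coherence, and $I \cdot \alpha = \alpha$ directly. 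Taking $Y = X^{-1}$ in (v) exhibits $X^{-1} \cdot(-)$ as two-sided inverse to $X \cdot (-)$, so each $X \cdot (-)$ lies in $\mathrm{Aut}(\pi_1(\calG))$; combined with (iii) and (v), this yields the desired group homomorphism $\pi_0(\calG) \to \mathrm{Aut}(\pi_1(\calG))$.

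The main obstacle is routine but cumulative: each of (i)--(v) is a diagram chase with associators and unitors. By Mac Lane's coherence theorem, every diagram built out of coherence data commutes, so each step reduces to a verification in a strict skeleton and presents no conceptual difficulty. The one genuinely abstract input is step (i), where the commutativity of $\pi_1(\calG)$ is indispensable --- without it the trivialization $\epsilon_X$ would enter into the answer, and no well-defined map $\pi_0(\calG) \to \mathrm{Aut}(\pi_1(\calG))$ would exist.
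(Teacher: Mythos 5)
Your proposal is correct, but it takes a different and more laborious route than the paper. The paper observes that for any object $x$ there are \emph{two} canonical isomorphisms $l_x, r_x : A=\pi_1(\calG)\to\End_\calG(x)$, induced by the unit constraints $I\otimes x\cong x$ and $x\otimes I\cong x$, and simply sets $\rho_x := l_x^{-1}\circ r_x$; the facts $\rho_x=\rho_{x'}$ for $x\cong x'$, $\rho_I=\mathrm{Id}$, and $\rho_{x\otimes x'}=\rho_x\rho_{x'}$ then drop out of uniqueness in a single small diagram. Your conjugation construction $\alpha\mapsto\epsilon_X\circ(\mathrm{id}_X\otimes\alpha\otimes\mathrm{id}_{X^{-1}})\circ\epsilon_X^{-1}$ computes the same automorphism (in a strict skeleton, both reduce to solving $\rho_x(a)\otimes 1_x = 1_x\otimes a$), but because you introduce a choice of weak inverse $(X^{-1},\epsilon_X)$ you must then check independence of $\epsilon_X$ and of $X^{-1}$ separately, which the paper's formulation bypasses entirely. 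One consequence: your claim that commutativity of $\pi_1(\calG)$ is ``indispensable'' is accurate for \emph{your} step (i), but is not needed in the paper's argument --- $l_x$ and $r_x$ are group isomorphisms by the interchange law, so $\rho_x=l_x^{-1}r_x$ is automatically an automorphism without invoking the Eckmann--Hilton fact. What your approach buys is a more transparent conceptual picture (it visibly generalizes conjugation $g\mapsto XgX^{-1}$); what the paper's buys is economy, since it only uses the unit constraints and no chosen trivializations.
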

\begin{proof}Denote
$G=\pi_0(\calG)$ and $A=\pi_1(\calG)$. Observe that for any object
$x$ of $\calG$, $\End_\calG(x)$ is isomorphic to $A$ in two ways.
Namely, $l_x:A\to\End_\calG(x)$ is obtained by the canonical
isomorphism $I\otimes x\cong x$, and $r_x:A\to\End_\calG(x)$ by
$x\otimes I\cong x$. Therefore, for any $x$, we define
$\rho_x:=l^{-1}_x\circ r_x:A\to A$. By definition, we know that
for any $a\in A$, $\rho_x(a)$ is the unique element in $A$ such
that the following diagram is commutative:

\[\begin{CD}
x\otimes
I@>\cong>>x@<\cong<< I\otimes x\\
@VV1\otimes
aV@.@VV\rho_x(a)\otimes 1V\\
x\otimes I@>\cong>>x@<\cong<< I\otimes x
\end{CD}\]
Its uniqueness implies the following two properties which complete
the proof of the lemma.

(i) If $x\cong x'$, then $\rho_x=\rho_{x'}$;

(ii) $\rho_I=\mathrm{Id}$ and $\rho_x\rho_{x'}=\rho_{x\otimes
x'}$.
\end{proof}

\begin{rmk}\label{functorial}
(i)The above construction is functorial. Let   $F:\calG\to\calH$
be a homomorphism of 2-groups. Then it induces group homomorphisms
$F_0:\pi_0(\calG)\to \pi_0(\calH)$ and
$F_1:\pi_1(\calG)\to\pi_1(\calH)$. We have
$F(\rho_x(a))=\rho_{F(x)}(F(a))$.

(ii) It is known (in particular, from the unpublished thesis of
Grothendieck's student Sinh \cite{S}, see also \cite{BL}) that
2-groups with $\pi_0=G$ and $\pi_1=A$ are classified by $H^3(G,A)$
(the so-called Postnikov invariant). The above lemma is the first
step toward the construction of a class in $H^3(G,A)$ associated
to $\calG$. Although we will not use this statement in this paper
(so we do not make it precise), our construction will be closely
related to this statement, but from a different point of view.

(iii) As usual, we could work in any topos $\calT$ instead of the
category of sets. Then a sheaf of 2-group $\calG$ will be a stack,
such that for any $U\in\calT$, $\calG(U)$ is a 2-group and the
pullback functor respects to the monoidal structure (i.e. for
$f:V\to U$, $g:W\to V$, $f^*,g^*$ are monoidal functors, and the
canonical isomorphism $g^*\circ f^*\cong (f\circ g)^*$ are
monoidal natural transforms). Denote by $I_U$ the unit object in
$\calG(U)$. Observe that $U\mapsto\mathrm{End}_{\calG(U)}(I_U)$ is
a sheaf of abelian groups over $\calT$, which is denoted by
$\pi_1(\calG)$. However, $U\mapsto\pi_0(\calG(U))$ is usually only
a presheaf. We will denote its sheafification by $\pi_0(\calG)$
(so in general $\pi_0(\calG)(U)\neq\pi_0(\calG(U))$). This is a
sheaf of groups, called the coarse moduli of $\calG$. Remark (i)
shows that there is an action of $\pi_0(\calG)$ on $\pi_1(\calG)$.
If one regards $\pi_0(\calG)$ as a 2-group, then the natural
projection $\pi: \calG\to\pi_0(\calG)$ is a 2-group homomorphism.
\end{rmk}

\subsection{Central extensions}

We will discuss a special type of 2-groups which can be regarded
as the central extensions of groups by Picard groupoids. We will
confine ourselves to the situations that are needed in the
following. For a much more general treatment of extensions of
groups by 2-groups, see \cite{Br}. We will work in the topos of
sets.

Let $\calP$ be a Picard groupoid. Recall that a $\calP$-torsor
$\calQ$ (over a point) is a module category over $\calP$, i.e.,
there is a bifunctor $\otimes:\calP\times\calQ\to\calQ$ satisfying
the associativity constraint, such that the functor
$I\otimes\cdot$ is isomorphic to the identity functor and for any
$x\in\calQ$, the functor $\cdot\otimes x:\calP\to\calQ$ is an
equivalence of categories.

Let $A$ be an abelian group. Then it makes perfect sense to tensor
two $A$-torsors over $A$, which is again an $A$-torsor. This
tensor product makes the category of $A$-torsors a Picard
groupoid, denoted by $BA$. It is clear that a $BA$-torsor in the
standard terminology is just an $A$-gerbe.

We will call $\calQ$ a $\calP$-bitorsor if it is equipped with two
commuting $\calP$-torsor structures. See \cite{Br}, Definition
3.1.8 for the more general definition of bi-torsors of a 2-group.
The meaning of "commuting $\calP$-torsor structures" is spelled
out in (3.1.8.2)--(3.1.8.4) of \emph{loc. cit.}.

Now let $\calG$ be a 2-group. Denote
$A=\pi_1(\calG)=\mathrm{End}_\calG(I)$. We let $\calG_e$ be the
subgroupoid of objects in $\calG$ that are isomorphic to $I$.
Therefore, $\calG_e$ is a connected groupoid, and the functor
$x\mapsto \Hom_\calG(I,x), \calG_e\to BA$ is an equivalence of
tensor categories. Now for any $s\in\pi_0(\calG)$, let $\calG_s$
be the subgroupoid consisting of objects in the isomorphism class
$s$. Then the monoidal functor gives $\calG_s$ the structure of a
bi-torsor under $\calG_e$. Namely,
$l:\calG_e\otimes\calG_s\to\calG_s$ and
$r:\calG_s\otimes\calG_e\to\calG_s$.

\begin{lem} The following statements
are equivalent:

(1) The action of $\pi_0(\calG)$ on $\pi_1(\calG)$ is trivial;

(2) The monoidal functor $\calG_e\to\calG$ is central in the sense
of \cite{Bez}, Definition 1;

(3) The two $A$-gerbe structures on $\calG_s$ are the same.
\end{lem}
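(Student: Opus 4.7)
The plan is to establish the three equivalences by proving (1) $\Leftrightarrow$ (3) and (1) $\Leftrightarrow$ (2) separately. Throughout, the common thread is the map $\rho_x = l_x^{-1} \circ r_x$ from the proof of Lemma \ref{action of G on A}: each of the three conditions is, in a different guise, the assertion that $\rho_x$ is the identity for every object $x$ of $\calG$.

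For (1) $\Leftrightarrow$ (3), I would fix a class $s \in \pi_0(\calG)$ and an object $x \in \calG_s$, and transport the two $\calG_e$-torsor structures on $\calG_s$ through the equivalence $\calG_e \simeq BA$ sending $y$ to $\Hom_\calG(I,y)$. Under this equivalence the left action $y \otimes x$ corresponds to the $A$-action on $\End_\calG(x)^\times$ via $l_x$, while the right action $x \otimes y$ corresponds to $r_x$. Hence the two $A$-gerbe structures on $\calG_s$ agree iff $l_x = r_x$, i.e. iff $\rho_x = \mathrm{Id}$. By property (i) at the end of the proof of Lemma \ref{action of G on A}, this depends only on $s$, and letting $s$ run over $\pi_0(\calG)$ yields the equivalence.

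For (1) $\Leftrightarrow$ (2), I would unwind Bezrukavnikov's definition from \cite{Bez}: centrality of the monoidal functor $\calG_e \hookrightarrow \calG$ amounts to a natural system of isomorphisms $\sigma_{y,x}\colon y \otimes x \to x \otimes y$ (for $y \in \calG_e$, $x \in \calG$) compatible with the unit, associativity, and tensor constraints. The implication (2) $\Rightarrow$ (1) is immediate: plugging $y = I$ into the naturality of $\sigma$ applied to the automorphism $a \in A$ of $I$ forces the commutative square defining $\rho_x(a)$ in Lemma \ref{action of G on A} to commute with $\rho_x(a) = a$. For the converse, assuming $\rho_x \equiv \mathrm{Id}$, I would build $\sigma_{y,x}$ by choosing any isomorphism $y \simeq I$ and using the left and right unit constraints; the hypothesis $l_x = r_x$ ensures this is independent of the chosen isomorphism, and property (ii) of $\rho$ ensures compatibility with the tensor product in the $y$-variable.

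The main obstacle I anticipate lies in the direction (1) $\Rightarrow$ (2): verifying that the candidate $\sigma_{y,x}$ really satisfies all the coherence conditions of Bezrukavnikov's definition, and in particular that naturality in both $y$ and $x$ follows from the single pointwise condition $l_x = r_x$. This should reduce to a diagram chase combining the pentagon and triangle axioms in $\calG$ with the two functorial properties of $\rho$ recorded in the proof of Lemma \ref{action of G on A}, but it is the one step that requires genuine bookkeeping rather than a direct appeal to definitions.
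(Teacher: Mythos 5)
The paper states this lemma without proof, so there is no official argument to compare against; I can only assess your proposal on its own merits.

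Your reduction of all three conditions to the single assertion ``$l_x = r_x$ for every object $x$ of $\calG$'' (equivalently $\rho_x = \mathrm{Id}$) is exactly the right organizing principle, and your handling of $(1) \Leftrightarrow (3)$ via transport through the equivalence $\calG_e \simeq BA$ is clean and correct, including the observation that property (i) of $\rho$ makes the condition depend only on the class $s$. For $(2) \Rightarrow (1)$, the argument is also sound: naturality of $\sigma_{I,x}$ in $a \in \End(I)$ combined with the unit compatibility $\mu_x \circ \sigma_{I,x} = \lambda_x$ immediately forces $l_x = r_x$, and both ingredients are part of Bezrukavnikov's definition.

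One correction for $(1) \Rightarrow (2)$. Your candidate $\sigma_{y,x} = (1_x \otimes \phi^{-1}) \circ \mu_x^{-1} \circ \lambda_x \circ (\phi \otimes 1_x)$ for a chosen $\phi\colon y \xrightarrow{\sim} I$ is indeed well-defined precisely because $l_x = r_x$, as you say. But once that is established, the remaining coherence verifications (naturality in $x$, the hexagon expressing $\sigma_{y, x \otimes x'}$ through $\sigma_{y,x}$ and $\sigma_{y,x'}$, and monoidality in the $y$-variable) do \emph{not} rely on property (ii) of $\rho$: they follow entirely from MacLane coherence, since $\sigma$ is built only from unit constraints, associators, and the chosen $\phi$, and the two $\phi$-occurrences in the hexagon composite cancel in the middle before the triangle axiom finishes the job. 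Property (ii) is what makes $\rho$ a homomorphism $\pi_0(\calG) \to \Aut(\pi_1(\calG))$ in the first place (Lemma \ref{action of G on A}), but it plays no role in the coherence check. With that attribution corrected, your proof is complete and correct.
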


\begin{dfn}\label{2-group of central extensions}
If a 2-group $\calG$ satisfies the above conditions, then we call
it the {\em central extension of $G=\pi_0(\calG)$ by $BA$}.
\end{dfn}

\subsection{Actions of groups
on categories}\label{action of group on categories, the naive
version} We now begin to discuss actions of groups on categories.
Our approach in this section will be naive in the sense that we
will work mostly with groups of $\C$-points of algebraic groups.
In \cite{next} we will develop the theory in a way that will allow
us to work in families and take full advantage of the
algebro-geometric structures. This will allow us to introduce the
notion of a Lie algebra action on a category which will be
important for our purposes.

\subsubsection{The center}
We will use the following notation. If $\lambda:F\to G$ is a
morphism between two functors $F$ and $G$ acting from a category
$\calC$ to a category $\calC'$, we denote $\lambda_X:F(X)\to G(X)$
the specialization of $\lambda$ to $X$.

Now let $\calC$ be an abelian category. The center $\calZ(\calC)$
of $\calC$ is by definition the ring $\End\mathbf{1}_{\calC}$,
where $\mathbf{1}_{\calC}$ is the identity functor of $\calC$.
Thus, an element $a\in \calZ(\calC)$ assigns to every $X\in\calC$
a morphism $a_X\in\End_\calC X$ such that for any $f:X\to Y$,
$a_Y\circ f=f\circ a_X$. It is easy to see that $\calZ(\calC)$ is
in fact a commutative ring. For instance, if $\calC=A\Mod$, the
category of left-modules over a ring $A$, then
$\calZ(\calC)=Z(A)$, the center of $A$. For any $X,Y\in\calC$,
$\mbox{Hom}_{\calC}(X,Y)$ is a $\calZ(\calC)$-module.

Let $\calZ(\calC)^\times$ be the group of invertible elements in
$\calZ(\calC)$. This is an abelian group which may be defined for
any category as the automorphism group of the identity functor.

\subsubsection{The 2-group $\bbG\bbL(\calC)$}
\label{bbGbbL} Let $\calC$ be a $\bbC$-linear abelian category. We
denote by $\bbG\bbL(\calC)$ the category of $\bbC$-linear
auto-equivalences of $\calC$. By definition, the objects of
$\bbG\bbL(\calC)$ are $\bbC$-linear additive functors
$F:\calC\to\calC$ which are equivalences of categories. The morphisms
$\Hom_{\bbG\bbL(\calC)}(F,G)$ are the natural transformations from $F$
to $G$ which are isomorphisms. It is clear from the definition that
$\bbG\bbL(\calC)$ is a strict monoidal category. Furthermore,
$\bbG\bbL(\calC)$ is a 2-group, with
$\pi_1(\bbG\bbL(\calC))=\calZ(\calC)^\times$. (We will prove in
\cite{next} that $\bbG\bbL(\calC)$ may be regarded as the groupoid of
$\bbC$-points of a stack.)

Likewise, for any category $\calC$, not necessarily abelian, we
can define the 2-group $\bbA\bbU\bbT(\calC)$ of auto-equivalences
of $\calC$, whose objects are all auto-equivalences of $\calC$ and
morphisms are isomorphisms between auto-equivalences.  However,
this 2-group does not possess rich structure.

\subsubsection{Genuine actions}
Let $\calG$ be a 2-group.

\begin{dfn}\label{genuine action}
A {\em (genuine) representation} of $\calG$ on an abelian category
$\calC$ is a homomorphism of 2-groups $F:\calG\to\bbG\bbL(\calC)$.
Likewise, an action of $\calG$ on a general category is a
homomorphism of 2-groups $\calG\to\bbA\bbU\bbT(\calC)$.
\end{dfn}

If $\calG=G$ is just an ordinary group, then the above definition
gives us the usual notion of action of a group on categories (see
the Introduction).

\begin{ex}\label{example
of genuine action} Here is a basic example. Assume that $G$ acts
on a $\bbC$-algebra $R$ by automorphisms. Then it acts on the
category of (left) $R$-modules in the following way. Let
$m:R\otimes M\to M$ be a left $R$-module. Then define a new
$R$-module structure on $M$ by the formula
\[(F_g(m)(r,x)=m(g^{-1}r,x) \mbox{ for }
r\in R, x\in M.\] If $f:(\rho,M)\to (\rho',M')$ is a morphism
between $R$-modules, then define $F_g(f)=f$ as linear map between
underlying $\bbC$-vector spaces. Obviously, $F_g:\calC\to\calC$ is
a functor. It is easy to check that $F$ defines an action of $G$
on $\calC$.
\end{ex}

The genuine actions of $\calG$ on $\calC$ form a category. Namely,
$\Hom_{2\mbox{-Grp}}(\calG,\bbA\bbU\bbT(\calC))$. (We recall that
all 2-groups form a strict 2-category $2\mbox{-Grp}$.)

\subsubsection{Gerbal
actions} \label{gerbal actions} A genuine representation of a
group on a category is a categorical analogue of a representation
of a group on a vector space. We also want some categorical
analogue of the projective representations, which will be called
gerbal representations. We will only consider gerbal
representations of groups in this paper. In \cite{next} we will
develop the theory further and introduce the notion of a gerbal
representation of a Lie algebra.

\begin{dfn}\label{gerbe
action1} A {\em gerbal representation} of a group $G$ on an abelian
category $\calC$ is a homomorphism of groups
$F:G\to\pi_0(\bbG\bbL(\calC))$. Likewise, a gerbal action of $G$ on a
general category $\calC$ is a group homomorphism
$G\to\pi_0(\bbA\bbU\bbT(\calC))$.

Equivalently, this is an assignment to each $g\in G$ of an
auto-equivalence $F_g:\calC\to\calC$ such that
$F_e\cong\mathbf{1}_\calC$ and $F_gF_{g'}\cong F_{gg'}$.

A {\em homomorphism} of gerbal representations $\calC$ and $\calC'$ of
$G$ is a functor $H: \calC \to \calC'$ such that there exist
isomorphisms $H \circ F_g \simeq F_g \circ H$ for all $g \in G$. If
$H$ is an equivalence of categories, we call these representations
{\em equivalent}.
\end{dfn}

\begin{rmk}
One can give another definition, in which one also specifies, as
part of the data, isomorphisms between $F_gF_{g'}$ and $F_{gg'}$
for all $g,g' \in G$. We have discussed this definition in the
Introduction. The two definitions are essentially equivalent. As
an analogy, consider the notion of projective representation of a
group $G$ on a $\bbC$-vector space. This may be defined as a
homomorphism $G \to \on{PGL}(V)$ or as a rule that assigns to each
$g \in G$ an automorphism $T_g$ of $V$ such that $T_g T_h =
\alpha_{g,h} T_{gh}$, for some $\alpha_{g,h} \in \bbC^\times$. The
above definition of gerbal representation is an analogue of the
former, whereas the definition used in the Introduction is an
analogue of the latter. We find the above definition more
convenient and economical, because it avoids the data of the
isomorphisms which are in some sense redundant, as we will see
below.
\end{rmk}

\medskip

Given a gerbal action of $G$ on $\calC$, one obtains, by Lemma
\ref{action of G on A}, an action of $G$ on
$\pi_1(\GL(\calC))=\calZ(\calC)^\times$. More explicitly, the
homomorphism $\rho:G\to\mathrm{Aut}(\calZ(\calC)^\times)$ is
defined as follows: for any $g\in G$ and
$a\in\calZ(\calC)^\times$, \[\rho_g(a)_{F_g(X)}=F_g(a_X)\] for any
object $X$ in $\calC$. (It is easy to see that this condition
determines $\rho$ uniquely.) Observe that if $\calC$ is abelian
and this is a gerbal representation, the above formula in fact
defines an action of $G$ on $\calZ(\calC)$.

We also observe that for any two objects $x,y$ in
$\bbG\bbL(\calC)$ (or in $\bbA\bbU\bbT(\calC)$), $\Hom(x,y)$ is a
bi-pseudotorsor\footnote{We recall that, for a group $G$, a
$G$-pseudotorsor is a set that is either empty or is a torsor
under $G$.} under $\calZ(\calC)^\times$. Indeed, $\Hom_\calG(x,y)$
is a pseudo $\End(x)$-torsor under the left action and a pseudo
$\End(y)$-torsor under the right action. A prior, we obtain four
pseudo $\calZ(\calC)^\times$-torsor structures on $\Hom(x,y)$
since there are two isomorphisms
$l_x,r_x:\calZ(\calC)^\times\to\End(x)$ and two isomorphisms
$l_y,r_y:\calZ(\calC)^\times\to\End(y)$. However, it is clear that
the two pseudo $\calZ(\calC)^\times$-torsor structures obtained by
$l_x$ and $l_y$ in fact coincide. So do the other two. Therefore,
there are only two pseudo $\calZ(\calC)^\times$-torsor structures
on $\Hom(x,y)$, which furthermore commute with each other. (These
are the two $\calZ(\calC)^\times$-gerbe structures on
$\bbG\bbL(\calC)$ (and on $\bbA\bbU\bbT(\calC)$).) In what
follows, we will use the pseudo $\calZ(\calC)^\times$-torsor
structure on $\Hom(x,y)$ coming from $l_x$.

\subsubsection{Third cohomology class}
Let us pick an isomorphism
$c(g_1,g_2):F_{g_1}F_{g_2}\stackrel{\cong}{\longrightarrow}
F_{g_1g_2}$ for all pairs $g_1,g_2 \in G$. Then for
$g_1,g_2,g_3\in G$, there are two isomorphisms between
$F_{g_1}F_{g_2}F_{g_3}$ and $F_{g_1g_2g_3}$; namely,
\[F_{g_1}F_{g_2}F_{g_3}\stackrel{\cong}{\longrightarrow}
F_{g_1g_2}F_{g_3}\stackrel{\cong}{\longrightarrow}F_{g_1g_2g_3}\]
and
\[F_{g_1}F_{g_2}F_{g_3}\stackrel{\cong}{\longrightarrow}
F_{g_1}F_{g_2g_3}\stackrel{\cong}{\longrightarrow}F_{g_1g_2g_3}\]
Let $a(g_1,g_2,g_3)\in \calZ(\calC)^\times$ be the unique element
sending the first isomorphism to the second one since
$\Hom(F_{g_1}F_{g_2}F_{g_3},F_{g_1g_2g_3})$ is a torsor under
$\calZ(\calC)^\times$. That is, the following diagram is
commutative:
\[\begin{CD}
F_{g_1}F_{g_2}F_{g_3}(X)@>\cong>>F_{g_1g_2}F_{g_3}(X)@> \cong
>>F_{g_1g_2g_3}(X)\\
\parallel  & & &
&
@VVa(g_1,g_2,g_3)_{F_{g_1g_2g_3}(X)}V\\
F_{g_1}F_{g_2}F_{g_3}(X)@>\cong>>F_{g_1}F_{g_2g_3}(X)@>\cong>>
F_{g_1g_2g_3}(X)
\end{CD}\]
In this way, one defines a map $a:G\times G\times
G\to\calZ(\calC)^\times$.

\begin{thm}\label{H^3}
(i) $a$ is a cocycle, i.e.
\[\rho_{g_1}a(g_2,g_3,g_4)a(g_1,g_2g_3,g_4)a(g_1,g_2,g_3)=
a(g_1g_2,g_3,g_4)a(g_1,g_2,g_3g_4).\]

(ii) Given different isomorphisms
$c'(g_1,g_2):F_{g_1}F_{g_2}\stackrel{\cong}{\longrightarrow}
F_{g_1g_2}$, the new cocycle differs from the original one by a
coboundary, and therefore, there is a well-defined cohomology
class $[a]\in H^3(G,\calZ(\calC)^\times)$ associated to a gerbal
action of the group $G$ on the category $\calC$.

(iii) If this class is trivial, then the action $F$ could be
upgraded to a genuine action, i.e., one could choose
$c(g_1,g_2):F_{g_1}F_{g_2}\stackrel{\cong}{\longrightarrow}F_{g_1g_2}$
in such a way that the following diagram is commutative:
\[\begin{CD}
F_{g_1}F_{g_2}F_{g_3}@>\cong
>>F_{g_1g_2}F_{g_3}\\
@VV\cong V@VV\cong
V\\
F_{g_1}F_{g_2g_3}@>\cong>>F_{g_1g_2g_3}
\end{CD}\]
In this case, the set of isomorphisms $c(g_1,g_2)$ such that $F$
is a genuine action is a torsor under
$H^2(G,\calZ(\calC)^\times)$. Furthermore, the automorphism of
such $F$ is $H^1(G,\calZ(\calC)^\times)$. \end{thm}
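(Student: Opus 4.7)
My plan is to deduce all three statements from a careful pentagon-style computation in the 2-group $\bbG\bbL(\calC)$. For (i), composition of functors in $\bbG\bbL(\calC)$ is strictly associative, so $F_{g_1}F_{g_2}F_{g_3}F_{g_4}$ is an unambiguous functor; however, the five parenthesizations of the product $g_1g_2g_3g_4$ produce five distinct isomorphisms $\phi_p:F_{g_1}F_{g_2}F_{g_3}F_{g_4}\to F_{g_1g_2g_3g_4}$, each built by successive applications of the $c(-,-)$'s following the reduction pattern of $p$. Any two such $\phi_p$ differ by a scalar in $\calZ(\calC)^\times$ via the left torsor structure on the $\Hom$-set. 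Traversing the Mac Lane pentagon once must yield the identity, and computing the five edge-scalars and equating the two resulting sides gives the cocycle identity.

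The crux, and the main obstacle, is to show that the edge $F_{g_1}((F_{g_2}F_{g_3})F_{g_4})\to F_{g_1}(F_{g_2}(F_{g_3}F_{g_4}))$ contributes the twisted scalar $\rho_{g_1}(a(g_2,g_3,g_4))$, whereas the other four edges contribute the untwisted scalars $a(g_1,g_2,g_3)$, $a(g_1,g_2g_3,g_4)$, $a(g_1g_2,g_3,g_4)$, and $a(g_1,g_2,g_3g_4)$. The twist is exactly the content of Lemma \ref{action of G on A}: the inner associator at $X$ is the natural transformation $a(g_2,g_3,g_4)_{F_{g_2g_3g_4}(X)}$, and whiskering on the left by $F_{g_1}$ amounts to applying $F_{g_1}$ to this endomorphism, which by the defining property of $\rho$ equals $\rho_{g_1}(a(g_2,g_3,g_4))_{F_{g_1}F_{g_2g_3g_4}(X)}$. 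To package the comparison cleanly I would then use the naturality of every $c(g,h)$ to transport all central natural transformations so that they act on the common object $F_{g_1g_2g_3g_4}(X)$, reducing the pentagon to a single scalar identity in $\calZ(\calC)^\times$.

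For (ii), an analogous and much shorter computation shows that replacing $c(g_1,g_2)$ by $c'(g_1,g_2)=\lambda(g_1,g_2)\cdot c(g_1,g_2)$ for a 2-cochain $\lambda:G\times G\to\calZ(\calC)^\times$ alters the cocycle by
\[a'(g_1,g_2,g_3)=a(g_1,g_2,g_3)\cdot\rho_{g_1}(\lambda(g_2,g_3))\cdot\lambda(g_1g_2,g_3)^{-1}\cdot\lambda(g_1,g_2g_3)\cdot\lambda(g_1,g_2)^{-1},\]
i.e.\ by the coboundary $d\lambda$ for the $\rho$-twisted $G$-module $\calZ(\calC)^\times$. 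Hence $[a]\in H^3(G,\calZ(\calC)^\times)$ is independent of the choice of $c$.

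For (iii), if $[a]=0$ then $a=d\lambda$ for some 2-cochain $\lambda$, and the formula in (ii) shows that the replacement $c(g,h)\mapsto\lambda(g,h)^{-1}c(g,h)$ produces a new family whose associativity square commutes on the nose, upgrading $F$ to a genuine action. Two trivializing choices differ by a 2-cocycle, and a direct check shows that the resulting genuine liftings are equivalent, in the sense of Definition \ref{gerbe action1}, precisely when this 2-cocycle is a coboundary; this yields the $H^2(G,\calZ(\calC)^\times)$-torsor structure on equivalence classes of genuine liftings. Finally, an automorphism of a fixed genuine action is a collection $\{\eta_g\in\on{Aut}(F_g)\cong\calZ(\calC)^\times\}$ compatible with every $c(g,h)$, and the compatibility condition is precisely the 1-cocycle relation $\eta_{gh}=\eta_g\cdot\rho_g(\eta_h)$; quotienting by inner equivalences gives $H^1(G,\calZ(\calC)^\times)$.
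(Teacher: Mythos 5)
Your argument is correct and matches the paper's in all essentials: the paper proves (i) by the same kind of diagram chase you describe, with the twist by $\rho_{g_1}$ in the "inner" edge being exactly the point isolated via the labels $(\rho_{g_1}a)(g_2,g_3,g_4)$ in its big commutative diagram (and the naturality of the $c(g,h)$'s used to transport the scalars to the common object $F_{g_1g_2g_3g_4}(X)$); for (ii) the paper likewise writes $c'=c\cdot d$ and observes $a'=a+\delta d$; and for (iii) the paper simply says "standard," which your sketch fills in along the expected lines. The only cosmetic difference is that you organize the computation explicitly as the Mac Lane pentagon, whereas the paper lays it out as one large diagram in which "all other loops commute," but the underlying steps coincide.
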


\begin{rmk}

(i) There is a tautological gerbal action of the group
$\pi_0(\bbA\bbU\bbT(\calC))$ on $\calC$, which gives a cohomology
class $u\in H^3(\pi_0(\bbA\bbU\bbT(\calC)),\calZ(\calC)^\times)$.
This is the Postnikov invariant associated to the 2-group
$\bbA\bbU\bbT(\calC)$. Then the cohomology class $[a]$ in the
theorem is just the pullback of $u$.

(ii) Part (iii) of the theorem could be interpreted as follows: if
the class $[a]$ vanishes, then the group homomorphism
$F:G\to\pi_0(\bbA\bbU\bbT(\calC))$ can be lifted as a 2-group
homomorphism $\tilde{F}:G\to\bbA\bbU\bbT(\calC)$. The possible
liftings form a subcategory of
$\Hom_{2\mbox{-Grp}}(G,\bbA\bbU\bbT(\calC))$. The isomorphism
classes form a torsor under $H^2(G,\calZ(\calC)^\times)$. Given a
lifting $\tilde{F}$, the automorphism group of $\tilde{F}$ (i.e.,
invertible 2-morphisms between $\tilde{F}$ and itself) is
$H^1(G,\calZ(\calC)^\times)$.

(iii) The same remarks apply to $\bbG\bbL(\calC)$.\end{rmk}

\begin{proof}(i) is proved
by diagram chasing. Observe that the following diagram is always
commutative:

\[\begin{CD}
F_{g_1}F_{g_2}F_{g_3}F_{g_4}@>F_{g_1}F_{g_2}(c(g_3,g_4))>\cong>
F_{g_1}F_{g_2}F_{g_3g_4}\\
@V\cong Vc(g_1,c_2)V@V\cong
Vc(g_1,g_2)V\\
F_{g_1g_2}F_{g_3}F_{g_4}@>F_{g_1g_2}(c(g_3,g_4))>\cong>F_{g_1g_2}
F_{g_3g_4}
\end{CD}\]
To prove (i), it is enough to show that both sides are the same
when evaluated at $F_{g_1g_2g_3g_4}(X)$, for any object $X$ in
$\calC$. That is, to show that the rightmost loop in the following
diagram commutes. This follows from the fact that all other loops
in the diagram are commutative.

\[\xymatrix{F_{g_1}F_{g_2
g_3
g_4}(X)\ar[rrr]& & & F_{g_1 g_2 g_3 g_4}(X) \\
F_{g_1}F_{g_2}F_{g_3 g_4}(X)\ar[u]\ar@{-->}[drr] & F_{g_1}F_{g_2
g_3 g_4}(X)\ar|{(\rho_{g_1}a)(g_2, g_3, g_4)}[ul]\ar[r] & F_{g_1
g_2 g_3 g_4}(X)\ar|{(\rho_{g_1}a)(g_2, g_3, g_4)}[ur]&
\\
F_{g_1}F_{g_2}F_{g_3}F_{g_4}(X)\ar[u]\ar[r]\ar[d]&
F_{g_1}F_{g_2g_3}F_{g_4}(X)\ar[u]\ar[d]& F_{g_1 g_2}F_{g_3 g_4}(X)
\ar@{-->}[r] & F_{g_1 g_2 g_3 g_4}(X)\ar|{a(g_1, g_2, g_3
g_4)}[uu]\\
F_{g_1 g_2}F_{g_3} F_{g_4}(X)\ar[d]\ar@{-->}[urr] & F_{g_1 g_2
g_3} F_{g_4}(X)\ar[r] & F_{g_1 g_2 g_3 g_4}(X) \ar
@/_3.5pc/|(.4){a(g_1, g_2 g_3,
g_4)}[uu]&  \\
F_{g_1 g_2 g_3} F_{g_4}(X)\ar|{a(g_1, g_2, g_3)}[ur]\ar[rrr]& & &
F_{g_1 g_2 g_3 g_4}(X)\ar|{a(g_1, g_2, g_3)}[ul] \ar|{a(g_1 g_2,
g_3,
g_4)}[uu]\\
}\]

\vspace{2mm}

For (ii), observe that we could write $c'(g_1,g_2)=c(g_1,g_2)\cdot
d(g_1,g_2)$ for a unique $d(g_1,g_2)\in Z(\calC)^\times$. Then it
is easy to see that $a'=a+\delta d$. (iii) is standard.
\end{proof}

Thus, we associate to a gerbal action of a group $G$ on a category
$\calC$ a third cohomology class of $G$ with coefficients in
$\calZ(\calC)^\times$, which is the obstruction to upgrade the
gerbal action to a genuine action.

We will apply the following simple observation in \S \ref{classes
corr}. Let $\calD\subset\calC$ be a subcategory of $\calC$. We
call $\calD$ is invariant under the gerbal action $F$ of $G$ if
for any $u:X\to Y\in\calD$, $F_g(u):F_g(X)\to F_g(Y)\in\calD$ for
any $g\in G$. Then we have a gerbal action of $G$ on $\calD$.
Observe that there is a natural $G$-module homomorphism
$r:\calZ(\calC)^\times\to\calZ(\calD)^\times$ by restriction.
We have the following

\begin{prop}\label{invariant}Let $a\in H^3(G,\calZ(\calC)^\times)$
be the obstruction to upgrade the gerbal action of $G$ on $\calC$
to a genuine action. Then the obstruction in
$H^3(G,\calZ(\calD)^\times)$ to upgrade the gerbal action of $G$
on $\calD$ is $r(a)$.
\end{prop}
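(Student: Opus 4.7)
The plan is to unwind the construction of the cohomology class from the proof of Theorem \ref{H^3} and verify directly that it is compatible with restriction to the subcategory.

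First, I would check that the restriction map $r:\calZ(\calC)^\times\to\calZ(\calD)^\times$ is a $G$-module homomorphism with respect to the two actions $\rho^\calC$ and $\rho^\calD$ defined in \S \ref{gerbal actions}. Given $a\in\calZ(\calC)^\times$ and $X\in\calD$, we have $(r(\rho^\calC_g(a)))_X=\rho^\calC_g(a)_X$, and since $F_g(X)\in\calD$ (because $\calD$ is $G$-invariant), the defining relation $\rho^\calC_g(a)_{F_g(X)}=F_g(a_X)$ agrees with the one for the restricted action. So $r$ induces a homomorphism $H^3(G,\calZ(\calC)^\times)\to H^3(G,\calZ(\calD)^\times)$.

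Second, I would compute the cocycle for the $G$-action on $\calD$ using the \emph{same} isomorphisms $c(g_1,g_2):F_{g_1}F_{g_2}\to F_{g_1g_2}$ used to compute the class on $\calC$, but restricted to objects of $\calD$. Denote by $a^\calD(g_1,g_2,g_3)\in\calZ(\calD)^\times$ the resulting cocycle and by $a^\calC(g_1,g_2,g_3)\in\calZ(\calC)^\times$ the original one. By the definition given right before Theorem \ref{H^3}, $a^\calC(g_1,g_2,g_3)_{F_{g_1g_2g_3}(Y)}$ is the unique element of $\End(F_{g_1g_2g_3}(Y))$ intertwining the two canonical isomorphisms from $F_{g_1}F_{g_2}F_{g_3}(Y)$ to $F_{g_1g_2g_3}(Y)$, for any $Y\in\calC$. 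For $Y=X\in\calD$, the very same diagram computes $a^\calD(g_1,g_2,g_3)_{F_{g_1g_2g_3}(X)}$, because all objects and morphisms in that diagram lie in $\calD$. By uniqueness, $a^\calD(g_1,g_2,g_3)_{F_{g_1g_2g_3}(X)}=a^\calC(g_1,g_2,g_3)_{F_{g_1g_2g_3}(X)}$, i.e.\ $a^\calD=r(a^\calC)$ at the level of cochains.

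Passing to cohomology classes, this gives $[a^\calD]=r([a^\calC])$, which is the assertion of the proposition. The only step with any subtlety is the verification that $r$ intertwines the two $G$-actions, and this is immediate from the defining formula $\rho_g(a)_{F_g(X)}=F_g(a_X)$ together with $G$-invariance of $\calD$; the rest is diagram-chasing with the very diagram that defines the cocycle in the proof of Theorem \ref{H^3}.
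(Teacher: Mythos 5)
Your proof is correct; the paper states Proposition \ref{invariant} without proof, treating it as an immediate consequence of the constructions, and your argument supplies exactly the expected verification. Namely, $r$ intertwines the $G$-actions because the defining relation $\rho_g(a)_{F_g(X)}=F_g(a_X)$ is compatible with restriction to the $G$-invariant subcategory, and the cocycle for $\calD$ is the $r$-image of the cocycle for $\calC$ because the commuting diagram defining $a(g_1,g_2,g_3)$ is the same diagram when evaluated at objects of $\calD$.
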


\subsubsection{An example}\label{example
of gerbe action}

Here is an example of the gerbal action of a group on a category.

Let $R$ be an associative $\bbC$-algebra, and $\calC$ be the
category of left $R$-modules. We assume that the $G$ acts on $R$
via \emph{outer} automorphisms, that is we have a homomorphism $G
\to \on{Aut}(R)/\on{Inn}(R)$. Then one can define a gerbal action
of $G$ on $\calC$. Namely, we choose any lifting of this
homomorphism to a map $s:G\to\mbox{Aut}(R)$ (not necessarily a
group homomorphism!). Let $m:R\otimes M\to M$ be a left
$A$-module. Then define a new $A$-module $F_gm: R\otimes M\to M$
by the formula \[(F_g m)(a,x)=m(s(g)^{-1}a,x) \mbox{ for } a\in A,
x\in M.\] If $f:(\rho,M)\to (\rho',M')$ is a morphism between
$A$-modules, then define $F_g(f)=f$ as linear map between
underlying $\bbC$-vector spaces. Clearly, $F_g:\calC\to\calC$ is a
functor. Moreover, it is easy to see that $F$ is a gerbal action
of $G$ on $\calC$ and we obtain the following:

\begin{cor}
If $G$ acts on an associative $\bbC$-algebra by outer
automorphisms, then there is a canonically defined class $a\in
H^3(G,Z(A)^\times)$. This class vanishes if and only if $G$ acts
on $A$ by (genuine) automorphisms.
\end{cor}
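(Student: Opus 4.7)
The construction in the example preceding the statement gives, from any set-theoretic section $s \colon G \to \on{Aut}(A)$ of $\on{Aut}(A) \twoheadrightarrow \on{Aut}(A)/\on{Inn}(A)$, a gerbal representation $F^{s}$ of $G$ on $\calC = A\Mod$. Since $\calZ(\calC)^{\times} = Z(A)^{\times}$, Theorem \ref{H^3} produces a class $a^{s} \in H^{3}(G, Z(A)^{\times})$. To see that this class is independent of the lift, observe that any two sections differ by $s'(g) = \on{Ad}(v_{g}) s(g)$ for some set-theoretic map $v \colon G \to A^{\times}$; a direct verification shows that the identity functor on $\calC$, equipped with the natural isomorphisms $\phi_{g,M}(m) = s(g)^{-1}(v_{g}^{-1}) \cdot m$, is an equivalence of gerbal representations $F^{s} \simeq F^{s'}$. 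Equivalent gerbal representations yield the same class by Theorem \ref{H^3}(ii), so $a := a^{s}$ is canonically defined.

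The direction $(\Leftarrow)$ of the vanishing criterion is immediate. If the homomorphism $G \to \on{Out}(A)$ admits a homomorphic lift $s \colon G \to \on{Aut}(A)$, then with this $s$ one has $F^{s}_{g} \circ F^{s}_{g'} = F^{s}_{gg'}$ as functors, so we may choose $c(g,g') = \on{id}$ and the associator $a(g_{1},g_{2},g_{3})$ is constantly $1$, giving $[a] = 0$.

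For the converse $(\Rightarrow)$, fix an arbitrary set-theoretic section $s$ and, for each $g,g' \in G$, choose $u_{g,g'} \in A^{\times}$ with $s(g)s(g') = \on{Ad}(u_{g,g'})\, s(gg')$ (possible by outerness). The structural isomorphism $c(g,g') \colon F^{s}_{g}F^{s}_{g'} \xrightarrow{\sim} F^{s}_{gg'}$ is then given by left multiplication by $s(gg')^{-1}(u_{g,g'})$, well defined up to a central scalar. Comparing the two composite isomorphisms from $F^{s}_{g_{1}}F^{s}_{g_{2}}F^{s}_{g_{3}}$ to $F^{s}_{g_{1}g_{2}g_{3}}$ produced by Theorem \ref{H^3} and expanding the conjugations that arise from associativity in $\on{Aut}(A)$, a direct computation identifies the associator with the classical Eilenberg--MacLane 3-cocycle
\[
a(g_{1},g_{2},g_{3}) \;=\; u_{g_{1},g_{2}}\, u_{g_{1}g_{2},g_{3}}\, \bigl(s(g_{1})(u_{g_{2},g_{3}})\, u_{g_{1},g_{2}g_{3}}\bigr)^{-1},
\]
which lies in $Z(A)^{\times}$ because applying $\on{Ad}$ to both chains of identities produces the same automorphism of $A$. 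This is the standard obstruction to lifting $G \to \on{Out}(A)$ through $\on{Aut}(A) \to \on{Out}(A)$; if $[a] = 0$, one first adjusts $u$ by a $Z(A)^{\times}$-valued $2$-cochain to make it satisfy the strict $2$-cocycle identity, then by a non-abelian cohomology argument rechooses $s$ within its $\on{Inn}(A)$-coset via inner twists $\on{Ad}(v_{g})$ to make $u \equiv 1$, producing a homomorphic lift $G \to \on{Aut}(A)$.

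The principal obstacle is the identification of the associator from Theorem \ref{H^3} with the Eilenberg--MacLane obstruction cocycle displayed above. This requires careful bookkeeping of the pseudo $Z(A)^{\times}$-torsor structure on the morphism spaces (using the convention $l_{x}$ fixed in \S\ref{gerbal actions}) and of the $G$-action on $Z(A)^{\times}$ induced by $s$, which is well defined precisely because inner automorphisms act trivially on $Z(A)$. Once the two cocycles are matched, both directions of the if-and-only-if follow from standard group-cohomological manipulations.
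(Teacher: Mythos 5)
The paper's own proof is essentially absent -- it consists of the one-line remark at the end of \S\ref{example of gerbe action} that the construction there is a gerbal action, after which the corollary is stated as following formally from Theorem \ref{H^3}. Your proposal supplies the details: the explicit check that the class is independent of the chosen set-theoretic section $s$ (via the natural isomorphism $\phi_{g,M}$ between the two families of twist functors, which shows $F^s$ and $F^{s'}$ are literally equal as maps to $\pi_0(\bbG\bbL(\calC))$), the easy direction $(\Leftarrow)$, and the identification of the associator with the Eilenberg--MacLane/Teichm\"uller obstruction cocycle. These parts are sound.

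There is, however, a gap in the direction $(\Rightarrow)$. After Theorem \ref{H^3}(iii) is used to rechoose the $u_{g,g'}\in A^\times$ so that the strict factor-set identity holds, you assert that ``by a non-abelian cohomology argument'' one can further rechoose $s$ by inner twists $\on{Ad}(v_g)$ so that $u\equiv 1$, producing a homomorphic lift $G \to \on{Aut}(A)$. This last step is exactly the statement that the Schreier extension $1 \to A^\times \to E \to G \to 1$ determined by the factor set $(s,u)$ is \emph{split}: one needs $v\colon G \to A^\times$ with $v_{gg'} = v_g\, s(g)(v_{g'})\, u_{g,g'}$. But the vanishing of the Eilenberg--MacLane obstruction in $H^3(G,Z(A)^\times)$ is equivalent only to the \emph{existence} of such an extension $E$ realizing the abstract kernel; the set of all such extensions is a torsor under $H^2(G,Z(A)^\times)$, and whether the split one occurs among them is a strictly stronger condition -- precisely the existence of the lift. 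In other words, what $[a]=0$ buys you is a genuine \emph{categorical} action of $G$ on $A\Mod$, i.e.\ a monoidal lift $G \to \bbA\bbU\bbT(\calC)$; the passage from this to a genuine \emph{algebra} action $G \to \on{Aut}(A)$ requires either an additional hypothesis (e.g.\ $H^2(G,Z(A)^\times)=0$, or $G$ free) or a separate argument, and you should revisit whether the ``only if'' in the corollary, read as the existence of a homomorphic lift $G\to\on{Aut}(A)$, is actually the assertion one can prove from Theorem \ref{H^3} alone.
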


\subsubsection{2-groups arising from
gerbal actions}\label{arising}

Recall that if a group acts a vector space projectively, we obtain
a representation of a central extension of the group. Likewise, if
a group $G$ acts on a category $\calC$ gerbally, we obtain a
genuine action of a certain 2-group on this category.

Assume that there is a gerbal action $F$ of $G$ on some category
$\calC$, which gives us a third cohomology class $[a]\in
H^3(G,\calZ(\calC)^\times)$. Let $A\subset\calZ(\calC)^\times$ be
an abelian subgroup and suppose that the cohomology class $[a]\in
H^3(G,\calZ(\calC)^\times)$ lies in the image of the map
$H^3(G,A)\to H^3(G,\calZ(\calC)^\times)$. Then we have, for any
pair $g,g'\in G$, an isomorphism $c(g,g'):F_gF_{g'}\cong F_{gg'}$
such that $a(g,g',g'')\in A$ for any $g,g',g''\in G$, where
$a(g,g',g'')$ is as in \S 2.3.5. We claim that there is a 2-group
$\calG$, with $\pi_0(\calG)=G$ and $\pi_1(\calG)=A$, and a genuine
action of $\calG$ on $\calC$, which lifts the gerbal action of $G$
on $\calC$ (in the obvious sense). Namely, let $\calG$ be the
category whose objects are $g\in G$, and morphisms are
$\on{Hom}_\calG(g,g') = \emptyset$, if $g \neq g'$, and
$\End_\calG(g)=l_{F_g}(A)$, where
\[l_{F_g}:\pi_1(\bbA\bbU\bbT(\calC))=\calZ(\calC)^\times\to
\End_{\bbA\bbU\bbT(\calC)}(F_g)\]
is an isomorphism. (Let us recall that in a 2-group, the
isomorphism $l_x:\pi_1=\End(I)\to\End(x)$ is induced from the
isomorphism $I\otimes x\cong x$.) The monoidal structure of
$\calG$ is given by $g\otimes g'=gg'$, with the associativity
constraint
\[(a(g,g',g''): (g\otimes g')\otimes g''\cong
g\otimes(g'\otimes g''))\in\End_\calG(gg'g'').\]
It is clear that this monoidal structure makes $\calG$ a 2-group.
The action of $\calG$ on $\calC$ is tautological. Namely, $g$ acts
on $\calC$ by $F_g$.

\section{2-dimensional story:
Gerbal representations}\label{Gerbal
    extensions}

Having developed the formalism of gerbal representations of
groups, we now wish to apply it to a particular group, which we
call $GL_{\infty,\infty}$. We will construct a gerbal
representation of this group on a category of modules over a
Clifford algebra. Presumably, it may also be realized using the
corresponding ``double infinite'' Grassmannian (by analogy with
the 1-dimensional story). We give some indications of how to do
this in \S \ref{2-Gr}, but this Grassmannian is a rather
complicated geometric object that requires further study.

Alternatively, the gerbal action of $GL_{\infty,\infty}$ on this
category may be viewed as a genuine representation of a
``2-group'', which is a $B\bbC^\times$-central extension of
$GL_{\infty,\infty}$. We start this section by introducing these
notions and develop the formalism necessary to describe the gerbal
actions of the group $GL_{\infty,\infty}$. We then use this
formalism to define gerbal representations of
$GL_{\infty,\infty}$.

\subsection{The Lie algebra
$\gl_{\infty,\infty}$ and the group $GL_{\infty,\infty}$}

\subsubsection{Definition}

We start by realizing $\gl_\infty$ as a functor from associative
rings to associative rings. Namely, for any ring $R$, regard
$R\ppart$ as a topological {\em right} $R$-module with the
$t$-adic topology. Then define $\gl_\infty(R)$ as the ring of
continuous endomorphisms of $R\ppart$ viewed as a {\em right}
$R$-module (so that $\gl_\infty(R)$ acts on $R\ppart$ from the
left). Now we define
$$
\gl_{\infty,\infty} := \gl_\infty(\gl_\infty).
$$
We also set
\[
GL_{\infty,\infty}:=\{g\in\gl_{\infty,\infty}, g \mbox{ is
    invertible}\}.
\]

Let us described the Lie algebra in more concrete terms. If we
give $K=\bbC\ppart$ the topological basis $\{t^i\}$, then elements
in $\gl_\infty$ could be regarded as $\infty\times\infty$-matrices
$A=(A_{ij})_{i,j\in\bbZ}$ which act on $\bbC\ppart$ by the formula
$$ A(t^j)=\sum\limits_{i\in\bbZ}A_{ij}t^i. $$
It is easy to see that
\[\gl_\infty=\left\{\begin{array}{l}(A_{ij})_{i,j\in\bbZ},
A_{ij}\in\bbC| \forall
m\in \bbZ, \exists \ n\in\bbZ, \\
\mbox{ such that whenever } i<m, j>n, A_{ij}=0
\end{array}\right\}.\]
Therefore,
\[\gl_{\infty,\infty}=\left\{\begin{array}{l}
(A_{ij})_{i,j\in\bbZ},A_{ij}\in\gl_\infty| \forall
m\in \bbZ, \exists \ n\in\bbZ, \\
\mbox{ such that whenever } i<m, j>n, A_{ij}=0
\end{array}\right\}.\]

From this presentation, it is clear that $\gl_{\infty,\infty}$
acts on
$$
\bbK := \bbC\ppart\ppars
$$
by the following formula. If we represent an element in
$\gl_{\infty,\infty}$ by $A=(A_{ij})_{i,j\in\bbZ}$ and
$A_{ij}=(A_{ij,mn})_{m,n\in\bbZ}$. Then
\[A(t^ns^j)=\sum_{m,n\in\bbZ}A_{ij,mn}t^ms^i.\]
Observe that $GL_{\infty,\infty}$ acts on $\bbK$ continuously by
the same formula as above.

The topology on $\bbK$ is given as follows: a basis of open
neighborhoods of $0\in\bbK$ consists of the subspaces
$$
s^m\bbC\ppart[[s]]+\sum\limits_{i\in\bbZ}t^{m_i}s^i\bbC[[t]],
$$
for some $m,m_i\in\bbZ$. The following lemma is proved in
\cite{Osi}, Proposition 3.

\begin{lem}The action of $\gl_{\infty,\infty}$ on $\bbK$
is continuous with respect to this topology.
\end{lem}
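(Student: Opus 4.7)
Fix $A \in \gl_{\infty,\infty}$ and a basic open neighborhood
\[U \;=\; s^m\bbC\ppart[[s]] + \sum_{i\in\bbZ} t^{m_i} s^i \bbC[[t]]\]
of $0$ in $\bbK$. The plan is to exhibit a neighborhood $V$ of $0$ of the same shape such that $A(V) \subset U$. Unpacking definitions, an element $w = \sum_i w_i s^i$ of $\bbK$ lies in $U$ if and only if $w_i \in t^{m_i}\bbC[[t]]$ for every $i < m$; no constraint is imposed for $i \geq m$. Accordingly I would seek a neighborhood of the form $V = s^{m'}\bbC\ppart[[s]] + \sum_j t^{m'_j} s^j \bbC[[t]]$, with $m'$ and the $(m'_j)$ to be determined.

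Writing $A = (A_{ij})$ with $A_{ij} \in \gl_\infty$, the action reads
\[A\Bigl(\sum_j v_j s^j\Bigr) \;=\; \sum_i \Bigl(\sum_j A_{ij}(v_j)\Bigr) s^i.\]
Two separate admissibility conditions cut the relevant indices down to a finite rectangle. First, the admissibility of $A$ as an element of $\gl_{\infty,\infty}$, applied to our fixed $m$, produces $n \in \bbZ$ with $A_{ij} = 0$ whenever $i < m$ and $j > n$, so that $(Av)_i = \sum_{j \leq n} A_{ij}(v_j)$ for every $i < m$. Second, the fact that each column $A(s^j) = \sum_i A_{ij} s^i$ is an honest element of $\gl_\infty\ppars$ (whose $s$-powers are bounded below) provides, for each single $j$, an integer $\ell_j$ with $A_{ij} = 0$ for $i < \ell_j$. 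Thus, for each fixed $j \leq n$, the genuinely contributing rows form the finite interval $[\ell_j,\,m)$.

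Next I would invoke the continuity of each $A_{ij}$ as a continuous endomorphism of $\bbC\ppart$: for each pair $(i,j)$ with $\ell_j \leq i < m$ and $j \leq n$ there exists $M_{ij} \in \bbZ$ with $A_{ij}(t^{M_{ij}}\bbC[[t]]) \subset t^{m_i}\bbC[[t]]$. Set $m' = n+1$ and, for each $j \leq n$,
\[m'_j \;=\; \max_{\ell_j \leq i < m} M_{ij},\]
a \emph{finite} maximum over at most $m - \ell_j$ values; the remaining $m'_j$ for $j \geq m'$ may be chosen arbitrarily. For any $v \in V$ one then has $v_j \in t^{m'_j}\bbC[[t]] \subset t^{M_{ij}}\bbC[[t]]$ for each relevant pair, so each summand $A_{ij}(v_j)$ lies in $t^{m_i}\bbC[[t]]$, and a finite sum of such elements is still in $t^{m_i}\bbC[[t]]$. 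Hence $A(v) \in U$ for every $v \in V$, as required.

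The main conceptual obstacle is that, a priori, one must control infinitely many pairs $(i,j)$ using a single open $V$, and the pointwise continuity of each individual $A_{ij}$ is not by itself enough to give a uniform bound. What rescues the argument is the interplay of the two admissibility conditions — the outer one from $A\in\gl_{\infty,\infty}$ and the inner column-finiteness of $A(s^j)\in\gl_\infty\ppars$ — which together confine, for each $j$, the set of genuinely nontrivial rows to a finite interval over which a uniform $t$-adic threshold is available as a finite maximum. The argument is essentially a two-tier version of the standard reason that $\gl_\infty$ acts continuously on $\bbC\ppart$.
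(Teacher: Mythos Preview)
Your argument is correct. The paper itself does not supply a proof of this lemma; it simply records that the statement is Proposition~3 of \cite{Osi} and moves on. So there is nothing to compare against line by line---you have filled in what the paper delegates to a reference.

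Your proof is the natural two-tier unwinding one would expect: the outer admissibility of $A\in\gl_{\infty,\infty}$ bounds the contributing columns by some $n$, the column-finiteness (which, as you correctly note, comes from $A(s^j)$ being an honest element of $\gl_\infty\ppars$ rather than from the matrix condition displayed in the paper) bounds the contributing rows for each fixed $j$, and then the continuity of each individual $A_{ij}\in\gl_\infty$ supplies the $t$-adic thresholds $M_{ij}$, over which you take a finite maximum. One small point worth making explicit for a reader: the sum $\sum_{j\le n}A_{ij}(v_j)$ is finite because any $v\in\bbK$ has bounded-below $s$-support, so even though the range $j\le n$ is infinite, only finitely many terms are nonzero for each particular $v$; this is what makes ``a finite sum of such elements'' literally true. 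You allude to this but it is the one place a skeptical reader might pause.
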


\begin{rmk}
In \cite{Osi}, Proposition 3, the author also shows that the
algebra of continuous endomorphisms of $\bbK$ is larger that
$\gl_{\infty,\infty}$.
\end{rmk}

\subsubsection{Lattices
in $\bbK=\bbC\ppart\ppars$}\label{Lattices}

Let us recall the following construction of completed tensor
products of topological vector spaces from \cite{BD}, \S 3.6.1.
All topological vector spaces in this subsection are assumed to be
linearly topologized, complete and separated with respect to the
topology.

Let $V_i, i=1,2,\ldots,n$, be topological vector spaces. Then one
defines the completed tensor product
$V_1\vec{\otimes}V_2\vec{\otimes}\cdots\vec{\otimes}V_n$ as the
completion of the plain tensor product $V_1\otimes
V_2\otimes\cdots\otimes V_n$ with respect to a topology in which a
vector subspace $U\subset V_1\otimes V_2\otimes\cdots\otimes V_n$
is open if and only if for any $1\leq i\leq n$ and $v_{i+1}\in
V_{i+1},\ldots,v_n\in V_n$, there exists an open $P\subset V_i$
such that $U\supset V_1\otimes\cdots V_{i-1}\otimes P\otimes
v_{i+1}\otimes\cdots v_n$. Observe that in general,
$V_1\vec{\otimes}V_2\ncong V_2\vec{\otimes}V_1$.

If $U$ be a topological vector space and $V\cong\bbC^n$ is a
finite-dimensional vector space, then $U\vec{\otimes}V\cong U^n$
as a topological vector space. Therefore, if $U$ is a Tate vector
space, so is $U\vec{\otimes}V$.  Furthermore, if $V_1\to V_2$ is a
closed embedding and $V_2/V_1$ is equipped with the quotient
topology, then $U\vec{\otimes}V_1\to U\vec{\otimes}V_2$ is a
closed embedding and $(U\vec{\otimes}V_2)/(U\vec{\otimes}V_1)$
with the quotient topology is canonically isomorphic to
$U\vec{\otimes}(V_2/V_1)$.

It is easy to check if $V_1=\bbC\ppart$ and $V_2=\bbC\ppars$, with
the usual adic topologies, then $V_1\vec{\otimes}V_2$ is
isomorphic to $\bbK$ as a topological vector space. In this paper
one could replace $\bbK$ by any other topological vector space of
the form $U\vec{\otimes}V$, where $U$ and $V$ are two non-compact
Tate vector spaces (see \S \ref{Tate}).

\begin{dfn}\label{lat} Let $U$ and $V$ be two Tate vector
spaces. We will call $\bbL\subset U\vec{\otimes}V$ a {\em lattice}
if (i) $\bbL$ is closed in $U\vec{\otimes}V$; and (ii) there exist
linearly compact open subspaces $P_1\subset P_2$ of $V$, such that
$U\vec{\otimes}P_1\subset \bbL\subset U\vec{\otimes}P_2$.
\end{dfn}

It is clear that the intersection of two lattices is again a
lattice.

For any pair of linearly compact open subspaces $P_1\subset
P_2\subset V$, $(U\vec{\otimes}{P_2})/(U\vec{\otimes}P_1)$ is a
Tate vector space. Since $\bbL$ is closed, if
$U\vec{\otimes}P_1\subset \bbL\subset U\vec{\otimes}P_2$, the
exact sequence
\[0\to\bbL/(U\vec{\otimes}P_1)\to
(U\vec{\otimes}{P_2})/(U\vec{\otimes}P_1)\to
(U\vec{\otimes}{P_2})/\bbL\to 0\] is an exact sequence in the
category of Tate vector spaces (see \S \ref{Tate}).

Unlike the case of $K$, two lattices $\bbL, \bbL'$ in
$U\vec{\otimes}V$ are not necessarily commensurable. But we have
the following weaker statement:

\begin{lem}
    \label{quot Tate}
For any two lattices $\bbL', \bbL$ in $U\vec{\otimes}V$ (where $U,
V$ are Tate vector spaces, as above) the quotients $\bbL/(\bbL
\cap \bbL')$ and $\bbL'/(\bbL \cap \bbL')$, endowed with the
quotient topology, are Tate vector spaces.
\end{lem}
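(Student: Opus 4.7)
The plan is to use the lattice presentation of $\bbL$ and $\bbL'$ to reduce the statement, via an admissible quotient, to a pair of closed subspaces of a Tate vector space of the form $U^n$, and then invoke two standard properties of Tate vector spaces.

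Using Definition \ref{lat}, choose linearly compact open subspaces $P_1 \subset P_2$ of $V$ with $U\vec{\otimes}P_1 \subset \bbL \subset U\vec{\otimes}P_2$, and analogously $P_1' \subset P_2'$ for $\bbL'$. Set $P_- := P_1 \cap P_1'$ and $P_+ := P_2 + P_2'$. Any two lattices in the Tate vector space $V$ are commensurable, so $P_-$ and $P_+$ are again linearly compact open subspaces of $V$ and $n := \dim_{\bbC}(P_+/P_-)$ is finite. By construction $U\vec{\otimes}P_- \subseteq \bbL \cap \bbL'$ and $\bbL,\bbL' \subseteq U\vec{\otimes}P_+$. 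By the discussion in \S \ref{Lattices} preceding Definition \ref{lat}, the inclusion $U\vec{\otimes}P_+ \hookrightarrow U\vec{\otimes}V$ is a closed embedding and
\[ (U\vec{\otimes}P_+)/(U\vec{\otimes}P_-) \,\cong\, U\vec{\otimes}(P_+/P_-) \,\cong\, U^n, \]
which is a Tate vector space as a finite power of $U$. Let $W_1, W_2 \subseteq U^n$ denote the images of $\bbL, \bbL'$. Since $\bbL, \bbL'$ are closed in $U\vec{\otimes}V$ (hence in $U\vec{\otimes}P_+$) and contain $U\vec{\otimes}P_-$, the subspaces $W_1, W_2$ are closed in $U^n$ and identify as topological vector spaces with $\bbL/(U\vec{\otimes}P_-)$ and $\bbL'/(U\vec{\otimes}P_-)$, respectively. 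Factoring the quotient $\bbL \twoheadrightarrow \bbL/(\bbL \cap \bbL')$ as $\bbL \twoheadrightarrow W_1 \twoheadrightarrow W_1/(W_1 \cap W_2)$ then gives a homeomorphism $\bbL/(\bbL \cap \bbL') \cong W_1/(W_1 \cap W_2)$.

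It therefore suffices to show $W_1/(W_1 \cap W_2)$ is a Tate vector space. This follows from two general facts. \emph{(a)} A closed subspace $W$ of a Tate vector space $T$ is Tate: intersecting a basis of linearly compact open neighborhoods of $0$ in $T$ with $W$ produces such a basis for $W$, since each intersection is closed in a linearly compact space and therefore linearly compact. So $W_1$ is Tate. \emph{(b)} A quotient $T/S$ of a Tate vector space $T$ by a closed subspace $S$ is Tate: for a basis $\{L_\alpha\}$ of linearly compact open neighborhoods of $0$ in $T$, the images $L_\alpha/(L_\alpha \cap S)$ form a basis of open neighborhoods of $0$ in $T/S$, and each such image is linearly compact because a quotient of a linearly compact space by a closed subspace is linearly compact (by the duality between linearly compact spaces and discrete vector spaces: closed subspaces of a linearly compact space correspond to quotients of the discrete dual, so the corresponding quotient dualizes to a subspace of the discrete dual, which is discrete). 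Applying \emph{(b)} to $W_1$ and its closed subspace $W_1 \cap W_2$ gives that $W_1/(W_1 \cap W_2)$, and hence $\bbL/(\bbL \cap \bbL')$, is Tate; the argument for $\bbL'/(\bbL \cap \bbL')$ is symmetric.

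The most delicate point is the topological compatibility in the reduction, namely that the quotient topology on $\bbL/(U\vec{\otimes}P_-)$ inherited from $\bbL \subset U\vec{\otimes}V$ agrees with the subspace topology on $W_1 \subset U^n = (U\vec{\otimes}P_+)/(U\vec{\otimes}P_-)$. This rests on the exact-categorical fact recalled in \S \ref{Lattices} that $U\vec{\otimes}P_- \hookrightarrow U\vec{\otimes}P_+ \hookrightarrow U\vec{\otimes}V$ is a chain of closed embeddings with the expected quotient identifications, an admissibility that is inherited by the closed subobject $\bbL \subset U\vec{\otimes}P_+$.
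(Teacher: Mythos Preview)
Your proof is correct and follows essentially the same strategy as the paper: sandwich the lattices between spaces of the form $U\vec{\otimes}P$ and reduce to closure properties (closed subspaces and quotients) of Tate vector spaces. The only organizational difference is that the paper first reduces to the nested case $\bbL\subset\bbL'$, then picks a single $P$ with $U\vec{\otimes}P\supset\bbL'\supset\bbL$ and observes that $\bbL'/\bbL$ is a closed subspace of the Tate space $(U\vec{\otimes}P)/\bbL$; this avoids your simultaneous bracketing by $P_-$ and $P_+$ and the explicit quotient-by-closed-subspace step, making the argument a line or two shorter, but the content is the same.
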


\begin{proof} It is
sufficient to consider the case when $\bbL \subset
    \bbL'$. Let $P$ be a linearly
compact open subspace of $V$ such that
$U\vec{\otimes}P\supset\bbL'\supset\bbL$. Then $\bbL'/\bbL$ is a
closed subspace of $U\vec{\otimes}P/\bbL$. Since
$U\vec{\otimes}P/\bbL$ is a Tate vector space, so is $\bbL'/\bbL$.
\end{proof}

In the case when $U\vec{\otimes}V = \bbK$, we have the following
lattice in $\bbK$:
$$
\calO_\bbK:=\bbC\ppart[[s]]\cong K\vec{\otimes}\calO_K.
$$
Note that condition (ii) of Definition \ref{lat} may be written as
$$
s^N\calO_{\bbK}\subset \bbL \subset s^{-N}\calO_{\bbK}
$$
for sufficiently large integer $N$. Here is a more general example
of a lattice in $\bbK$:
$$
\bbL = s^N \bbC\ppart[[s]] \oplus \bigoplus_{i=-M}^{N-1} s^i
t^{m_i} \C[[t]], \qquad m_i \in \Z, M \in \Z_+.
$$

The following lemma shows that all other lattices in $\bbK$ may be
obtained from $\calO_\bbK$ by the $GL_{\infty,\infty}$-action.

\begin{lem}    \label{transitive}
(i) For any $g\in GL_{\infty,\infty}$, $g\calO_{\bbK}$ is a
lattice in $\bbK$.

(ii) $GL_{\infty,\infty}$ acts transitively on the set of lattices
in $\bbK$.
\end{lem}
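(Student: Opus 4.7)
The plan is to handle (i) and (ii) in sequence, both exploiting the matrix presentation $g = (g_{ij})_{i,j \in \bbZ}$ with $g_{ij} \in \gl_\infty$. For (i), since both $g$ and $g^{-1}$ are continuous endomorphisms of $\bbK$, the map $g$ is a homeomorphism, so $g\calO_\bbK$ is automatically closed. For the sandwich bounds, membership of $g$ in $\gl_\infty(\gl_\infty)$ yields two finiteness conditions on the matrix: outer finiteness (for any $m \in \bbZ$ there exists $n$ with $g_{ij} = 0$ whenever $i < m$ and $j > n$), and column finiteness (for each fixed $j$ there exists $m_j$ with $g_{ij} = 0$ for $i < m_j$, coming from $g(\mathrm{id} \cdot s^j) \in \gl_\infty\ppars$). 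Applying outer finiteness at $m = 0$ gives some $n_0$ with $g_{ij} = 0$ for $i < 0$, $j > n_0$; setting $-N_2 = \min\bigl(0, \min_{0 \le j \le n_0} m_j\bigr)$, we obtain $g_{ij} = 0$ for all $i < -N_2$ and $j \geq 0$, whence $g\calO_\bbK \subset s^{-N_2}\calO_\bbK$. The reverse inclusion $s^{N_1}\calO_\bbK \subset g\calO_\bbK$ follows symmetrically by applying the same reasoning to $g^{-1} = (h_{ij})$: picking $N_1 > n_0'$, where $n_0'$ is the outer bound of $h$ at $m = 0$, every $\eta \in s^{N_1}\calO_\bbK$ lies in $g\calO_\bbK$ since $g^{-1}(\eta) = \sum_i \bigl(\sum_{j \geq N_1} h_{ij}(e_j)\bigr) s^i$ has vanishing $s^i$-coefficients for $i < 0$ (all relevant $h_{ij}$ vanish).

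For (ii), given a lattice $\bbL$ with $s^N\calO_\bbK \subset \bbL \subset s^{-N}\calO_\bbK$, the plan is first to normalize $\bbL$ by multiplying it by appropriate powers of $s$ and $t$ (each lies in $GL_{\infty,\infty}$ and permutes lattices among themselves) so that $\bbL$ matches $\calO_\bbK$ in the natural rank invariants attached to the Tate quotient $W := s^{-N}\calO_\bbK/s^N\calO_\bbK$. After this reduction, both $W_\bbL := \bbL/s^N\calO_\bbK$ and $W_0 := \calO_\bbK/s^N\calO_\bbK$ become closed admissible subspaces of $W$ with Tate quotients by Lemma \ref{quot Tate}, and by Remark \ref{exact str} the short exact sequences
\[
0 \to W_\bbL \to W \to W/W_\bbL \to 0, \qquad 0 \to W_0 \to W \to W/W_0 \to 0
\]
split topologically (since $\bbK$ has a countable basis of neighborhoods of $0$). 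The matched pairs $(W_\bbL, W/W_\bbL)$ and $(W_0, W/W_0)$ of Tate spaces are then isomorphic, so one can assemble a continuous automorphism $\varphi$ of $W$ carrying $W_0$ to $W_\bbL$.

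Finally, lift $\varphi$ to $g \in GL_{\infty,\infty}$ by letting $g$ act as the identity on $s^N\calO_\bbK$ and on a chosen topological complement of $s^{-N}\calO_\bbK$ in $\bbK$, and as $\varphi$ on a chosen topological complement of $s^N\calO_\bbK$ inside $s^{-N}\calO_\bbK$. By construction $g\calO_\bbK = \bbL$ and $g$ is invertible (via the parallel construction for $\varphi^{-1}$). The principal obstacle, where care is needed, is verifying that this $g$ indeed lies in $GL_{\infty,\infty}$: the matrix $(g_{ij})$ must satisfy the outer finiteness condition, which forces the splittings and the isomorphisms to be selected compatibly with the topological basis $\{t^m s^j\}$ of $\bbK$ rather than abstractly. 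With such a compatible choice, the verification is a routine bookkeeping exercise.
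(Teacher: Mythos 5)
Your part (i) is fine and fills in the same matrix-finiteness details that the paper leaves to the reader. Part (ii), however, takes a genuinely different route from the paper and, as written, has a gap at its crucial step.

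Your construction (identity on $s^N\calO_\bbK$ and outside $s^{-N}\calO_\bbK$, an automorphism $\varphi$ of $W=s^{-N}\calO_\bbK/s^N\calO_\bbK$ in the middle block) requires an automorphism $\varphi$ of $W$ carrying $W_0=\calO_\bbK/s^N\calO_\bbK$ onto $W_\bbL=\bbL/s^N\calO_\bbK$. Such a $\varphi$ exists if and only if $W_\bbL\cong W_0$ and $W/W_\bbL\cong W/W_0$ as Tate vector spaces, and this is simply false in general: take $\bbL=s\calO_\bbK+\C[[t]]$ with $N=1$, so $W\cong\C\ppart^2$, $W_0\cong\C\ppart$, but $W_\bbL\cong\C[[t]]$ is linearly compact while $W_0$ is not. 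Your proposed normalization by powers of $s$ and $t$ does not repair this: multiplication by $t^a$ induces an automorphism of $W$ and so leaves the isomorphism type of $W_\bbL$ unchanged, and multiplication by $s^b$ combined with the corresponding shift of $N$ is an isomorphism $W_\bbL\to W'_{s^b\bbL}$, so again nothing changes. The normalization that \emph{would} work is to replace $N$ by $N+1$ (keeping $\bbL$ fixed): then $W'_\bbL\cong W_\bbL\oplus\C\ppart$ and $W'/W'_\bbL\cong (W/W_\bbL)\oplus\C\ppart$ both acquire infinite discrete and infinite compact dimension, as do $W'_0$ and $W'/W'_0$; one can then invoke the classification of Tate vector spaces with a countable basis of neighborhoods of $0$ (isomorphism type is determined by the cardinal pair $(\dim V_{\mathrm{disc}},\dim V_{\mathrm{comp}}^*)$) to produce $\varphi$. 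You do not mention either of these two ingredients, and the phrase ``the matched pairs are then isomorphic'' hides exactly the content that needs proof.

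By contrast, the paper never needs to match isomorphism types at all. Choosing a complement $L'$ with $W=W_\bbL\oplus L'$, it tiles $\bbK$ into $2N$-wide $s$-blocks $V_k\cong W$, transports $(W_\bbL,L')$ to a decomposition $V_k=L_k\oplus L'_k$ in each block, and defines $g_k\colon V_k\overset\sim\to L_k\oplus L'_{k+1}$. Since $L_k\oplus L'_{k+1}\cong W_\bbL\oplus(W/W_\bbL)\cong W\cong V_k$ unconditionally, the isomorphism $g_k$ always exists; the ``shift by one block'' in the $L'$-factor is precisely the trick that makes the image of $\calO_\bbK$ come out to be $\bbL$ while never requiring $W_\bbL\cong W_0$. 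That construction is visibly block-bidiagonal, so membership in $GL_{\infty,\infty}$ is immediate.

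Finally, the point you flag as the ``principal obstacle'' --- checking that your $g$ lies in $GL_{\infty,\infty}$ --- is in fact the easy part. If you take the complement of $s^N\calO_\bbK$ in $s^{-N}\calO_\bbK$ to be $\bigoplus_{i=-N}^{N-1}s^i\C\ppart$ and the complement of $s^{-N}\calO_\bbK$ in $\bbK$ to be $\bigoplus_{i<-N}s^i\C\ppart$, then the $s$-block matrix of $g$ is the identity off the finite square $[-N,N-1]^2$ and equals the block matrix of $\varphi$ (each entry a continuous endomorphism of $\C\ppart$) on that square; the outer-finiteness condition is then automatic and no compatibility constraint on $\varphi$ is needed. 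The real work, which is missing, is producing $\varphi$.
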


\begin{proof}(i) It is clear that for any
$g\in GL_{\infty,\infty}$ there exists an integer $N$ such that
$s^N\calO_{\bbK}\subset g\calO_\bbK\subset s^{-N}\calO_{\bbK}$.
Since $\calO_\bbK$ is closed in $\bbK$ and the action of $g$ is
continuous, $g\calO_\bbK$ is also closed in $\bbK$.

(ii) Let $\bbL$ be a lattice such that
$s^N\calO_{\bbK}\subset\bbL\subset s^{-N}\calO_{\bbK}$. Then we
have the exact sequence
\[0\to\bbL/s^N\calO_\bbK\to
s^{-N}\calO_{\bbK}/s^N\calO_\bbK\to s^{-N}\calO_{\bbK}/\bbL\to 0\]
By Remark \ref{exact str}, we could assume that
$s^{-N}\calO_{\bbK}/s^N\calO_\bbK=\bbL/s^N\calO_\bbK\oplus L'$ for
some $L'\cong s^{-N}\calO_\bbK/\bbL$. For any $k\in\bbZ$, we
choose an isomorphism
\[s^{-N}\calO_\bbK/s^N\calO_\bbK\cong\sum\limits_{i=0}^{2N-1}
s^{(2k-1)N+i}\bbC\ppart\] and let the image of
$\bbL/s^N\calO_\bbK$ and $L'$ in
$\sum\limits_{i=0}^{2N-1}s^{(2k-1)N+i}\bbC\ppart$ be $L_k$ and
$L'_k$ respectively. We can always make
$L_0=\bbL\cap\sum\limits_{i=0}^{2N-1}s^{-N+i}\bbC\ppart$. Let us
define an automorphism $g:\bbK\to\bbK$ as follows. We write
\[\bbK=\sum\limits_{k<0}\sum\limits_{i=0}^{2N-1}s^{2kN+i}\bbC\ppart
\oplus\prod\limits_{k\geq
  0}\sum\limits_{i=0}^{2N-1}s^{2kN+i}\bbC\ppart\]
For each $k$, we define an isomorphism $$
g_k:\sum\limits_{i=0}^{2N-1}s^{2kN+i}\bbC\ppart\cong L_k+L'_{k+1}
$$
and then set $$ g=\sum\limits_{k<0}g_k+\prod\limits_{k\geq 0}g_k.
$$
It is clear that $g\in GL_{\infty,\infty}$ and $g\calO_\bbK=\bbL$.
\end{proof}

\subsection{A gerbal representation of $GL_{\infty,\infty}$ on a
  category of Clifford modules}

We wish to define a gerbal representation of $GL_{\infty,\infty}$ on a
certain abelian category realizing a non-trivial third cohomology
class. Recall that according to Definition \ref{gerbe action1}, a
gerbal representation of a group $G$ on an abelian category $\calC$ is
a homomorphism of groups $F:G\to\pi_0(\bbG\bbL(\calC))$, where
$\bbG\bbL(\calC)$ is the category of $\bbC$-linear auto-equivalences
of $\calC$ (see \S \ref{bbGbbL}). More explicitly, this means
assigning every $g\in G$ an auto-equivalence $F_g:\calC\to\calC$ such
that $F_e\cong\mathbf{1}_\calC$ and $F_gF_{g'}\cong F_{gg'}$ (see \S
\ref{gerbal actions}).

Theorem \ref{H^3} shows that a gerbal representation of $G$ on
$\calC$ gives rise to a third cohomology class of $G$ with
coefficients in $\calZ(\calC)^\times$, the (abelian) group of
invertible elements of the center $\calZ(\calC)$ of the category
$\calC$. Furthermore, as explained in \S \ref{arising}, we obtain
a 2-group $\calG$ equipped with a genuine action on $\calC$.

This is analogous to the notion of projective representation of a
group $G$ on a vector space $V$. Such a representation gives rise
to a second cohomology class of $G$ and to a canonical central
extension of $G$ which genuinely acts on $V$. In the 1-dimensional
story we naturally obtain a projective representation of
$GL_\infty$ on a module over a Clifford algebra, as explained in
\S \ref{central extensions}. Therefore it is natural to guess that
one can construct a gerbal action of $GL_{\infty,\infty}$ on a
category of modules over a Clifford algebra. What could this
category be? A naive guess is that it should be a category of
modules over the Clifford algebra $\Cl(\bbK\oplus\bbK^*)$.
However, this cannot be true since $GL_{\infty,\infty}$ acts by
automorphisms on $\Cl(\bbK\oplus\bbK^*)$ and therefore, it acts on
the category of modules {\em genuinely} (that is, the
corresponding third cohomology class is equal to $0$). It turns
out that the correct Clifford algebra is the Clifford algebra
$\Cl(\calO_\bbK\oplus\calO_\bbK^*)$ associated to the lattice
$\calO_\bbK$ of $\bbK$.  In \S \ref{classes corr} we will show
that the corresponding third cohomology class is the non-zero
class $[E_3]\in H^3(GL_{\infty,\infty},\bbC^\times)$ constructed
in \S \ref{group cohomology}.

\subsubsection{More on lattices in $\bbK=\bbC\ppart\ppars$}

Let $U$ and $V$ be Tate vector spaces. Observe that lattices of
$U\vec{\otimes}V$ in general are not open subspaces of
$U\vec{\otimes}V$. Let $\bbL$ be a lattice. We can endow
$(U\vec{\otimes}V)/\bbL$ with the quotient topology. Then
$(U\vec{\otimes}V)/\bbL$ is an ind-Tate vector space, i.e.
$(U\vec{\otimes}V)/\bbL=\lim\limits_{\overrightarrow{i\in I}}V_i$,
where $V_i$ are Tate vector spaces, $V_i\to V_j$ are closed
embeddings, and a subspace $W\subset (U\vec{\otimes}V)/\bbL$ is
open if and only if $W\cap V_i$ is open in $V_i$ for any $i\in I$.
For example, if $\bbL=\calO_{\bbK}\subset\bbK$, then
$$
\bbK/\calO_{\bbK}\cong
s^{-1}\bbC\ppart[s^{-1}]\cong\bbC\ppart^\bbN.
$$
A basis of open neighborhood of $0\in\bbK/\calO_{\bbK}$ could be
given by the sets $\sum\limits_{i\leq-1}s^{i}t^{m_i}\bbC[[t]]$ for
different collections $(m_i)$.

Let $\bbK^*:=K^*\vec{\otimes}K^*\cong\bbC\ppart\ppars dtds$. We
have a natural non-degenerate symmetric bilinear form on
$\bbK\oplus\bbK^*$ induced by the residue pairing. That is,
\[(f,\omega)=\res_{t=0}\res_{s=0}f\omega
\qquad f\in\bbK,\omega\in\bbK^*.\] It is clear that if
$\bbL\subset\bbK$ is a lattice, then $\bbL^\perp\subset\bbK^*$ is
also a lattice. We have a non-degenerate symmetric bilinear form
on $\bbL\oplus\bbK^*/\bbL^\perp$.

\begin{dfn}\label{2-lattice}
Let $\bbL$ be a lattice in $U\vec{\otimes} V$, where $U$ and $V$
are Tate vector spaces. We will call $L\subset\bbL$ a {\em
secondary lattice} of $\bbL$ if $L$ is closed in $\bbL$ and for
any linearly compact open subspace $P\subset V$ such that
$U\vec{\otimes}P\subset\bbL$, $L/(L\cap U\vec{\otimes}P))$ is
linearly compact open in $\bbL/(U\vec{\otimes}P)$.
\end{dfn}

If $\bbL = \C\ppart[[s]]$, a lattice in $\bbK = \C\ppart\ppars$,
then the following is an example of a secondary lattice:
$$
L = \prod_{n \geq 0} s^{m_i} \C[[t]], \qquad m_i \in \Z.
$$

Let $L$ be a secondary lattice of $\bbL$. Denote by
$L^\perp\subset(U \vec{\otimes} V)^*/\bbL^\perp$ its orthogonal
complement, modulo $\bbL^\perp$.  Observe that for any linearly
compact open subspace $P\subset V$ such that
$U\vec{\otimes}P\subset\bbL$, $(U\vec{\otimes}P)^\perp/\bbL^\perp$
is a Tate vector subspace of $(U \vec{\otimes} V)^*/\bbL^\perp$.
It is clear that $L^\perp\cap
((U\vec{\otimes}P)^\perp/\bbL^\perp)$ is a linearly compact open
subspace of $(U\vec{\otimes}P)^\perp/\bbL^\perp$. Therefore,
$L^\perp$ is an open subspace of $(U \vec{\otimes}
V)^*/\bbL^\perp$.

\begin{lem} Denote
by $(L^{\perp})^\perp\subset\bbL$ the orthogonal complement of
$L^\perp$. Then $(L^{\perp})^\perp=L$.
\end{lem}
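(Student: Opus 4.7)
The plan is to reduce the statement to the classical biduality $(M^\perp)^\perp = M$ for a lattice $M$ inside a Tate vector space. The inclusion $L \subset (L^\perp)^\perp$ is immediate from the definitions, so the content lies in the reverse inclusion: given $v \in \bbL$ with $\langle v, \omega\rangle = 0$ for every $\omega \in L^\perp$, one must show $v \in L$.

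First I would use that $L$ is closed in $\bbL$ (part of the definition of a secondary lattice) and that the open subspaces $U\vec{\otimes}P \subset \bbL$ (for $P$ a linearly compact open subspace of $V$ with $U\vec{\otimes}P \subset \bbL$) form a neighborhood basis of $0$ in the subspace topology on $\bbL$. This gives the identity
\[
L = \bigcap_{P} \bigl( L + U\vec{\otimes}P \bigr),
\]
so it suffices to show $v \in L + U\vec{\otimes}P$ for every such $P$.

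Fix such a $P$ and pass to the quotient $\bbL/(U\vec{\otimes}P)$. By Lemma \ref{quot Tate} (or rather by sandwiching $\bbL$ between $U\vec{\otimes}P$ and $U\vec{\otimes}P'$ for some larger compact open $P'$, so that the quotient embeds as a closed subspace of $U\vec{\otimes}(P'/P)$, which is Tate), this quotient is a Tate vector space. The defining property of a secondary lattice says that the image $\overline L$ of $L$ is linearly compact and open in $\bbL/(U\vec{\otimes}P)$, i.e.\ $\overline L$ is an honest lattice. Under the residue pairing, the continuous dual of $\bbL/(U\vec{\otimes}P)$ is canonically identified with $(U\vec{\otimes}P)^\perp/\bbL^\perp$, and an elementary check shows that the perp of $\overline L$ inside this dual is exactly
\[
L^\perp \, \cap \, \bigl( (U\vec{\otimes}P)^\perp/\bbL^\perp \bigr),
\]
since a functional kills $\overline L$ iff its lift kills both $L$ and $U\vec{\otimes}P$.

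Now the hypothesis $\langle v, \omega\rangle = 0$ for all $\omega \in L^\perp$ specializes to: the image $\overline v$ pairs trivially with every element of $\overline L^\perp$. By the classical biduality for a lattice in a Tate vector space, $(\overline L^\perp)^\perp = \overline L$, so $\overline v \in \overline L$, which means $v \in L + U\vec{\otimes}P$. Intersecting over all admissible $P$ yields $v \in L$. The main (minor) obstacle is the bookkeeping in step three: verifying that the pairing on $\bbL \times (\bbK^*/\bbL^\perp)$ descends compatibly to the pairing on the Tate pair $\bbL/(U\vec{\otimes}P)$ and $(U\vec{\otimes}P)^\perp/\bbL^\perp$, and that the resulting orthogonal complement really coincides with $L^\perp$ intersected with the open subspace $(U\vec{\otimes}P)^\perp/\bbL^\perp$; once this is in place, the argument is just Tate duality applied level by level.
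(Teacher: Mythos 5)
Your argument follows the paper's proof step for step: reduce to the Tate quotients $\bbL/(U\vec{\otimes}P)$, identify $\overline{L}^{\perp}$ with $L^\perp\cap\bigl((U\vec{\otimes}P)^\perp/\bbL^\perp\bigr)$, invoke biduality for the lattice $\overline{L}$ in that Tate space, and pass to the limit over $P$. The paper encodes this in the single line $(L^\perp)^\perp=\lim\limits_{\overleftarrow{P}}\bigl(L^\perp\cap(U\vec{\otimes}P)^\perp/\bbL^\perp\bigr)^\perp$ while you phrase it as an intersection, but the content is identical; you even spell out the ``elementary check'' that the paper dismisses as ``clear.''

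One correction to your setup is needed, though. The subspaces $U\vec{\otimes}P$ are \emph{not} open in $\bbL$, and in particular they do not form a neighborhood basis of $0$ for the subspace topology. If they did, each quotient $\bbL/(U\vec{\otimes}P)$ would be discrete rather than a genuine Tate vector space, and the whole notion of a secondary lattice would trivialize. (The paper already remarks that lattices in $\bbK$, such as $\calO_\bbK$, are not open; the same reasoning applies to $s^N\calO_\bbK$.) The identity $L=\bigcap_P\bigl(L+U\vec{\otimes}P\bigr)$ you need is nevertheless correct, but for a slightly different reason: each $L+U\vec{\otimes}P$ is \emph{closed} in $\bbL$, being the preimage of the linearly compact (hence closed) image of $L$ in $\bbL/(U\vec{\otimes}P)$, and although the $U\vec{\otimes}P$ are not open, every open neighborhood of $0$ in $\bbL$ does contain some $U\vec{\otimes}P$; together these facts give $\bigcap_P\bigl(L+U\vec{\otimes}P\bigr)=\overline{L}=L$. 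This is exactly what the paper expresses by saying that the injection $L\to\lim\limits_{\overleftarrow{P}}L/(L\cap U\vec{\otimes}P)$ is a bijection because $L$ is closed. With that substitution for the ``neighborhood basis'' claim, your proof is correct and coincides with the paper's.
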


\begin{proof} Observe that $L$ is a
secondary lattice in $\bbL$ if and only if for any linearly
compact open subspace $P\subset V$ such that
$U\vec{\otimes}P\subset\bbL$, $L/(L\cap (U\vec{\otimes}P))$ is
linearly compact open in $\bbL/(U\vec{\otimes}P)$, and
$L=\lim\limits_{\overleftarrow{P}}L/(L\cap (U\vec{\otimes}P))$.
Indeed, there is a natural injection $L\to
\lim\limits_{\overleftarrow{P}}L/(L\cap (U\vec{\otimes}P))$, which
is a surjection if and only if $L$ is closed.

Then
$(L^{\perp})^\perp=\lim\limits_{\overleftarrow{P}}(L^\perp\cap
(U\vec{\otimes}P)^\perp/\bbL^\perp)^\perp$, where $(L^\perp\cap
(U\vec{\otimes}P)^\perp/\bbL^\perp)^\perp$ is the orthogonal
complement of $L^\perp\cap (U\vec{\otimes}P)^\perp/\bbL^\perp$ in
$\bbL/(U\vec{\otimes}P)$, under the natural pairing between
$\bbL/(U\vec{\otimes}P)$ and $(U\vec{\otimes}P)^\perp/\bbL^\perp$.
However, it is clear that $(L^\perp\cap
(U\vec{\otimes}P)^\perp/\bbL^\perp)^\perp=L/(L\cap
U\vec{\otimes}P)$. The lemma follows.
\end{proof}

Therefore $L\oplus L^\perp$ is a maximal isotropic subspace of
$\bbL\oplus (U \vec{\otimes} V)^*/\bbL^\perp$. We will call such
subspaces {\em Lagrangian}.

\subsubsection{The category of Clifford modules}

 \label{cat 2-Cl}

{}From now on we will consider lattices $\bbL \subset \bbK =
\C\ppart\ppars$. Let
$$
\Cl_\bbL:=\Cl(\bbL\oplus(\bbK^*/\bbL^\perp))
$$
be the Clifford algebra associated to the space
$\bbL\oplus(\bbK^*/\bbL^\perp)$ equipped with a non-degenerate
symmetric bilinear form defined above. Let $L$ be a secondary
lattice of $\bbL$. Since $L\oplus L^\perp$ is a Lagrangian
subspace of $\bbL\oplus\bbK^*/\bbL^\perp$, the exterior algebra
$\bigwedge(L\oplus L^\perp)$ is a subalgebra of $\Cl_\bbL$. We
define a $\Cl_\bbL$-module
$$
M_L=\ind_{\bigwedge(L\oplus L^\perp)}^{\Cl_\bbL}(\bbC|0\rangle).
$$

Observe that $M_L$ is not a discrete $\Cl_\bbL$-module.
Nevertheless, we still have the following

\begin{lem} $M_L$ is
an irreducible $\Cl_\bbL$-module.
\end{lem}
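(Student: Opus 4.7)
The plan is to show that any nonzero $\Cl_\bbL$-submodule $N\subset M_L$ contains the vacuum $|0\rangle$, and therefore equals $\Cl_\bbL\cdot|0\rangle = M_L$. The strategy is to reduce the infinite-dimensional assertion to the classical fact that the Fock module of a Clifford algebra on a finite-dimensional dual pair is irreducible.

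First I would choose any vector-space complements $L^c$ of $L$ in $\bbL$ and $C$ of $L^\perp$ in $\bbK^*/\bbL^\perp$, so that
\[
\bbL\oplus(\bbK^*/\bbL^\perp) = (L\oplus L^\perp)\oplus(L^c\oplus C).
\]
By the purely algebraic PBW decomposition for Clifford algebras, this gives $\Cl_\bbL\cong\bigwedge(L^c\oplus C)\otimes\bigwedge(L\oplus L^\perp)$ as vector spaces, and hence $M_L\cong\bigwedge(L^c\oplus C)$ with $|0\rangle$ corresponding to $1$. Crucially, since no topological completion is taken, each element of $M_L$ is a \emph{finite} sum of monomials involving only finitely many elements of $L^c\oplus C$.

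Next I would use the preceding lemma $(L^\perp)^\perp = L$ to verify that the restricted pairings $L^c\times L^\perp\to\bbC$ and $C\times L\to\bbC$ are nondegenerate in both arguments. For example, given $u\in L^c\setminus\{0\}$, the hypothesis $u\notin L = (L^\perp)^\perp$ produces a $z\in L^\perp$ with $(u,z)\neq 0$; nondegeneracy of $C\times L$ comes from the fact that $C\cap L^\perp = 0$, and the other two directions are analogous. From this, given any finite-dimensional subspace $W\subset L^c\oplus C$, a standard finite-dimensional argument yields a subspace $W^*\subset L\oplus L^\perp$ together with dual bases $\{e_i\}$ of $W$ and $\{e_i^*\}$ of $W^*$ satisfying $[e_i,e_j^*]_+ = \delta_{ij}$.

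To finish, given a nonzero $v\in N$, I would take $W$ to be the finite-dimensional span of the creators appearing in $v$ and form the subalgebra $\Cl_W\subset\Cl_\bbL$ generated by $W\oplus W^*$. Since $W^*\subset L\oplus L^\perp$ annihilates $|0\rangle$, the $\Cl_W$-submodule $\Cl_W\cdot|0\rangle\subset M_L$ is the standard Fock module of $\Cl_W$, which is irreducible. Because $v\in\Cl_W\cdot|0\rangle$, we conclude $\Cl_W\cdot v = \Cl_W\cdot|0\rangle\ni|0\rangle$, so $|0\rangle\in N$ and hence $N = M_L$. The main obstacle is the bookkeeping needed to push the PBW argument and the nondegeneracies of the restricted pairings through in the infinite-dimensional setting; both rely essentially on the purely algebraic (uncompleted) definition of $\Cl_\bbL$ and on the closedness property $(L^\perp)^\perp = L$ of secondary lattices proved in the preceding lemma.
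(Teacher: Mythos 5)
Your proof is correct and follows essentially the same strategy as the paper's: realize $M_L$ as $\bigwedge$ of a complement of $L\oplus L^\perp$, note each element involves only finitely many generators, pair that finite span $W$ against a dual subspace $W^*\subset L\oplus L^\perp$ to get a finite-dimensional Clifford subalgebra $\Cl_W$, and conclude by irreducibility of the finite-dimensional Fock module. Your version fills in a small detail the paper leaves implicit --- the explicit use of $(L^\perp)^\perp = L$ to produce the dual subspace $W^*$ --- but the argument is the same.
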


\begin{proof} We can always find
a subspace $L'\subset \bbL$ such that
    $\bbL=L\oplus L'$ and
that $\bbK^*/\bbL^\perp=L^\perp\oplus L'^\perp$. Therefore,
$M_L\cong\bigwedge(L'\oplus L'^\perp)|0\rangle$, and any element
in $M_L$ could be expressed as a finite sum
$m=\sum_{i_1,\ldots,i_r}v_{i_1}\cdots v_{i_r}|0\rangle$ with
$v_{i_j}\in L'\oplus L'^\perp$. Let $W$ be the span of these
$v_{i_j}$. This is a finite-dimensional subspace in $L'\oplus
L'^\perp$. One can always find a finite-dimensional subspace
$W^*\subset L\oplus L^\perp$ such that the non-degenerate
symmetric bilinear form on $\bbL\oplus \bbK^*/\bbL$ restricts to a
non-degenerate symmetric bilinear form on $W\oplus W^*$. Let
$\Cl_W=\Cl(W\oplus W^*)$ be the Clifford algebra associated to
this bilinear form. This is a subalgebra of $\Cl_\bbL$. It is
clear that $m\in\Cl_W|0\rangle=\ind_{\bigwedge
W^*}^{\Cl_W}(\bbC|0\rangle)$.  From the theory of Clifford
algebras modeled on finite-dimensional vector spaces we know that
there exists some $a\in\Cl_W\subset\Cl_\bbL$ such that
$a(m)=|0\rangle$. This proves that $M_L$ is irreducible.
\end{proof}

\begin{lem}    \label{non-isom}
$\Hom_{\Cl_\bbL}(M_L,M_{L'}) \neq 0$
if and only if $L$ and $L'$ are commensurable with each other. In
that case, there exists a canonical isomorphism
$$\Hom_{\Cl_\bbL}(M_L,M_{L'})\cong\det(L|L')$$ such
that the same diagram as in Lemma \ref{Hom set} holds.
\end{lem}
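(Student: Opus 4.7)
The plan is to mirror the proof of Lemma \ref{Hom set} from the 1-dimensional story, with two essential modifications forced by the fact that $M_L$ is no longer a discrete $\Cl_\bbL$-module and that, unlike in the Tate setting, two secondary lattices in $\bbL$ need not be commensurable.

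By Frobenius reciprocity, $\Hom_{\Cl_\bbL}(M_L, M_{L'})$ is canonically identified with the space of $\bigwedge(L \oplus L^\perp)$-invariants in $M_{L'}$, so the problem reduces to analyzing these invariants. First I would treat the containment case $L' \subset L$ with $n := \dim_{\bbC}(L/L') < \infty$: choosing a basis $v_1,\dots,v_n$ of $L/L'$ and arbitrary lifts $\tilde v_i \in L$, the vector $\tilde v_1 \cdots \tilde v_n \cdot |0\rangle_{L'} \in M_{L'}$ is $\bigwedge(L \oplus L^\perp)$-invariant, and its class modulo nonzero scalars depends only on $v_1 \wedge \cdots \wedge v_n \in \det(L/L') = \det(L|L')$. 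The verification uses only that $\bbL$ and $\bbK^*/\bbL^\perp$ are isotropic (so elements of $\bbL$ strictly anticommute in $\Cl_\bbL$), together with the inclusion $L^\perp \subset L'^\perp$: for $\alpha \in L^\perp$ one has $\langle \alpha, \tilde v_i\rangle = 0$, so $\alpha$ anticommutes past the $\tilde v_i$ and then annihilates $|0\rangle_{L'}$, while $L$-invariance follows similarly using $\tilde v_i^2 = 0$. This produces a well-defined map $\det(L|L') \to \Hom_{\Cl_\bbL}(M_L, M_{L'})$, with formula matching (\ref{image}) from the 1-dimensional case.

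Next I would show this map is an isomorphism and, simultaneously, that $\Hom_{\Cl_\bbL}(M_L, M_{L'}) = 0$ whenever $L$ and $L'$ are not commensurable, by finite-dimensional Clifford reduction as in the proofs of Lemmas \ref{equi} and \ref{Hom set}. Any $(L \oplus L^\perp)$-invariant vector $|u\rangle \in M_{L'}$ can be written as $\omega \cdot |0\rangle_{L'}$ where $\omega$ involves only finitely many ``creation operators'' from chosen complements $\bar L$ of $L'$ in $\bbL$ and $\bar{L^\perp}$ of $L'^\perp$ in $\bbK^*/\bbL^\perp$. I would enlarge these finitely many vectors, together with chosen generators of $L/(L\cap L')$ and of $L^\perp/(L^\perp \cap L'^\perp)$, to finite-dimensional subspaces $W \subset \bbL$ and $W^* \subset \bbK^*/\bbL^\perp$ on which the residue bilinear form is non-degenerate and such that $L\cap(W\oplus W^*)$ and $L'\cap(W\oplus W^*)$ are Lagrangian in $W\oplus W^*$. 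Inside the finite-dimensional Clifford algebra $\Cl(W\oplus W^*)$, the invariance condition on $|u\rangle$ reduces to the classical statement that the space of $\Lambda$-invariants in the Fock module built from $\Lambda'$ is the one-dimensional determinant $\det(\Lambda|\Lambda')$. Letting $W$ exhaust an appropriate directed system, these one-dimensional lines stabilize to $\det(L|L')$ precisely when $L$ and $L'$ are commensurable; otherwise they become incompatible as $W$ grows, forcing $|u\rangle$ to vanish.

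The general commensurable case is then obtained by composing through $L \cap L'$, which is a secondary lattice commensurable with both $L$ and $L'$. Irreducibility of the modules $M_{(\,\cdot\,)}$ (the previous lemma) forces any nonzero hom to be an isomorphism, so inverting the map $M_{L'} \to M_{L\cap L'}$ from the containment case and composing with $M_L \to M_{L\cap L'}$ produces the desired hom, while the compatibility
\[
\det(L|L') \cong \det(L|L\cap L') \otimes \det(L\cap L'|L')
\]
transports the canonical identification. The commutativity of the composition diagram promised in the statement is then a formal consequence of the associativity of the wedge product of creation operators and of the matching associativity of $\det$, exactly as in Lemma \ref{Hom set}. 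The main obstacle is the non-commensurable case: turning the intuition ``the Bogoliubov transformation relating the $L'$-vacuum to an $L$-invariant vector has infinite rank'' into a rigorous argument requires carefully tracking how the finite-dimensional invariant lines $\det(\Lambda|\Lambda')$ change under enlarging $W$, and showing that any coherent choice of nonzero invariants would force both $L/(L\cap L')$ and $L'/(L\cap L')$ to be finite-dimensional.
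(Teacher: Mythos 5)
Your overall strategy is the same as the paper's: reduce via Frobenius reciprocity to $\bigwedge(L\oplus L^\perp)$-invariants in $M_{L'}$, construct the map from $\det(L|L')$ in the containment case, and compose through $L\cap L'$. However, you have explicitly flagged, but not actually filled, the key gap: showing that a nonzero invariant vector forces $L$ and $L'$ to be commensurable. Your plan of ``carefully tracking how the finite-dimensional invariant lines $\det(\Lambda|\Lambda')$ change under enlarging $W$'' and showing they ``become incompatible'' is vague and, if pursued, would be much more involved than the direct argument the paper uses.

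The paper's argument for the ``only if'' direction is shorter and worth internalizing. Suppose $\phi \neq 0$. The invariant vector $\phi(|0\rangle_L) \in M_{L'}$ is a \emph{finite} expression in creation operators, so it lies in $\Cl_W|0\rangle_{L'}$ for some finite-dimensional $W$. Every element of $W^\perp\cap(L'\oplus L'^\perp)$ anticommutes past $\Cl_W$ (producing zero inner products, since it is in $W^\perp$) and kills $|0\rangle_{L'}$; hence $W^\perp\cap(L'\oplus L'^\perp)$ annihilates $\phi(|0\rangle_L)$. But the annihilator of $\phi(|0\rangle_L)$ in $\bbL\oplus\bbK^*/\bbL^\perp$ is an isotropic subspace containing the maximal isotropic $L\oplus L^\perp$, hence equals it. Therefore $W^\perp\cap(L'\oplus L'^\perp)\subset L\oplus L^\perp$, and since $(L'\oplus L'^\perp)/(W^\perp\cap(L'\oplus L'^\perp))\cong W^*$ is finite-dimensional, so is $L'/(L'\cap L)$. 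Irreducibility makes $\phi$ an isomorphism, and repeating the argument for $\phi^{-1}$ gives $\dim L/(L\cap L') <\infty$. This avoids any inverse-limit or ``coherence'' bookkeeping entirely: the finiteness of the expression for $\phi(|0\rangle_L)$ does all the work in one step. You would do well to replace your unproven ``incompatibility of invariant lines'' heuristic with this direct annihilator argument.

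One more small point: you should also verify (as the paper does, again by finite-dimensional reduction) that the $\bigwedge(L\oplus L^\perp)$-invariant subspace of $M_L$ itself is exactly the vacuum line $\bbC|0\rangle$, so that the Hom space is at most one-dimensional; otherwise the ``canonical isomorphism with $\det(L|L')$'' is not pinned down.
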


\begin{proof}
Assume that $\phi\in\Hom_{\Cl_\bbL}(M_L,M_{L'})$ is non-zero. Then
$\phi(|0\rangle_L)$ is an element in $M_{L'}$ which is annihilated
by $L\oplus L^\perp$. As in the proof of the previous lemma, one
can find a finite-dimensional Clifford algebra
$\Cl_W\subset\Cl_\bbL$ such that
$\phi(|0\rangle_L)\in\Cl_W|0\rangle_{L'}=\ind_{\bigwedge
W^*}^{\Cl_W} (\bbC|0\rangle_{L'})$.  Then it is clear that
$\phi(|0\rangle_L)$ is annihilated by $W^\perp\cap(L'\oplus
L'^\perp)$. Therefore, $W^\perp\cap(L'\oplus L'^\perp)\subset
L\oplus L^\perp$. Since $(L'\oplus L'^\perp)/(W^\perp\cap(L'\oplus
L'^\perp))\cong W^*$ is finite-dimensional, we obtain that
$L'/(L'\cap L)$ is also finite-dimensional. Since $M_L$ and
$M_{L'}$ are irreducible, $\phi$ is an isomorphism. Applying the
same argument to $\phi^{-1}$, we obtain that $L/(L\cap L')$ is
finite-dimensional.

Before we prove the second statement of the lemma, we claim that
the $\bigwedge(L\oplus L^\perp)$-invariant subspace of $M_L$ is
the line $\bbC|0\rangle$. Indeed, assume that $m\in M_L$ is
invariant under $\bigwedge(L\oplus L^\perp)$. Then, as in the
proof of the previous lemma, we can assume that
$m\in\Cl_W|0\rangle=\ind_{\bigwedge W^*}^{\Cl_W}(\bbC|0\rangle)$.
Then $m$ is invariant under $\bigwedge W^*$. By the theory of
finite dimensional Clifford algebras, $m\in\bbC|0\rangle$.

Now assume that $L$ and $L'$ are commensurable. As in the proof of
Lemma \ref{Hom set}, $\Hom_{\Cl_\bbL}(M_L,M_{L'})$ is canonically
isomorphic to the $\bigwedge(L\oplus L^\perp)$-invariant subspace
in $M_{L'}$, which is either zero- or one-dimensional. Indeed, if
this subspace is not zero, then $M_{L'}\cong M_L$ and we have just
seen that the $\bigwedge(L\oplus L^\perp)$-invariant subspace in
$M_L$ is one-dimensional.

Let $V=(L\oplus L^\perp)\cap(L'\oplus L'^\perp)$. Then $V^\perp/V$
is finite-dimensional and it carries a non-degenerate symmetric
bilinear form. Denote $\Cl(V^\perp/V)$ to be the corresponding
Clifford algebra. The $V$-invariant subspace $M_{L'}^V$ in
$M_{L'}$ is naturally a module over $\Cl(V^\perp/V)$. Let
$\Cl(V^\perp/V)|0\rangle_{L'}$ be the submodule containing the
vacuum vector $|0\rangle_{L'}$. We know that $(L\oplus L^\perp)/V$
is a Lagrangian subspace of $V^\perp/V$. The theory of
finite-dimensional Clifford algebras implies that the
$\bigwedge((L\oplus L^\perp)/V)$-invariant subspace in
$\Cl(V^\perp/V)|0\rangle_{L'}$ is canonically isomorphic to
$\bigwedge^{\on{top}} L/(L\cap L')\otimes(\bigwedge^{\on{top}}
L'/(L\cap L'))^{-1}$. That is, the $\bigwedge(L\oplus
L^\perp)$-invariant subspace in $M_{L'}$ is canonically isomorphic
to $\det(L|L')$.
\end{proof}

\begin{dfn} $\calC_{\bbL}^{\on{ss}}$ is
the semi-simple abelian category, whose objects are
$\Cl_\bbL$-modules that are direct sums of $M_L$, with $L$ being
secondary lattices of $\bbL$, and morphisms are homomorphisms of
these $\Cl_\bbL$-modules.
\end{dfn}

Let $g\in GL_{\infty,\infty}$. Let $\bbL\subset\bbK$ be a lattice,
and $L\subset\bbL$ a secondary lattice. It is clear that
$gL\subset g\bbL$ is a secondary lattice. We define, for any $g\in
GL_{\infty,\infty}$, a functor
$$
T_g:\calC_\bbL^{\on{ss}}\to\calC_{g\bbL}^{\on{ss}},
$$
which sends $M_L$ to $M_{gL}$, and the map of Hom's is defined as
follows: for $L,L'\subset\bbL$ commensurable, we have
$$
\Hom_{\Cl_\bbL}(M_L,M_{L'})\cong\det(L|L')\cong\det(gL|gL')\cong
\Hom_{\Cl_{g\bbL}}(M_{gL},M_{gL'}).
$$
This isomorphism gives us a map $\Hom_{\Cl_\bbL}(M_L,M_{L'}) \to
\Hom_{\Cl_{g\bbL}}(M_{gL},M_{gL'})$.

\subsubsection{Gerbal representation of
$GL_{\infty,\infty}$} \label{gerbal1}

The first main theorem of this paper is the following:

\begin{thm}\label{main1} There is a natural gerbal representation
of $GL_{\infty,\infty}$ on the category $\calC_{\bbL}^{\on{ss}}$.
These representations are equivalent to each other for all lattices
$\bbL \subset \bbK$.
\end{thm}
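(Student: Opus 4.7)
The plan is to lift the functors $T_g : \calC_\bbL^{\on{ss}} \to \calC_{g\bbL}^{\on{ss}}$ constructed immediately before the theorem to genuine endofunctors of the fixed category $\calC_\bbL^{\on{ss}}$. The starting point is to verify that each $T_g$ is an equivalence of semisimple abelian categories and that the strict composition law $T_g \circ T_{g'} = T_{gg'}$ holds: on objects this is tautological since $(gg')L = g(g'L)$, and on morphisms it follows from the compatibility, built into the commutative diagram of Lemma \ref{non-isom}, of the determinantal identifications $\det(L|L') \cong \det(gL|gL')$ under composition in $GL_{\infty,\infty}$.

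To convert $\{T_g\}$, a family of functors with varying targets, into auto-equivalences of $\calC_\bbL^{\on{ss}}$, I invoke Lemma \ref{transitive} to pick for each lattice $\bbL'\subset\bbK$ some $h_{\bbL'} \in GL_{\infty,\infty}$ with $h_{\bbL'}\bbL = \bbL'$ (taking $h_\bbL = e$), set $\Phi_{\bbL'} := T_{h_{\bbL'}}^{-1} : \calC_{\bbL'}^{\on{ss}} \to \calC_\bbL^{\on{ss}}$, and define
\[
F_g := \Phi_{g\bbL} \circ T_g : \calC_\bbL^{\on{ss}} \longrightarrow \calC_\bbL^{\on{ss}}.
\]
Then $F_e = \mathbf{1}$ by construction. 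The defining condition $F_g \circ F_{g'} \cong F_{gg'}$, combined with the strict composition law for $T$, unfolds to the demand that the two equivalences $\Phi_{g\bbL} \circ T_g \circ \Phi_{g'\bbL}$ and $\Phi_{gg'\bbL} \circ T_g$ from $\calC_{g'\bbL}^{\on{ss}}$ to $\calC_\bbL^{\on{ss}}$ be naturally isomorphic. Since both categories are semisimple, with one-dimensional hom-spaces between isomorphic simples and zero hom-spaces between non-isomorphic ones, such a natural isomorphism exists as soon as the two equivalences induce the same bijection between commensurability classes of secondary lattices.

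I expect the main difficulty to lie in this matching of bijections. Tracing definitions, the discrepancy on commensurability classes is implemented by a cocycle element $\gamma(g,g') \in \on{Stab}_{GL_{\infty,\infty}}(\bbL)$ manufactured out of the $h_{\bbL'}$ and $g, g'$, and what is needed is that every such $\gamma(g,g')$ act trivially on the set of commensurability classes of secondary lattices in $\bbL$. My first attempt would be to choose the $h_{\bbL'}$ as a set-theoretic section of $GL_{\infty,\infty} \twoheadrightarrow GL_{\infty,\infty}/\on{Stab}(\bbL)$; if this does not already force the cocycle to land in the kernel of the action $\on{Stab}(\bbL) \to \on{Aut}(\text{commensurability classes})$, I would allow the $\Phi_{\bbL'}$ to be more general equivalences, not necessarily of the form $T_h^{-1}$, and exploit the extra freedom to trivialise the residual stabiliser action on this discrete set. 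Either way the obstruction lives only in the permutation action on a set of isomorphism classes, which is why the construction goes through.

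For the second assertion, pick any $h \in GL_{\infty,\infty}$ with $h\bbL = \bbL'$; then $T_h$ is an equivalence of abelian categories, and combining the strict identity $T_h \circ T_g = T_{hgh^{-1}} \circ T_h$ with a transfer through the chosen equivalences $\Phi$ on each side produces natural isomorphisms $T_h \circ F_g \cong F'_g \circ T_h$, where $F'$ denotes the gerbal action on $\calC_{\bbL'}^{\on{ss}}$. This exhibits $T_h$ as an equivalence of gerbal representations in the sense of Definition \ref{gerbe action1}, completing the proof.
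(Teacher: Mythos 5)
Your reduction of the problem is clean and the key observation is correct: because $\calC_{\bbL}^{\on{ss}}$ is semisimple with $\End(M_L)=\bbC$ and with one-dimensional (or zero) Hom spaces between simples, two $\bbC$-linear auto-equivalences that induce the same permutation of the set of commensurability classes of secondary lattices are automatically naturally isomorphic. So the problem does reduce to matching bijections on isomorphism classes. But the construction you then propose cannot produce the right bijections, and your fallback does not explain how to repair it.

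The difficulty is not, as you suggest, a removable nuisance in a ``permutation action on a discrete set''. The stabiliser of $\bbL$ in $GL_{\infty,\infty}$ genuinely permutes the commensurability classes of secondary lattices of $\bbL$ in a non-trivial way --- this is exactly the content of Example \ref{ex} in the paper, where an $a$ fixing $\calO_\bbK$ moves $L_0=\bbC[[t]][[s]]$ into a non-commensurable class. Consequently $T_\gamma\not\cong\mathbf{1}$ for many $\gamma\in\operatorname{Stab}(\bbL)$, and your functors $\Phi_{\bbL'}=T_{h_{\bbL'}}^{-1}$ genuinely depend, up to natural isomorphism, on the choice of coset representatives $h_{\bbL'}$. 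The cocycle $\gamma(g,g')$ you extract is therefore not a priori in the kernel of this permutation action, and you offer no argument that a clever choice of section would put it there.

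Moreover, the bijection that one \emph{must} realise is a specific, canonical one, and it is not implemented by any single $T_{h}$. The paper's $\Xi_{\bbL,\bbL'}$ (for $\bbL\supset\bbL'$) acts on isomorphism classes by $[L]\mapsto[L\cap\bbL']$; this is the only bijection compatible with the $T_g$-action, by the computation $[gL\cap g\bbL']=[g(L\cap\bbL')]$. To see concretely that $T_{h_{\bbL'}}$ does \emph{not} induce it, take $\bbL=\calO_\bbK$, $\bbL'=s\calO_\bbK$, $h_{\bbL'}=s$ (multiplication by $s$), and $L=\prod_{i\ge 0}s^i t^i\bbC[[t]]$. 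Then $L\cap\bbL'=\prod_{i\ge 1}s^i t^i\bbC[[t]]$, whereas $sL=\prod_{i\ge 1}s^i t^{i-1}\bbC[[t]]$, and the quotient $sL/(sL\cap(L\cap\bbL'))\cong\prod_{i\ge 1}s^i\bbC$ is infinite-dimensional, so the two classes differ. Thus $T_s$ gives the wrong bijection even in the simplest case. Realising the canonical bijection $[L]\mapsto[L\cap\bbL']$ as an actual \emph{functor} (not merely a map of isomorphism classes) requires pinning down the line by which $M_L$ and $M_{L\cap\bbL'}$ differ, and this is precisely what the determinantal theory $\Delta_{\bbL,\bbL'}$ on the Tate space $\bbL/\bbL'$ provides in the paper's formula $\Xi_{\bbL,\bbL'}(M_L)=M_{L\cap\bbL'}\otimes\Delta_{\bbL,\bbL'}(L/(L\cap\bbL'))^{-1}$. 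In short, the missing ingredient in your proposal is exactly Lemma \ref{determinantal gerbe} and the gerbe $\calD_{\bbL/\bbL'}$, without which you cannot produce a compatible system of equivalences $\calC_{\bbL'}^{\on{ss}}\to\calC_{\bbL}^{\on{ss}}$.
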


\begin{proof} We will construct for
any lattices two $\bbL,\bbL'\subset\bbK$, an equivalence of categories
\[\Xi_{\bbL,\bbL'}:\calC_{\bbL}^{\on{ss}}\to\calC_{\bbL'}^{\on{ss}}\]
such that:

\medskip

(i)
$\Xi_{\bbL',\bbL''}\circ\Xi_{\bbL,\bbL'}\cong\Xi_{\bbL,\bbL''}$;

\medskip

(ii) For any $g\in GL_{\infty,\infty}$, $\Xi_{g\bbL,g\bbL'}\circ
T_g\cong T_g\circ\Xi_{\bbL,\bbL'}$.

\medskip

It is enough to construct such functors for pairs
$\bbL\supset\bbL'$, so that (i) and (ii) hold. Then we extend
the definition to any pair $\bbL,\bbL'$ by
$\Xi_{\bbL,\bbL'}=\Xi_{\bbL',\bbL\cap\bbL'}^{-1}\circ
\Xi_{\bbL,\bbL\cap\bbL'}$, where $\Xi_{\bbL',\bbL\cap\bbL'}^{-1}$
is any quasi-inverse functor of $\Xi_{\bbL',\bbL\cap\bbL'}$.

Therefore, we assume that $\bbL\supset\bbL'$. According to Lemma
\ref{quot Tate}, $\bbL/\bbL'$ is a Tate vector space. We therefore
have the gerbe $\calD_{\bbL/\bbL'}$ of determinantal theories on
$\bbL/\bbL'$ (see Definition \ref{def det th}).
Choose an object $\Delta_{\bbL,\bbL'}\in\calD_{\bbL/\bbL'}$, that
is, a determinantal theory. If $L\subset\bbL$ is a secondary
lattice, then $L\cap\bbL'$ is a secondary lattice of $\bbL'$ and
$L/(L\cap\bbL')$ is a linearly compact open subspace of
$\bbL/\bbL'$. Now $\Xi_{\bbL,\bbL'}$ is defined on objects by the
formula
\begin{equation}
\label{Xi} \Xi_{\bbL,\bbL'}(M_L)=M_{L\cap\bbL'}\otimes
\Delta_{\bbL,\bbL'}(L/(L\cap \bbL'))^{-1}.
\end{equation}
To define $\Xi_{\bbL,\bbL'}$ on Hom's, observe that if $L$ and
$L'$ are two commensurable secondary lattices of $\bbL$, then we
have a canonical isomorphism
\[\det(L|L')\cong\det(L\cap\bbL'|L'\cap\bbL')
\otimes\det(\frac{L}{L\cap\bbL'}|\frac{L'}{L'\cap\bbL'}).\] Since
\[\det(\frac{L}{L\cap\bbL'}|\frac{L'}{L'\cap\bbL'})\cong
\Hom(\Delta_{\bbL,\bbL'}(\frac{L'}{L'\cap\bbL'}),
\Delta_{\bbL,\bbL'}(\frac{L}{L\cap\bbL'})),\] we have a canonical
isomorphism
\begin{multline*}
\Hom_{\Cl_\bbL}(M_L,M_{L'})\cong
\\
\Hom_{\Cl_{\bbL'}}
(M_{L\cap\bbL'}\otimes\Delta_{\bbL,\bbL'}(L/(L\cap\bbL'))^{-1},
M_{L'\cap\bbL'}\otimes\Delta_{\bbL,\bbL'}(L'/(L'\cap\bbL'))^{-1}).
\end{multline*}
This gives us the sough-after map
$$
\Hom_{\Cl_\bbL}(M_L,M_{L'}) \to
\Hom_{\Cl_{\bbL'}}(\Xi_{\bbL,\bbL'}(M_L),\Xi_{\bbL,\bbL'}(M_{L'})).
$$
(This explains the necessity of the second factor in formula
\eqref{Xi}.)

Now assume that $\bbL\supset\bbL'\supset\bbL''$. Then
\[\begin{split}\Xi_{\bbL',\bbL''}(\Xi_{\bbL,\bbL'}(M_L))=
&M_{L\cap\bbL''}\otimes \Delta_{\bbL,\bbL'}(L/(L\cap \bbL'))^{-1}
\otimes\Delta_{\bbL',\bbL''}(L\cap\bbL'/(L\cap
\bbL''))^{-1}\\
=&M_{L\cap\bbL''}\otimes(\Delta_{\bbL,\bbL'}\otimes
\Delta_{\bbL',\bbL''})(L/(L\cap\bbL''))^{-1}\end{split},\] where
we apply the canonical equivalence
$\calD_{\bbL/\bbL'}\otimes\calD_{\bbL'/\bbL''}\cong
\calD_{\bbL/\bbL''}$ of Lemma \ref{determinantal gerbe}.

On the other hand
\[\Xi_{\bbL,\bbL''}(M_L)=M_{L\cap\bbL''}\otimes
\Delta_{\bbL,\bbL''}(L/(L\cap \bbL''))^{-1}.\] Therefore, any
choice of an isomorphism
$\Delta_{\bbL,\bbL'}\otimes\Delta_{\bbL',\bbL''}\cong
\Delta_{\bbL,\bbL''}$ will give us an isomorphism of functors
$\Xi_{\bbL',\bbL''}\circ\Xi_{\bbL,\bbL'}\cong\Xi_{\bbL,\bbL''}$.

Next, let $g\in GL_{\infty,\infty}$. We have
\[T_g(\Xi_{\bbL,\bbL'}(M_L))=M_{g(L\cap\bbL')}\otimes
\Delta_{\bbL,\bbL'}(L/(L\cap\bbL'))^{-1}=M_{g(L\cap\bbL')} \otimes
(g\Delta_{\bbL,\bbL'})(gL/gL\cap g\bbL'))^{-1},\] where
$g(L\cap\bbL')$ is, by definition, the determinantal theory on
$g\bbL/g\bbL'$ which assigns to $\wt{L}\subset g\bbL/g\bbL'$ the
vector space $\Delta_{\bbL/\bbL'}(g^{-1}\wt{L})$. On the other
hand, we have
\[\Xi_{g\bbL,g\bbL'}(T_g(M_L))=M_{gL\cap
g\bbL'}\otimes \Delta_{g\bbL,g\bbL'}(gL/(gL\cap g\bbL'))^{-1}.\]
Note that $g\Delta_{\bbL,\bbL'}$ and $\Delta_{g\bbL,g\bbL'}$ are
objects of the same groupoid $\calD_{g\bbL/g\bbL'}$. Therefore,
the set of isomorphisms
$g\Delta_{\bbL,\bbL'}\cong\Delta_{g\bbL,g\bbL'}$ is a
$\C^\times$-torsor (in particular, non-empty). A choice of such an
isomorphism will give rise to an isomorphism
$T_g\circ\Xi_{\bbL,\bbL'}\cong\Xi_{g\bbL,g\bbL'}\circ T_g$.

Now we are ready to define a gerbal representation of
$GL_{\infty,\infty}$ on $\calC_\bbL^{\on{ss}}$. For $g\in
GL_{\infty,\infty}$, we define the functor
$F_{g}:\calC_\bbL^{\on{ss}} \to \calC_\bbL^{\on{ss}}$ as the
composition
$$
\calC_\bbL^{\on{ss}} \stackrel{T_g}{\to}
\calC_{g\bbL}^{\on{ss}}\stackrel{\Xi_{g\bbL,\bbL}}{\to}
\calC_\bbL^{\on{ss}}.
$$
Properties (i) and (ii) of $\Xi_{\bbL,\bbL'}$ imply that $F_g\circ
F_{g'}\cong F_{gg'}$. Thus, we obtain a gerbal representation of
$GL_{\infty,\infty}$ on $\calC_\bbL^{\on{ss}}$.

The functor $\Xi_{\bbL,\bbL'}$ defines an equivalence between the two
gerbal representations, $\calC_{\bbL}^{\on{ss}}$ and
$\calC_{\bbL'}^{\on{ss}}$, that is, we have an isomorphism of functors
$\Xi_{\bbL,\bbL'} \circ F_g \simeq F_g \circ \Xi_{\bbL,\bbL'}$ for all
$g \in GL_{\infty,\infty}$ (see Definition \ref{gerbe action1}). This
follows from properties (i) and (ii) of $\Xi_{\bbL,\bbL'}$. Thus, the
gerbal representation $\calC_{\bbL}^{\on{ss}}$ is independent of the
choice of the lattice $\bbL \subset \bbK$. (This is analogous to the
fact that the Fock modules $M_L$ are isomorphic as representations of
$GL_\infty$ for all lattices $L \subset K$.)
\end{proof}

According to Theorem \ref{H^3}, the gerbal representation of
$GL_{\infty,\infty}$ on $\calC_\bbL^{\on{ss}}$ gives us a cohomology
class in $H^3(GL_{\infty,\infty},\calZ(\calC_\bbL^{\on{ss}})^\times)$.
Since $\calC_\bbL^{\on{ss}}$ is $\bbC$-linear, there is a natural
embedding $\bbC^\times\to\calZ(\calC_\bbL^{\on{ss}})^\times$. We will
see that this class is in fact in the image
$H^3(GL_{\infty,\infty},\bbC^\times)\to
H^3(GL_{\infty,\infty},\calZ(\calC_\bbL^{\on{ss}})^\times)$, and will
compute it in \S \ref{gerbal2}. (We note that this map of cohomology
groups is injective since the embedding
$\bbC^\times\to\calZ(\calC_\bbL^{\on{ss}})^\times$ admits a splitting
by
$\calZ(\calC_\bbL^{\on{ss}})^\times\to\End(M_L)^\times\cong\bbC^\times$.)

\begin{rmk} Here is an informal explanation of the above construction
and the meaning of formula \eqref{Xi}. Naively, we would like to
identify the Clifford modules $M_L$ and $M_{L'}$, where $L' =
L\cap\bbL'$. If $L/L'$ were a finite-dimensional subspace of
$\bbL/\bbL'$, then we would have an isomorphism
$$
M_{L} \simeq M_{L'} \otimes \on{det}(L/L')^{-1},
$$
sending the generating vector $\vac_L$ to the vector \eqref{image} in
$M_{L'}$, which depends on the choice of a non-zero vector in
$\on{det}(L/L')$, interpreted as the wedge product of basis vectors
in $L/L'$.

But in general $L/L'$ is infinite-dimensional, and so the Clifford
modules $M_L$ and $M_{L'}$ are not isomorphic, according to Lemma
\ref{non-isom}. But we can generalize formula \eqref{image} by taking
the {\em infinite wedge product} of a basis in $L/L'$. To do this, we
must pick a vector in the determinant line
$\Delta_{\bbL,\bbL'}(L/L')$. This leads us to formula \eqref{Xi},
which we use to define a gerbal action of $GL_{\infty,\infty}$ on
$\calC_\bbL^{\on{ss}}$. But the determinant of $L/L'$ is
non-canonical; in order to define it, we have to choose a
determinantal theory $\Delta_{\bbL,\bbL'}$ on $\bbL/\bbL'$.
\end{rmk}

\subsection{The 2-group determinantal extension of
  $GL_{\infty,\infty}$}

In the previous section we constructed a gerbal representation of
$GL_{\infty,\infty}$ on $\calC_\bbL^{\on{ss}}$. This is a
2-dimensional analogue of the projective representation of
$GL_{\infty}$ on the Fock representation $\bigwedge$ described in \S
\ref{another descr}. In that case the operator on $\bigwedge =
M_{L_0}$ corresponding to $g \in GL_\infty$ was defined as the
composition of the tautological identification of $M_{L_0}$ with
$M_{gL_0}$ and a non-trivial isomorphism $M_{gL_0} \simeq M_{L_0}$,
which depends on the choice of $c \in
\on{det}(gL_0|L_0)^\times$. Thus, the projective action of $GL_\infty$
gives rise to a genuine action of $\wh{GL}_\infty$, which is the
$\bbC^\times$-central extension of $GL_\infty$ by determinant
lines. (Note that we can replace $L_0$ here by an arbitrary lattice $L
\in K$.)

We have defined a gerbal action of $GL_{\infty,\infty}$ on the
category $\calC_\bbL^{\on{ss}}$ in a similar way: the functor $F_g, g
\in GL_{\infty,\infty}$, on the category $\calC_\bbL^{\on{ss}}$ is the
composition of the tautological equivalence $\calC_\bbL^{\on{ss}}
\simeq \calC_{g\bbL}^{\on{ss}}$ and a non-trivial equivalence
$\Xi_{g\bbL,\bbL}: \calC_{g\bbL}^{\on{ss}} \to \calC_\bbL^{\on{ss}}$,
which depends on the choice of a determinantal theory
$\Delta_{\bbL,\bbL'}$ on $\bbL/\bbL'$.  Hence, in order to promote
this gerbal representation to a genuine representation, we must
include this additional choice as part of our data. Thus, the gerbal
representation of $GL_{\infty,\infty}$ gives rise to a genuine
representation of a 2-group $\bbG\bbL_{\infty,\infty}$, which is a
$B\bbC^\times$-central extension of $GL_{\infty,\infty}$ by
determinantal gerbes which is constructed in this section. This
2-group is essentially equivalent to the 2-group construct
previously by Arkhipov and Kremnizer \cite{AK}.  We will show that
this extension is non-trivial, which is equivalent to the fact that
the third cohomology class of our gerbal representation of
$GL_{\infty,\infty}$ on $\calC_\bbL^{\on{ss}}$ is also non-trivial.

We recall from Definition \ref{2-grp} that a 2-group is a monoidal
groupoid $\calG$ such that the set of isomorphism classes
of $\calG$, denoted by $\pi_0(\calG)$, is a group under the
multiplication induced from the monoidal structure.

\subsubsection{Determinantal gerbes}\label{dg}

Recall from Lemma \ref{quot Tate} that if $\bbL\subset\bbL'$ are
two lattices in $\bbK$, then $\bbL'/\bbL$ with the quotient
topology is a Tate vector space. We continue to use the notation
$\calD_{\bbL'/\bbL}$ for the $\C^\times$-gerbe of determinantal
theories on $\bbL'/\bbL$ (see Definition \ref{def det th}).

We will fix, once and for all, for each pair $\bbL\subset\bbL'$, a
linearly compact open subspace $V_{\bbL,\bbL'}\subset\bbL'/\bbL$,
such that for any $\bbL\subset\bbL'\subset\bbL''$, we have the
exact sequence
\[0\to
V_{\bbL,\bbL'}\to V_{\bbL,\bbL''}\to V_{\bbL',\bbL''}\to 0 \ \
\qquad \qquad \qquad (*)\] This is always possible. For example,
one can choose
\[V_{\bbL,\bbL'}=\bbL'\cap\bbC[[t]]\ppars/\bbL\cap\bbC[[t]]\ppars.\]

Recall Lemma \ref{The   determinantal gerbes}. If
$\bbL\subset\bbL'\subset\bbL''$ are three lattices, then there is
a canonical equivalence
\[\calD_{\bbL'/\bbL}\otimes\calD_{\bbL''/\bbL'}\stackrel{\cong}{\to}
\calD_{\bbL''/\bbL}\] (where the tensor product of two gerbes is
defined in \S \ref{determinantal gerbe}) and for a four-step
filtration, there is a canonical isomorphism between two
equivalences, such that for a five-step filtration, the natural
diagram of these canonical isomorphisms is commutative. Now our
choice $V_{\bbL',\bbL''}$ gives a quasi-inverse
\begin{equation}
\label{quasi-inverse}
\calD_{\bbL''/\bbL}\stackrel{\cong}{\to}\calD_{\bbL'/\bbL}\otimes
\calD_{\bbL''/\bbL'}.
\end{equation}
Namely, let $\Delta_{V_{\bbL',\bbL''}}$ be the determinantal
theory in $\calD_{\bbL''/\bbL'}$ such that
$\Delta_{V_{\bbL',\bbL''}}(V_{\bbL',\bbL''})=\bbC$. Then for any
$\Delta\in\calD_{\bbL''/\bbL}$ we define
$\Delta'\in\calD_{\bbL'/\bbL}$ as follows: let
$U\subset\bbL'/\bbL$ be a linearly compact open subspace. Choose
any linearly compact open subspace $U'\subset\bbL''/\bbL$ that
contains $U$, and define
$\Delta'(U)=\Delta(U')\otimes\Delta_{V_{\bbL',\bbL''}}(U'/U)^{-1}$.
It is clear this is independent of the choice of $U'$ up to a
canonical isomorphism. Now the functor sending
$\Delta\mapsto\Delta'\otimes\Delta_{V_{\bbL',\bbL''}}$ is the
desired quasi-inverse. The fact that our choice of
$\{V_{\bbL,\bbL'}\}$ satisfies ($*$) makes properties similar to
those of Lemma \ref{determinantal gerbe} hold for these
quasi-inverses.

Now let $\bbL$ and $\bbL'$ be two lattices in $\bbK$. Then
$\bbL\cap\bbL'$ is also a lattice. We define a $\bbC^\times$-gerbe
\[\calD(\bbL|\bbL')
:=
\calD_{\bbL'/\bbL\cap\bbL'}\otimes\calD_{\bbL/\bbL\cap\bbL'}^{-1}.\]
Recall that for any gerbe ${\mc D}$ we denote by ${\mc D}^{-1}$
the dual gerbe (the objects are the same as in ${\mc D}$ and the
set of morphisms between two objects in ${\mc D}^{-1}$ is the
torsor dual to the set of morphisms between these objects in ${\mc
D}$).

The following lemma is a consequence of Lemma \ref{determinantal
gerbe} and the above discussion. In it we assume that we have made
a choice of a collection of subspaces
$V_{\bbL,\bbL'}\subset\bbL'/\bbL$ satisfying the above properties,
and we use the corresponding quasi-inverse equivalences
\eqref{quasi-inverse}.

\begin{lem}\label{determinantal gerbe II}
(i) Any $g\in GL_{\infty,\infty}$ gives rise to an equivalence of
$\C^\times$-gerbes $\calD(\bbL|\bbL')\cong\calD(g\bbL|g\bbL')$.

(ii) There exists a preferred equivalence of categories
\[\calD(\bbL|\bbL')\otimes\calD(\bbL'|\bbL'')\cong\calD(\bbL|\bbL'')\]
and natural transformations between the two equivalences
$$\calD(\bbL_1|\bbL_2)\otimes\calD(\bbL_2|\bbL_3)\otimes
\calD(\bbL_3|\bbL_4)\cong\calD(\bbL_1|\bbL_4)$$ such that the
diagram of natural transformations is commutative.
\end{lem}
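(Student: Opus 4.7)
The strategy for both parts is to express $\calD(\bbL|\bbL')$ in terms of a ``common denominator'' sublattice using the fundamental short-exact-sequence equivalence of Lemma \ref{determinantal gerbe}, and then exploit the symmetric monoidal cancellation in $\bbC^\times$-gerbes.

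For part (i): since $g \in GL_{\infty,\infty}$ acts as a continuous automorphism of $\bbK$, by Lemma \ref{transitive} it sends lattices to lattices and intersections to intersections, so it induces topological isomorphisms $\bbL'/(\bbL\cap\bbL') \cong g\bbL'/(g\bbL\cap g\bbL')$ and $\bbL/(\bbL\cap\bbL') \cong g\bbL/(g\bbL\cap g\bbL')$. Applying the functoriality $\calD_\varphi$ of $\calD$ under isomorphisms of Tate vector spaces (noted right after Lemma \ref{determinantal gerbe}) to each factor, and tensoring, yields the desired equivalence of $\bbC^\times$-gerbes.

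For part (ii): fix three lattices $\bbL,\bbL',\bbL''$ and let $W = \bbL\cap\bbL'\cap\bbL''$, which is again a lattice (the intersection of two lattices is a lattice, iterate). The admissible filtrations $W \rightarrowtail \bbL\cap\bbL' \rightarrowtail \bbL$, together with the chosen subspaces $V_{\bbL,\bbL'}$ providing the quasi-inverse \eqref{quasi-inverse}, produce a preferred equivalence
\[
\calD_{\bbL/W} \;\cong\; \calD_{\bbL\cap\bbL'/W} \otimes \calD_{\bbL/\bbL\cap\bbL'},
\]
and analogously for $\bbL'$. Substituting into the definition of $\calD(\bbL|\bbL')$, the factor $\calD_{\bbL\cap\bbL'/W}$ cancels with its inverse, giving a preferred equivalence
\[
\calD(\bbL|\bbL') \;\cong\; \calD_{\bbL'/W} \otimes \calD_{\bbL/W}^{-1}.
\]
Tensoring the analogous expressions for $\calD(\bbL|\bbL')$ and $\calD(\bbL'|\bbL'')$ gives
$\calD_{\bbL'/W} \otimes \calD_{\bbL/W}^{-1} \otimes \calD_{\bbL''/W} \otimes \calD_{\bbL'/W}^{-1}$, in which the two copies of $\calD_{\bbL'/W}$ cancel canonically using the Picard (symmetric monoidal) structure on $\bbC^\times$-gerbes, yielding $\calD_{\bbL''/W} \otimes \calD_{\bbL/W}^{-1} \cong \calD(\bbL|\bbL'')$. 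This is the preferred composition equivalence.

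For the coherence diagram on four lattices, take $W = \bbL_1\cap\bbL_2\cap\bbL_3\cap\bbL_4$ and apply the same ``common denominator'' rewriting $\calD(\bbL_i|\bbL_j) \cong \calD_{\bbL_j/W}\otimes\calD_{\bbL_i/W}^{-1}$ uniformly. Both ways of composing $\calD(\bbL_1|\bbL_2)\otimes\calD(\bbL_2|\bbL_3)\otimes\calD(\bbL_3|\bbL_4) \to \calD(\bbL_1|\bbL_4)$ then collapse, after this rewriting, to the same telescoping cancellation, so commutativity of the diagram follows. The main obstacle will be verifying that the common-denominator equivalence is independent of auxiliary choices: that passing to a smaller common sublattice $W'\subset W$, or performing cancellations in a different order, yields \emph{strictly equal} (not merely isomorphic) natural transformations. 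This verification reduces to the cubical coherence for four-step filtrations recorded in Lemma \ref{determinantal gerbe} and the five-step coherence noted just before the statement, together with the compatibility condition $(*)$ on the subspaces $V_{\bbL,\bbL'}$, which was imposed precisely so that the associativity and inversion isomorphisms \eqref{quasi-inverse} fit into commuting diagrams of the required shapes.
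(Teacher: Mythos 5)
The paper supplies no proof here, only the remark that the lemma ``is a consequence of Lemma~\ref{determinantal gerbe} and the above discussion,'' so there is no detailed argument to compare against; your write-up is a correct elaboration of exactly that reduction. Your ``common denominator'' reorganization---rewriting $\calD(\bbL|\bbL')\cong\calD_{\bbL'/W}\otimes\calD_{\bbL/W}^{-1}$ with $W$ the total intersection, then cancelling via the Picard structure---is a clean way to extract the preferred composition from the equivalences and quasi-inverses of Lemma~\ref{determinantal gerbe}, and you correctly flag the one genuinely delicate point (independence of $W$ and compatibility of the two parenthesizations), tracing it back to the cubical/five-step coherence and the condition $(*)$ on the $V_{\bbL,\bbL'}$, which is indeed where the paper's terse remark is doing its work.
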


\subsubsection{$B\bbC^\times$-central extension
of $GL_{\infty,\infty}$}    \label{BGm central extension}

Let $\calG$ be a 2-group. In Definition \ref{2-group of central
extensions} we introduced the notion of central extension of
$\pi_0(\calG)$ by $B\pi_1(\calG)$.

Now we define a $B\bbC^\times$-central extension of
$GL_{\infty,\infty}$. We fix a lattice $\bbL\subset\bbK$. Let
$\GL_{\infty,\infty}$ be the following category: objects are
$(g,\Delta), g\in GL_{\infty,\infty}, \Delta\in\calD(g\bbL|\bbL)$;
the set of morphisms
$\Hom_{\GL_{\infty,\infty}}((g,\Delta),(g',\Delta'))$ is empty if
$g\neq g'$ and is the space $\Hom_{\calD
(g\bbL|\bbL)}(\Delta,\Delta')$ if $g=g'$. We define the monoidal
structure
$$
m:\GL_{\infty,\infty}\times\GL_{\infty,\infty}\to\GL_{\infty,\infty}
$$
by $m((g,\Delta),(g',\Delta'))=(gg',\Delta\otimes g(\Delta'))$.
Here we regard $\Delta\otimes g(\Delta')$ as an object of the
category
$$
\calD(g\bbL|\bbL)\otimes\calD(gg'\bbL|g\bbL)\cong\calD(gg'\bbL|\bbL),
$$
using Lemma \ref{determinantal gerbe II}. This Lemma also
guarantees that $\GL_{\infty,\infty}$ is a 2-group which is a
monoidal category. It is clear that $\pi_1(\GL_{\infty,\infty}) =
\C^\times$ and the action of $\pi_0(\GL_{\infty,\infty}) =
GL_{\infty,\infty}$ on $\pi_1(\GL_{\infty,\infty})$ is trivial.
Thus, $\GL_{\infty,\infty}$ is a $B\bbC^\times$-central extension
of $GL_{\infty,\infty}$.

\subsubsection{The genuine action
of $\bbG\bbL_{\infty,\infty}$ on $\calC_\bbL^{\on{ss}}$} Now we
show that the gerbal representation of $GL_{\infty,\infty}$ gives
rise to a genuine representation of the 2-group
$\GL_{\infty,\infty}$.

Let $\bbL$ and $\bbL'$ be two lattices of $\bbK$. We claim that
there is a functor
\[\Xi:\calD(\bbL|\bbL')\to\bbF\bbU\bbN\bbC(\calC_{\bbL}^{\on{ss}},
\calC_{\bbL'}^{\on{ss}}),\] where
$\bbF\bbU\bbN\bbC(\calC_{\bbK/\bbL}^{\on{ss}},\calC_{\bbK/\bbL'}^{\on{ss}})$
denotes the category of additive functors between these two
abelian categories, such that the following diagram commutes up to
a canonical isomorphism.
\[\begin{CD}\calD(\bbL|\bbL')\otimes\calD(\bbL'|\bbL'')@>
\Xi\otimes\Xi>>\bbF\bbU\bbN\bbC(\calC_{\bbL}^{\on{ss}},
\calC_{\bbL'}^{\on{ss}})\otimes\bbF\bbU\bbN\bbC(\calC_{\bbL'}^{\on{ss}},
\calC_{\bbL''}^{\on{ss}})\\
@VVV@VVV
\\
\calD(\bbL|\bbL'')@>\Xi>>\bbF\bbU\bbN\bbC(\calC_{\bbL}^{\on{ss}},
\calC_{\bbL''}^{\on{ss}})\end{CD}\]

It is clear that we need only to consider the case
$\bbL\subset\bbL'$ and $\bbL\supset\bbL'$. We first consider the
case $\bbL\supset\bbL'$. Then
$\calD(\bbL|\bbL')=\calD_{\bbL/\bbL'}^{-1}$. An object $\Delta$ is
a determinantal theory of the Tate vector space $\bbL/\bbL'$. If
$L\subset\bbL$ is a secondary lattice, then $L\cap\bbL'$ is a
secondary lattice of $\bbL'$ and $L/(L\cap\bbL')$ is a linearly
compact open subspace of $\bbL/\bbL'$. Now $\Xi$ is defined on
objects by the formula
\[\Xi(\Delta)(M_L)=M_{L\cap\bbL'}\otimes\Delta(L/(L\cap \bbL'))^{-1},\]
We define $\Xi(\Delta)$ on morphisms so as to make
$\Xi(\Delta)\in\bbF\bbU\bbN\bbC(\calC_{\bbK/\bbL}^{\on{ss}},
\calC_{\bbK/\bbL'}^{\on{ss}})$ in the same way as in the proof of
Theorem \ref{main1}.  Therefore, $\Xi$ defines a functor from
$\calD(\bbL|\bbL')$ to
$\bbF\bbU\bbN\bbC(\calC_{\bbK/\bbL}^{\on{ss}},\calC_{\bbK/\bbL'}^{\on{ss}})$.

Next, we consider the case $\bbL\subset\bbL'$. Then
$\calD(\bbL|\bbL')=\calD_{\bbL'/\bbL}$. Let
$\Delta\in\calD_{\bbL'/\bbL}$. For any secondary lattice
$L\subset\bbL$ we define
\[\Xi(\Delta)(M_L)=M_{L'}\otimes\Delta(V_{\bbL,\bbL'}),\]
where $L'$ is any secondary lattice in $\bbL'$ fitting into the
following exact sequence
\[0\to
L\to L'\to V_{\bbL,\bbL'}\to 0\] Observe that $M_{L'}$ is
independent of the choice of $L'$ up to a canonical isomorphism.
So $\Xi$ is well-defined in this case.

Now we define the genuine action of the 2-group
$\GL_{\infty,\infty}$ on $\calC_\bbL^{\on{ss}}$ as follows: for
$(g,\Delta)\in\GL_{\infty,\infty}$,
$F_{(g,\Delta)}:\calC_\bbL^{\on{ss}} \to \calC_\bbL^{\on{ss}}$ is
defined as the composition
$$
\calC_\bbL^{\on{ss}} \stackrel{g}{\to}
\calC_{g\bbL}^{\on{ss}}\stackrel{\Xi(\Delta)}{\to}\calC_\bbL^{\on{ss}}.
$$

\subsection{2-infinite
Grassmannian}    \label{2-Gr}

This subsection is completely independent from the rest of the
paper. Here we discuss informally a possible geometric realization
of the gerbal representation constructed in the previous section.

First, let us recall the 1-dimensional story. Consider the set of
lattices in the Tate vector space $K = \C\ppart$. Let us pick a
lattice, for instance, $L_0 = \C[[t]]$. Then to any lattice $L \in
K$ we associate a line $\det(L|L_0)$ and a $\C^\times$-torsor
$\det(L|L_0)^\times$. The group $GL_\infty$ acts transitively on
the set of lattices, but this action does not lift to the union of
these $\C^\times$-torsors. In fact, what lifts is a non-trivial
central extension $\wh{GL}_\infty$ of $GL_\infty$. This is clear
from the definition of this central extension given in \S
\ref{hat} (as the group $\wh{GL}'_\infty$).

We can use these geometric objects to construct a projective
representation of $GL_\infty$ on a vector space. For this we
present the set of lattices as the set of $\C$-points of an
ind-scheme $\on{Gr}$ (see \S \ref{1-Gr}) and the set of torsors as
the set of $\C$-points of a principal $\bbG_m$-bundle ${\mc
L}^\times$ on $\on{Gr}$. Here ${\mc L}$ is the determinant line
bundle introduced in \S \ref{det lb}. Now we can take the space of
global sections of ${\mc L}^*$, and the corresponding dual space
is the Fock representation $\bigwedge$ of $\wh{GL}_\infty$ (see
formula \eqref{geom Fock}).

\medskip

We would like to imitate this in the 2-dimensional case. The
set-theoretic part of the story is straightforward (see also
\cite{AK}). We have the set of lattices in $\bbK$, and the group
$GL_{\infty,\infty}$ acts transitively on it. Let us pick a
lattice, for instance, $\bbL_0 = {\mc O}_{\bbK}$. Then to any
lattice $\bbL$ we associate the $\C^\times$-gerbe
$\calD(\bbL|\bbL_0)$. The action of $GL_{\infty,\infty}$ on the
set of lattices lifts to an action of the 2-group
$\bbG\bbL_{\infty,\infty}$ on these gerbes. If we could realize
these objects algebro-geometrically, then we would be able to
construct a representation of $\bbG\bbL_{\infty,\infty}$ (that is,
a gerbal representation of $GL_{\infty,\infty}$) on the
corresponding category of ``global sections''. At the moment, we
do not know how to do this, but here are some indications of how
this could be done.

\medskip

For a Tate vector space $V$ with a countable basis of
neighborhoods of $0$, we will denote by $\Gr(V)$ the moduli space
of closed Tate vector subspaces of $V$. The corresponding functor
from commutative $\bbC$-algebras to sets is defined as follows:
for any commutative $\bbC$-algebra $R$,
\begin{equation}\label{moduli of Tate vector subspaces}
\Gr(V)(R)=\{\mbox{Tate } R\mbox{-modules that are direct summands of
}R\wh{\otimes}V\}.
\end{equation} Therefore, a map $\spec
R\to\Gr(V)$ is equivalent to a Tate $R$-module (see \cite{Dr}, \S
3.2.1 for the definition of Tate $R$-modules), which is a direct
summand of $R\wh{\otimes}V$. One can show that $\Gr(V)$ is a
$\bbC$-space. Observe that for $V=K$ the infinite Grassmannian
$\Gr$ defined in \S \ref{1-Gr} is a subspace of $\Gr(K)$. While
the later consists of all closed Tate vector subspaces of $K$, the
former only consists of those which are linearly compact.

If $M$ is a Tate $R$-module, then there is a $\bbG_m$-gerbe over
$\spec R$ in Nisnevich topology, namely, the gerbe $\calD_M$ of
determinantal theories of $M$ (see \cite{Dr}, \S 3.6). Therefore,
there is a tautological $\bbG_m$-gerbe over $\Gr(V)$, which
assigns to any $u: \on{Spec} R\to\Gr(V)$ the gerbe of
determinantal theories of the Tate $R$-module corresponding to
$u$. Denote this $\bbG_m$-gerbe over $\Gr(V)$ by $\calD_{\Gr(V)}$.

Now denote by $2\Gr$ the sought-after moduli of lattices of
$\bbK=\bbC\ppart\ppars$. It is natural to define it as the direct
limit
\[2\Gr=\lim\limits_{\overrightarrow{N}}\Gr(s^{-N}
\calO_\bbK/s^N\calO_\bbK).\] The tautological gerbes over these
spaces should be compatible with the pull-backs under the
embeddings
\[\Gr(s^{-N}\calO_\bbK/s^N\calO_\bbK)\to
\Gr(s^{-N-1}\calO_\bbK/s^{N+1}\calO_\bbK),\] and therefore we
should obtain a $\bbG_m$-gerbe $\calD_{2\Gr}$ over $2\Gr$.

Alternatively, consider the Cartesian square $2\Gr\times 2\Gr$.
Then there should be a tautological $\bbG_m$-gerbe over it, whose
fiber over $(\bbL,\bbL')$ is the gerbe $\calD(\bbL|\bbL')$ as
defined in \S \ref{dg}. The restriction of this gerbe to
$2\Gr\times\{\calO_\bbK\}$ is the above gerbe $\calD_{2\Gr}$.

The group $GL_{\infty,\infty}$ acts transitively on the set of
$\C$-points of $2\Gr$ (see Lemma \ref{transitive}), and this
action lifts to a tautological action of $\GL_{\infty,\infty}$
(see \S \ref{BGm central extension}) on $\calD_{2\Gr}$.

Recall that for a $\bbG_m$-gerbe $\calG$ over a scheme $S$, there
is a notion of a $\calG$-twisted $\calO$-module on $S$ (see, e.g.,
\cite{Li}). We denote the category of $\calG$-twisted
$\calO$-modules on $S$ by $\QCoh_S(\calG)$. The $\bbG_m$-gerbe
$\calG$ also gives rise to a sheaf of abelian categories over $S$,
$\calC:=\calG\underset{B\bbG_m}\times \on{Vect}$, and
$\QCoh_S(\calG)$ may be viewed as the category of global sections
of $\calC$. Since there is a an action of
$\bbG\bbL_{\infty,\infty}$ on $\calD_{2\Gr}$, we should obtain a
(genuine) representation of $\bbG\bbL_{\infty,\infty}$ (and hence
a gerbal representation of $GL_{\infty,\infty}$) on
$\QCoh_{2\Gr}(\calD_{2\Gr})$. In addition, we expect that there is
a $GL_{\infty,\infty}$-equivariant embedding of categories
$\calC_{\calO_\bbK}^{\on{ss}}\to \QCoh_{2\Gr}(\calD_{2\Gr})$.

\medskip

This is still a very rough scenario in which many details need to be
worked out. But let us look at the following simplified version, which
is its 1-dimensional analogue. Let $V$ be again a Tate vector space,
with a countable basis of neighborhoods of $0$. Let $\Gr(V)$ be the
moduli space of closed Tate vector subspaces of $V$ as defined in
(\ref{moduli of Tate vector subspaces}) and $\calD_{\Gr(V)}$ the
tautological $\bbG_m$-gerbe over it. Let $GL(V)$ be the group of
continuous automorphisms of $V$. Note that $GL(V)=GL_\infty$ if
$V=K$. Now $GL(V)$ acts (genuinely) on $\Gr(V)$, as well as on
$\calD_{\Gr(V)}$. Therefore, there is a genuine representation of
$GL(V)$ on $\QCoh_{\Gr(V)}(\calD_{\Gr(V)})$. On the other hand, we
recall that $\calC_V$ is the category of discrete Clifford modules
over $\Cl_V$ (see \S \ref{another descr}), and there is a genuine
representation of $GL(V)$ on $\calC_V$ (since $GL(V)$ acts by
automorphisms of $\Cl_V$). We claim that there is a
$GL(V)$-equivariant embedding
$\calC_V\to\QCoh_{\Gr(V)}(\calD_{\Gr(V)})$.

To see that, we first need a strengthening of Lemma \ref{equi},
which can be proven using the results in \cite{BBE}, \S 2.14-2.15.
Let $R$ be a commutative ring and $M$ a Tate $R$-module.
Consider the Clifford algebra $\Cl_M=\Cl_R(M\oplus M^*)$ and denote
by $\calC_M$ the abelian category of discrete modules of $\Cl_M$.
Let $\calD_M$ be the $\bbG_m$-gerbe of determinantal theories of
$M$ in Nisnevich topology on $\on{Spec} R$. We have the following
\begin{prop} The category $\calC_M$ is naturally
equivalent to the category of $\calD_M$-twisted ${\mc O}$-modules
over $\spec R$.
\end{prop}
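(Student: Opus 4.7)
The strategy is to generalize Lemma \ref{equi} to the relative setting, working Nisnevich-locally on $\spec R$. By Drinfeld's results (see \cite{Dr}, \S3.6 and \S5.2), after passing to a Nisnevich cover $\spec R'\to\spec R$ the pullback $M_{R'}$ admits a lattice $L\subset M_{R'}$ --- a linearly compact open $R'$-submodule that is a direct summand. In fact, the gerbe $\calD_M$ is precisely the $\bbG_m$-gerbe classifying choices of such a lattice together with a compatible determinantal theory. This allows us to reduce the statement to a local description plus descent.

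Over such a cover, fix a lattice $L\subset M_{R'}$. Then $L\oplus L^\perp$ is a maximal isotropic submodule of $M_{R'}\oplus M_{R'}^*$, and we form the local Fock module
\[
M_L:=\ind_{\bigwedge(L\oplus L^\perp)}^{\Cl_{M_{R'}}}(R'|0\rangle_L),
\]
which is a discrete $\Cl_{M_{R'}}$-module. A relative version of the argument in the proof of Lemma \ref{equi} --- realizing any $v$ in a discrete $\Cl_{M_{R'}}$-module $N$ as a vector killed by a sub-lattice of $L\oplus L^\perp$ of finite codimension, and reducing to the known structure theory for modules over finite-dimensional Clifford algebras --- shows that the functors
\[
N\;\longmapsto\; F_L(N):=N^{\bigwedge(L\oplus L^\perp)},\qquad V\;\longmapsto\; V\otimes_{R'} M_L
\]
are mutually quasi-inverse equivalences between $\calC_{M_{R'}}$ and the category of $R'$-modules. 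This is essentially the content of \cite{BBE}, \S2.14--2.15.

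To globalize, one uses the fact that for two lattices $L,L'\subset M_{R'}$ (possibly after passing to a further Nisnevich refinement), the invariants $F_L(N)$ and $F_{L'}(N)$ differ by tensoring with the determinant line $\det(L|L')$, in complete analogy with Lemmas \ref{Hom set} and \ref{non-isom}; and these identifications satisfy the cocycle condition of Remark \ref{composition of determinantal lines}. Hence, for a discrete $\Cl_M$-module $N$ on $\spec R$, the local invariants $F_L(N|_{R'})$ naturally assemble into a $\calD_M$-twisted $\calO$-module. Conversely, a $\calD_M$-twisted sheaf $\calF$ determines a collection of local $R'$-modules that one tensors with $M_L$ and glues via the same determinantal identifications to yield a globally defined discrete $\Cl_M$-module. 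The two constructions are inverse to each other by what has already been verified locally.

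The main obstacle is the descent step: one must check that the local equivalences $F_L$, which a priori depend on the auxiliary choice of $L$, assemble into a well-defined functor out of the abelian category $\calC_M$, and that the gluing data on the Clifford side matches exactly with the definition of $\calD_M$-twisted modules. This ultimately relies on Drinfeld's theorem that $\calD_M$ is a $\bbG_m$-gerbe in the Nisnevich topology (so that local lattices exist and determinantal comparisons glue), together with the compatibility of formation of $F_L$ with base change in $R'$, which in turn follows from the fact that $N$ is discrete and the relevant invariants are computed on finite-dimensional Clifford sub-algebras that are preserved by flat base change.
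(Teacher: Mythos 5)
The paper states this proposition with no proof of its own, saying only that it ``can be proven using the results in \cite{BBE}, \S 2.14--2.15,'' and having earlier invoked \cite{Dr} for the fact that $\calD_M$ is a $\bbG_m$-gerbe in the Nisnevich topology. Your proposal is a reasonable expansion of exactly that indicated route: use Drinfeld to get a lattice $L\subset M_{R'}$ Nisnevich-locally, prove the relative analogue of Lemma \ref{equi} (that $N\mapsto N^{\bigwedge(L\oplus L^\perp)}$ and $V\mapsto V\otimes_{R'}M_L$ are quasi-inverse, via reduction to finite-rank Clifford subalgebras as in \cite{BBE}), observe that changing $L$ to $L'$ twists by $\det(L|L')$ in a way that satisfies the cocycle condition of Remark \ref{composition of determinantal lines}, and descend. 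Since the paper offers no proof to compare against, there is nothing to contrast; your sketch is the fleshed-out version of the argument the paper gestures toward.
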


Now we will associate to every lattice $L\subset V$ of $V$ an
object $\calF_L\in \QCoh_{\Gr(V)}(\calD_{\Gr(V)})$. Let $u:\spec
R\to \Gr(V)$ be a morphism given by $M\subset R\wh{\otimes}V$. For
any $L$, the tensor product $R\wh{\otimes} L$ is a lattice in
$R\wh{\otimes} V$, and $L_M:=(R\wh{\otimes} L)\cap M$ is a lattice
in $M$. Let $M_{L_M}$ be the vacuum module over $\Cl_M$ induced
from the trivial representation of $\bigwedge(L_M\oplus
L_M^\perp)$. By the above proposition, we obtain an object in
$\QCoh_{\spec R}(\calD_M)$, which we denote by $u^*\calF_L$. Then
the collection $\{u^*\calF_L\}_{u:\spec R\to\Gr(V)}$ define the
desired object $\calF_L\in \QCoh_{\Gr(V)}(\Gr(V))$.

Finally, there is a $GL(V)$-equivariant functor $\calC_V\to
\QCoh_{\Gr(V)}(\calD_{\Gr(V)})$ that will send $M_L$ to $\calF_L$,
where $M_L\in\calC_V$ is the vacuum module of $\Cl_V$ associated
with $L$.

Thus, we see that a simplified version of our proposal does work.

\section{Cohomology of $GL_{\infty,\infty}$ and related
groups} \label{coh}

In the previous section we have constructed a gerbal
representation of the group $GL_{\infty,\infty}$ on a certain
category of modules over a Clifford algebra. It is important to
identify the third cohomology class arising in this gerbal
representation. In this section, we first discuss the relevant
cohomology groups. In the next section we will show that there
exists of a particular non-zero cohomology class $[E_3]\in
H^3(GL_{\infty,\infty},\bbC^\times)$ and this class corresponds to
the gerbal representation of $GL_{\infty,\infty}$ on
$\calC_{\calO_\bbK}^{\on{ss}}$ constructed in Theorem \ref{main1}.

Analogous results on the cohomologies of the Lie algebra
$\gl_{\infty,\infty}$ and related Lie algebras will be discussed
in \cite{next}.

\subsection{Lie algebras and Lie groups
of matrices}

\subsubsection{Lie algebras of matrices} Let $R$ be an
associative (not necessarily unital) $\bbC$-algebra. By
definition, $R$ is a $\bbC$-vector space, with a ring structure
such that the multiplication $R\times R\to R$ is $\bbC$-bilinear.

We will denote by $R\ppart$ the $\bbC$-algebra of Laurent series
with coefficients in $R$. We will just regard it as a topological
{\em right} $R$-module, endowed with the $t$-adic topology. Let
$R[[t]]$ denote the $\bbC$-algebra of power series with
coefficients in $R$. Then it is an open submodule of $R\ppart$.
Observe that we have the following polarization:
\[R\ppart=R[[t]]\oplus t^{-1}R[t^{-1}].\]

Recall that we define $\gl_\infty(-)$ as a functor from the
category of associative (not necessarily unital) $\bbC$-algebras
to itself by assigning to any $\bbC$-algebra $R$ the algebra of
continuous {\em right} $R$-module endomorphisms of $R\ppart$ (so
that $\gl_\infty(R)$ acts on $R\ppart$ from the left). Likewise,
we define $\gl_\infty^+(R)$ as the algebra of continuous {\em
right} $R$-module endomorphisms of $R[[t]]$, and define
$\gl_\frakf(R)$ to be the two-sided ideal of $\gl_\infty^+(R)$
consisting of discrete $R$-module endomorphisms of $R[[t]]$, i.e.,
endomorphisms $f:R[[t]]\to R[[t]]$ such that $\exists N$,
$f|_{t^NR[[t]]}=0$.

Finally, there is a natural map $\pi:\gl_\infty(R)\to
\gl^+_\infty(R)$ (not an algebra homomorphism) induced by the
natural projection $\pi:R\ppart\to R[[t]]$ with respect to the
decomposition $R\ppart=R[[t]]\oplus t^{-1}R[t^{-1}]$. Namely,
$\pi(A)\in\gl^+_\infty(R)$ is defined by the formula
\[\pi(A)x=\pi(Ax) \ \ \ \ \ \ \ \mbox{ for } x\in
R[[t]]\]

Now we generalize the construction in \S \ref{central extensions}
as follows. Define
\[\widetilde{\gl}_\infty(R)=\{(A,X)\in
\gl^+_\infty(R)\times \gl_\infty(R), A-\pi(X)\in \gl_\frakf(R)\}\]
One has the following exact sequence of associative algebras (and
therefore Lie algebras)

\begin{equation}\label{basicexactsequence}
0\to\gl(R)\stackrel{i}{\rightarrow}\widetilde{\gl}_\infty(R)\stackrel{p}{\rightarrow}\gl_\infty(R)\to
0\end{equation} where $i(A)=(A,0)$ and $p(A,X)=X$. One also has
the section of $p$ given by $X\mapsto(\pi(X),X)$.

Therefore, $H_*(\gl(R)),H_*(\gl^+_\infty(R)),H_*(\gl_\infty(R))$
have the structures of graded Hopf algebras.

\subsubsection{Groups of matrices}\label{groups
of matrices}

Likewise, we will define $GL_\infty(R)$ to be the group of
continuous {\em right} $R$-module automorphisms of $R\ppart$,
$GL_\infty^+(R)$ the group of continuous {\em right} $R$-module
automorphisms of $R[[t]]$, and $GL_\frakf$ the normal subgroup of
$GL_\infty^+(R)$ consisting of those $g$ such that $\exists N$,
$g|_{t^NR[[t]]}=\mathrm{id}$. Furthermore, let
$\widetilde{GL}_\infty(R)$ be the group of invertible elements in
$\widetilde{\gl}_\infty(R)$, defined in the previous subsection.
Unlike the case of Lie algebras where we have the short exact
sequence (\ref{basicexactsequence}), for groups we only have the
following sequence
\[1\to
GL_\frakf(R)\stackrel{i}{\to}\widetilde{GL}_\infty(R)\stackrel{p}{\to}
GL_\infty(R),\] where $i:GL_\frakf(R)\to\widetilde{GL}_\infty(R)$
is given by $i(a)=(a,1)$. (We recall that $p(a,g)=g$.) We will
have a detailed discussion of this sequence in \S \ref{Two exact
sequences of groups}.

If $R$ has the unit, and if we use the standard topological basis
$\{t^i\}$ of $R\ppart$, these groups can be written in the
following concrete terms:
\[GL_\frakf(R)=\{(a_{ij})_{i,j\geq 0}, a
\mbox{ invertible }, a_{ij}\in R, a_{ij}=\delta_{ij} \mbox{ for }
j\gg 0\},\]
\[GL_\infty^+(R)=\left\{\begin{array}{l}(a_{ij})_{i,j\geq 0}, a
\mbox{
invertible }, a_{ij}\in R| \forall m\geq 0, \exists n\geq 0, \\
\mbox{ such that whenever } i<m, j>n, a_{ij}=0
\end{array}\right\},\]
\[GL_\infty(R)=\left\{\begin{array}{l}(a_{ij})_{i,j\in\bbZ},
a \mbox{ invertible }, a_{ij}\in R| \forall m\in \bbZ, \exists
n\in\bbZ, \\ \mbox{ such that whenever } i<m, j>n, a_{ij}=0
\end{array}\right\}.\]

The {\em left} actions of these group on $R\ppart$ are given by
$at^j=\sum a_{ij}t^i$.

\begin{rmk}
Even if $R$ does not have the unit, we can still present these
groups in matrix forms. For example, $GL_\frakf(R)$ consists of
those $A=(A_{ij})_{i,j\geq 0}$, $A_{ij}=0$ for $j\gg 0$, such that
there exists some $B$, $A+B+AB=0$, where $AB$ is the usual matrix
multiplication.
\end{rmk}

Observe that $GL(R),GL^+_\infty(R), GL_\infty(R)$ have inner sums,
that is, group homomorphisms
\[\begin{array}{l}\oplus: GL(R)\times
GL(R)\to
GL(R)\\
\oplus:GL^+_\infty(R)\times GL^+_\infty(R)\to
GL^+_\infty(R)\\
\oplus:GL_\infty(R)\times GL_\infty(R)\to
GL_\infty(R)\end{array}\] defined as follows: for $a=(a_{ij}),
b=(b_{ij})$, $a\oplus b=c$, where
\[c_{ij}=\left\{\begin{array}{ll}a_{\frac{i}{2},\frac{j}{2}}&\mbox{ if }
i,j
\mbox{ even },\\
b_{\frac{i-1}{2},\frac{j-1}{2}}&\mbox{ if }i,j \mbox{ odd
},\\

                                0 &\mbox{ otherwise
}
\end{array}\right.\]

Let us remark that $\gl_\frakf(R), \gl_\infty^+(R)$ and
$\gl_\infty(R)$ also have inner sums, given by the same formula as
in the previous subsection.

\subsubsection{Notation}

As in \S \ref{central extensions}, for brevity,
$\gl_\frakf(\bbC)$, $\gl^+_\infty(\bbC)$, $\gl_\infty(\bbC)$ and
$\widetilde{\gl}_\infty(\bbC)$ will be denoted by $\gl_\frakf$,
$\gl^+_\infty$, $\gl_\infty$, and $\widetilde{\gl}_\infty$,
respectively. Likewise, $GL_\frakf(\bbC)$, $GL_\infty^+(\bbC)$,
$GL_\infty(\bbC)$ and $\widetilde{GL}_\infty(\bbC)$ will be
denoted by $GL_\frakf$, $GL_\infty^+$, $GL_\infty$ and
$\widetilde{GL}_\infty$), respectively. As in \S \ref{Gerbal
extensions}, we denote $GL_\infty(\gl_\infty)$ by
$GL_{\infty,\infty}$. Furthermore, we will denote
$GL_\frakf(\gl_\infty)$ by $GL_{\frakf,\infty}$.

\subsection{Computation of group
cohomology}\label{group cohomology}

We do not have a complete description of the cohomology for the
groups introduced above (unlike the Lie algebra case, which will
be treated in \cite{next}). However, we still obtain some
interesting cohomology classes that will be sufficient for our
purposes.

\subsubsection{Starting point}\label{starting
point} We do not know the full cohomology groups
$H^\bullet(GL_\frakf,\bbC^\times)$. However, the determinant
$\det:GL_\frakf\to\bbC^\times$ determines a class $[\det]\in
H^1(GL_\frakf,\bbC^\times)$, which is the starting point of the
following construction.

Next, we turn to $GL_\infty$ and $GL_{\infty,\infty}$. Recall the
definition of $\widetilde{GL}_\infty(R)$ from \S \ref{groups of
matrices}. We have

\begin{prop}\label{group acyclic}
For any $\bbC$-algebra $R$,
$H^\bullet(GL_\infty^+(R),\bbC^\times)=H^\bullet(\widetilde{GL}_\infty(R),
\bbC^\times)\cong\bbC^\times$.
\end{prop}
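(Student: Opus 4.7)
Since both $G = GL_\infty^+(R)$ and $G = \widetilde{GL}_\infty(R)$ act trivially on $\bbC^\times$, the zero-th cohomology is $H^0(G, \bbC^\times) = \bbC^\times$, and the claim reduces to showing $H^n(G, \bbC^\times) = 0$ for all $n \geq 1$. The plan is to apply an Eilenberg swindle, in the spirit of Mather's acyclicity theorem for flabby groups.

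The core observation is that $G$ admits an ``infinite inner sum'' endomorphism $T : G \to G$. Fixing a bijection $\varphi : \bbN \xrightarrow{\sim} \bbN \times \bbN$ (for instance, via $n+1 = 2^p(2q+1)$, so that $\varphi(n)=(p,q)$), one defines $T(g)$ to be the block-diagonal matrix with countably many copies of $g$ indexed by the first factor of $\bbN \times \bbN$; a direct bookkeeping check shows $T(g)$ retains the column-finiteness condition defining $GL_\infty^+(R)$, is invertible, and $T$ is a group homomorphism. Crucially, every permutation of $\bbN$ tautologically satisfies the column-finiteness condition, so all permutation matrices lie in $G$; hence any two block-diagonal elements of $G$ whose block decompositions differ only by a rearrangement of isomorphic blocks are conjugate in $G$. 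Since both $g \oplus T(g)$ and $T(g)$ are block-diagonal with countably many copies of $g$ (arranged on different subsets of $\bbN$), this yields the key identity
\[ \sigma\, T(g)\, \sigma^{-1} \;=\; g \oplus T(g) \;=\; \phi_0(g)\,\phi_1(T(g)), \]
for some $\sigma \in G$, where $\phi_0, \phi_1 : G \hookrightarrow G$ are the commuting embeddings $g \mapsto g \oplus 1$ and $g \mapsto 1 \oplus g$. One further checks that $\phi_0(g)$ and $\phi_1(g)$ are conjugate by the even-odd swap permutation, and that $T(T(g)) \sim T(g)$ in $G$.

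These structural facts are exactly what is needed to run the Mather-type acyclicity argument: combining the displayed identity with the fact that $T$ is a homomorphism and the above conjugacies, one constructs an explicit contracting chain homotopy on the bar complex of $G$ with trivial $\bbC^\times$-coefficients, showing that it is acyclic in positive degrees. The argument applies verbatim to $\widetilde{GL}_\infty(R)$: the endomorphism $T$, the operations $\oplus, \phi_0, \phi_1$, and the permutation $\sigma$ all extend componentwise through the defining inclusion $\widetilde{\gl}_\infty(R) \subset \gl_\infty^+(R) \times \gl_\infty(R)$, preserving the Mather identity.

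The principal obstacle is the formal verification that this identity produces a genuine contracting homotopy in every positive degree, not merely in degree one. In degree one the argument is essentially a two-line computation: for a character $\alpha : G \to \bbC^\times$, one gets $\alpha(T(g)) = \alpha(\phi_0(g)) + \alpha(\phi_1(T(g)))$, and iterating together with $T(T(g)) \sim T(g)$ eventually forces $\alpha \equiv 1$. In higher degrees one must carefully construct the swindle on cochains and check that the resulting homotopy respects the coboundary; this is standard and purely combinatorial once the Mather identity is established, but is where one has to do actual work.
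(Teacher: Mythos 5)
Your overall strategy — an Eilenberg/Mather swindle driven by an ``infinite inner sum'' endomorphism and permutation conjugations — uses the same structural ingredients as the paper, but organizes the bookkeeping differently. The paper first reduces to showing $H_*(G,k)\cong k$ for any field $k$ (by the universal coefficient theorem), and then exploits the Hopf algebra structure on $H_*(G,k)$: the inner sum $\oplus$ gives the Pontryagin product, the diagonal gives the coproduct, and one runs an induction on degree using the K\"unneth decomposition of $\Delta_*(x)$ together with the homomorphism $\tau$ satisfying the \emph{exact} identity $a\oplus\tau(a)=\tau(a)$. You instead posit a global contracting chain homotopy on the bar complex with $\C^\times$-coefficients. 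The structural inputs are equivalent (your conjugacy $\sigma T(g)\sigma^{-1}=g\oplus T(g)$ with a $g$-independent $\sigma$ has the same effect on homology as the paper's on-the-nose identity), but you leave the actual construction of the homotopy in degrees $\geq 2$ to the reader, whereas the paper's Hopf-theoretic induction is precisely the device that makes the higher-degree case a two-line computation. That gap in your argument is not purely cosmetic; the degree-wise K\"unneth argument is exactly where the combinatorics gets organized, and I would not call it ``standard'' to write down an explicit bar-complex homotopy here.

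The more serious issue is your treatment of $\widetilde{GL}_\infty(R)$. You assert that $T$, $\oplus$, $\phi_0$, $\phi_1$, $\sigma$ all extend ``componentwise'' along $\widetilde{\gl}_\infty(R)\subset\gl_\infty^+(R)\times\gl_\infty(R)$, preserving the Mather identity. This fails. If $(A,X)\in\widetilde{\gl}_\infty(R)$, then $A-\pi(X)\in\gl_\frakf(R)$, i.e.\ $A-\pi(X)$ has finitely many nonzero columns. Applying the componentwise $T$ gives $T(A)-\pi(T(X))=T(A-\pi(X))$, but the infinite block-diagonal sum of a nonzero element of $\gl_\frakf(R)$ has one copy of its finite column support in each of infinitely many blocks, hence infinitely many nonzero columns, hence lies \emph{outside} $\gl_\frakf(R)$. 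So $T$ does not map $\widetilde{\gl}_\infty(R)$ (nor $\widetilde{GL}_\infty(R)$) to itself, and the swindle cannot be run directly on this group. The paper sidesteps this by fitting $\widetilde{GL}_\infty(R)$ into the short exact sequence $1\to J(R)\to\widetilde{GL}_\infty(R)\to GL_\infty^+(R)\to 1$ (projection $(a,g)\mapsto a$), presenting the kernel $J(R)$ as a filtered colimit of copies of a group acyclic by the same swindle as $GL_\infty^+(R)$, and then invoking the Lyndon--Hochschild--Serre spectral sequence. You would need a comparable reduction; the claim of a verbatim componentwise extension is wrong as stated.
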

\begin{proof}
Let us in fact prove that
$H_*(GL_\infty^+(R),k)=H_*(\widetilde{GL}_\infty(R),k)\cong k$ for
any field $k$. The proposition then follows from the universal
coefficient theorem.

We first prove that $H_*(GL_\infty^+(R),k)\cong k$. This is in
fact proved in \cite{W}. For the sake of completeness, we
reproduce the proof here.

Recall that $H_*(GL^+_\infty(R),k)$ has the structure of a Hopf
algebra since we have the inner sum $"\oplus"$. We will show that
there is a group homomorphism $\tau:GL^+_\infty(R)\to
GL^+_\infty(R)$ such that for any $a\in GL^+_\infty(R)$,
$a\oplus\tau(a)=\tau(a)$. Using this fact, we prove
$H_*(GL_\infty^+(R),k)\cong k$ by induction. Assume that
$H_{n-1}(GL^+_\infty(R),k)=0$. Then for $x\in
H_n(\gl^+_\infty(R),k)$, $\Delta(x)=x\otimes 1+1\otimes x+ \sum
u_i\otimes v_i$, where $u_i,v_i\in H_k(GL^+_\infty(R)),
k=1,2,\ldots,n-1$. By induction, $u_i,v_i$ vanish and therefore,
$\Delta(x)=x\otimes 1+1\otimes x$. Since
\[\tau=\oplus\circ(\mbox{id}\oplus\tau)\circ\Delta:GL^+_\infty(R)\to
GL^+_\infty(R)\] one obtain that $\tau_*(x)=x+\tau_*(x)$ and
therefore $x=0$.

Now we construct the morphism $\tau$: for $a=(a_{ij})\in
GL^+_\infty(R)$, we set
\[\tau(a)_{ij}=\left\{\begin{array}{ll}a_{i/2,j/2}  &\mbox{ if } i,j
\mbox{
even};\\
a_{m,n} &\mbox{ if } i=2^km+2^{k-1}-1, j=2^kn+2^{k-1}-1;\\
    0
&\mbox{ otherwise}
    \end{array}\right.\]
It is easy to check that $\tau$ has the required properties.

Next, we show that $H_*(\widetilde{GL}_\infty(R),k)\cong k$. We
have the surjective group homomorphism
\[1\to
J(R)\to\widetilde{GL}_\infty(R)\to GL_\infty^+(R)\to 1\] sending
$(a,g)\to a$. The kernel is
\[J(R)=\{g=(g_{ij})_{i,j\in\bbZ}\in GL_\infty(R),
g_{ij}=\delta_{ij} \mbox{ for } j\gg 0\}\] Therefore, it is enough
to prove that $H_*(J(R),k)\cong k$. We could present $J(R)$ as
\[J(R)=\lim\limits_{\overrightarrow{n}}J_n(R)\]
where
\[J_n(R)=\{g=(g_{ij})_{i,j\in\bbZ}\in
GL_\infty(R), g_{ij}=\delta_{ij} \mbox{ for } j\gg n\}\] Observe
that all $J_n(R)$ is indeed isomorphic to $J_0(R)$, and therefore
it is enough to show that $H_*(J_0(R),k)\cong k$. This in fact
follows from the same argument as for $GL_\infty^+(R)$.
\end{proof}

\subsubsection{Two exact sequences
of groups}\label{Two exact sequences of groups}

Recall that from \S \ref{groups of matrices}, we have a left exact
sequence of groups
\[1\to
GL_\frakf(R)\stackrel{i}{\to}\widetilde{GL}_\infty(R)\stackrel{p}{\to}
GL_\infty(R).\] In general, the map $p:\widetilde{GL}_\infty(R)\to
GL_\infty(R)$is not surjective. We have the following:

\begin{prop}\label{degree map}
There is a well-defined surjective group homomorphism
\[\deg:GL_\infty(R)\to K_0(R^{\on{op}})\]
where $K_0(R^{\on{op}})$ is the Grothendieck group of the category
of finitely generated projective {\em right} $R$-modules.
\end{prop}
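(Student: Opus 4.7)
The plan is to generalize the classical definition given at the level of $\bbC$-points in \S\ref{hat}, namely $\deg(g) = \dim(L_0/(L_0 \cap gL_0)) - \dim(gL_0/(L_0 \cap gL_0))$. For $g \in GL_\infty(R)$, with $L_0 := R[[t]] \subset R\ppart$, we set
\[\deg(g) := [L_0/(L_0 \cap gL_0)] - [gL_0/(L_0 \cap gL_0)] \in K_0(R^{\on{op}}).\]
To make sense of this we must verify that the quotients appearing define classes in $K_0(R^{\on{op}})$, i.e. are finitely generated projective right $R$-modules.

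First I would introduce the notion of an \emph{$R$-lattice} in $R\ppart$: a closed right $R$-submodule $L$ with $t^N L_0 \subset L \subset t^{-N} L_0$ for some $N \geq 0$, such that $L/t^N L_0$ is a finitely generated projective right $R$-module. Applying the matrix description of $GL_\infty(R)$ from \S\ref{groups of matrices} to both $g$ and $g^{-1}$ yields the size bounds $t^N L_0 \subset gL_0 \subset t^{-N} L_0$. Projectivity is the key technical point: for $N$ sufficiently large, the matrix entries $a_{ij}$ of $g$ with $j \geq N$ satisfy $a_{ij} = 0$ for $i < 0$, so the restriction $g\colon t^N L_0 \to L_0$ provides a right $R$-linear splitting that exhibits $gL_0/t^N L_0$ as a direct summand of $t^{-N}L_0/t^N L_0 \cong R^{2N}$. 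The same decomposition argument shows that for any two $R$-lattices $L, L'$ the intersection $L \cap L'$ is again a lattice and $L/(L \cap L')$ is finitely generated projective.

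To show $\deg$ is a group homomorphism I would prove the additivity identity
\[[L_1/(L_1 \cap L_3)] - [L_3/(L_1 \cap L_3)] = \bigl([L_1/(L_1 \cap L_2)] - [L_2/(L_1 \cap L_2)]\bigr) + \bigl([L_2/(L_2 \cap L_3)] - [L_3/(L_2 \cap L_3)]\bigr)\]
for any three $R$-lattices, by reducing to a common sublattice $t^M L_0 \subset L_1 \cap L_2 \cap L_3$ and applying the additivity of $K_0$ to the resulting short exact sequences of finitely generated projective modules. Taking $L_1 = L_0$, $L_2 = g_1 L_0$, $L_3 = g_1 g_2 L_0$, and using that the automorphism $g_1$ of $R\ppart$ identifies the pair $(g_1 L_0, g_1 g_2 L_0)$ with $(L_0, g_2 L_0)$, gives $\deg(g_1 g_2) = \deg(g_1) + \deg(g_2)$.

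For surjectivity, every class in $K_0(R^{\on{op}})$ has the form $[P] - [R^n]$ with $P$ a finitely generated projective right $R$-module and $n \geq 0$. The shift operator $\sigma\colon t^i \mapsto t^{i-n}$ lies in $GL_\infty(R)$ and satisfies $\deg(\sigma) = -[R^n]$. For the positive part, given $P$ with $P \oplus Q \cong R^n$, fix elements $p_1, \ldots, p_n \in P$ obtained as images of a standard basis of $R^n$, and let $L \subset L_0$ be the kernel of the continuous right $R$-linear surjection $L_0 \twoheadrightarrow P$ sending $t^j \mapsto p_{j+1}$ for $0 \leq j < n$ and $t^j \mapsto 0$ for $j \geq n$; then $L$ is an $R$-lattice with $L_0/L \cong P$. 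Transitivity of the $GL_\infty(R)$-action on $R$-lattices (proved along the lines of Lemma \ref{transitive}) produces $g \in GL_\infty(R)$ with $gL_0 = L$, whence $\deg(g) = [P]$. The principal obstacle is the projectivity of lattice quotients; this is a genuine strengthening of the commensurability used in the classical case, and it relies essentially on the asymptotic upper-triangularity built into the definition of $GL_\infty(R)$.
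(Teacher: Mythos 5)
Your overall strategy — define $\deg(g)$ as a difference of classes of lattice quotients, reduce well-definedness to showing these quotients are finitely generated projective, and then prove additivity and surjectivity — matches the paper's. But the central step, projectivity of $gL_0/t^NL_0$, is where your argument has a genuine gap, and it is precisely the place the paper spends most of its effort.

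You assert that ``the restriction $g\colon t^N L_0 \to L_0$ provides a right $R$-linear splitting that exhibits $gL_0/t^N L_0$ as a direct summand of $t^{-N}L_0/t^N L_0$.'' This does not follow. Note first that ``the restriction $g\colon t^N L_0 \to L_0$'' encodes the inclusion $g(t^N L_0)\subset L_0$, which is a different condition from the one relevant here, namely $t^N L_0\subset gL_0$; and neither containment by itself is a splitting of the exact sequence
\[
0\to gL_0/t^N L_0\to t^{-N}L_0/t^N L_0\to t^{-N}L_0/gL_0\to 0.
\]
To produce an actual retraction you need something like the conjugated projection $e:=g\pi g^{-1}$, where $\pi:R\ppart\to R[[t]]$ is the idempotent projection onto $L_0$. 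For $N$ chosen so that $t^N L_0\subset gL_0\subset t^{-N}L_0$, one checks that $e$ preserves both $t^N L_0$ and $t^{-N}L_0$, hence descends to an $R$-linear idempotent on $t^{-N}L_0/t^N L_0\cong R^{2N}$ whose image is exactly $gL_0/t^N L_0$; this does exhibit it as a direct summand of a finite free module. That idempotent is the content missing from your sentence. The paper takes a different and more roundabout route: it chooses a four-term chain $gt^MR[[t]]\supset t^{n_1}R[[t]]\supset gt^{n_2}R[[t]]\supset t^{n_3}R[[t]]$, shows $gt^MR[[t]]/t^{n_1}R[[t]]$ has projective dimension $\le 1$ via a length-two resolution, deduces that $t^{n_1}R[[t]]/gt^{n_2}R[[t]]$ is projective, and finally that $gt^{n_2}R[[t]]/t^{n_3}R[[t]]$ is a direct summand of the finite free module $t^{n_1}R[[t]]/t^{n_3}R[[t]]$.

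A secondary issue: for surjectivity you appeal to ``transitivity of the $GL_\infty(R)$-action on $R$-lattices (proved along the lines of Lemma \ref{transitive}),'' but Lemma \ref{transitive} concerns $GL_{\infty,\infty}$ acting on lattices in $\bbK$, not $GL_\infty(R)$ acting on $R$-lattices in $R\ppart$, and the paper does not prove the latter; it instead builds $g$ by hand from a chosen decomposition $R_{n+km}^{n+(k+1)m}=P_k\oplus Q_k$ and the block shift. Your plan for transitivity is plausible (you would need to split $0\to L/t^NL_0\to L_0/t^NL_0\to L_0/L\to 0$, which works because $L_0/L\cong P$ is projective, and then piece together isomorphisms on the blocks), but it is not something you can simply cite, and you should carry it out.
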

\begin{proof}
The existence of $\deg:GL_\infty(R)\to K_0(R^{\on{op}})$ and its
surjectivity can be proved in a way similar to \cite{FW}
Proposition 1.3. We need the following

\begin{lem}
For any $g\in GL_\infty(R)$, and any integer $M$, there exists an
integer $N$ such that $t^MR[[t]]\cdot g\supset t^NR[[t]]$ and
$\frac{t^MR[[t]]\cdot g}{t^NR[[t]]}$ is a finitely generated
projective $R$-module. (Then any $N'\geq N$ also satisfies this
property.)
\end{lem}

\begin{proof}
Since $g\in GL_\infty(R)$ is continuous, for any $m$ there exists
$n$ such that $t^mR[[t]]\supset gt^nR[[t]]$. Therefore, for a
given $M$, we have
\[gt^MR[[t]]\supset t^{n_1}R[[t]]\supset
gt^{n_2}R[[t]]\supset t^{n_3}R[[t]]\] We claim that $N=n_3$
satisfies the desired property. Indeed,
$\frac{gt^MR[[t]]}{t^{n_1}R[[t]]}$ has projective dimension one
(i.e., $\mathrm{Ext}^i(\frac{gt^MR[[t]]}{t^{n_1}R[[t]]},-)=0$ for
$i\geq 2$) since $\frac{gt^MR[[t]]}{t^{n_1}R[[t]]}$ has a free
resolution of length two
\[0\to
t^{n_1}R[[t]]\to gt^MR[[t]]\to \frac{gt^MR[[t]]}{t^{n_1}R[[t]]}\to
0\] Then $\frac{t^{n_1}R[[t]]}{gt^{n_2}R[[t]]}$ is projective
since there is the following short exact sequence
\[0\to\frac{t^{n_1}R[[t]]}{gt^{n_2}R[[t]]}\to
\frac{gt^MR[[t]]}{gt^{n_2}R[[t]]}\to\frac{gt^MR[[t]]}{t^{n_1}R[[t]]}\to
0\] and the middle term is free. Then
$\frac{gt^{n_2}R[[t]]}{t^{n_3}R[[t]]}$ is projective and finitely
generated because of the following short exact sequence
\[0\to
\frac{gt^{n_2}R[[t]]}{t^{n_3}R[[t]]}\to\frac{t^{n_1}R[[t]]}{t^{n_3}R[[t]]}\to\frac{t^{n_1}R[[t]]}{gt^{n_2}R[[t]]}\to0\]
and the middle term is a free {\em right} $R$-module of finite
rank.

Therefore
\[\frac{gt^MR[[t]]}{t^{n_3}R[[t]]}\cong\frac{gt^MR[[t]]}{gt^{n_2}R[[t]]}\oplus\frac{gt^{n_2}R[[t]]}{t^{n_3}R[[t]]}\]
is a finitely generated projective {\em right} $R$-module.
\end{proof}

Now we define $\deg:GL_\infty(R)\to K_0(R^{\on{op}})$ as
$[\frac{R[[t]]}{t^NR[[t]]}]-[\frac{gR[[t]]}{t^NR[[t]]}]$, where
$N$ is chosen so that $t^NR[[t]]\subset gR[[t]]$ and
$\frac{gR[[t]]}{t^NR[[t]]}$ is a finitely generated projective
{\em right} $R$-module. Apparently, this element in
$K_0(R^{\on{op}})$ does not depend on the choice of $N$, and
therefore gives a well-defined map $\deg$. It is also easy to
check that this is indeed a group homomorphism.

Next, we prove that $\deg$ is surjective. First, observe that any
element in $K_0(R^{\on{op}})$ can be represented as $[R^n]-[P]$
where $R^n$ is the free {\em right} $R$-module of rank $n$, and
$P$ is a finitely generated projective {\em right} $R$-module. Now
let $Q$ be a {\em right} $R$-module such that $P\oplus Q\cong R^m$
for some $m$. Now we define $g\in GL_\infty(R)$ so that
$\deg(g)=[R^n]-[P]$. For $s\geq r$, denote by $R_r^s$ the free
{\em right} $R$-module of rank $s-r$ $Rt^r\oplus
Rt^{r+1}\oplus\cdots\oplus Rt^{s-1}$. Write
$R_{n+km}^{n+(k+1)m}=P_k\oplus Q_k$, where $P_k\cong P$ and
$Q_k\cong Q$. Then we define an isomorphism $g:R\ppart\to R\ppart$
by sending $R_{km}^{(k+1)m}$ isomorphically to $P_{k-1}\oplus
Q_k$, so that $A[[t]]\cong\prod_{k\geq 0}R_{km}^{(k+1)m}$ is
mapped isomorphically to $P_{-1}\oplus\prod_{k\geq 0}(Q_k\oplus
P_k)\cong P\oplus t^nR[[t]]$. It is clear that such an element $g$
is indeed in $GL_\infty(R)$, and
$[\frac{R[[t]]}{t^nR[[t]]}]-[\frac{gR[[t]]}{t^nR[[t]]}]=[R^n]-[P]$.
\end{proof}

\begin{prop}\label{e.s.
of groups} Let $\deg:GL_\infty(R)\to K_0(R^{\on{op}})$ be the
homomorphism constructed in Proposition \ref{degree map}. Then we
have the following exact sequence of groups:
\[1\to
GL_\frakf(R)\stackrel{i}{\to}\widetilde{GL}_\infty(R)\stackrel{p}{\to}
GL_\infty(R)\stackrel{\deg}{\to}K_0(R^{\on{op}})\to 0.\]
\end{prop}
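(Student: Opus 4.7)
My plan is to verify exactness at each of the four terms. Two pieces are essentially given: injectivity of $i$ is clear from $i(a) = (a,1)$, and surjectivity of $\deg$ was already established in Proposition \ref{degree map}. Exactness at $\widetilde{GL}_\infty(R)$ follows because $(a,g) \in \ker(p)$ means $g = 1$, so $a - \pi(1) = a - \mathrm{id} \in \gl_\frakf(R)$; invertibility of $(a,1)$ in $\widetilde{\gl}_\infty(R)$ then forces $a \in GL_\frakf(R)$, so $\ker(p) = i(GL_\frakf(R))$.

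The substantive content is exactness at $GL_\infty(R)$, i.e.\ $\mathrm{im}(p) = \ker(\deg)$. For the forward inclusion, given $(a,g) \in \widetilde{GL}_\infty(R)$, I would pick $M$ large enough that $a = \pi(g)$ on $t^M R[[t]]$ (possible since $a - \pi(g) \in \gl_\frakf(R)$) and simultaneously $g(t^M R[[t]]) \subset R[[t]]$ (possible by continuity of $g$). Together these give $a|_{t^M R[[t]]} = g|_{t^M R[[t]]}$, hence $a(t^M R[[t]]) = g(t^M R[[t]])$; since $a$ is an automorphism of $R[[t]]$ this yields a right-$R$-module isomorphism $R[[t]]/t^M R[[t]] \cong R[[t]]/g(t^M R[[t]])$. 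Then I pick $N \geq M$ with $t^N R[[t]] \subset g(t^M R[[t]])$ (by continuity of $g^{-1}$) and large enough that the quotients appearing are finitely generated projective (via the lemma from the proof of Proposition \ref{degree map}). The two short exact sequences sharing the subobject $g(t^M R[[t]])/t^N R[[t]]$, together with the isomorphism $g\colon R[[t]]/t^M R[[t]] \cong gR[[t]]/g(t^M R[[t]])$, reduce to
\[\deg(g) \;=\; [R[[t]]/g(t^M R[[t]])] - [R[[t]]/t^M R[[t]]] \;\in\; K_0(R^{\on{op}}),\]
which vanishes by the $a$-induced isomorphism.

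For the reverse inclusion, suppose $\deg(g) = 0$ and pick $M$ large enough that $g(t^M R[[t]]) \subset R[[t]]$ and $R[[t]]/g(t^M R[[t]])$ is finitely generated projective; the latter follows by applying the lemma to $g^{-1}$, since $R[[t]]/g(t^M R[[t]]) \cong g^{-1}R[[t]]/t^M R[[t]]$ via $g^{-1}$. The formula above gives $[R[[t]]/g(t^M R[[t]])] = [R^M]$, hence $R[[t]]/g(t^M R[[t]]) \oplus R^j \cong R^{M+j}$ for some $j \geq 0$. Replacing $M$ by $M+j$ and using the short exact sequence
\[0 \to g(t^M R[[t]])/g(t^{M+j} R[[t]]) \to R[[t]]/g(t^{M+j} R[[t]]) \to R[[t]]/g(t^M R[[t]]) \to 0,\]
whose left term is $\cong R^j$ via $g^{-1}$ and which splits by projectivity of the right term, promotes the stable isomorphism to a genuine one $R[[t]]/g(t^{M+j}R[[t]]) \cong R^{M+j}$. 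Using the topological direct sum decompositions $R[[t]] = t^{M+j} R[[t]] \oplus R^{M+j}$ (polynomial part) and $R[[t]] = g(t^{M+j} R[[t]]) \oplus Q$ with $Q \cong R^{M+j}$, I define $a\colon R[[t]] \to R[[t]]$ to be $g|_{t^{M+j} R[[t]]}$ on the first summand and any right-$R$-module isomorphism $R^{M+j} \to Q$ on the second. Then $a - \pi(g) \in \gl_\frakf(R)$ by construction, so $(a, g) \in \widetilde{GL}_\infty(R)$ with $p(a,g) = g$.

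The hardest step is the passage from a stable isomorphism in $K_0(R^{\on{op}})$ to a genuine right-$R$-module isomorphism at a sufficiently deep level in the $t$-adic filtration, which is what allows one to explicitly construct the lift $a$. Once that is handled, the remainder is careful bookkeeping with nested sublattices, exploiting continuity of $g$ and $g^{-1}$ together with the projectivity estimates from the proof of Proposition \ref{degree map}.
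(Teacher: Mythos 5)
Your proof is correct and follows essentially the same route as the paper: reduce to exactness at $GL_\infty(R)$, use the agreement of $a$ with $\pi(g)$ on a deep sublattice to compute that $\deg$ vanishes via a $K_0$ computation, and conversely convert the stable isomorphism implied by $\deg(g)=0$ into a genuine one by increasing the filtration level and then build $a$ explicitly from the resulting direct-sum decomposition of $R[[t]]$. The only cosmetic difference is that the paper computes $\deg(g^{-1})=0$ directly from its definition, whereas you first establish the identity $\deg(g)=[R[[t]]/g(t^MR[[t]])]-[R[[t]]/t^MR[[t]]]$ and then conclude.
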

\begin{proof}
It is enough to check the exactness at $GL_\infty(R)$. So we have
to prove that for $g\in GL_\infty(R)$, we have $\deg(g)=0$ if and
only if $\pi g=a-f$ for some $a\in GL_\infty^+(R)$ and
$f\in\gl_\frakf(R)$.

First, assume that $\pi g=a-f$ with $a\in GL_\infty^+(R)$ and
$f\in\gl_\frakf(R)$. Then there exists some $N$, large enough so
that $f|_{t^NR[[t]]}=0$. For $N$ large enough, $gt^NR[[t]]\subset
R[[t]]$ and therefore
$gt^NR[[t]]=\pi(gt^NR[[t]])=(a-f)t^NR[[t]]=at^NR[[t]]$. We have
\begin{eqnarray*}
\deg(g^{-1})=&[\frac{R[[t]]}{t^NR[[t]]}]-[\frac{g^{-1}R[[t]]}{t^NR[[t]]}]\\
=&[\frac{R[[t]]}{t^NR[[t]]}]-[\frac{R[[t]]}{gt^NR[[t]]}]\\
=&[\frac{R[[t]]}{t^NR[[t]]}]-[\frac{R[[t]]}{at^NR[[t]]}]\\
=&[\frac{R[[t]]}{t^NR[[t]]}]-[\frac{aR[[t]]}{at^NR[[t]]}]=0
\end{eqnarray*}

Conversely, assume that $\deg(g^{-1})=\deg(g)=0$, and therefore
$[\frac{R[[t]]}{t^NR[[t]]}]=[\frac{g^{-1}R[[t]]}{t^NR[[t]]}]$ for
$N$ large enough. It is well-known that if $[P]=[Q]$ in
$K_0(R^{\on{op}})$, then there exists some $n$ such that $P\oplus
R^n\cong Q\oplus R^n$ as {\em right} $R$-modules. Therefore, we
could choose $N$ large enough so that
\[\frac{R[[t]]}{t^NR[[t]]}\cong\frac{g^{-1}R[[t]]}{t^NR[[t]]}\cong\frac{R[[t]]}{gt^NR[[t]]}.\]
We pick such an isomorphism. Observe that since
$\frac{R[[t]]}{gt^NR[[t]]}$ is projective, there exists an
$R$-submodule $M\subset R[[t]]$, isomorphic to
$\frac{R[[t]]}{gt^NR[[t]]}$, such that $R[[t]]=gt^NR[[t]]\oplus
M$. Then we have an isomorphism
$f:\frac{R[[t]]}{t^NR[[t]]}\cong\frac{R[[t]]}{gt^NR[[t]]}\cong M$.

Now we define an isomorphism $a\in R[[t]]\to R[[t]]$ as follows:
\[R[[t]]\cong
t^NR[[t]]\oplus R[[t]]/t^NR[[t]]\stackrel{g\oplus f}{\to}
gt^NR[[t]]\oplus M=R[[t]].\] It is clear from the definition that
$a\in GL_\infty^+(R)$ and $\pi g-a|_{t^NR[[t]]}=0$, i.e. $\pi
g-a\in\gl_\frakf(R)$.
\end{proof}

Denote $GL_\infty(R)^0:=\ker(GL_\infty(R)\to K_0(R^{\on{op}}))$.
Therefore, we have a short exact sequence

\begin{equation}\label{basicexactsequenceforgroups}
0\to GL(R)\to \widetilde{GL}_\infty(R)\to GL_\infty(R)^0\to 1
\end{equation}
$GL_\infty(\bbC)^0$ will be denoted by $GL_\infty^0$ for
simplicity.

We can construct another exact sequence of groups. Apply the
functor $GL_\frakf$ to the short exact sequence of algebras
(\ref{basicexactsequence}), we obtain
\[1\to
GL_\frakf(\gl_\frakf(R))\to
GL_\frakf(\widetilde{\gl}_\infty(R))\to
GL_\frakf(\gl_\infty(R)).\] We have

\begin{prop}
The above exact sequence extends to
\[1\to
GL_\frakf(\gl_\frakf(R))\to
GL_\frakf(\widetilde{\gl}_\infty(R))\to
GL_\frakf(\gl_\infty(R))\to K_0(R^{\on{op}})\to 0.\]
\end{prop}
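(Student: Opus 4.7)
The plan is to parallel the proof of Proposition \ref{e.s. of groups} (the analogous exact sequence for $GL_\infty$), working in the colimit presentation $GL_\frakf(\gl_\infty(R))=\varinjlim_N GL_N(\gl_\infty(R))$. First I would define the map $\deg\colon GL_\frakf(\gl_\infty(R))\to K_0(R^{\on{op}})$ as follows. Fix for each $N$ the shuffle isomorphism $\sigma_N\colon (R\ppart)^N\stackrel{\sim}{\to} R\ppart$ sending $(f_0,\dots,f_{N-1})\mapsto \sum_{j=0}^{N-1} f_j(t^N)\,t^j$; it carries $(R[[t]])^N$ onto $R[[t]]$ and $t^M (R[[t]])^N$ onto $t^{NM}R[[t]]$. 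Conjugation by $\sigma_N$ gives a compatible family of embeddings $\iota_N\colon GL_N(\gl_\infty(R))\hookrightarrow GL_\infty(R)$, and I set $\deg := \deg_R\circ \iota$, with $\deg_R$ as in Proposition \ref{degree map}. Independence of $N$ (i.e.\ $\deg(g)=\deg(g\oplus 1)$) follows from conjugation-invariance of $\deg_R$, since the codomain $K_0(R^{\on{op}})$ is abelian.

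Exactness at $GL_\frakf(\widetilde{\gl}_\infty(R))$ is immediate from entry-wise inspection: an element $(h_{ij})\in\ker p_*$ has each $h_{ij}=(a_{ij},\delta_{ij})$ with $a_{ij}-\delta_{ij}\in\gl_\frakf(R)$, which exactly describes the image $i_* GL_\frakf(\gl_\frakf(R))$. The heart of the proof is exactness at $GL_\frakf(\gl_\infty(R))$, which I would handle via the following ``shuffle dictionary'': under the ring isomorphism $\gl^+_\infty(R)\cong M_N(\gl^+_\infty(R))$ induced by $\sigma_N$, the subspace $\gl_\frakf(R)$ corresponds to $M_N(\gl_\frakf(R))$ (both sides being characterized by vanishing on a deep enough $t^M R[[t]]$), and the non-ring projection $\pi\colon \gl_\infty(R)\to\gl^+_\infty(R)$ commutes with $\sigma_N$ entry-wise. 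Granting this, the inclusion $\on{im}(p_*)\subseteq\ker(\deg)$ follows because for any $(a_{ij},g_{ij})\in GL_N(\widetilde{\gl}_\infty(R))$ the assembled pair $(\sigma_N((a_{ij})),\sigma_N((g_{ij})))=(a,\iota_N(g))$ lands in $\widetilde{GL}_\infty(R)$, so Proposition \ref{e.s. of groups} gives $\deg_R\iota_N(g)=0$.

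The reverse inclusion runs this argument backwards: starting from $g\in GL_N(\gl_\infty(R))$ with $\deg\iota_N(g)=0$, Proposition \ref{e.s. of groups} supplies $a\in GL_\infty^+(R)$ satisfying $a-\pi(\iota_N(g))\in\gl_\frakf(R)$. The shuffle dictionary decomposes $a$ back into a matrix $(a_{ij})\in M_N(\gl^+_\infty(R))$ with $a_{ij}-\pi(g_{ij})\in\gl_\frakf(R)$, so $(a_{ij},g_{ij})\in M_N(\widetilde{\gl}_\infty(R))$. Its invertibility in $M_N(\widetilde{\gl}_\infty(R))$ reduces to invertibility in each of the factors $M_N(\gl^+_\infty(R))$ and $M_N(\gl_\infty(R))$---which I have by construction---together with the fact that $\widetilde{\gl}_\infty(R)$ is closed under inversion of its units; the latter I verify using that $\gl_\frakf(R)$ is a two-sided ideal in $\gl^+_\infty(R)$, together with the elementary relation $\pi(g)\pi(g^{-1})-\mathrm{id}\in\gl_\frakf(R)$ (which comes directly from the continuity of $g$ and $g^{-1}$). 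Surjectivity of $\deg$ onto $K_0(R^{\on{op}})$ is then easy: Proposition \ref{degree map} supplies some $h\in GL_\infty(R)=\gl_\infty(R)^\times=GL_1(\gl_\infty(R))$ realizing any prescribed class $\alpha$, and the $N=1$ stage of the colimit (for which $\iota_1=\mathrm{id}$) provides the desired preimage in $GL_\frakf(\gl_\infty(R))$.

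The main obstacle is the shuffle dictionary itself: while conceptually straightforward, one must carefully verify that ``entry-wise $\gl_\frakf(R)$'' and ``globally $\gl_\frakf(R)$'' correspond under $\sigma_N$, and that $\pi$ commutes with $\sigma_N$. Both statements reduce to the single identity $\sigma_N(t^M (R[[t]])^N)=t^{NM}R[[t]]$, but some care is required because $\gl_\frakf(R)$ is non-unital and $\pi$ is not a ring homomorphism; once this dictionary is in place, the rest of the argument reduces cleanly to Proposition \ref{e.s. of groups}.
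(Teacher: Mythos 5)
Your plan correctly mirrors the paper's — reduce to Proposition \ref{e.s. of groups} via the shuffle isomorphism $\varphi_N\colon GL_N(\gl_\infty(R))\cong GL_\infty(R)$ — but the opening colimit presentation $GL_\frakf(\gl_\infty(R))=\varinjlim_N GL_N(\gl_\infty(R))$ is false, and this creates a genuine gap. Writing $S=\gl_\infty(R)$, an element $g\in GL_\frakf(S)$ need only fix $s^NS[[s]]$ for some $N$; the matrix $(g_{ij})$ then has columns equal to $\delta_{ij}$ for $j\geq N$, but for $j<N$ it may have infinitely many nonzero entries $g_{ij}$ with $i\geq N$. Such a $g$ preserves no finite truncation $\bigoplus_{j<M}s^jS$ and so lies outside $\varinjlim_M GL_M(S)$: for instance $g(1)=1+s+s^2+\cdots$, $g(s^j)=s^j$ for $j\geq 1$, is an invertible continuous automorphism fixing $sS[[s]]$ but is not in any $GL_M(S)$. (The isomorphism $GL_\frakf(R)\cong GL(\gl_\frakf(R))$ in the paper's remark following the proposition is applied to $\gl_\frakf(R)$, not $\gl_\infty(R)$, and is explicitly ``not compatible with any $\varphi_n$''; it does not license the identification you want.) As a result your $\deg$ is defined, and your exactness is checked, only on the proper subgroup $\varinjlim_N GL_N(\gl_\infty(R))$ of $GL_\frakf(\gl_\infty(R))$.

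The paper closes this gap by taking the colimit over the larger subgroups $GL_{\frakf,N}(\gl_\infty(R))$ (those fixing $s^N\gl_\infty(R)[[s]]$), whose union really is $GL_\frakf(\gl_\infty(R))$, together with the surjective group homomorphisms $p_N\colon GL_{\frakf,N}(S)\twoheadrightarrow GL_N(S)$ given by the upper-left $N\times N$ block. The degree map is then $\deg\circ\varphi_N\circ p_N$; compatibility across $N$ is, as you observe, a conjugation-invariance statement; and exactness at $GL_{\frakf,N}(\gl_\infty(R))$ reduces to your $GL_N$ case because $\ker p_N$ consists of block lower-unitriangular matrices whose entries lift freely along $\widetilde{\gl}_\infty(R)\to\gl_\infty(R)$. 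Once these $p_N$'s are inserted, the remainder of your argument — the entrywise exactness at the middle term, the shuffle dictionary transporting $\gl_\frakf$ and $\pi$ under $\sigma_N$, surjectivity through $\iota_1$, and the conjugation argument — is sound and essentially coincides with the paper's.
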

\begin{proof}
For any $n$, let $GL_{\frakf,n}(R)$ be the subgroup of
$GL_\frakf(R)$ consisting of those $a$ for which
$a|_{t^nR[[t]]}=\mathrm{id}$. Then it maps surjective to
$GL_n(R)$, the group of invertible $n\times n$ matrices with
coefficients in $R$, by $p_n$, where
$$
p_n(a=(a_{ij})_{i,j\geq 0})=(a_{ij})_{0\leq i,j\leq n-1}.
$$
Denote by $i_{n,n+1}$ the inclusion of $GL_{\frakf,n}(R)$ to
$GL_{\frakf,n+1}(R)$.

We have isomorphisms $\varphi_n: GL_n(\gl_\frakf(R))\cong
GL_\frakf(R),
GL_n(\widetilde{\gl}_\infty(R))\cong\widetilde{GL}_\infty(R)$, and
$GL_n(\gl_\infty(R))\cong GL_\infty(R)$ making the following
diagram commutative

\[\begin{CD}
1@>>>GL_{\frakf,n}(\gl_\frakf(R))@>>>
GL_{\frakf,n}(\widetilde{\gl}_\infty(R))@>>>
GL_{\frakf,n}(\gl_\infty(R))\\
@.@Vp_nVV@Vp_nVV@Vp_nVV\\
1@>>>GL_n(\gl_\frakf(R))@>>>GL_n(\widetilde{\gl}_\infty(R))@>>>
GL_n(\gl_\infty(R))\\
@.@V\varphi_nVV@V\varphi_nVV@V\varphi_nVV\\
1@>>>GL_\frakf(R)@>>>\widetilde{GL}(R)@>>>GL_\infty(R)@>\deg>>
K_0(R^{\on{op}})@>>>0
\end{CD}\]

Using this diagram, it is easy to see that the sequence

\[GL_{\frakf,n}(\widetilde{\gl}_\infty(R))\to
GL_{\frakf,n}(\gl_\infty(R))\stackrel{\varphi_n\circ
p_n}{\to}K_0(R^{\on{op}})\] is exact at
$GL_{\frakf,n}(\gl_\infty(R))$. Furthermore, it is easy to check
that, although $\varphi_n\circ p_n\neq\varphi_{n+1}\circ
p_{n+1}\circ i_{n,n+1}$, we have $\deg\circ\varphi_n\circ
p_n=\deg\circ(\varphi_{n+1}\circ p_{n+1}\circ i_{n,n+1})$. This
proves the proposition.
\end{proof}

\begin{rmk}
Observe that we have an isomorphism
$$
GL_\frakf(R)\cong
GL(\gl_\frakf(R))=\lim\limits_{\stackrel{\longrightarrow}{n}}
GL_n(\gl_\frakf(R))
$$
(not compatible with any $\varphi_n$ above).
\end{rmk}

Let $GL_{\frakf}(\gl_\infty(R))^0$ be the kernel of
$GL_\frakf(\gl_\infty(R))\to K_0(R^{\on{op}})$. Thus, we have
another exact sequence,

\begin{equation}\label{2ndbasicexactsequenceforgroups}1\to
GL_\frakf(\gl_\frakf(R))\to
GL_\frakf(\widetilde{\gl}_\infty(R))\to
GL_\frakf(\gl_\infty(R))^0\to 1.
\end{equation}

\subsubsection{$H^2$ and central extensions}\label{H2 and
central extensions}

Let $R=\bbC$. Then we have the exact sequence (\ref{e.s.}),
\[1\to
GL_\frakf\to\widetilde{GL}_\infty\to GL_\infty^0\to 1.\] Pushing
out this sequence by $\det$, we obtain a central extension of
$GL_\infty^0$.
\[1\to\bbC^\times\to\widehat{GL}_\infty^0\to GL_\infty^0\to
1,\] which we constructed in \S \ref{central extensions}. The
cohomology class $[C^0_2]\in H^2(GL_\infty^0,\bbC^\times)$ is
obtained by the transgression of $[\det]\in
H^1(GL_\frakf,\bbC^\times)$. We claim that it is non-trivial. This
is because in the Lyndon-Hochschild-Serre spectral sequence
associated to (\ref{e.s.}), $[C^0_2]\in E_2^{2,0}$ is obtained by
transgression of $[\det]\in E_2^{0,1}$. If $[C^0_2]=0$, then
$[\det]$ would survive to the $E_\infty$ term, so that
$H^1(\widetilde{GL}_\infty,\bbC^\times)\neq 0$, which contradicts
Proposition \ref{group acyclic}.

It was proved in Proposition \ref{uniqueness of central extension}
that such central extension can be extended to a unique central
extension $\widehat{GL}_\infty$ of $GL_\infty$.

We have a similar story for $GL_{\frakf,\infty}$. We only sketch
it, since it is almost a word-for-word repetition of the story of
$GL_\infty$. Let $R=\bbC$ in
(\ref{2ndbasicexactsequenceforgroups}), we obtain\[1\to
GL_\frakf(\gl_\frakf)\to GL_\frakf(\widetilde{\gl}_\infty)\to
GL_{\frakf,\infty}^0\to 1.\] We claim that there is a well-defined
group homomorphism $\det:GL_\frakf(\gl_\frakf)\to\bbC^\times$.
Indeed, for any $n$, we have the homomorphisms
\[GL_{\frakf,n}(\gl_\frakf)\to
GL_n(\gl_\frakf)\cong GL_\frakf\stackrel{\det}{\to} \bbC^\times,\]
which are compatible with the embeddings $i_{n,n+1}$. This gives
$\det:GL_\frakf(\gl_\frakf)\to\bbC^\times$. Now pushing-out the
above sequence by $\det$, we obtain

\begin{equation}\label{extension of
GLfrakfinfty0} 1\to\bbC^\times
\to\widehat{GL}_{\frakf,\infty}^0\to GL_{\frakf,\infty}^0\to 1.
\end{equation}

One can similarly prove the following:

\begin{prop}
(i) The extension (\ref{extension of GLfrakfinfty0})
is non-trivial. The cohomology class $[D^0_2]\in
H^2(GL^0_{\frakf,\infty},\bbC^\times)$ is the transgression of
$[\det]\in H^1(GL_{\frakf}(\gl_\frakf),\bbC^\times)$.

(ii) There is a unique (up to an isomorphism) central extension
\begin{equation}\label{extension of
GLfrakfinfty} 1\to\bbC^\times \to\widehat{GL}_{\frakf,\infty}\to
GL_{\frakf,\infty}\to 1
\end{equation}
whose restriction to $GL_{\frakf,\infty}^0$ is (\ref{extension of
GLfrakfinfty0}). The cohomology class corresponding to this
central extension is denoted by $[D_2]$.
\end{prop}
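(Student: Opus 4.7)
The plan is to parallel \S \ref{H2 and central extensions} and Proposition \ref{uniqueness of central extension}, applying the same arguments to the short exact sequence (\ref{2ndbasicexactsequenceforgroups}) specialized at $R=\bbC$:
\[
1\to GL_\frakf(\gl_\frakf)\to GL_\frakf(\widetilde{\gl}_\infty)\to GL^0_{\frakf,\infty}\to 1.
\]

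For part (i), I would use the Lyndon--Hochschild--Serre spectral sequence of this extension. By construction of $[D^0_2]$ as the push-out by $\det$, the class $[D^0_2]\in E_2^{2,0}=H^2(GL^0_{\frakf,\infty},\bbC^\times)$ is the image of $[\det]\in E_2^{0,1}=H^1(GL_\frakf(\gl_\frakf),\bbC^\times)$ under the transgression $d_2$. If $[D^0_2]$ vanished, $[\det]$ would survive to $E_\infty$, forcing $H^1(GL_\frakf(\widetilde{\gl}_\infty),\bbC^\times)\neq 0$. So it suffices to prove
\[
H^\bullet(GL_\frakf(\widetilde{\gl}_\infty),\bbC^\times)\cong\bbC^\times,
\]
which I would establish by the Hopf-algebra-and-shift technique used in the proof of Proposition \ref{group acyclic}: endow $H_*(GL_\frakf(\widetilde{\gl}_\infty),k)$ with a graded Hopf algebra structure via the inner sum $\oplus$, construct a shift endomorphism $\tau$ satisfying $a\oplus\tau(a)=\tau(a)$, and run the same induction showing that every primitive element is killed.

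For part (ii), the uniqueness argument mirrors Proposition \ref{uniqueness of central extension}. One first chooses any element $\sigma\in GL_{\frakf,\infty}$ of degree $1\in\bbZ=K_0(\bbC)$; then $\langle\sigma\rangle$ splits $\deg$ and $GL_{\frakf,\infty}\cong GL^0_{\frakf,\infty}\rtimes\bbZ$. The vanishing of $H^1(GL^0_{\frakf,\infty},\bbC^\times)$ (which follows from part (i) via the same spectral-sequence argument, since it injects into $H^1(GL_\frakf(\widetilde{\gl}_\infty),\bbC^\times)=0$) together with $H^i(\bbZ,\bbC^\times)=0$ for $i\geq 2$ yields
\[
H^2(GL_{\frakf,\infty},\bbC^\times)\cong H^2(GL^0_{\frakf,\infty},\bbC^\times)^{\langle\sigma\rangle}.
\]
It then remains to verify that $[D^0_2]$ is $\sigma$-invariant, which I would do by constructing a homomorphism $\tilde\sigma:GL_\frakf(\widetilde{\gl}_\infty)\to GL_\frakf(\widetilde{\gl}_\infty)$ covering conjugation by $\sigma$, via a suitable splitting and the formula $\tilde\sigma(a,g)=(a_\sigma,\sigma g\sigma^{-1})$ in direct analogy with Proposition \ref{uniqueness of central extension}. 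The induced automorphism $\widehat\sigma$ of $\widehat{GL}^0_{\frakf,\infty}$ then exhibits $[D^0_2]$ as $\sigma$-invariant, and the extension (\ref{extension of GLfrakfinfty}) is defined as the semi-direct product $\widehat{GL}_{\frakf,\infty}:=\widehat{GL}^0_{\frakf,\infty}\rtimes\langle\sigma\rangle$.

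The main obstacle is the acyclicity input to part (i): the shift $\tau$ in the proof of Proposition \ref{group acyclic} is written for $GL_\infty^+$, whose defining condition is weaker than the ``eventually equal to the identity'' condition for $GL_\frakf$, so one must either verify directly that the same $\tau$ preserves $GL_\frakf(\widetilde{\gl}_\infty)$ or modify it (e.g.\ by reorganizing the ``slots'' so that each $2^k$-adic block of indices is filled only finitely often) while retaining the key identity $a\oplus\tau(a)=\tau(a)$. Once this technical point is settled, the remaining steps — the spectral sequence computations, the construction of $\tilde\sigma$, and the semi-direct-product formation — are formally parallel to the one-variable story carried out in \S \ref{central extensions}.
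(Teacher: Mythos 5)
Your overall architecture matches the paper's: the Lyndon--Hochschild--Serre transgression argument for (i), and for (ii) the splitting $GL_{\frakf,\infty}\cong GL^0_{\frakf,\infty}\rtimes\langle\sigma\rangle$, the reduction to $\sigma$-invariance of $[D^0_2]$, and the semi-direct-product construction. The place where you diverge, and where your proposal has a genuine gap, is the acyclicity input you feed into the spectral sequence.

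You propose to show $H^\bullet(GL_\frakf(\widetilde{\gl}_\infty),\bbC^\times)\cong\bbC^\times$ by transporting the shift map $\tau$ from the proof of Proposition \ref{group acyclic}, and you correctly note that $\tau$ as written does not preserve $GL_\frakf$. This is not a removable technicality. The swindle identity $a\oplus\tau(a)=\tau(a)$ forces $\tau(a)$ to contain infinitely many shifted copies of $a$; if $a\neq\mathrm{id}$, no such element can be eventually equal to the identity, so no reorganization of slots can produce a shift $\tau:GL_\frakf(R)\to GL_\frakf(R)$ with the swindle property. Consistently, $GL_\frakf(R)$ is not acyclic for general $R$: already $GL_\frakf(\bbC)$ carries $[\det]\neq 0$ in $H^1$, and that is the very class the proposition is built around. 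The obstacle you flag at the end of your proposal is therefore fatal to this route, not a loose end to be tidied.

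The fix is that full acyclicity is not needed. Both parts use only $H^1(GL_\frakf(\widetilde{\gl}_\infty),\bbC^\times)=0$: in (i) to conclude that $[\det]$ cannot survive to $E_\infty$ in the spectral sequence of (\ref{2ndbasicexactsequenceforgroups}), and in (ii) to get $H^1(GL^0_{\frakf,\infty},\bbC^\times)=0$ via the injection $H^1(GL^0_{\frakf,\infty},\bbC^\times)\hookrightarrow H^1(GL_\frakf(\widetilde{\gl}_\infty),\bbC^\times)$, hence $H^2(GL_{\frakf,\infty},\bbC^\times)\cong H^2(GL^0_{\frakf,\infty},\bbC^\times)^{\langle\sigma\rangle}$. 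The paper obtains this single vanishing from the inclusion $\widetilde{GL}_\infty\subset GL_\frakf(\widetilde{\gl}_\infty)$ together with $H^1(\widetilde{GL}_\infty,\bbC^\times)=0$, the latter being part of Proposition \ref{group acyclic}. Once you replace your attempted acyclicity claim with this $H^1$-vanishing, the rest of your argument goes through unchanged.
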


\begin{proof}
The only fact we use is that
$H^1(GL_\frakf(\widetilde{\gl}_\infty),\bbC^\times)=0$. This is
because $\widetilde{GL}_\infty\subset
GL_\frakf(\widetilde{\gl}_\infty)$ and
$H^1(\widetilde{GL}_\infty,\bbC^\times)=0$. Then (i) follows from
the same argument as the proof of the non-triviality of $[C^0_2]$,
and (ii) follows from the proof of Proposition \ref{uniqueness of
central extension}.
\end{proof}

As before, the automorphism of (\ref{extension of GLfrakfinfty})
is $H^1(GL_{\frakf,\infty},\bbC^\times)=\bbC^\times$.

\subsubsection{$\wh{GL}_{\frakf,\infty}$
as the central extension of $GL_{\frakf,\infty}$ by determinant
lines}\label{dl} We can also regard $\wh{GL}_{\frakf,\infty}$ as
the central extension of $GL_{\frakf,\infty}$ by determinant
lines. Recall that $\calO_\bbK=\bbC\ppart[[s]]$. This is a lattice
of $\bbK$ (see Definition \ref{lat}). Let $L, L'$ be two secondary
lattices of $\calO_\bbK$ (see Definition \ref{2-lattice}) We assume
that there exists some $n$ such that $L\cap s^n\calO_\bbK=L'\cap
s^n\calO_\bbK$. Observe that although $L$ and $L'$ may not be
commensurable with each other (an example being given below), we
can still define the determinant line $\det(L|L')$. This is
because for any $m\geq n$, there is a canonical isomorphism
\begin{equation}\label{independent of n}
\det(\frac{L}{L\cap s^m\calO_\bbK}|\frac{L'}{L'\cap
s^m\calO_\bbK})\stackrel{\cong}{\to}\det(\frac{L}{L\cap
s^n\calO_\bbK}|\frac{L'}{L'\cap s^n\calO_\bbK}).
\end{equation}

Then we can define
\begin{equation}\label{determinantal
line}\det(L|L')
:=\lim\limits_{\overleftarrow{m}}\det(\frac{L}{L\cap
s^m\calO_\bbK}|\frac{L'}{L'\cap s^m\calO_\bbK}).
\end{equation}
It is clear that if $L$ and $L'$ are commensurable, the new
definition coincides with the old one. This motivates us to define

\begin{dfn}\label{pseudo commensurability}Let $\bbL$ be a
lattice in $\bbK$ and $L,L'\subset\bbL$ are two secondary
lattices. We will call $L,L'$ {\em pseudo commensurable} with each
other if there exists some lattice $\bbL'\subset\bbL$ such that
$L\cap\bbL'=L'\cap\bbL'$. In this case, we define $\det(L|L')$ by
formula (\ref{determinantal line}).
\end{dfn}

Let us emphasize that the important properties of $\det(L|L')$ are
those stated in Remark \ref{composition of determinantal lines}.
That is, for $L,L',L''$ pseudo commensurable with each other,
there is a canonical isomorphism
\[\gamma_{L,L',L''}:\det(L|L')\otimes\det(L'|L'')\cong\det(L|L'')\]
such that for any $L,L',L'',L'''$,
$\gamma_{L,L',L'''}\gamma_{L',L'',L'''}=\gamma_{L,L'',L'''}
\gamma_{L,L',L''}$.

Now let $GL_\infty^+(\gl_\infty)$ acts on $\calO_\bbK$ via the
following formula. If we represent an element in
$GL_\infty^+(\gl_\infty)$ as $a=(a_{ij,mn})_{i,j\geq
0,m,n\in\bbZ}$, then
\begin{equation}\label{action
of GL+inftyglinfty} a(t^ns^j)=\sum_{m\in\bbZ,n\geq
0}a_{ij,mn}(t^ms^i).
\end{equation}
The following lemmas are easy to check.

\begin{lem}\label{sec lat}
For any $a\in GL_\infty^+(\gl_\infty)$, and a secondary lattice
$L\subset\calO_\bbK$, $aL$ is a secondary lattice. In fact,
$GL_\infty^+(\gl_\infty)$ acts transitively on the set of secondary
lattices in $\calO_\bbK$. Furthermore, if $a\in
GL_{\frakf,\infty}\subset GL_\infty^+(\gl_\infty)$, then $aL$ is
pseudo commensurable with $L$, i.e., $\exists n$ such that $L\cap
s^n\calO_\bbK=aL\cap s^n\calO_\bbK$.
\end{lem}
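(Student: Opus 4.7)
The plan is to verify the three assertions in the lemma separately, treating the pseudo-commensurability claim first (it is the easiest), then the preservation of the secondary-lattice property, and finally transitivity.

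For the pseudo-commensurability assertion, I would start from the definition of $a\in GL_{\frakf,\infty}=GL_\frakf(\gl_\infty)$: there exists $N\geq 0$ with $a|_{s^N\gl_\infty[[s]]}=\mathrm{id}$. Translating via formula \eqref{action of GL+inftyglinfty}, this forces $a_{ij}=\delta_{ij}\cdot\mathrm{id}_{\gl_\infty}$ for $j\geq N$, and hence $a$ (and also $a^{-1}$) restricts to the identity on the closed subspace $s^N\calO_\bbK\subset\calO_\bbK$. Taking $n=N$, I would then verify by applying $a$ to $L\cap s^N\calO_\bbK$ and $a^{-1}$ to $aL\cap s^N\calO_\bbK$ that both sides agree, giving $L\cap s^N\calO_\bbK=aL\cap s^N\calO_\bbK$.

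For the first assertion (that $aL$ is a secondary lattice whenever $L$ is), the key technical step will be a bracketing property: for any $a\in GL_\infty^+(\gl_\infty)$ and any $n\geq 0$, there exist integers $n_1,n_2\geq n$ with
\[a(s^{n_1}\calO_\bbK)\subset s^n\calO_\bbK\quad\text{and}\quad s^{n_2}\calO_\bbK\subset a(s^n\calO_\bbK).\]
I would derive the first inclusion directly from the defining condition on $a=(a_{ij})\in GL_\infty^+(\gl_\infty)$ (the row-band $\{i<n\}$ has only finitely many nonzero columns $a_{ij}$), combined with the continuity of the action; the second comes from applying the same reasoning to $a^{-1}\in GL_\infty^+(\gl_\infty)$. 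Granted this, $aL$ is closed in $\calO_\bbK$ because $a$ is a homeomorphism, and the linearly compact open property of $aL/(aL\cap s^n\calO_\bbK)$ in $\calO_\bbK/s^n\calO_\bbK$ can be checked by comparing it with the quotient $L/(L\cap s^{n_1}\calO_\bbK)$ of $L$, which is already a lattice in $\calO_\bbK/s^{n_1}\calO_\bbK$ by the secondary-lattice hypothesis on $L$.

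For transitivity, I would take $L_0:=\bbC[[t]][[s]]$ as a reference secondary lattice. Given any secondary lattice $L\subset\calO_\bbK$, the data $\{L_n:=L/(L\cap s^n\calO_\bbK)\}_{n\geq 0}$ forms a compatible family of lattices in the Tate vector spaces $\calO_\bbK/s^n\calO_\bbK$, with $L=\varprojlim L_n$. The strategy is to adapt the construction in Lemma \ref{transitive} to the present setting: I would decompose $\calO_\bbK$ into large $s$-blocks, use the transitivity of $GL_\infty$ on lattices in $\bbC\ppart$-valued quotients at each finite level to match $L_0/(L_0\cap s^n\calO_\bbK)=\bbC[[t]]^n$ with $L_n$, and assemble these block isomorphisms into a continuous right $\gl_\infty$-module automorphism $a$ of $\gl_\infty[[s]]$. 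One must verify that the resulting matrix $(a_{ij})$ satisfies the finiteness condition defining $GL_\infty^+(\gl_\infty)$, which follows from the requirement $a(L_0)=L$ together with the secondary-lattice structure of $L$. The hard part will be this transitivity assertion: the explicit block-by-block construction of $a$ requires simultaneous bookkeeping in the $t$- and $s$-directions, and the compatibility of choices across blocks is considerably more delicate than in the one-dimensional analogue of Lemma \ref{transitive}, where the underlying object is a single Tate vector space rather than a secondary lattice living in a ``2-dimensional'' one.
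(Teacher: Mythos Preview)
The paper does not actually prove this lemma: it is introduced with ``The following lemmas are easy to check'' and no argument is given. So there is nothing to compare against line by line; the question is whether your proposal stands on its own.

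Your arguments for the pseudo-commensurability claim and for the preservation of the secondary-lattice property are correct. For the former, the observation that $a|_{s^N\calO_\bbK}=\mathrm{id}$ (and hence also $a^{-1}|_{s^N\calO_\bbK}=\mathrm{id}$) immediately gives $aL\cap s^N\calO_\bbK=a(L\cap s^N\calO_\bbK)=L\cap s^N\calO_\bbK$, exactly as you say. For the latter, the bracketing $a(s^{n_1}\calO_\bbK)\subset s^n\calO_\bbK\subset a(s^{n_2^{-1}}\calO_\bbK)$ is precisely the continuity of $a$ and $a^{-1}$ encoded in the definition of $GL_\infty^+(\gl_\infty)$, and it does yield the linearly-compact-open condition for $aL/(aL\cap s^n\calO_\bbK)$: linearly compact because it is a quotient of $L/(L\cap s^{n_1}\calO_\bbK)$ via $a$, and open because $aL+s^n\calO_\bbK\supset a(L+s^{n_1}\calO_\bbK)$, which is open since $a$ is a homeomorphism.

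Your plan for transitivity is also sound, and you are right that this is where the real work lies. One way to streamline the bookkeeping you anticipate: show first that the successive quotients $L^{(i)}:=(L\cap s^i\calO_\bbK)/(L\cap s^{i+1}\calO_\bbK)$ are lattices in $s^i\bbC\ppart$ (this follows from the secondary-lattice condition applied at levels $i$ and $i+1$), and then use the splitting of Remark~\ref{exact str} inductively to write $L\cong\prod_{i\geq 0}L^{(i)}$ as a closed subspace of $\calO_\bbK$. This reduces the problem to the ``diagonal'' case, where one only needs, for each $i$, an element of $GL_\infty$ sending $\bbC[[t]]$ to $L^{(i)}$; assembling these as the diagonal blocks of an element of $GL_\infty^+(\gl_\infty)$ is then automatic. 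This avoids the cross-block compatibility issues you flag and is closer in spirit to the paper's treatment of the analogous Lemma~\ref{transitive}.
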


\begin{lem}If
$L,L'$ are two secondary lattices in $\calO_\bbK$, pseudo
commensurable with each other, then for $a\in
GL_\infty^+(\gl_\infty)$, $aL,aL'$ are pseudo commensurable, and
$a$ induces an isomorphism
$\det(L|L')\stackrel{\cong}{\to}\det(aL|aL')$.
\end{lem}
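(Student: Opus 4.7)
The plan is to first establish pseudo commensurability of $aL$ and $aL'$, then to construct the isomorphism of determinant lines by invoking cofinality in the inverse limit defining $\det(L|L')$.

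For the first part, let $\bbL' \subset \calO_\bbK$ be a witness to the pseudo commensurability of $L$ and $L'$, that is, $L \cap \bbL' = L' \cap \bbL'$. Since $a \in GL_\infty^+(\gl_\infty)$ acts as a continuous automorphism of $\calO_\bbK$ with continuous inverse, $a\bbL'$ is closed in $\calO_\bbK$, is contained in $a\calO_\bbK = \calO_\bbK$, and contains $a(s^N\calO_\bbK)$ for any $N$ with $s^N\calO_\bbK \subset \bbL'$; by continuity of $a^{-1}$, $a(s^N\calO_\bbK) \supset s^{N'}\calO_\bbK$ for some $N'$, so $a\bbL'$ is again a lattice in $\calO_\bbK$. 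Then $aL \cap a\bbL' = a(L \cap \bbL') = a(L' \cap \bbL') = aL' \cap a\bbL'$, so $a\bbL'$ witnesses the pseudo commensurability of $aL$ and $aL'$.

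For the isomorphism of determinantal lines, I plan to argue that the inverse limit in \eqref{determinantal line} can equally well be computed along any cofinal system of lattices contained in $s^n\calO_\bbK$ (where $n$ is chosen so that $L \cap s^n\calO_\bbK = L' \cap s^n\calO_\bbK$), not only along the specific family $\{s^m\calO_\bbK\}_{m \geq n}$. The key is to generalize the canonical isomorphism \eqref{independent of n}: for any two lattices $\bbL_1 \subset \bbL_2 \subset s^n\calO_\bbK$, the same cancellation argument yields a canonical isomorphism
\[
\det\bigl(L/(L\cap \bbL_1)\,\big|\,L'/(L'\cap \bbL_1)\bigr) \;\cong\; \det\bigl(L/(L\cap \bbL_2)\,\big|\,L'/(L'\cap \bbL_2)\bigr),
\]
because the ``difference'' contributions $(L\cap \bbL_2)/(L\cap \bbL_1)$ and $(L'\cap \bbL_2)/(L'\cap \bbL_1)$ coincide (both equal $(L\cap \bbL_2)/(L\cap \bbL_1)$ by pseudo commensurability) and hence cancel in the determinant ratio, and these isomorphisms satisfy the obvious cocycle condition for chains $\bbL_1 \subset \bbL_2 \subset \bbL_3$. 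Thus $\det(L|L')$ is canonically the inverse limit of $\det\bigl(L/(L\cap \bbL)\,\big|\,L'/(L'\cap \bbL)\bigr)$ over any cofinal-from-below system $\{\bbL\}$ of lattices in $s^n\calO_\bbK$.

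Applying this to $aL, aL'$ (with witness lattice $a\bbL'$, say with $a\bbL' \supset s^{n'}\calO_\bbK$), I take the cofinal system $\{a(s^m\calO_\bbK)\}_{m \geq n}$. For each such $m$, $a$ restricts to a linear isomorphism
\[
a \colon L/(L\cap s^m\calO_\bbK) \xrightarrow{\;\cong\;} aL/(aL \cap a(s^m\calO_\bbK)),
\]
and similarly for $L'$; these induce a canonical isomorphism of finite-dimensional determinant lines
\[
\det\bigl(L/(L\cap s^m\calO_\bbK)\,\big|\,L'/(L'\cap s^m\calO_\bbK)\bigr) \xrightarrow{\;\cong\;} \det\bigl(aL/(aL\cap a(s^m\calO_\bbK))\,\big|\,aL'/(aL'\cap a(s^m\calO_\bbK))\bigr),
\]
compatibly with the transition maps as $m$ grows. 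Passing to the inverse limit yields the desired isomorphism $\det(L|L') \xrightarrow{\cong} \det(aL|aL')$, with the right-hand side computed via the cofinal system $\{a(s^m\calO_\bbK)\}$ and canonically identified with the standard description via $\{s^m\calO_\bbK\}$.

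I expect the main obstacle to be the bookkeeping in the second step: verifying that the generalized form of \eqref{independent of n} is compatible with arbitrary refinements of the truncation lattice and that the resulting isomorphism of $\det(L|L')$ with the alternate inverse limit is canonical and satisfies the required composition property $\gamma_{L,L',L''}$ from Remark \ref{composition of determinantal lines} equivariantly under $a$. Once the cofinality of lattice systems and the invariance of the determinant under change of cofinal family are set up cleanly, the isomorphism induced by $a$ is formally tautological.
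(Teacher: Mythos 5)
The paper states this lemma without proof, prefacing it only with ``The following lemmas are easy to check,'' so there is no official argument to compare against; you are filling an acknowledged gap, and your approach is sound. The pseudo-commensurability transfer via the witness $a\bbL'$ is correct (indeed $a\bbL'$ is closed because $a$ is a homeomorphism, and the sandwich $s^{N'}\calO_\bbK\subset a\bbL'\subset\calO_\bbK$ follows from the continuity of $a^{-1}$), and the determinant isomorphism does reduce to identifying the inverse limit \eqref{determinantal line} over $\{s^m\calO_\bbK\}$ with the one over the $a$-translated cofinal system $\{a(s^m\calO_\bbK)\}$, which is exactly what is needed since $a$ does not preserve the standard filtration by $s^m\calO_\bbK$.

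The one place that deserves to be made precise is the phrase ``hence cancel in the determinant ratio'' when generalizing \eqref{independent of n}. The clean statement underlying it: if $A,B$ are commensurable subspaces of an ambient space and $C\subset A\cap B$ is any subspace, then $(A/C)\cap(B/C)=(A\cap B)/C$ and $(A/C)/\bigl((A\cap B)/C\bigr)\cong A/(A\cap B)$, so $\det(A|B)\cong\det(A/C\mid B/C)$ canonically. Apply this in $\calO_\bbK/\bbL_1$ with $A=L/(L\cap\bbL_1)$, $B=L'/(L'\cap\bbL_1)$ and the \emph{common} subspace $C=(L\cap\bbL_2)/(L\cap\bbL_1)=(L'\cap\bbL_2)/(L'\cap\bbL_1)$ (the equality is exactly where pseudo-commensurability is used, since $\bbL_2\subset s^n\calO_\bbK$); then $A/C\cong L/(L\cap\bbL_2)$, $B/C\cong L'/(L'\cap\bbL_2)$, giving the generalized transition isomorphism. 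The cocycle condition for $\bbL_1\subset\bbL_2\subset\bbL_3$ is then just transitivity of these quotients. With that spelled out, the argument is complete and the induced isomorphism $\det(L|L')\to\det(aL|aL')$ is manifestly canonical.
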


\begin{ex}\label{ex}
Let $L_0=\bbC[[t]][[s]]$. Observe that if we let $a\in
GL_{\frakf,\infty}$ which is defined by
\[at^i=\sum_{j\geq 0}s^jt^{i-j}, \ \ \ \ \ \ \ \ a(s^it^j)=s^it^j,
\]
then $aL_0\cap L_0=s\bbC[[t]][[s]]$. Therefore, $aL_0$ is not
commensurable with $L_0$.
\end{ex}

\bigskip

Recall the definition of $GL_{\frakf,n}(\gl_\infty)$, which will
be denoted by $GL_{n,\infty}$ in what follows for simplicity. This
is a subgroup of $GL_{\frakf,\infty}$ consisting of
$a:\gl_\infty[[s]]\to\gl_\infty[[s]]$ such that
$a|_{s^n\gl_\infty[[s]]}=\mathrm{id}$. We denote the restriction
of the central extensions $\wh{GL}_{\frakf,\infty}$ of
$GL_{\frakf,\infty}$ to $GL_{n,\infty}$ by $\wh{GL}_{n,\infty}$.
Let $L$ be a secondary lattice of $\calO_\bbK$. As shown in
Proposition \ref{identification of two central extensions}, the
central extension $\wh{GL}_{n,\infty}$ can be interpreted as the
group consisting of $(a,e)$, where $a\in GL_{n,\infty}$ and
$$e\in\det(\frac{aL}{aL\cap
s^n\calO_\bbK}|\frac{L}{L\cap
s^n\calO_\bbK})^\times=\det(\frac{aL}{L\cap
s^n\calO_\bbK}|\frac{L}{L\cap s^n\calO_\bbK})^\times.$$ We thus
obtain

\begin{prop}\label{another
presentation}The central extensions $\wh{GL}_{\frakf,\infty}$ of
$GL_{\frakf,\infty}$ can be interpreted as the group consisting of
$(a,e)$, where $a\in GL_{\frakf,\infty}$ and
$e\in\det(aL|L)^\times$ for any secondary lattice
$L\subset\calO_\bbK$.
\end{prop}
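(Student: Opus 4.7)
The plan is to realize $\wh{GL}_{\frakf,\infty}$ concretely as the set $\wh{G}_L := \{(a,e) : a\in GL_{\frakf,\infty},\ e\in\det(aL|L)^\times\}$ with an appropriate group law, and then match this with the central extension (\ref{extension of GLfrakfinfty}) by passing to the filtration $\{GL_{n,\infty}\}$. First I would verify that $\det(aL|L)^\times$ is well-defined for every $a\in GL_{\frakf,\infty}$. Since $GL_{\frakf,\infty}=\bigcup_n GL_{n,\infty}$, for any $a$ there exists $n$ with $a\in GL_{n,\infty}$, and by Lemma \ref{sec lat} the secondary lattices $aL$ and $L$ satisfy $aL\cap s^n\calO_\bbK=L\cap s^n\calO_\bbK$, so they are pseudo commensurable (Definition \ref{pseudo commensurability}) and $\det(aL|L)$ is defined by (\ref{determinantal line}). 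Independence of the chosen $n$ is built into the projective system via the transition isomorphisms (\ref{independent of n}).

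Next I would equip $\wh{G}_L$ with the group law
\[
(a,e)\cdot(a',e') := \bigl(aa',\ \gamma_{aa'L,aL,L}\bigl(a(e')\otimes e\bigr)\bigr),
\]
where $a(e')\in\det(aa'L|aL)^\times$ is the image of $e'$ under the isomorphism $\det(a'L|L)\cong\det(aa'L|aL)$ induced by $a$ (which is defined by the second lemma of \S \ref{dl}), and $\gamma$ is the composition isomorphism of Remark \ref{composition of determinantal lines}. Associativity follows from the cocycle identity $\gamma_{L_1,L_2,L_4}\gamma_{L_2,L_3,L_4}=\gamma_{L_1,L_3,L_4}\gamma_{L_1,L_2,L_3}$ applied to the quadruple $aa'a''L,\ aa'L,\ aL,\ L$. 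The unit is $(1,1_L)$ with $1_L\in\det(L|L)^\times=\bbC^\times$ canonical, and inverses come from dualizing determinant lines. The map $(a,e)\mapsto a$ is surjective with kernel $\{(1,c):c\in\bbC^\times\}$, so $\wh{G}_L$ is a central extension of $GL_{\frakf,\infty}$ by $\bbC^\times$.

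Then I would identify $\wh{G}_L$ with $\wh{GL}_{\frakf,\infty}$ by restricting to $GL_{n,\infty}$: the canonical isomorphism (\ref{independent of n}) matches pairs $(a,e)\in\wh{G}_L$ with $a\in GL_{n,\infty}$ to the description of $\wh{GL}_{n,\infty}$ recorded right before the proposition via Proposition \ref{identification of two central extensions}. These identifications commute with the embeddings $GL_{n,\infty}\hookrightarrow GL_{n+1,\infty}$ because the transition maps (\ref{independent of n}) are compatible with the composition isomorphisms $\gamma$ at every finite level. Since $\wh{GL}_{\frakf,\infty}$ is built from the tower $\{\wh{GL}_{n,\infty}\}$ by gluing, we obtain an isomorphism of central extensions $\wh{GL}_{\frakf,\infty}\cong\wh{G}_L$. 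Finally, independence of $L$ is derived by noting that if $L,L'$ are two secondary lattices and $f\in\det(L|L')^\times$ is any section, then conjugation by $f$ through $\gamma$ gives a $\bbC^\times$-equivariant isomorphism $\wh{G}_L\cong\wh{G}_{L'}$ over $GL_{\frakf,\infty}$ which is canonical by the uniqueness of the extension.

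The main obstacle will be the compatibility check at the level of limits — namely, verifying that the group law on $\wh{G}_L$, which involves $\det(aL|L)$ defined as a projective limit in (\ref{determinantal line}), agrees at each finite approximation with the group law on $\wh{GL}_{n,\infty}$ defined in Proposition \ref{identification of two central extensions}. Concretely, one must check that if $a\in GL_{n,\infty}$ and $a'\in GL_{n',\infty}$ with $n''\geq\max(n,n')$ large enough so that $aa'\in GL_{n'',\infty}$, the product $(a,e)\cdot(a',e')$ computed in $\wh{G}_L$ matches the product in $\wh{GL}_{n'',\infty}$ under the identification above; this reduces to the functoriality of $\gamma$ under the maps (\ref{independent of n}), which is straightforward but bookkeeping-heavy.
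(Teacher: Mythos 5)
Your proposal is correct and follows essentially the same route the paper uses implicitly: restrict to the exhaustive filtration $GL_{n,\infty}\subset GL_{\frakf,\infty}$, apply Proposition \ref{identification of two central extensions} at each finite level, and pass to the limit via the compatibility isomorphisms (\ref{independent of n}) defining $\det(aL|L)$. The paper treats this as immediate from the preceding paragraph and gives no explicit proof; your version supplies the group law on $\wh{G}_L$, the associativity check, and the $L$-independence that the paper leaves unstated, so it is a fleshed-out rendering of the same argument rather than a different one.
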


Observe that $GL_\infty^+(\gl_\infty)$ acts on
$GL_{\frakf,\infty}$ by conjugation $\rho$, i.e.,
$\rho_g(h)=ghg^{-1}$ for $g\in GL^+_{\infty}(\gl_\infty), h\in
GL_{\frakf,\infty}$.

\begin{cor}\label{D2
invariant under g} Let $g\in GL_\infty^+(\gl_\infty)$. Then the
central extension $\bbC^\times\to\wh{GL}_{\frakf,\infty}^g\to
GL_{\frakf,\infty}$ obtained by pullback of
$\bbC^\times\to\wh{GL}_{\frakf,\infty}\to GL_{\frakf,\infty}$
along $\rho_g:GL_{\frakf,\infty}\to GL_{\frakf,\infty}$ is
isomorphic to $\wh{GL}_{\frakf,\infty}$.
\end{cor}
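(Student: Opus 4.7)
The plan is to exploit the concrete description of $\wh{GL}_{\frakf,\infty}$ supplied by Proposition \ref{another presentation}: its elements are pairs $(h,e)$ with $h \in GL_{\frakf,\infty}$ and $e \in \det(hL|L)^\times$ for a choice of secondary lattice $L \subset \calO_\bbK$, with multiplication governed by the canonical pairings $\gamma$ of determinant lines from Remark \ref{composition of determinantal lines}, extended to pseudo commensurable lattices in \S \ref{dl}. Using this, I would construct an explicit isomorphism
\[
\phi \colon \wh{GL}_{\frakf,\infty} \longrightarrow \wh{GL}_{\frakf,\infty}^g
\]
by letting $g$ act simultaneously on the secondary lattices and on the associated determinant lines.

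Concretely, send $(h,e)$, with $e$ recorded via a secondary lattice $L$, to $(h, g_*(e))$, where $g_* \colon \det(hL|L)^\times \stackrel{\sim}{\to} \det(ghL|gL)^\times$ is the isomorphism furnished by the second lemma in \S \ref{dl} (available because $hL$ and $L$ are pseudo commensurable by Lemma \ref{sec lat}). By Lemma \ref{sec lat}, $gL$ is itself a secondary lattice of $\calO_\bbK$, and $\det(ghL|gL)^\times = \det(\rho_g(h) \cdot gL | gL)^\times$, so the image really does lie in the fiber of $\wh{GL}_{\frakf,\infty}^g$ over $h$.

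Two compatibilities then require checking. Well-definedness (independence of the secondary lattice used to represent $e$) reduces to the observation that the transition isomorphisms (\ref{independent of n}) commute with the action of $g$, which follows tautologically from the definition (\ref{determinantal line}). For multiplicativity, one unpacks the product $(h_1,e_1)(h_2,e_2) = (h_1 h_2,\, e_1 \otimes h_1(e_2))$ formed via $\gamma$; applying $\phi$ yields $(h_1 h_2,\, g(e_1) \otimes g(h_1(e_2)))$, and since $g(h_1(e_2)) = (g h_1 g^{-1})(g(e_2)) = \rho_g(h_1)(g(e_2))$, this coincides with the product $\phi(h_1,e_1) \cdot \phi(h_2,e_2)$ computed in $\wh{GL}_{\frakf,\infty}^g$ (where each fiber is taken with respect to the lattice $gL$).

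Finally, $\phi$ covers the identity on $GL_{\frakf,\infty}$ and, by taking $h = 1$, restricts to the identity on the central $\bbC^\times$ since $g_*$ matches the canonical generators $\det(L|L)^\times \cong \bbC^\times \cong \det(gL|gL)^\times$. Hence $\phi$ is an isomorphism of central extensions by the five lemma. The only mildly delicate point will be the bookkeeping of the determinant lines under pseudo commensurability, but the formalism already assembled in \S \ref{dl} should handle this without further input.
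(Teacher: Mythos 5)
Your proposal is correct and takes essentially the same route as the paper's one-line proof: both rest on Proposition \ref{another presentation} (the presentation of $\wh{GL}_{\frakf,\infty}$ by determinant lines of secondary lattices, valid for any such lattice) together with the $g$-action on secondary lattices and their determinant lines from \S\ref{dl}. The paper phrases it passively (the pullback inherits the same presentation with $L$ replaced by $g^{-1}L$), while you make the isomorphism explicit via $g_*$ and the target lattice $gL$, but these are two readings of the same observation.
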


\begin{proof} If we
present $\wh{GL}_{\frakf,\infty}$ as the group consisting of
elements $(a,e)$, where $a\in GL_{\frakf,\infty}$ and
$e\in\det(aL|L)^\times$ for any secondary lattice
$L\subset\calO_\bbK$, then $\wh{GL}_{\frakf,\infty}^g$ will have
similar presentation where $L$ is replaced by $g^{-1}L$.
\end{proof}

\subsubsection{An
extension of $GL_{\infty,\infty}$ by
   $GL_{\frakf,\infty}$} We first claim the
following:

\begin{lem}
$K_0(\gl^{\on{op}}_\infty)=0$ so that
$GL_{\infty,\infty}^0=GL_{\infty,\infty}$.
\end{lem}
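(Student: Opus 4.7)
The plan is to show that every finitely generated projective right $\gl_\infty$-module $P$ satisfies $[P]=0$ in $K_0(\gl_\infty^{\on{op}})$. The starting point is that $K=\bbC\ppart$ is isomorphic to $K\oplus K$ as a Tate vector space---for instance via the even/odd coefficient splitting $\sum_i a_i t^i\mapsto(\sum_i a_{2i}t^i,\sum_i a_{2i+1}t^i)$, whose continuity and that of its inverse are immediate. Applying the functor $\Hom(K,-)$ of continuous $\bbC$-linear maps, which commutes with finite direct sums, then yields an isomorphism of right $\gl_\infty$-modules $\gl_\infty\cong\gl_\infty\oplus\gl_\infty$; iterating gives $\gl_\infty\cong\gl_\infty^n$ for every $n\geq 1$.

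Given this, any finitely generated projective right $\gl_\infty$-module $P$ is a direct summand of some $\gl_\infty^n\cong\gl_\infty$, so $P\cong e\gl_\infty$ for an idempotent $e\in\gl_\infty=\End K$. Setting $K_1=e(K)$, a closed continuously complemented Tate subspace of $K$, I would identify $P\cong\Hom(K,K_1)$ as a right $\gl_\infty$-module. The crucial claim is then that $K_1\oplus K\cong K$ as Tate vector spaces, for any such direct summand $K_1$. Granting this, applying $\Hom(K,-)$ gives $P\oplus\gl_\infty\cong\gl_\infty$, whence $[P]+[\gl_\infty]=[\gl_\infty]$ so $[P]=0$ in $K_0(\gl_\infty^{\on{op}})$, and the lemma $GL_{\infty,\infty}^0=GL_{\infty,\infty}$ follows.

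The main obstacle is this absorption claim $K_1\oplus K\cong K$, which I would prove via the structure theory of Tate vector spaces with countable basis of neighborhoods of $0$: any such $K_1$ decomposes as $L_+\oplus L_-$ with $L_+$ linearly compact (topologically dual to a discrete space of dimension at most $\aleph_0$, hence isomorphic to $\bbC^n$ or to $\bbC[[t]]$) and $L_-$ discrete (of finite or countably infinite dimension). Combining this with the absorption isomorphisms $\bbC[[t]]\oplus L_+\cong\bbC[[t]]$ (by duality to the fact that any countable-dimensional discrete space absorbs another countable-dimensional one) and $t^{-1}\bbC[t^{-1}]\oplus L_-\cong t^{-1}\bbC[t^{-1}]$, applied to the standard polarization $K=\bbC[[t]]\oplus t^{-1}\bbC[t^{-1}]$, yields $K\oplus K_1\cong K$, completing the argument.
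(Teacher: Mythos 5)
Your proposal is correct and follows essentially the same route as the paper: identify $P$ with $\Hom(K,K_1)$ for a Tate subspace $K_1\subset K$ (the paper writes $K_1=P\otimes_{\gl_\infty}K$), invoke the absorption $K_1\oplus K\cong K$, and apply $\Hom(K,-)$ to get $P\oplus\gl_\infty\cong\gl_\infty$. The only difference is that you spell out the absorption claim via the polarization $K_1=L_+\oplus L_-$, whereas the paper simply asserts that $W\oplus K\cong K$ for any Tate subspace $W\subset K$.
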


\begin{proof} We define
a functor from the category of finitely generated projective
$\gl_\infty$-modules to the category of Tate vector spaces by
$P\mapsto P\otimes_{\gl_\infty}K$. (We recall hat $K=\bbC\ppart$
and $\gl_\infty=\mathrm{End}K$.) Fixing a finite set $\{p_i\}$ of
generators of $P$ as a {\em right} $\gl_\infty$-module, a basis of
open neighborhoods of $0\in P\otimes_{\gl_\infty}K$ is given by
$\sum_ip_i\otimes U_i$, where $U_i$ vary in the set of open
subspaces of $K$. It is easy to see that the topology on
$P\otimes_{\gl_\infty}K$ is independent of the choice of
$\{p_i\}$. Observe that $\Hom(K,P\otimes_{\gl_\infty}K)$ is a {\em
right} $\Hom(K,K)=\gl_\infty$-module. We first show that
$\Hom(K,P\otimes_{\gl_\infty}K)\cong P$.

Indeed, using an isomorphism $K\oplus K\cong K$ of Tate vector
spaces, we obtain $\gl_\infty\oplus\gl_\infty=\Hom(K,K\oplus
K)\cong\Hom(K,K)=\gl_\infty$. Therefore, any finitely generated
projective {\em right} $\gl_\infty$ is a direct summand of
$\gl_\infty$, in particular a cyclic {\em right}
$\gl_\infty$-module. Let $p\in\gl_\infty$ be a generator of $P$
(so $P=p\gl_\infty\subset\gl_\infty$). We thus obtain that
$\Hom(K,P\otimes_{\gl_\infty}K)$ is a direct summand of
$\Hom(K,\gl_\infty\otimes_{\gl_\infty}K)\cong\gl_\infty$. Observe
that under the last isomorphism, the map
$\varphi\in\Hom(K,\gl_\infty\otimes_{\gl_\infty}K)$ defined by
$\varphi(v)=p\otimes v$ goes to $p$. This proves that
$\Hom(K,P\otimes_{\gl_\infty}K)\cong P$.

Now, using the fact that for any Tate subspace $W\subset K$, we
have $W\oplus K\cong K$ as Tate vector spaces, we obtain that
\[P\oplus \gl_\infty\cong\Hom(K,P\otimes_{\gl_\infty}K\oplus
K) \cong\Hom(K,K)=\gl_\infty.\] This implies that in
$K_0(\gl^{\on{op}}_\infty)$, $[P]=0$.
\end{proof}

\begin{rmk}(i) For any $R$, we thus obtain a functor from
the category $\calP_R$ of finitely generated projective
$\gl_\infty(R)$-modules to the category $\calT_R$ of Tate $R$-modules
(see \cite{Dr}, Definition 3.2.1, for the definition of Tate
$R$-modules). At the level of Grothendieck groups, this gives an
isomorphism $K_0(\gl_\infty(R))=K_0(\calP_R)\cong K_0(\calT_R)\cong
K_{-1}(R)$. (See \cite{Dr}, Theorem 3.6.)

(ii) A variation of the main result in \cite{W} generalizes the
result in Remark (i). Namely, the functor $\gl_\infty$ shifts the
algebraic $K$-theory of a ring by degree one, i.e.,
$K_*(\gl_\infty(R))=K_{*-1}(R)$.

(iii) There is also an additive analogue of Remark (ii), see
\cite{FT3}, Proposition 4.1.5.
\end{rmk}

Now let $R=\gl_\infty$ in (\ref{basicexactsequenceforgroups}). By
the above lemma, we have an exact sequence of groups

\begin{equation}\label{extension of GLinftyinfty}
1\to GL_{\frakf,\infty}\to\widetilde{GL}_\infty(\gl_\infty)\to
GL_{\infty,\infty}\to 1.
\end{equation}

Since $H^\bullet(\widetilde{GL}_\infty(\gl_\infty))\cong\bbZ$ and
$H^1(GL_{\frakf,\infty},\bbZ)\cong\bbZ$, by the spectral sequence
associated to (\ref{extension of GLinftyinfty}), we obtain that
$H^2(GL_{\infty,\infty},\bbZ)\cong\bbZ$. The $\bbZ$-central
extension

\begin{equation}\label{Z-central extension
of GLinftyinfty}1\to\bbZ\to\wh{GL}_{\infty,\infty}\to
GL_{\infty,\infty}\to 1
\end{equation}
of $GL_{\infty,\infty}$ corresponding to the generator of
$H^2(GL_{\infty,\infty},\bbZ)$ can be obtained as the push-out of
(\ref{extension of GLfrakfinfty}) by
$\deg:GL_{\frakf,\infty}\to\bbZ$. This is the universal
$\bbZ$-central extension of $GL_{\infty,\infty}$.  Observe that
$\wh{GL}_{\infty,\infty}$ fits into the following exact sequence
of groups

\begin{equation}\label{extension
of whGLfrakfinfty}1\to
GL_{\frakf,\infty}^0\to\widetilde{GL}_\infty(\gl_\infty)\to
\wh{GL}_{\infty,\infty}\to1
\end{equation}

\begin{rmk}We
remark that the group $\wh{GL}_{\infty,\infty}$ plays a central
role in Osipov's work of reciprocity laws on algebraic surfaces
(see \cite{Osi1}).
\end{rmk}

In the next section, we will obtain a nontrivial cohomology class
$[E_3]\in H^3(GL_{\infty,\infty})$ (see Theorem \ref{H^3 of
GLinftyinfty}) from the sequence (\ref{extension of
GLinftyinfty}).

\section{Cohomology classes of gerbal representations of
  $GL_{\infty,\infty}$}    \label{coh cl}

In this section we compute the third cohomology class realized in
the gerbal representation of $GL_{\infty,\infty}$ on the category
$\calC_{\calO_\bbK}^{\on{ss}}$ constructed in Theorem \ref{main1}.
The main result is Theorem \ref{main2}. In order to prove this
result we realize this class as the cohomology class arising from
another gerbal representation of $GL_{\infty,\infty}$. We then use
some general results presented below on what we call ``gerbal
pairs of groups''.

\subsection{Gerbal pairs of groups}

In this section we develop a formalism which allows us to
calculate the cohomology classes corresponding to gerbal
representations in a certain situation. This is based on what we
call \emph{gerbal pairs of groups}. In this section, we will first
recall one construction of the Lyndon-Hochschild-Serre spectral
sequences. Then we will develop the general formalism of gerbal
pairs of groups. Finally, we will show that there is an action of
$\widetilde{GL}_\infty(\gl_\infty)$ on $\wh{GL}_{\frakf,\infty}$,
such that the groups
$(\widetilde{GL}_\infty(\gl_\infty),\wh{GL}_{\frakf,\infty})$
equipped with the short exact sequences (\ref{extension of
GLfrakfinfty}) and (\ref{extension of GLinftyinfty}) form a gerbal
pair in the sense of Definition \ref{gerbal pair}.

\subsubsection{The spectral sequence of a group
extension}\label{LHS s.s.}

Recall that if \[1\to H\to G\to K\to 1\] is an extension of
groups, then for a $G$-module $M$ the Lyndon-Hochschild-Serre
spectral sequence has the following second term:
\[E_2^{p,q}=H^p(K,H^q(H,M))\Rightarrow
H^{p+q}(G,M).\] We will recall one construction of this spectral
sequence given in Chapter II of \cite{HS}. It uses an appropriate
filtration on the complex of normalized cochains $C^\bullet(G,M)$.
Recall that for $M$ a $G$-module, the cohomology $H^\bullet(G,M)$
can be calculated by the complex of "normalized" cochains
$C^\bullet(G,M)$, where $C^n(G,M)$ is the group of maps $f:G^n\to
M$ such that $f(g_1,\ldots,g_n)=0$ whenever one of the $g_i$ is
the identity. The coboundary map $\delta_G:C^n(G,M)\to
C^{n+1}(G,M)$ is given by
\[\begin{array}{lll}(\delta_Gf)(g_1,\ldots,g_{n+1})
&=&g_1f(g_2,\ldots,g_{n+1})\\
&&+\sum(-1)^if(g_1,\ldots,g_ig_{i+1},\ldots,g_{n+1})\\
&&+(-1)^{n+1}f(g_1,\ldots,g_n)\end{array}.\]

Now let $H$ be a normal subgroup of $G$ with $K=G/H$ the quotient.
Then we have a filtration on $F^\bullet C^\bullet(G,M)$ defined by
the formula
\[F^pC^n(G,M)=\left\{\begin{array}{ll}\mathrm{Maps}(G^{n-p}\times
K^p,M)\cap C^n(G,M) &p\leq n\\ 0& p>n\end{array}\right.\]

The filtration gives rise to a spectral sequence which converges
to $H^\bullet(G,M)$. Let us recall the $E_1$ and $E_2$ terms.
There is a natural map $$\res:F^pC^n(G,M)\to C^p(K,C^{n-p}(H,M))$$
obtained by restriction. A key observation of Hochschild and Serre
is that this map induces an isomorphism $(E_1^{*,q},d_1)\cong
(C^\bullet(K,H^q(H,M)),\delta_K)$. This gives the desired spectral
sequence with $E_2^{p,q}\cong H^p(K,H^q(H,M))$.

In particular, we have the following commutative diagram:
\[\begin{CD}E_1^{0,2}=\dfrac{\{a\in
C^2,\delta_Ga\in
F^1C^3\}}{\delta_G(C^1)+F^1C^2}@>\cong>\res> H^2(H,M)\\
\bigcup @.\bigcup\\
E_2^{0,2}=\dfrac{\{a\in C^2,\delta_Ga\in
F^2C^3\}}{\delta_G(C^1)+\{a\in F^1C^2,\delta_Ga\in
F^2C^3\}}@>\cong>\res> H^2(H,M)^K\\ \bigcup @.
\bigcup\\
E_3^{0,2}=\dfrac{\{a\in C^2,\delta_Ga\in
F^3C^3\}}{\delta_G(C^1)+\{a\in F^1C^2,\delta_Ga\in
F^3C^3\}}@>\cong>\res> \small{\ker(H^2(H,M)^K\to H^2(K,H^1(H,M)))}
\end{CD}\]

\subsubsection{The definition and a criterion for
gerbal pairs of groups}

Let
\begin{equation}\label{central extension of
H} 1\to\bbC^\times\to\wh{H}\stackrel{\pi}{\to} H\to 1
\end{equation}
be a central extension of group $H$ by $\bbC^\times$, which
corresponds the cohomology class $a\in H^2(H,\bbC^\times)$. Then
the conjugation of $\wh{H}$ on itself descends to an action of $H$
on $\wh{H}$, which is denoted by $c$. That is, for any $h\in H,
\wh{h}\in\wh{H}$, choose any lifting $\tilde{h}\in\wh{H}$ of $h$,
then
\[c_h(\wh{h})=\tilde{h}\wh{h}\tilde{h}^{-1}\]

Now assume that we have an extension of groups
\begin{equation}\label{extension of K by H}
1 \to H\stackrel{i}{\to} G\to K \to 1
\end{equation}

Since $H$ is a normal subgroup of $G$, $G$ acts on $H$ by
conjugation, which is denoted by $\rho$, $\rho_g(h)=ghg^{-1}$.

\begin{dfn}
\label{gerbal pair} Suppose that we can lift $\rho$ to an action
$\tilde{\rho}$ of $G$ on $\wh{H}$, i.e.
$\pi(\tilde{\rho}_g(\wh{h}))=\rho_g(\pi(\wh{h}))$, so that
$\tilde{\rho}_h=c_h$ for $h\in H$ and
$\tilde{\rho}_g|_{\bbC^\times}=\mathrm{id}$. Then we will call the
pair $(G,\wh{H})$ of groups equipped with the short exact
sequences (\ref{central extension of H}) and (\ref{extension of K
by H}), and the action $\tilde{\rho}$, a {\em gerbal pair}.
\end{dfn}

Note that if $(G,\wh{H})$ is a gerbal pair of groups, then
$\wh{H}\to G$ form what is called a ``crossed module of groups''
(see \cite{Br}).

The following proposition gives the necessary and sufficient
conditions for $(G,\wh{H})$ to be a gerbal pair.

\begin{prop}\label{liftings}
The lifting of Definition \ref{gerbal pair} exists if and only if
the class $a\in H^2(H,\bbC^\times)$ is transgressive, that is,
$a\in H^2(H,\bbC^\times)^K$ and $d_2(a)=0$, where
$d_2:H^2(H,\bbC^\times)\to H^2(K,H^1(H,\bbC^\times))$ is the
differential in the $E_2$ term of the Lyndon-Hochschild-Serre
spectral sequence. If such a lifting exists, then all liftings form
a torsor under $Z^1(K,H^1(H,\bbC^\times))$.
\end{prop}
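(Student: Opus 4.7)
The plan is to work cocycle-theoretically throughout. Fix a normalized 2-cocycle $\alpha \in Z^2(H,\bbC^\times)$ representing $a$, so that $\wh{H}$ is realized as $H \times \bbC^\times$ with product $(h_1,z_1)(h_2,z_2) = (h_1h_2,\, z_1z_2\alpha(h_1,h_2))$. Any $\tilde\rho_g$ covering $\rho_g$ and acting trivially on $\bbC^\times$ must have the form $(h,z)\mapsto(\rho_g(h),\, z\beta_g(h))$ for some 1-cochain $\beta_g : H \to \bbC^\times$, and the homomorphism property of $\tilde\rho_g$ translates to the cochain equation $\delta \beta_g = \rho_g^*\alpha \cdot \alpha^{-1}$. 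This is solvable for a given $g$ if and only if $\rho_g^*\alpha$ is cohomologous to $\alpha$, i.e., $a$ is $G$-invariant; since inner automorphisms by $H$ act trivially on $H^2(H,\bbC^\times)$, $G$-invariance reduces to $K$-invariance of $a$, giving the first half of the transgression condition.

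Assuming $a \in H^2(H,\bbC^\times)^K$, choose a $\beta_g$ for each $g \in G$. For $h \in H$, direct computation of conjugation inside $\wh{H}$ produces a canonical formula for $\beta_h$, and adopting these values ensures $\tilde\rho_h = c_h$. The failure of $g \mapsto \tilde\rho_g$ to be multiplicative is measured by
\begin{equation*}
\omega(g_1,g_2)(h) := \beta_{g_1g_2}(h) \cdot \beta_{g_2}(h)^{-1} \cdot \beta_{g_1}(\rho_{g_2}(h))^{-1}.
\end{equation*}
Using $\delta\beta_g = \rho_g^*\alpha \cdot \alpha^{-1}$ together with $\rho_{g_2}^*\rho_{g_1}^* = \rho_{g_1g_2}^*$, one checks that $\delta_H \omega(g_1,g_2)=1$, so $\omega(g_1,g_2)\in Z^1(H,\bbC^\times)=\Hom(H,\bbC^\times) = H^1(H,\bbC^\times)$. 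A second routine calculation shows that $\omega$ is a 2-cocycle on $G$ valued in $H^1(H,\bbC^\times)$ (with the natural $G$-action induced by $\rho$), whose class in $H^2(G,H^1(H,\bbC^\times))$ is independent of the choices of $\beta_g$. The choice $\tilde\rho_h=c_h$ forces $\omega|_{H\times H}=1$, and by modifying the $\beta_g$ by suitable 1-cochains one arranges $\omega$ to factor through $K \times K$, so that $[\omega] \in H^2(K,H^1(H,\bbC^\times))$ is well-defined and is precisely the obstruction to assembling $\{\tilde\rho_g\}$ into a group homomorphism.

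The central step, and the main obstacle, is identifying $[\omega]$ with $d_2(a)$. The plan is to use the explicit filtration-based description of the Lyndon--Hochschild--Serre spectral sequence recalled in \S \ref{LHS s.s.}. Using the set-theoretic section $s:K\to G$ implicit in the choice of $\beta_g$, build a cochain $\tilde\alpha \in F^0C^2(G,\bbC^\times)$ which restricts to $\alpha$ on $H\times H$ and is built from the $\beta_{s(k)}$ on off-diagonal pieces. The $K$-invariance of $[\alpha]$ forces $\delta_G\tilde\alpha \in F^2C^3$, and the image of $\delta_G\tilde\alpha$ under the restriction map $F^2C^3 \to C^2(K,C^1(H,\bbC^\times))$, followed by the projection onto $C^2(K,H^1(H,\bbC^\times))$, is by construction a representative of $d_2([\alpha])$. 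A bookkeeping computation, essentially the content of the definition of $d_2$, shows that this representative coincides with $\omega$. Thus $[\omega] = d_2(a)$, and the lifting exists if and only if $a$ is transgressive.

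Finally, for the torsor statement, if $\tilde\rho,\tilde\rho'$ are two liftings, set $\theta_g := \tilde\rho'_g \circ \tilde\rho_g^{-1}$. Each $\theta_g$ is an automorphism of $\wh{H}$ covering the identity of $H$ and trivial on $\bbC^\times$, hence $\theta_g \in \Hom(H,\bbC^\times) = H^1(H,\bbC^\times)$. The joint normalization $\tilde\rho|_H = \tilde\rho'|_H = c$ forces $\theta_h = 1$ for all $h \in H$, so $\theta$ descends to a map $K \to H^1(H,\bbC^\times)$, and the homomorphism property of both $\tilde\rho, \tilde\rho'$ is precisely the 1-cocycle identity for $\theta$ in $Z^1(K,H^1(H,\bbC^\times))$ (for the natural $K$-action). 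Conversely, any $\theta \in Z^1(K,H^1(H,\bbC^\times))$ modifies a given lifting into a new one, establishing the claimed torsor structure.
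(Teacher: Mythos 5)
Your proposal is correct and follows essentially the same route as the paper's proof: realize $\wh{H}$ via an explicit 2-cocycle $\alpha$, express each $\tilde\rho_g$ as a twist of $\rho_g$ by a 1-cochain $\beta_g$ solving $\delta\beta_g = \rho_g^*\alpha\cdot\alpha^{-1}$ (equivalently, $a$ is $G$-invariant, hence $K$-invariant), measure the failure of multiplicativity by a 2-cocycle $\omega$ valued in $H^1(H,\bbC^\times)$, descend it to $K$ via the compatibility normalization $\tilde\rho_h = c_h$ on $H$, and identify $[\omega]$ with $d_2(a)$ through the Hochschild--Serre filtration on normalized cochains by lifting $\alpha$ to a cochain $\tilde\alpha$ on $G$ and inspecting $\delta_G\tilde\alpha \in F^2C^3$. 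The paper phrases the same argument at the level of isomorphisms between pulled-back central extensions $\wh{H}_g$ rather than explicit cochains $\beta_g$, and bundles the needed compatibilities into the normalization $(\dag)$ ($\widetilde{\rho_{gh}}=\widetilde{\rho_g}c_h$), but the logical content, the obstruction cocycle, the filtration computation identifying it with $d_2(a)$, and the $Z^1(K,H^1(H,\bbC^\times))$-torsor argument for the set of liftings are all the same.
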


\begin{proof}
For any $g\in G$, it acts on $H$ by conjugation and therefore,
acts on the cohomology $H^\bullet(H,\bbC^\times)$. Denote this
({\em right}) action also by $\rho_g$. More explicitly, the class
$\rho_g(a)$ is obtained by pullback $\bbC^\times\to\wh{H}\to H$ by
$\rho_g:H\to H$. Denote the new central extension by
$\bbC^\times\to\wh{H}_g\to H$. Then $\rho_g$ could lift to
$\widetilde{\rho_g}$, which is the identity map on the central
$\bbC^\times\to\wh{H}$ if and only if there is an isomorphism
$\wh{H}\to\wh{H}_g$, which induces the identity maps on the
central $\bbC^\times$ and the quotient $H$, i.e. $\rho_g(a)=a$.
Furthermore, all such liftings are bijective to the isomorphisms
from $\wh{H}\to\wh{H}_g$, and therefore is a torsor under
$H^1(H,\bbC^\times)$. Therefore, a lifting $\tilde{\rho}$ exists
only if $a$ is a class invariant under $G$. It is a well-known
fact that $H$ acts on its cohomology trivially. Therefore, such a
lifting exists only if $a\in H^2(H,\bbC^\times)^K$.

We assume that $a\in H^2(H,\bbC^\times)^K$. We will identify a
lifting $\widetilde{\rho_g}:\wh{H}\to\wh{H}$ with an isomorphism
$\widetilde{\rho_g}:\wh{H}\to\wh{H}_g$. For every $g\in G$, we
choose a lifting of $\widetilde{\rho_g}$ of $\rho_g$ such that
\[\widetilde{\rho_1}=1, \ \
\ \ \ \ \ \ \ \widetilde{\rho_{gh}}=\widetilde{\rho_g}c_h \ \mbox{
for } g\in G, h\in H \ \ \ \ \ \ \\ \ \ \ \ \ \ \ (\dag)\] For any
two $g,g'\in G$, both $\widetilde{\rho_{gg'}}$ and
$\widetilde{\rho_g}\widetilde{\rho_{g'}}$ are liftings of
$\rho_{gg'}$. Therefore, there is a unique $\bar{b}(g,g')\in
H^1(H,\bbC^\times)$ sending the second lifting to the first
lifting. That is, the unique automorphism
$\bar{b}(g,g'):\wh{H}\to\wh{H}$ making the following diagram
commute.
\[\begin{CD}\wh{H}@>\bar{b}(g,g')>>\wh{H}\\
@V\widetilde{\rho_{gg'}}VV@V\widetilde{\rho_{g'}}VV\\
\wh{H}_{gg'}@<\widetilde{\rho_g}<<\wh{H}_{g'}\end{CD}\]

This way one defines $\bar{b}\in C^2(G,H^1(H,\bbC^\times))$. We
will check that

(i) $\bar{b}(hg,g')=\bar{b}(g,g'h)=\bar{b}(g,g')$. Therefore,
$\bar{b}$ is in fact in $C^2(K,H^1(H,\bbC^\times))$.

(ii) Define $b(g,g')=\bar{b}(g'^{-1},g^{-1})$. Then $b$ is a
cocycle. Choosing different liftings $\widetilde{\rho_g}$
satisfying ($\dag$), $b$ differs by a coboundary. Therefore, $[b]$
is a well defined cohomology class in $H^2(K,H^1(H,\bbC^\times))$.

(iii) If $[b]=0$ in $H^2(K,H^1(H,\bbC^\times))$, we can choose a
compatible family of liftings $\widetilde{\rho_g}$ such that
$\tilde{\rho}_g=\widetilde{\rho_g}$ is a lifting of $\rho$.

(iv) All the liftings form a torsor under
$Z^1(K,H^1(H,\bbC^\times))$.

(v) $[b]=d_2(a)$.

We begin with the proof of (i). $\bar{b}(hg,g')=\bar{b}(g,g')$
comes from the following commutative diagram:
\[\begin{CD}\wh{H}@>\bar{b}(g,g')>>\wh{H}@>\widetilde{\rho_{g'}}>>
\wh{H}_{g'}@>\widetilde{\rho_g}>>\wh{H}_{gg'}\\
@|@|@|@Vc_hVV\\
\wh{H}@>\bar{b}(hg,g')>>\wh{H}@>\widetilde{\rho_{g'}}>>\wh{H}_{g'}@>
\widetilde{\rho_{hg}}>>\wh{H}_{hgg'}\end{CD}\] Similarly, we
obtain the other identity.

\medskip

Next, we prove (ii). Observe that we have the following
commutative diagram
\[\begin{CD}\wh{H}@>\bar{b}(gg',g'')>>\wh{H}
@>\widetilde{\rho_{g''}}>>\wh{H}_{g''}
@>\widetilde{\rho_{gg'}}>>\wh{H}_{gg'g''}\\
@V\bar{b}(g,g'g'')VV@V\rho_{g''}\bar{b}(g,g')
VV@V\bar{b}(g,g')VV@A\widetilde{\rho_g}AA\\
\wh{H}@>\bar{b}(g',g'')>>\wh{H}@>\widetilde{\rho_{g''}}>>
\wh{H}_{g''}@>\widetilde{\rho_{g'}}>>\wh{H}_{g'g''}\end{CD}\]
Therefore,
$\bar{b}(g',g'')\bar{b}(gg',g'')^{-1}\bar{b}(g,g'g'')(\rho_{g''}
\bar{b}(g,g'))^{-1}=1$. It is readily to check that
$b(g,g')=\bar{b}(g'^{-1},g^{-1})$ is a cocycle. If we choose
different liftings $\widetilde{\rho_g}'$ satisfying ($\dag$), then
$\widetilde{\rho_g}'=\widetilde{\rho_g}\bar{u}(g)$ for a unique
$\bar{u}:C^1(G,H^1(H,\bbC^\times))$. Furthermore, the condition
($\dag$) gives $\bar{u}(gh)=c^{-1}_h\bar{u}(g)c_h=\bar{u}(g)$,
i.e., $\bar{u}\in C^1(K,H^1(H,\bbC^\times))$. Easy calculation
shows that $\bar{b}'(g,g')=\bar{b}(g,g')\bar{u}(g')^{-1}(\rho_{g'}
\bar{u}(g))^{-1} \bar{u}(gg')$. Then $b=b'\delta u$ where
$u(g)=\bar{u}(g^{-1})$.

\medskip

Next, we prove (iii). Assuming that $[b]=0$ in
$H^2(K,H^1(H,\bbC^\times))$. Then there exists an $u\in
C^1(G,H^1(H,\bbC^\times))$, such that $u(gh)=u(g)$ for $h\in H$
and $b(g,g')=\rho_{g^{-1}}(u(g'))u(g)u(gg')^{-1}$. Define a new
family of liftings by
$\widetilde{\rho_g}'=\widetilde{\rho_g}u(g^{-1})$. By the
calculation made in (ii),
$\widetilde{\rho_{gh}}'=\widetilde{\rho_g}'c_h$ and
$\widetilde{\rho_{gg'}}'=\widetilde{\rho_g}'\widetilde{\rho_{g'}}'$.

\medskip

Next, we prove (iv). Assuming that there exists at least one
lifting. Let $\tilde{\rho},\tilde{\rho}'$ be two liftings. Then we
can write $\tilde{\rho}'_g=\tilde{\rho}_g\bar{u}(g)$ with
$\bar{u}(g)\in H^1(H,\bbC^\times), \bar{u}(gh)=\bar{g}$. It is
clear that $\bar{u}(gg')=\rho_{g'}(\bar{u}(g))\bar{u}(g')$.
Therefore, if we define $u(g)=\bar{u}(g^{-1})$, then $u\in
Z^1(G,H^1(H,\bbC^\times))$. Using the fact that $u(hg)=u(g)$,
$u\in Z^1(K,H^1(H,\bbC^\times))$.

\medskip

It remains to prove (v). We choose for any $g\in G$, a lifting
$\widetilde{\rho_g}:\wh{H}\to\wh{H}$ satisfying $(\dag)$. We
choose a section $s:K\to G$. Then any element $g\in G$ could be
uniquely written as $\fraks(k)h$ for $h\in H$ and $k\in K$. For
$k,k'\in K$, denote
\[t(k,k')=s(kk')^{-1}s(k)s(k')\in H\]
We also choose a section $H\to\wh{H}$. That is, for every $h\in
H$, we choose a particular lifting in $\wh{H}$, which is denoted
by $\tilde{h}$ or $(h,1)$. Now we define $\tilde{a}\in
C^2(G,\bbC^\times)$ by
\begin{equation}\label{tilde(a)}
\tilde{a}(s(k)h,s(k')h')=\frac{\widetilde{t(k,k')}
\widetilde{\rho_{s(k')^{-1}}}(\tilde{h})\tilde{h'}}
{(t(k,k')\rho_{s(k')^{-1}}(h)h',1)}
\end{equation}
It is clear that the restriction of $\tilde{a}$ to $H$ is a
cocycle representing the cohomology class $a$.

We claim $\delta_G(\tilde{a})\in F^2C^3(G,\bbC^\times)$. First,
according to the proof of \cite{HS}, Theorem I (or direct
calculation), $\delta_G(\tilde{a})\in F^1C^3(G,\bbC^\times)$.
Therefore, it is enough to show that
\[\frac{\tilde{a}(s(k_2)h_2,s(k_3))\tilde{a}
(s(k_1)h_1,s(k_2)h_2s(k_3))}{\tilde{a}(s(k_1)h_1s(k_2)h_2,
s(k_3))\tilde{a}(s(k_1)h_1,s(k_2)h_2)}=
\frac{\tilde{a}(s(k_2),s(k_3))\tilde{a}(s(k_1)h_1,s(k_2)s(k_3))}
{\tilde{a}(s(k_1)h_1s(k_2),s(k_3))\tilde{a}(s(k_1)h_1,s(k_2))}\]
Direct calculation shows the above identity is equivalent to
\[\widetilde{\rho_{s(k_3)^{-1}}}\left(\frac{(h_2,1)(t(k_1,k_2)
\rho_{s(k_2)^{-1}}(h_1),1)}{(t(k_1,k_2)\rho_{s(k_2)^{-1}}
(h_1)h_2,1)}\right)
=\frac{(h_2,1)(t(k_1,k_2)\rho_{s(k_2)^{-1}}(h_1),1)}
{(t(k_1,k_2)\rho_{s(k_2)^{-1}}(h_1)h_2,1)}\] But this follows from
$\frac{(h_2,1)(t(k_1,k_2)\rho_{s(k_2)^{-1}}(h_1),1)}{(t(k_1,k_2)
\rho_{s(k_2)^{-1}}(h_1)h_2,1)}$ is in the central
$\bbC^\times\subset\wh{H}$ and $\widetilde{\rho_{s(k_3)^{-1}}}$
leaves the central $\bbC^\times$ invariant.

Now, by the construction of the spectral sequence, a cocycle
representing $d_2(a)\in H^2(K,H^1(H,\bbC^\times))$ can be chosen
as $\delta_G(\tilde{a})(h,s(k),s(k'))$ where $h\in H, k,k'\in K$.
Let us calculate this cocycle. We have
\[\begin{split}\delta_G(\tilde{a})(h,s(k),s(k'))
&=\frac{\tilde{a}(s(k),s(k'))\tilde{a}(h,s(k)s(k'))}{\tilde{a}
(hs(k),s(k'))\tilde{a}(h,s(k))}\\
&=\frac{\widetilde{\rho_{s(kk')^{-1}}}
(\tilde{h})\widetilde{t(k,k')}}{\widetilde{t(k,k')}
\widetilde{\rho_{s(k')^{-1}}}((\rho_{s(k)^{-1}}(h),1))}
\frac{(\rho_{s(k)^{-1}}(h),1)}{\widetilde{\rho_{s(k)^{-1}}}
(\tilde{h})}\\
&=\frac{\widetilde{\rho_{s(kk')^{-1}}}(\tilde{h})\widetilde{t(k,k')}}
{\widetilde{t(k,k')}\widetilde{\rho_{s(k')^{-1}}}
((\rho_{s(k)^{-1}}(h),1))}\widetilde{\rho_{s(k')^{-1}}}
\left(\frac{(\rho_{s(k)^{-1}}(h),1)}{\widetilde{\rho_{s(k)^{-1}}}
(\tilde{h})}\right)\\
&=\frac{\widetilde{\rho_{s(kk')^{-1}}}(\tilde{h})
\widetilde{t(k,k')}}{\widetilde{t(k,k')}\widetilde{\rho_{s(k')^{-1}}}
(\widetilde{\rho_{s(k)^{-1}}}(\tilde{h}))}\\
&=\widetilde{\rho_{s(k')^{-1}}}(\widetilde{\rho_{s(k)^{-1}}}
(b(k,k')(h)))=b(k,k')(h)
\end{split}\]
Therefore, $[b]=d_2(a)$. This completes the proof.
\end{proof}

\subsubsection{Gerbal pairs
and gerbal representations}\label{second lemma}

We still assume that we have the short exact sequences of groups
(\ref{central extension of H}) and (\ref{extension of K by H}).
For a character $\lambda:\bbC^\times\to\bbC^\times$, denote by
$\mathrm{Rep}_\lambda(\wh{H})$ the category of complex
representations of $\wh{H}$, on which the central $\bbC^\times$
acts by $\lambda$.

We assume that the lifting $\tilde{\rho}$ of Definition
\ref{gerbal pair} exists. Then, by Proposition \ref{liftings},
$a\in H^2(H,\bbC^\times)^K$ and $d_2(a)=0$. Let us choose such a
lifting, so that $(G,\wh{H})$ is a gerbal pair of groups in the
sense of Definition \ref{gerbal pair}. Then we have an action of
$G$ on $\wh{H}$ by automorphisms, which leaves the central
$\bbC^\times\subset\wh{H}$ invariant. Therefore we obtain a
representation of $G$ on $\rep_\lambda(\wh{H})$ (see Example
\ref{example of genuine action}). We claim that we obtain a gerbal
representation of $K$ on $\rep_\lambda(\wh{H})$, in the sense of
Definition \ref{gerbe action1}.

Indeed, for any $g\in G$, we have an auto-equivalence
$F_g:\rep_\lambda(\wh{H})\to\rep_\lambda(\wh{H})$. For $h\in H$, since
the action $\tilde{\rho}_h$ on $\wh{H}$ is just the conjugation $c_h$,
we have $F_h\cong\mathrm{id}$. Now we choose a set-theoretic section
$s:K\to G$, and for any $k\in K$, define
$F_k=F_{s(k)}:\rep_\lambda(\wh{H})\to\rep_\lambda(\wh{H})$. Since for
$k,k'\in K$, there exists some $h\in H$ such that $s(kk')=s(k)s(k')h$,
we obtain that $$F_{kk'}=F_{s(kk')} = F_{s(k)s(k')h} \cong
F_{s(k)}F_{s(k')}F_h\cong F_kF_{k'}.$$

By Theorem \ref{H^3}, there is a cohomology class
$e_{\tilde{\rho}}\in
H^3(K,\calZ(\mathrm{Rep}_\lambda(\wh{H}))^\times)$. Observe that
since $\rep_\lambda(\wh{H})$ is $\bbC$-linear,
$\bbC^\times\subset\calZ(\mathrm{Rep}_\lambda(\wh{H}))^\times$
naturally. We therefore obtain a map
$\bbC^\times\stackrel{\lambda}{\to}\bbC^\times\subset
\calZ(\rep_\lambda(\wh{H}))^\times$.

\begin{prop}\label{gerbal
action} (i) $e_{\tilde{\rho}}\in
\mathrm{Im}(H^3(K,\bbC^\times)\stackrel{\lambda_*}{\to}
H^3(K,\calZ(\mathrm{Rep}_\lambda(\wh{H}))^\times)$.

(ii) The projection of $e_{\tilde{\rho}}$ along
\[H^3(K,\bbC^\times)\cong
E_2^{3,0}\to H^3(K,\bbC^\times)/d_2(H^1(K,H^1(H,\bbC^\times)))\cong
E_3^{3,0}\] is independent of the choice of the lifting $\tilde{\rho}$
and is equal to $d_3(a)$, where $d_3$ is the differential $E_3^{0,2}
\to E_3^{3,0}$ in the $E_3$ term of the Lyndon-Hochschild-Serre
spectral sequence (this is well-defined since $d_2(a)=0$).
\end{prop}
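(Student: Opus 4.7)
The strategy is to write down an explicit $3$-cocycle representing $e_{\tilde\rho}$, show its values lie in (the image of) $\bbC^\times$, and then match this cocycle with the one produced by the Lyndon--Hochschild--Serre construction for $d_3(a)$. Choose a set-theoretic section $s\colon K\to G$, so that every element of $G$ is uniquely written $s(k)h$ for $k\in K$, $h\in H$. Put $t(k,k')=s(kk')^{-1}s(k)s(k')\in H$, and pick (arbitrarily) lifts $\widetilde{t(k,k')}\in\wh{H}$ and a set-theoretic section $h\mapsto \tilde h$ of $\pi\colon \wh H\to H$. Since the representation of $G$ on $\rep_\lambda(\wh H)$ is genuine, $F_{k}F_{k'}=F_{s(k)}F_{s(k')}=F_{s(k)s(k')}=F_{s(kk')t(k,k')}$. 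Define the isomorphism $c(k,k')\colon F_kF_{k'}\cong F_{kk'}$ by the natural transformation whose component at $(\pi,M)$ is the action of $\widetilde{t(k,k')}\in\wh H$, which intertwines the action twisted by $\tilde\rho_{t(k,k')}$ with the original action.

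Following Theorem \ref{H^3} with these choices, the associator $a_{\mathrm{gerbal}}(k_1,k_2,k_3)\in \calZ(\rep_\lambda(\wh H))^\times$ is computed by the two ways of associating the triple product. Both routes produce an element of $\wh H$ that lifts the same element of $H$; explicitly, one route yields $\widetilde{t(k_1k_2,k_3)}\cdot \tilde\rho_{s(k_3)^{-1}}\bigl(\widetilde{t(k_1,k_2)}\bigr)$ while the other yields $\widetilde{t(k_1,k_2k_3)}\cdot \widetilde{t(k_2,k_3)}$. Their ratio lies in the central $\bbC^\times\subset\wh H$; multiplication by such a scalar on any object of $\rep_\lambda(\wh H)$ gives the image of that scalar under $\lambda$, which is an element of the center coming from $\bbC^\times$. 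This proves part (i): the class $e_{\tilde\rho}$ is the image under $\lambda_*$ of the class of the $\bbC^\times$-valued $3$-cocycle on $K$ recording these ratios.

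For part (ii), match this $\bbC^\times$-valued cocycle with the one extracted from the spectral sequence. Build $\tilde a\in C^2(G,\bbC^\times)$ exactly as in formula \eqref{tilde(a)} of Proposition \ref{liftings}. By the computation at the end of the proof of \emph{loc. cit.}, $\delta_G\tilde a\in F^2C^3(G,\bbC^\times)$ and its component in $C^2(K,C^1(H,\bbC^\times))$ represents $d_2(a)$. Since a lifting $\tilde\rho$ exists, $d_2(a)=0$, so one may modify $\tilde a$ by a cochain in $F^1C^2$ to arrange $\delta_G\tilde a\in F^3C^3$; by definition its restriction to $K^3$ then represents $d_3(a)\in E_3^{3,0}$. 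A direct calculation shows that this restriction $\delta_G\tilde a(s(k_1),s(k_2),s(k_3))$ equals exactly the scalar ratio computed in the previous paragraph, since both arise from comparing the two lifts of $s(k_1k_2k_3)^{-1}s(k_1)s(k_2)s(k_3)$ to $\wh H$ dictated by the two associativity bracketings.

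The main obstacle is carefully tracking how replacing the lifting $\tilde\rho$ by another lifting $\tilde\rho'$ affects the cocycle. By Proposition \ref{liftings}, the two differ by an element $u\in Z^1(K,H^1(H,\bbC^\times))$, and one must show that the corresponding $3$-cocycles on $K$ with values in $\bbC^\times$ differ by a coboundary plus an element of $d_2 C^1(K,H^1(H,\bbC^\times))$. Both modifications can be read off formula \eqref{tilde(a)}: changing $\tilde\rho$ alters the choices entering $\tilde a$, hence $\tilde a$ changes by a cochain in $F^1C^2$ whose $\delta_G$ differs from the original by exactly $d_2(u)$. Hence the image of $e_{\tilde\rho}$ in $E_3^{3,0}=E_2^{3,0}/d_2(H^1(K,H^1(H,\bbC^\times)))$ is independent of the lifting, completing the proof. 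The bookkeeping for this last paragraph is where the argument requires care; the preceding paragraphs are essentially a rewriting of Proposition \ref{liftings} one step further along the spectral sequence.
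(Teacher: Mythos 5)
Your proposal follows essentially the same path as the paper: same section $s$, same cochain $t(k,k')$, same explicit $3$-cocycle for $e_{\tilde\rho}$ expressed as a ratio of two liftings of $s(k_1k_2k_3)^{-1}s(k_1)s(k_2)s(k_3)$ in $\wh H$, same use of $\tilde a$ from formula \eqref{tilde(a)} to match with the spectral-sequence differential. However, there is one imprecise step worth flagging. You say "one may modify $\tilde a$ by a cochain in $F^1C^2$ to arrange $\delta_G\tilde a\in F^3C^3$," and then assert that "a direct calculation shows that this restriction $\delta_G\tilde a(s(k_1),s(k_2),s(k_3))$ equals exactly the scalar ratio computed in the previous paragraph." If you actually modified $\tilde a$, the modification contributes to $\delta_G\tilde a$ restricted to $K^3$ (a cochain in $F^1C^2$ does not in general have $\delta_G$ vanishing on $K^3$), so the direct comparison with the associator cocycle would require an additional accounting. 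The paper avoids this by observing that in this situation $\tilde\rho$ is a genuine group action ($\tilde\rho_{gg'}=\tilde\rho_g\tilde\rho_{g'}$), not merely a compatible family of lifts as in Proposition \ref{liftings}, and this extra homomorphism property forces the \emph{unmodified} $\tilde a$ of formula \eqref{tilde(a)} to satisfy $\delta_G\tilde a\in F^3C^3$ directly. That observation is what makes the subsequent exact comparison with $e_{\tilde\rho}(k_1,k_2,k_3)$ legitimate, and you should either prove it or else carefully track how the modification affects the restriction to $K^3$. Likewise, for the independence statement, the paper computes the ratio $e_{\tilde\rho'}/e_{\tilde\rho}$ explicitly and identifies it as a representative of $d_2(u)$; your hand-wave that "$\tilde a$ changes by a cochain in $F^1C^2$ whose $\delta_G$ differs from the original by exactly $d_2(u)$" captures the right idea but should be made into an explicit formula.
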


\begin{proof}
Without loss of generality, we may assume that $\lambda$
   is the identity.

Fix a lifting $\tilde{\rho}$. We choose a set-theoretic section
$s:K\to G$. For $k,k'\in K$, denote
\[t(k,k'):=s(kk')^{-1}s(k)s(k')\in H.\]

Recall that the functor
$F_k:\mathrm{Rep}_\lambda(\wh{H})\to\mathrm{Rep}_\lambda(\wh{H})$
is defined as follows. Let $(M,m)$ be a representation of
$\wh{H}$. Then $F_k(M,m)$ has $M$ as the underlying vector space
with multiplication given by
\[F_km(\wh{h},x)=m(\tilde{\rho}_{s(k)^{-1}}\wh{h},x).\]
It is easy to show that the natural transform
$c(k,k'):F_kF_{k'}M\cong F_{kk'}M$ will satisfy the following
formula
\[c(k,k')m((c_{t(k,k')^{-1}}(\tilde{\rho}_{s(kk')^{-1}}(\wh{h}))),x)
=m(\tilde{\rho}_{s(kk')^{-1}}(\wh{h}),c(k,k')x)\] for any $x\in M,
\wh{h}\in\wh{H}$. Therefore, we can choose
\[c(k,k')=m(\widetilde{t(k,k')},\cdot),\]
where $\widetilde{t(k,k')}$ is any lifting of $t(k,k')$ to
$\wh{H}$. The natural transformation
\[F_{k_1}F_{k_2}F_{k_3}M\to
F_{k_1k_2}F_{k_3}M\to F_{k_1k_2k_3}M\] is given by
\[x\mapsto
m(\widetilde{t(k_1k_2,k_3)}(\tilde{\rho}_{s(k_3)^{-1}}
(\widetilde{t(k_1,k_2)})),x).\] On the other hand, the natural
transformation
\[F_{k_1}F_{k_2}F_{k_3}M\to
F_{k_1}F_{k_2k_3}M\to F_{k_1k_2k_3}M\] is given by
\[x\mapsto
m(\widetilde{t(k_1,k_2k_3)}\widetilde{t(k_2,k_3)},x).\] Observe
that
\[i\circ\pi(\widetilde{t(k_1,k_2k_3)}\widetilde{t(k_2,k_3)}
(\widetilde{t(k_1k_2,k_3)}\tilde{\rho}_{s(k_3)^{-1}}
(\widetilde{t(k_1,k_2)}))^{-1})=1.\] Therefore
\[\widetilde{t(k_1,k_2k_3)}\widetilde{t(k_2,k_3)}
(\widetilde{t(k_1k_2,k_3)}\tilde{\rho}_{s(k_3)^{-1}}
(\widetilde{t(k_1,k_2)}))^{-1}\] is in the central $\bbC^\times$
of $\wh{H}$.

We find that
\[e_{\tilde{\rho}}(k_1,k_2,k_3)=\widetilde{t(k_1,k_2k_3)}
\widetilde{t(k_2,k_3)}(\widetilde{t(k_1k_2,k_3)}
\tilde{\rho}_{s(k_3)^{-1}}(\widetilde{t(k_1,k_2)}))^{-1}\] is a
3-cocycle. Different choices of liftings $\widetilde{t(k,k')}$
make $e_{\tilde{\rho}}$ differ by a coboundary. Therefore,
$e_{\tilde{\rho}}$ is a well-defined cohomology class in
$H^3(K,\bbC^\times)$. Furthermore, if $\tilde{\rho}'$ is another
lifting of $\rho$, then we know from the proof of Lemma
\ref{liftings} that $\tilde{\rho}'_g=\tilde{\rho}_gu(g^{-1})$,
where $u\in Z^1(K,H^1(H,\bbC^\times))$. Therefore, we have
\[\frac{e_{\tilde{\rho}'}(k_1,k_2,k_3)}{e_{\tilde{\rho}}
(k_1,k_2,k_3)}=\frac{1}{u(s(k_3))(t(k_2,k_3))}.\] Applying the
same method as in the proof of Lemma \ref{liftings}, one can show
the right hand side is an expression for $d_2(u)$. Therefore, the
image of $e_{\tilde{\rho}}$ in
$$H^3(K,\bbC^\times)/d_2(H^1(K,H^1(H,\bbC^\times)))$$ is
independent of the lifting $\tilde{\rho}$.

It remains to prove that this image is exactly the class $d_3(a)$.
As in the proof of Lemma \ref{liftings}, we choose a section
$H\to\wh{H}$, and the corresponding lifting of $h$ is denoted by
$\tilde{h}$ or $(h,1)$. Then we define
\[\tilde{a}(s(k)h,s(k')h')=\frac{\widetilde{t(k,k')}
\tilde{\rho}_{s(k')^{-1}}(\tilde{h})\tilde{h'}}{(t(k,k')
\rho_{s(k')^{-1}}(h)h',1)}\] We have already shown that
$\delta_G(a)\in F^2C^3(G,\bbC^\times)$. Using the fact that
$\tilde{\rho}_{gg'}=\tilde{\rho_g}\tilde{\rho}_{g'}$, one can even
show that $\delta_G(a)\in F^3C^3(G,\bbC^\times)$. Furthermore, a
direct calculation shows that
\[\delta_G(\tilde{a})(s(k_1),s(k_2),s(k_3))=e_{\tilde{\rho}}
(k_1,k_2,k_3).\] This completes the proof.
\end{proof}

Observe that to prove the above proposition, we do not really use
the fact that each object in $\rep_\lambda(\wh{H})$ is realized as
a representation of $\wh{H}$. All we need is that
$\rep_\lambda(\wh{H})$ is a $\bbC$-linear abelian category, with a
genuine action of $G$ satisfying certain properties. This allows
us to generalize the proposition in the following way.

We still assume that $(G,\wh{H})$ is a gerbal pair.
Therefore,there is an action $\tilde{\rho}$ of $G$ on $\wh{H}$,
which lifts the action $\rho$ of $G$ on $H$. Recall that
$\pi:\wh{H}\to H$ so that for any $h\in H$, $\pi^{-1}(h)$ is a
$\bbC^\times$-torsor. Let $\calC$ be a $\bbC$-linear abelian
category. Recall the definition of the 2-group $\GL(\calC)$ (see \S
\ref{bbGbbL}). Assume that there is a genuine representation of
$G$ on $\calC$, i.e. a 2-group homomorphism $F:G\to\GL(\calC)$.
For simplicity, we will assume that $F_gF_{g'}=F_{gg'}$. (This is
not essential, but simplifies the discussion.) Since $\calC$ is
$\bbC$-linear, there is a natural $\bbC^\times$-action on
$\Hom_{\GL(\calC)}(F_g,\mathbf{1}_{\calC})$, for any $g\in G$.
Assume that for any $h\in H$, there is a $\bbC^\times$-equivariant
embedding
\[\alpha_h:\pi^{-1}(h)\to\Hom_{\GL(\calC)}(F_h,\mathbf{1}_\calC)\]
such that:

\noindent (i) for any $g\in G$, the following diagram is
commutative:
\[\begin{CD}
\pi^{-1}(h)@>\alpha_h>>\Hom_{\GL(\calC)}(F_h,\mathbf{1}_\calC)\\
@V\tilde{\rho}_gVV@VVF_g\otimes-\otimes
F_{g^{-1}}V\\
\pi^{-1}(\rho_g(h))@>\alpha_{\rho_g(h)}>>\Hom_{\GL(\calC)}(F_{\rho_g(h)},
\mathbf{1}_\calC)
\end{CD}\]

\noindent (ii) for any $h,h'\in H$, the following diagram is
commutative:
\[\begin{CD}\pi^{-1}(h)@.\otimes@.\pi^{-1}(h')@>>>\pi^{-1}(h'h)\\
@V\alpha_hVV@.@VV(-\otimes
F_h)\circ\alpha_{h'}V@VV\alpha_{h'h}V\\
\Hom(F_h,\mathbf{1}_\calC)@.\otimes@.\Hom(F_{h'}F_h,F_h)@>>>\Hom(F_{h'h},
\mathbf{1}_\calC)\end{CD}\]

\begin{prop}\label{variation}
Assumptions are as above. Then there is a gerbal representation of
$K$ on $\calC$. The corresponding cohomology class $e\in
\mathrm{Im}(H^3(K,\bbC^\times)\to H^3(K,\calZ(\calC)^\times)$. The
projection of $e$ along
\[H^3(K,\bbC^\times)\cong
E_2^{3,0}\to
H^3(K,\bbC^\times)/d_2(H^1(K,H^1(H,\bbC^\times)))\cong E_3^{3,0}\]
is equal to $d_3(a)$.
\end{prop}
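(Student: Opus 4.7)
The approach will be to mimic the proof of Proposition \ref{gerbal action} step by step, replacing each concrete use of representation-theoretic operations on $\rep_\lambda(\wh{H})$ by abstract applications of the data $(\alpha_h)_{h\in H}$, using only compatibility conditions (i) and (ii). The strictness assumption $F_gF_{g'}=F_{gg'}$ means that all ``associativity'' ambiguities in $\calC$ come from the choice of a set-theoretic section $K\to G$ and from choices of lifts in $\wh H$, so the 3-cocycle will be manufactured out of these choices in exactly the same way as in Proposition \ref{gerbal action}.

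First I would construct the gerbal representation of $K$ on $\calC$. Choose a set-theoretic section $s:K\to G$, and put $\bar F_k:=F_{s(k)}$. Writing $t(k,k'):=s(kk')^{-1}s(k)s(k')\in H$ and choosing a lift $\widetilde{t(k,k')}\in\pi^{-1}(t(k,k'))$ for each pair, the morphism $\alpha_{t(k,k')}(\widetilde{t(k,k')}):F_{t(k,k')}\to\mathbf{1}_\calC$ together with the identity $F_{s(k)s(k')}=F_{s(kk')}\circ F_{t(k,k')}$ (from strictness of $F$) yields the required natural isomorphism $c(k,k'):\bar F_k\bar F_{k'}\to\bar F_{kk'}$. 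Since $H$ is the kernel of $G\to K$, this is enough to produce the structure of a gerbal representation of $K$ on $\calC$ in the sense of Definition \ref{gerbe action1}.

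Next I would compute the associator. Expanding $\bar F_{k_1}\bar F_{k_2}\bar F_{k_3}=F_{s(k_1)s(k_2)s(k_3)}$ in two ways via $t$-terms, one sees that the two composition paths to $\bar F_{k_1k_2k_3}$ differ by the scalar $e(k_1,k_2,k_3)\in\calZ(\calC)^\times$ obtained by applying $\alpha$ to the ratio
\[
\widetilde{t(k_1,k_2k_3)}\,\widetilde{t(k_2,k_3)}\,\bigl(\widetilde{t(k_1k_2,k_3)}\,\tilde\rho_{s(k_3)^{-1}}(\widetilde{t(k_1,k_2)})\bigr)^{-1}\in\wh H.
\]
Here condition (i) is what produces the $\tilde\rho_{s(k_3)^{-1}}$-factor: whiskering $\alpha_{t(k_1,k_2)}$ on the left by $F_{s(k_3)^{-1}}$ and on the right by $F_{s(k_3)}$ turns it into $\alpha_{\rho_{s(k_3)^{-1}}(t(k_1,k_2))}\circ\tilde\rho_{s(k_3)^{-1}}$, and condition (ii) is what lets adjacent $\alpha$'s be merged into a single $\alpha$ applied to a product in $\wh H$. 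Both displayed products lift the same element of $H$, so their quotient lies in the central $\bbC^\times\subset\wh H$, proving claim (i) that $e\in\mathrm{Im}(H^3(K,\bbC^\times)\to H^3(K,\calZ(\calC)^\times))$.

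Finally, the identification of the image of $e$ in $E_3^{3,0}$ with $d_3(a)$ is verbatim the last part of the proof of Proposition \ref{gerbal action}: define $\tilde a\in C^2(G,\bbC^\times)$ by formula (\ref{tilde(a)}), use the fact that $\tilde\rho$ is a genuine group action to verify $\delta_G\tilde a\in F^3C^3(G,\bbC^\times)$, and check directly that $\delta_G\tilde a(s(k_1),s(k_2),s(k_3))$ equals the formula for $e(k_1,k_2,k_3)$ above; independence modulo $d_2(H^1(K,H^1(H,\bbC^\times)))$ under a change of lifts $\widetilde{t(k,k')}$ is proved exactly as in Proposition \ref{gerbal action}. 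The main obstacle will be purely bookkeeping: keeping straight the two $\calZ(\calC)^\times$-torsor structures on each Hom-set in $\GL(\calC)$ while chasing the diagram that computes the associator, and making sure that each application of (i) and (ii) is performed with the correct orientation. Once this translation is carried out, no new input beyond Proposition \ref{gerbal action} is needed.
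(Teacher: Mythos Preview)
Your proposal is correct and follows exactly the paper's approach: the paper's entire proof is the single sentence ``The proof is identical to the proof of Proposition \ref{gerbal action},'' and you have simply spelled out how each step of that proof translates from the concrete category $\rep_\lambda(\wh{H})$ to the abstract data $(\alpha_h)_{h\in H}$ via conditions (i) and (ii). The cocycle formula you obtain for $e(k_1,k_2,k_3)$ coincides verbatim with $e_{\tilde\rho}(k_1,k_2,k_3)$ in Proposition \ref{gerbal action}, and the spectral-sequence identification is unchanged.
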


The proof is identical to the proof of Proposition \ref{gerbal
action}.

\subsection{Gerbal pairs associated
with $GL_{\infty,\infty}$}

Having developed the general formalism of gerbal pairs, we now
apply it to a special case. Namely, we show that there is an
action of $\widetilde{GL}_\infty(\gl_\infty)$ on
$\wh{GL}_{\frakf,\infty}$, so that the groups
$(\widetilde{GL}_\infty(\gl_\infty),\wh{GL}_{\frakf,\infty})$,
equipped with the short exact sequences (\ref{extension of
GLfrakfinfty}) and (\ref{extension of GLinftyinfty}), form a
gerbal pair in the sense of Definition \ref{gerbal pair}.
Similarly,
$(\widetilde{GL}_\infty(\gl_\infty),\wh{GL}^0_{\frakf,\infty})$
equipped with (\ref{extension of GLfrakfinfty0}) and
(\ref{extension of whGLfrakfinfty}) also form a gerbal pair.
Applying Proposition \ref{liftings}, we will obtain a particular
class $[E_3]\in H^3(GL_{\infty,\infty},\bbC^\times)$, and
similarly $[\wh{E}_3]\in
H^3(\wh{GL}_{\infty,\infty},\bbC^\times)$.

Recall that (\ref{extension of GLfrakfinfty}) gives us a map
$\widehat{GL}_{\frakf,\infty}\to GL_{\frakf,\infty}$. Combined
with (\ref{extension of GLfrakfinfty}), we obtain a group
homomorphism
$\delta:\widehat{GL}_{\frakf,\infty}\to\widetilde{GL}_\infty(\gl_\infty)$
as the composition of $\widehat{GL}_{\frakf,\infty}\to
GL_{\frakf,\infty}\to\widetilde{GL}_\infty(\gl_\infty)$.

\begin{thm}\label{crossed
module corresponding to E3} There is a unique action of
$\widetilde{GL}_\infty(\gl_\infty)$ on $\wh{GL}_{\frakf,\infty}$,
making the pair
$(\widetilde{GL}_\infty(\gl_\infty),\wh{GL}_{\frakf,\infty})$,
equipped with the short exact sequences (\ref{extension of
GLfrakfinfty}) and (\ref{extension of GLinftyinfty}), into a
gerbal pair in the sense of Definition \ref{gerbal pair}.
\end{thm}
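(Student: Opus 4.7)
The plan is to construct $\tilde\rho$ explicitly, using the presentation of $\wh{GL}_{\frakf,\infty}$ in terms of determinant lines of secondary lattices from Proposition \ref{another presentation}, and to exploit the fact that the conjugation action of $\widetilde{GL}_\infty(\gl_\infty)$ on $GL_{\frakf,\infty}$ factors through the first-component projection: for $\tilde g = (A, g)\in\widetilde{GL}_\infty(\gl_\infty)$ and $(a, 1) = i(a)$, a direct multiplication in $\widetilde{GL}_\infty(\gl_\infty)$ gives $\tilde g\, (a, 1)\,\tilde g^{-1} = (A a A^{-1}, 1)$, so that $\rho_{\tilde g}(a) = A a A^{-1}$ depends only on $A\in GL_\infty^+(\gl_\infty)$. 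Since $A$ is a continuous automorphism of $\calO_\bbK$, it permutes secondary lattices (Lemma \ref{sec lat}) and hence induces, for every secondary lattice $L$ and every $a\in GL_{\frakf,\infty}$, canonical $\bbC^\times$-equivariant isomorphisms $A_*\colon \det(aL|L) \xrightarrow{\sim} \det(AaA^{-1}\cdot AL\,|\,AL)$.

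With this in hand I would set
\[
\tilde\rho_{(A,g)}(a, e) := \bigl(AaA^{-1},\; A_* e\bigr),
\]
where $A_* e$ denotes the compatible family of nowhere-vanishing sections whose value at the secondary lattice $AL$ is $A(e_L)$. Three of the four required properties follow almost immediately: functoriality $(AA')_* = A_*\circ A'_*$ combined with $(A,g)(A',g') = (AA', gg')$ makes $\tilde\rho$ a group homomorphism into $\mathrm{Aut}(\wh{GL}_{\frakf,\infty})$; the action fixes the central $\bbC^\times$ pointwise since $A$ acts trivially on $\det(L|L)=\bbC$; and $\pi\circ\tilde\rho_{(A,g)} = \rho_{(A,g)}\circ\pi$ is built into the formula.

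The essential step is checking that for $\tilde g = (b, 1) = i(b)$ with $b\in GL_{\frakf,\infty}$, the automorphism $\tilde\rho_{(b,1)}$ coincides with inner conjugation $c_{(b, e_b)}$ by any lift $(b, e_b)\in\wh{GL}_{\frakf,\infty}$. Using the group law on $\wh{GL}_{\frakf,\infty}$ inherited from the one recalled in \S \ref{hat}, a direct expansion yields
\[
(b, e_b)(a, e)(b, e_b)^{-1} = \bigl(bab^{-1},\; e_b\cdot b(e)\cdot bab^{-1}(e_b^{-1})\bigr).
\]
I expect the main work to be identifying this expression with $(bab^{-1}, b_* e)$ via the compatibility relations for determinantal lines (Remark \ref{composition of determinantal lines}): after fixing a trivialization $\xi\in\det(L|b^{-1}L)$ and normalizing the lift so that $e_{b,L} = b_*(\xi)$, both the right-hand side of the displayed equation and the value $b(e_{b^{-1}L})$ of $b_*e$ at $L$ reduce to the common expression $b_*(\xi)\cdot b(e_L)\cdot (ba)_*(\xi^{-1})$ in $\det(bab^{-1}L|L)$, once one expands $e_{b^{-1}L}$ using the compatibility of the family $\{e_L\}$. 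Independence of the chosen lift $(b, e_b)$ is automatic since inner conjugation is insensitive to central scalars.

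For uniqueness, Proposition \ref{liftings} identifies the torsor of possible liftings with $Z^1(GL_{\infty,\infty}, H^1(GL_{\frakf,\infty}, \bbC^\times))$, so it suffices to prove $H^1(GL_{\frakf,\infty}, \bbC^\times) = 0$. I would obtain this by adapting the ``inner shift'' device from the proof of Proposition \ref{group acyclic}: the inner sum $\oplus$ on $GL_\frakf(\gl_\infty) = GL_{\frakf,\infty}$ allows one to construct a group endomorphism $\tau$ with $a\oplus\tau(a) = \tau(a)$, and the Hopf-algebra argument with coproduct induced by $\oplus$ then forces $H_*(GL_{\frakf,\infty}, k)\cong k$ for every field $k$. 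The universal coefficient theorem yields the required vanishing, completing the proof of uniqueness.
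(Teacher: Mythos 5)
Your uniqueness argument contains a factual error: you claim $H^1(GL_{\frakf,\infty},\bbC^\times)=0$ via the shift/Hopf-algebra device from the proof of Proposition \ref{group acyclic}, but the paper states explicitly (end of \S\ref{H2 and central extensions}) that $H^1(GL_{\frakf,\infty},\bbC^\times)=\bbC^\times$. The nontrivial character comes from the surjective $\deg\colon GL_{\frakf,\infty}=GL_\frakf(\gl_\infty)\to K_0(\gl_\infty^{\on{op}})\cong\bbZ$, composed with any embedding $\bbZ\hookrightarrow\bbC^\times$. The shift $\tau$ does not preserve the subgroup $GL_\frakf(R)\subset GL_\infty^+(R)$: an element of $GL_\frakf(R)$ acts by the identity on $t^NR[[t]]$ for some finite $N$, whereas $\tau(a)$ has nonidentity blocks in arbitrarily high columns. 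The correct uniqueness argument runs through $Z^1(GL_{\infty,\infty},H^1(GL_{\frakf,\infty},\bbC^\times))$: since $H^1(GL_{\frakf,\infty},\bbC^\times)\cong\bbC^\times$ is a \emph{trivial} $GL_{\infty,\infty}$-module, this coincides with $\on{Hom}(GL_{\infty,\infty},\bbC^\times)=H^1(GL_{\infty,\infty},\bbC^\times)$, which vanishes because $GL_{\infty,\infty}$ is a quotient of $\widetilde{GL}_\infty(\gl_\infty)$ and the latter has $H^1=0$ by Proposition \ref{group acyclic}. You are trying to prove a stronger statement than is needed, and the stronger statement is false.

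The existence argument also has a gap. You read Proposition \ref{another presentation} as saying that an element of $\wh{GL}_{\frakf,\infty}$ over $a$ is a \emph{compatible family} $\{e_L\}$ indexed by all secondary lattices $L\subset\calO_\bbK$, with $(A_*e)_{AL}=A(e_L)$. But the lines $\det(aL|L)$ and $\det(aL'|L')$ carry a canonical identification only when $L$ and $L'$ are pseudo commensurable, and not all secondary lattices of $\calO_\bbK$ lie in one pseudo commensurability class: already $L_0=\bbC[[t]][[s]]$ and $t^{-1}L_0$ fail to be pseudo commensurable (any lattice $\bbL'\subset\calO_\bbK$ contains $s^N\calO_\bbK$ for some $N$, and at that level $\bbL'\cap L_0=s^N\bbC[[t]][[s]]\neq s^Nt^{-1}\bbC[[t]][[s]]=\bbL'\cap t^{-1}L_0$). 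Proposition \ref{another presentation} therefore gives a presentation only after fixing a single $L$; for $A\in GL_\infty^+(\gl_\infty)$ the element $A_*e$ lands in the $AL$-presentation, and identifying it with the $L$-presentation when $AL$ is not pseudo commensurable with $L$ requires a non-canonical choice — making these choices multiplicatively consistent in $A$ is precisely the content of the theorem. The paper handles this by first producing the lift on the neutral component $\wh{GL}^0_{\frakf,\infty}$ (where $H^1(GL^0_{\frakf,\infty},\bbC^\times)=0$ forces uniqueness and hence consistency), and then extending across the $\bbZ$-factor of $\wh{GL}_{\frakf,\infty}=\wh{GL}^0_{\frakf,\infty}\rtimes\bbZ$ by an explicit formula for $\widetilde{\rho_g}(\wh\sigma)$, justified by the two Claims verified at the end of its proof. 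Your ``direct expansion'' verifying $\tilde\rho_{(b,1)}=c_{(b,e_b)}$ works within a single pseudo commensurability class (as it must, since $b\in GL_{\frakf,\infty}$), but it does not address the multiplicativity $\tilde\rho_{(A,g)(A',g')}=\tilde\rho_{(A,g)}\tilde\rho_{(A',g')}$ for general $A,A'\in GL_\infty^+(\gl_\infty)$, which is where the missing choice appears.
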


\begin{rmk} This action will
also make
$(\widetilde{GL}_\infty(\gl_\infty),\wh{GL}^0_{\frakf,\infty})$ a
gerbal pair.
\end{rmk}

\begin{proof} Once such an action exists, the uniqueness is
clear. This is because according to Proposition \ref{liftings},
all such actions form a torsor under
$Z^1(GL_{\infty,\infty},H^1(GL_{\frakf,\infty},\bbC^\times))
=H^1(GL_{\infty,\infty},\bbC^\times)=0$. Therefore, we will be
focusing on the existence of such an action.

Observe that $GL_{\frakf,\infty}$ is a normal subgroup of
$GL_\infty^+(\gl_\infty)$, and the action of
$\widetilde{GL}_\infty(\gl_\infty)$ on $GL_{\frakf,\infty}$
factors through the adjoint action of $GL_\infty^+(\gl_\infty)$ on
$GL_{\frakf,\infty}$ via the natural projection
$\widetilde{GL}_\infty(\gl_\infty)\to GL_\infty^+(\gl_\infty)$.
Therefore, it is enough to prove that there is an action of
$GL_\infty^+(\gl_\infty)$ on $\wh{GL}_{\frakf,\infty}$ such that
$(GL_\infty^+(\gl_\infty),\wh{GL}_{\frakf,\infty})$ is a gerbal
pair.

Denote the conjugation of $GL_\infty^+(\gl_\infty)$ on
$GL_{\frakf,\infty}$ by $\rho$, i.e., $\rho_g(h)=ghg^{-1}$ for
$g\in GL^+_{\infty}(\gl_\infty), h\in GL_{\frakf,\infty}$. Denote
the conjugation of $GL_{\frakf,\infty}$ on
$\wh{GL}_{\frakf,\infty}$ by $c$, i.e.,
$c_h(a)=\tilde{h}a\tilde{h}^{-1}$ for $h\in GL_{\frakf,\infty},
a\in\wh{GL}_{\frakf,\infty}$, and
$\tilde{h}\in\wh{GL}_{\frakf,\infty}$ is any lifting of $h$. We
want to show that $\rho$ lifts to an action $\tilde{\rho}$ of
$GL_\infty^+(\gl_\infty)$ on $\widehat{GL}_{\frakf,\infty}$, such
that: (i) $\tilde{\rho}|_{GL_{\frakf,\infty}}=c$; (ii) for any
$g\in GL_\infty^+(\gl_\infty)$, $\tilde{\rho}_g$ is the identity
map on the central $\bbC^\times\subset GL_{\frakf,\infty}$.

First, by Corollary \ref{D2 invariant under g}, for any $g\in
GL_\infty^+(\gl_\infty)$, $\rho_g:GL_{\frakf,\infty}\to
GL_{\frakf,\infty}$ lifts to a group automorphism
$\widetilde{\rho_g}:\wh{GL}_{\frakf,\infty}\to\wh{GL}_{\frakf,\infty}$,
which leaves the central $\bbC^\times$ invariant. We denote the
restriction of $\widetilde{\rho_g}$ to $\wh{GL}^0_{\frakf,\infty}$
by $\widetilde{\rho_g}^0$. Since
$H^1(GL^0_{\frakf,\infty},\bbC^\times)=0$, such
$\widetilde{\rho_g}^0$ is unique. Therefore, we obtain an action
$\tilde{\rho}^0$ of $GL_\infty^+(\gl_\infty)$ on
$GL^0_{\frakf,\infty}$ that lifts $\rho$. It is clear that this
action makes $\wh{GL}^0_{\frakf,\infty}\to
GL_\infty^+(\gl_\infty)$ a gerbal pair.

Next, we show we can extend $\tilde{\rho}^0$ to an action
$\tilde{\rho}$ of $GL_\infty^+(\gl_\infty)$ on
$\wh{GL}_{\frakf,\infty}$ such that $\wh{GL}_{\frakf,\infty}\to
GL_\infty^+(\gl_\infty)$ is a gerbal pair. Recall that by choosing
an element $\sigma\in GL_{\frakf,\infty}$ such that
$\deg(\sigma)=1$, we obtain a splitting
$GL_{\frakf,\infty}=GL^0_{\frakf,\infty}\rtimes\bbZ$. For example,
we will choose $\sigma$ such that $\sigma(t^i)=t^{i+1}$ and
$\sigma(s^it^j)=s^it^j$ for $i\geq 1$. Choosing a lifting
$\wh{\sigma}$ of $\sigma$ in $\wh{GL}_{\frakf,\infty}$, we obtain
a splitting
$\wh{GL}_{\frakf,\infty}=\wh{GL}_{\frakf,\infty}^0\rtimes\bbZ$.
Therefore, we denote elements in $\wh{GL}_{\frakf,\infty}$ by
$(a,\wh{\sigma}^m)$ for $a\in\wh{GL}^0_{\frakf,\infty}$ and the
multiplication by
$(a,\wh{\sigma}^m)(a',\wh{\sigma}^{m'})=(a\wh{\sigma}^ma'
\wh{\sigma}^{-m},\wh{\sigma}^{m+m'})$.

For any $g\in GL_\infty^+(\gl_\infty)$, we choose a lifting
$\widetilde{\rho_g(\sigma)}\in\wh{GL}_{\frakf,\infty}$ of
$\rho_g(\sigma)\in GL_{\frakf,\infty}$. We check that
$\widetilde{\rho_g}:\wh{GL}_{\frakf,\infty}\to\wh{GL}_{\frakf,\infty}$
defined by
\[\widetilde{\rho_g}(a,\wh{\sigma}^m)=(\tilde{\rho}^0_g(a)
\widetilde{\rho_g(\sigma)}^m\wh{\sigma}^{-m},\wh{\sigma}^m)\] is a
group homomorphism and therefore is a lifting of $\rho_g$. Thus,
we must check that
\[\widetilde{\rho_g}((a,\wh{\sigma}^{m})(a',\wh{\sigma}^{m'}))=
\widetilde{\rho_g}((a,\wh{\sigma}^{m}))
\widetilde{\rho_g}((a',\wh{\sigma}^{m'})).\] By definition,
\[\begin{array}{ll}\mathrm{l.h.s.}&=\widetilde{\rho_g}((a\wh{\sigma}^ma'\wh{\sigma}^{-m},\wh{\sigma}^{m+m'}))\\

&=(\tilde{\rho}_g^0(a\wh{\sigma}^ma'\wh{\sigma}^{-m})\widetilde{\rho_g(\sigma)}^{m+m'}\wh{\sigma}^{-m-m'},\wh{\sigma}^{m+m'})\\

&=(\tilde{\rho}_g^0(a)\tilde{\rho}_g^0(\wh{\sigma}^ma'\wh{\sigma}^{-m})\widetilde{\rho_g(\sigma)}^{m+m'}\wh{\sigma}^{-m-m'},\wh{\sigma}^{m+m'}).

\end{array}\]
\[\begin{array}{ll}\mathrm{r.h.s.}&=(\tilde{\rho}^0_g(a)\widetilde{\rho_g(\sigma)}^m\wh{\sigma}^{-m},\wh{\sigma}^m)(\tilde{\rho}^0_g(a')\widetilde{\rho_g(\sigma)}^{m'}\wh{\sigma}^{-m'},\wh{\sigma}^{m'})\\

&=(\tilde{\rho}^0_g(a)\widetilde{\rho_g(\sigma)}^m\wh{\sigma}^{-m}\wh{\sigma}^m\tilde{\rho}^0_g(a')\widetilde{\rho_g(\sigma)}^{m'}\wh{\sigma}^{-m'}\wh{\sigma}^{-m},\wh{\sigma}^{m+m'})\\

&=(\tilde{\rho}^0_g(a)\widetilde{\rho_g(\sigma)}^m\tilde{\rho}^0_g(a')\widetilde{\rho_g(\sigma)}^{-m}\widetilde{\rho_g(\sigma)}^{m+m'}\wh{\sigma}^{-m-m'},\wh{\sigma}^{m+m'}).
                                 \end{array}\]
Therefore, we must check that
\begin{equation}\label{a.a.}
\tilde{\rho}_g^0(\wh{\sigma}^ma'\wh{\sigma}^{-m})=\widetilde{\rho_g(\sigma)}^m\tilde{\rho}^0_g(a')\widetilde{\rho_g(\sigma)}^{-m}.
\end{equation}
However, observe the two group homomorphisms
$\wh{GL}^0_{\frakf,\infty}\to\wh{GL}^0_{\frakf,\infty}$ given by
$a'\mapsto\tilde{\rho}_g^0(\wh{\sigma}^ma'\wh{\sigma}^{-m})$ and
$a'\mapsto\widetilde{\rho_g(\sigma)}^m\tilde{\rho}^0_g(a')
\widetilde{\rho_g(\sigma)}^{-m}$ are liftings of the group
homomorphism $GL^0_{\frakf,\infty}\to GL^0_{\frakf,\infty}$ given
by $a\mapsto \rho_g(\sigma^ma\sigma^{-m})$. Since
$H^1(GL^0_{\frakf,\infty},\bbC^\times)=0$, the two homomorphisms
must be the same. Therefore, (\ref{a.a.}) holds and
$\widetilde{\rho_g}$ defined as above is a group homomorphism.

To finish the proof, we will show that for each $g\in
GL_\infty^+(\gl_\infty)$, we can choose a particular lifting
$\widetilde{\rho_g(\sigma)}$ so that $\widetilde{\rho_g}=c_g$ is
just conjugation by $g$ if $g\in GL_{\frakf,\infty}$ and
$\widetilde{\rho_{gg'}}=\widetilde{\rho_g}\widetilde{\rho_{g'}}$.
To this end, we make use the following two claims. Recall that
$g\in GL_\infty^+(\gl_\infty)$ acts on $\calO_\bbK$ by formula
(\ref{action of GL+inftyglinfty}). We will write
$\calO_\bbK=\bbC\ppart\oplus s\calO_\bbK$.

\medskip

(i) For any $g\in GL_\infty^+(\gl_\infty)$, there exists some
$h\in GL^0_{\frakf,\infty}$ such that
$g|_{\bbC\ppart}=h|_{\bbC\ppart}$ and
$\rho_g(\sigma)=\rho_h(\sigma)$.

\medskip

(ii)For $h,h'\in GL^0_{\frakf,\infty}$, if
$h|_{\bbC\ppart}=h'|_{\bbC\ppart}$ and
$\rho_h(\sigma)=\rho_{h'}(\sigma)$, then
$c_h(\wh{\sigma})=c_{h'}(\wh{\sigma})$.

\medskip

If these two claims hold, then we let
$\widetilde{\rho_g(\sigma)}=c_{h}(\wh{\sigma})$ for some $h\in
GL^0_{\frakf,\infty}$ satisfying properties in Claim (i). By Claim
(ii), this is well-defined. We check that
\[\widetilde{\rho_{gg'}}((a,\wh{\sigma}^m))=\widetilde{\rho_g}(\widetilde{\rho_{g'}}((a,\wh{\sigma}^m))).\]
Assume that $h,h'\in GL^0_{\frakf,\infty}$ such that
$\rho_g(\sigma)=c_h(\sigma)$, $g|_{\bbC\ppart}=h|_{\bbC\ppart}$
and $\rho_{g'}(\sigma)=c_{h'}(\sigma)$,
$g'|_{\bbC\ppart}=h'|_{\bbC\ppart}$. Then
$gg'|_{\bbC\ppart}=\rho_g(h')h|_{\bbC\ppart}$ and
$\rho_{gg'}(\sigma)=c_{\rho_g(h')h}(\sigma)$. Now,
\[\begin{array}{ll}\mathrm{l.h.s.}&=(\tilde{\rho}^0_{gg'}(a)(c_{\rho_g(h')h}(\wh{\sigma}))^m\wh{\sigma}^{-m},\wh{\sigma}^m),
\end{array}\]
\[\begin{array}{lll}\mathrm{r.h.s.}&=\widetilde{\rho_g}((\tilde{\rho}^0_{g'}(a)(c_{h'}(\wh{\sigma}))^m\wh{\sigma}^{-m},\wh{\sigma}^m))&\\

&=(\tilde{\rho}^0_g(\tilde{\rho}^0_{g'}(a))\tilde{\rho}^0_g((c_{h'}(\wh{\sigma}))^m\wh{\sigma}^{-m})(c_{h}(\wh{\sigma}))^m\wh{\sigma}^{-m},\wh{\sigma}^m)&\\

&=(\tilde{\rho}^0_{gg'}(a)\tilde{\rho}^0_g(\tilde{h'}\wh{\sigma}^m\tilde{h'}^{-1}\wh{\sigma}^{-m})(c_{h}(\wh{\sigma}))^m\wh{\sigma}^{-m},\wh{\sigma}^m)&\\

&=(\tilde{\rho}^0_{gg'}(a)\tilde{\rho}^0_g(\tilde{h'})\tilde{\rho}^0_g(\wh{\sigma}^m\tilde{h'}^{-1}\wh{\sigma}^{-m})(c_{h}(\wh{\sigma}))^m\wh{\sigma}^{-m},\wh{\sigma}^m)&\\

&=(\tilde{\rho}^0_{gg'}(a)\tilde{\rho}^0_g(\tilde{h'})(c_h(\wh{\sigma}))^m\tilde{\rho}^0_g(\tilde{h'}^{-1})\wh{\sigma}^{-m},\wh{\sigma}^m)&
\mbox{
by }                                 (\ref{a.a.})\\

         &=\mathrm{l.r.s},
\mbox{ since } \tilde{\rho}^0_g(\tilde{h'}) \mbox{ is a lifting of
}\rho_g(h').

                              \end{array}\]

We also check that $\widetilde{\rho_h}=c_h$ for $h\in
GL_{\frakf,\infty}$. But this follows from
$\tilde{\rho}^0_h(a)=c_h(a)$ for $h\in GL_{\frakf,\infty},
a\in\wh{GL}^0_{\frakf,\infty}$ and our choice
$\widetilde{\rho_h(\sigma)}$.

It remains to prove the two claims we made. We begin with the
proof of Claim (i). Recall that we choose $\sigma$ such that
$\sigma(t^i)=t^{i+1}$ and $\sigma(s^it^j)=s^it^j$ for $i\geq 1$.
For $g\in GL_\infty^+(\gl_\infty)$, there exists $N \in \Z$ such
that $g^{-1}s^N\calO_\bbK\subset s\calO_\bbK$. Then one can find
some Tate vector space $V\subset gs\calO_\bbK$ such that
$gs\calO_\bbK=V\oplus s^N\calO_\bbK$. Therefore
$\calO_\bbK=g\bbC\ppart\oplus gs\calO_\bbK=g\bbC\ppart\oplus
V\oplus s^N\calO_\bbK$. We can define an
$h:\calO_\bbK\to\calO_\bbK$ by $h|_{s^N\calO_\bbK}=\mathrm{id}$,
$h|_{\bbC\ppart}=g|_{\bbC\ppart}$ and
$h(\sum\limits_{i=1}^{N-1}s^i\bbC\ppart)=V$. It is clear that
$h\in GL_{\frakf,\infty}$ and $g\sigma g^{-1}=h\sigma h^{-1}$. Now
we choose some $h_1\in GL_{\frakf,\infty}$, such that
$\deg(h)=\deg(h_1)$, $h_1|_{\bbC\ppart}=\mathrm{id}$ and
$h_1(s\calO_\bbK)=s\calO_\bbK$. Since $h_1\sigma h_1^{-1}=\sigma$,
we see that $hh_1^{-1}$ satisfies all the required properties.

Next, we prove the Claim (ii). It is enough to prove that if $h\in
GL^0_{\frakf,\infty}$, $h|_{\bbC\ppart}=\mathrm{id}$ and $\sigma
h\sigma^{-1}=h$, then
$\wh{\sigma}\tilde{h}\wh{\sigma}^{-1}=\tilde{h}$ for $\tilde{h}$
any lifting of $h$ in $\wh{GL}^0_{\frakf,\infty}$. It is easy to
prove that in this case $h(s\calO_\bbK)=s\calO_\bbK$. Recall the
definition of $GL_{\frakf,n}(\gl_\infty)$, which is a subgroup of
$GL_{\frakf,\infty}$ consisting of whose
$a:\gl_\infty[[s]]\to\gl_\infty[[s]]$ such that
$a|_{s^n\gl_\infty[[s]]}=\mathrm{id}$. Let
$L_0=\bbC[[t]][[s]]dtds$. Assume that $h\in
GL^0_{\frakf,n}(\gl_\infty)$. Then $\tilde{h}=(h,e)$ where
\[e\in\det(\frac{hL_0}{L_0\cap
s^N\calO_\bbK}|\frac{L_0}{L_0\cap
s^N\calO_\bbK})^\times=\det(\frac{hL_0}{L_0\cap(s^N\calO_\bbK+\bbC\ppart)}
|\frac{L_0}{L_0\cap(s^N\calO_\bbK+\bbC\ppart)})^\times\] It is
clear that $\sigma$ induces identity on this line. Likewise,
$\wh{\sigma}=(\sigma,o)$ where
\[o\in\det(\frac{\sigma
L_0}{L_0\cap s^N\calO_\bbK}|\frac{L_0}{L_0\cap
s^N\calO_\bbK})^\times=\det(\frac{\sigma L_0}{L_0\cap
s\calO_\bbK}|\frac{L_0}{L_0\cap s\calO_\bbK})^\times\] and $h$
induces identity on this line. Therefore,
$\tilde{h}\wh{\sigma}=\wh{\sigma}\tilde{h}$.
\end{proof}

\begin{thm}\label{H^3
of GLinftyinfty} (i)
$H^2(GL_{\infty,\infty},\bbC^\times)=\bbC^\times$.

(ii) The cohomology class $[D_2]\in
H^2(GL_{\frakf,\infty},\bbC^\times)$ is transgressive, that is, it
belongs to
$H^2(GL_{\frakf,\infty},\bbC^\times)^{GL_{\infty,\infty}}=E_2^{0,2}$,
and $d_2[D_2]=0$, so that $[D_2]\in E_3^{0,2}$.

(iii) $E_3^{3,0}\cong H^3(GL_{\infty,\infty},\bbC^\times)$ and
$d_3[D_2]$ is non-zero in $H^3(GL_{\infty,\infty},\bbC^\times)$.
We denote this class by $[E_3]$.
\end{thm}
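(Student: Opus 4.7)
The plan is to apply the Lyndon-Hochschild-Serre spectral sequence to the extension (\ref{extension of GLinftyinfty}),
\[1\to GL_{\frakf,\infty}\to\widetilde{GL}_\infty(\gl_\infty)\to GL_{\infty,\infty}\to 1,\]
with coefficients in $\bbC^\times$. By Proposition \ref{group acyclic}, the abutment $H^\bullet(\widetilde{GL}_\infty(\gl_\infty),\bbC^\times)$ equals $\bbC^\times$ concentrated in degree zero, so every $E_\infty^{p,q}$ with $p+q\geq 1$ must vanish, giving tight control over the low-degree cohomology of $GL_{\infty,\infty}$. As a preliminary step I would show $H^1(GL_{\frakf,\infty},\bbC^\times)\cong\bbC^\times$ via the characters $\chi\circ\deg$ for $\chi\in\Hom(\bbZ,\bbC^\times)$; using the split extension $1\to GL_{\frakf,\infty}^0\to GL_{\frakf,\infty}\to\bbZ\to 1$ this reduces to $H^1(GL_{\frakf,\infty}^0,\bbC^\times)=0$, which follows by inflation from the vanishing $H^1(GL_\frakf(\widetilde{\gl}_\infty),\bbC^\times)=0$ recorded in \S \ref{H2 and central extensions} together with the exact sequence (\ref{2ndbasicexactsequenceforgroups}). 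Since $\deg$ is defined conjugation-invariantly, $GL_{\infty,\infty}$ acts trivially on $H^1(GL_{\frakf,\infty},\bbC^\times)$. One also has $H^1(GL_{\infty,\infty},\bbC^\times)=0$ by inflation into $H^1(\widetilde{GL}_\infty(\gl_\infty),\bbC^\times)=0$, so in particular $E_2^{1,1}=0$.

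For (i), I would examine the transgression $d_2:E_2^{0,1}=\bbC^\times\to E_2^{2,0}=H^2(GL_{\infty,\infty},\bbC^\times)$. Surjectivity is forced by $E_\infty^{2,0}=0$, as no other differentials enter or leave this spot. For injectivity: if $\chi\circ\deg$ lies in $\ker(d_2)$, then this character extends to $\widetilde{GL}_\infty(\gl_\infty)$, and $H^1(\widetilde{GL}_\infty(\gl_\infty),\bbC^\times)=0$ then forces $\chi$ to be trivial. Hence $H^2(GL_{\infty,\infty},\bbC^\times)\cong\bbC^\times$.

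For (ii), the $GL_{\infty,\infty}$-invariance of $[D_2]$ is the content of Corollary \ref{D2 invariant under g}, placing $[D_2]$ in $E_2^{0,2}$. The equality $d_2[D_2]=0$ is, by the criterion of Proposition \ref{liftings}, equivalent to the existence of a lift of the conjugation action of $\widetilde{GL}_\infty(\gl_\infty)$ on $GL_{\frakf,\infty}$ to an action on $\widehat{GL}_{\frakf,\infty}$ that is trivial on the central $\bbC^\times$; this lift is produced in Theorem \ref{crossed module corresponding to E3}. Hence $[D_2]$ descends to a well-defined class in $E_3^{0,2}$.

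For (iii), the vanishing $E_2^{1,1}=0$ established above makes $d_2^{1,1}:E_2^{1,1}\to E_2^{3,0}$ zero, so $E_3^{3,0}=E_2^{3,0}=H^3(GL_{\infty,\infty},\bbC^\times)$. For $d_3[D_2]\neq 0$ in $E_3^{3,0}$ I would argue by contradiction: if $d_3[D_2]=0$, then $[D_2]$ survives to $E_\infty^{0,2}$, which vanishes because the abutment has no classes of total degree $2$, contradicting the non-triviality of $[D_2]$ from \S \ref{H2 and central extensions}. One then sets $[E_3]:=d_3[D_2]$. The main technical obstacle is the preliminary determination of $H^1(GL_{\frakf,\infty},\bbC^\times)$ and the triviality of the $GL_{\infty,\infty}$-action on it; once these are in hand, all three statements become formal consequences of the structure of the spectral sequence combined with Corollary \ref{D2 invariant under g} and Theorem \ref{crossed module corresponding to E3}.
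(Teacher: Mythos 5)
Your proof is correct and takes essentially the same route as the paper's: the Lyndon--Hochschild--Serre spectral sequence of the extension (\ref{extension of GLinftyinfty}) with the acyclicity of $\widetilde{GL}_\infty(\gl_\infty)$ from Proposition \ref{group acyclic}, together with Corollary \ref{D2 invariant under g}, Theorem \ref{crossed module corresponding to E3} and Proposition \ref{liftings} for (ii), and a survival-to-$E_\infty$ contradiction for $d_3[D_2]\neq 0$. You usefully spell out a few steps the paper leaves implicit --- the computation $H^1(GL_{\frakf,\infty},\bbC^\times)\cong\bbC^\times$ and the triviality of the $GL_{\infty,\infty}$-action on it --- but this is the same argument.
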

\begin{proof}
Since
$H^\bullet(\widetilde{GL}_\infty(\gl_\infty),\bbC^\times)\cong\bbC^\times$
and $H^1(GL_{\frakf,\infty},\bbC^\times)\cong\bbC^\times$, (i)
follows from the spectral sequence associated to (\ref{extension
of GLinftyinfty}). (ii) follows from Theorem \ref{crossed module
corresponding to E3} and Proposition \ref{liftings}. We shall
prove (iii). Indeed,$E_3^{3,0}\cong
H^3(GL_{\infty,\infty},\bbC^\times)$ follows from
\[E_2^{1,1}=H^1(GL_{\infty,\infty},H^1(GL_{\frakf,\infty},\bbC^\times))=0\]
which in turn follows from
\[H^1(GL_{\infty,\infty},\bbC^\times)\hookrightarrow
H^1(\widetilde{GL}_\infty(\gl_\infty),\bbC^\times)=0, \quad
H^1(GL_{\frakf,\infty},\bbC^\times)=\bbC^\times\] The reason for
$d_3[D_2]\neq 0$ is the same as that for $[C^0_2]\neq 0$ and
$[D^0_2]\neq 0$.
\end{proof}

We have a similar theorem for $\wh{GL}_{\infty,\infty}$, whose
proof is even simpler.

\begin{thm}\label{H^3 of
whGLinftyinfty}(i) $H^2(\wh{GL}_{\infty,\infty},\bbC^\times)=0$.

(ii) The cohomology class $[D^0_2]\in
H^2(GL^0_{\frakf,\infty},\bbC^\times)$ is transgressive, that is,
it belongs to
$H^2(GL^0_{\frakf,\infty},\bbC^\times)^{\wh{GL}_{\infty,\infty}}
=E_2^{0,2}$, and $d_2[D^0_2]=0$, so that $[D^0_2]\in E_3^{0,2}$.

(iii) $E_3^{3,0}\cong H^3(\wh{GL}_{\infty,\infty},\bbC^\times)$
and $d_3[D^0_2]$ is non-zero in
$H^3(GL_{\infty,\infty},\bbC^\times)$. We denote this class by
$[\wh{E}_3]$. Then $[\wh{E}_3]$ is the pullback of $[E_3]$ along
$\wh{GL}_{\infty,\infty}\to GL_{\infty,\infty}$.
\end{thm}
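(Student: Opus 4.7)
The plan is to apply the Lyndon-Hochschild-Serre spectral sequence to the extension (\ref{extension of whGLfrakfinfty}),
\[1 \to GL^0_{\frakf,\infty} \to \widetilde{GL}_\infty(\gl_\infty) \to \wh{GL}_{\infty,\infty} \to 1,\]
in exact parallel to the proof of Theorem \ref{H^3 of GLinftyinfty}, substituting $GL^0_{\frakf,\infty}$ for $GL_{\frakf,\infty}$ and the pair $(\widetilde{GL}_\infty(\gl_\infty),\wh{GL}_{\frakf,\infty}^0)$ for $(\widetilde{GL}_\infty(\gl_\infty),\wh{GL}_{\frakf,\infty})$. The two facts that make this proof ``even simpler'' are Proposition \ref{group acyclic}, which gives $H^\bullet(\widetilde{GL}_\infty(\gl_\infty),\bbC^\times)\cong\bbC^\times$ concentrated in degree zero, and the vanishing $H^1(GL^0_{\frakf,\infty},\bbC^\times)=0$. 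The latter follows by inflation-restriction applied to (\ref{2ndbasicexactsequenceforgroups}) at $R=\bbC$, together with the already-noted vanishing $H^1(GL_\frakf(\widetilde{\gl}_\infty),\bbC^\times)=0$.

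For (i), the vanishing of $H^1(GL^0_{\frakf,\infty},\bbC^\times)$ makes both $E_2^{0,1}$ and $E_2^{1,1}$ zero, so no nonzero differentials enter $E_r^{2,0}$; since $H^2(\widetilde{GL}_\infty(\gl_\infty),\bbC^\times)=0$ forces $E_\infty^{2,0}=0$, we conclude $E_2^{2,0}=H^2(\wh{GL}_{\infty,\infty},\bbC^\times)=0$. For (ii), the Remark following Theorem \ref{crossed module corresponding to E3} gives a gerbal pair structure on $(\widetilde{GL}_\infty(\gl_\infty),\wh{GL}_{\frakf,\infty}^0)$; together with Proposition \ref{liftings} this means $[D^0_2]$ is $\wh{GL}_{\infty,\infty}$-invariant, hence lies in $E_2^{0,2}$, and $d_2[D^0_2]$ is automatically zero since its target $E_2^{2,1}=H^2(\wh{GL}_{\infty,\infty},H^1(GL^0_{\frakf,\infty},\bbC^\times))$ vanishes. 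Therefore $[D^0_2]\in E_3^{0,2}$.

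For (iii), the vanishing $E_2^{1,1}=0$ identifies $E_3^{3,0}=E_2^{3,0}=H^3(\wh{GL}_{\infty,\infty},\bbC^\times)$, and no differentials into $E_r^{3,0}$ for $r\geq 4$ are possible for degree reasons. Since $H^3(\widetilde{GL}_\infty(\gl_\infty),\bbC^\times)=0$ forces $E_\infty^{3,0}=0$, the transgression $d_3:E_3^{0,2}\to E_3^{3,0}$ is surjective. Nontriviality of $d_3[D^0_2]$ follows by the same argument as for $[C^0_2]\neq 0$ and $[D^0_2]\neq 0$: otherwise $[D^0_2]$ would survive to $E_\infty^{0,2}$, a subquotient of $H^2(\widetilde{GL}_\infty(\gl_\infty),\bbC^\times)=0$, contradicting the nontriviality of $[D^0_2]$ itself (established alongside (\ref{extension of GLfrakfinfty0})). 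Thus $[\wh{E}_3]:=d_3[D^0_2]$ is a nonzero generator of $H^3(\wh{GL}_{\infty,\infty},\bbC^\times)$.

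Finally, to identify $[\wh{E}_3]$ with the pullback of $[E_3]$ along $\wh{GL}_{\infty,\infty}\to GL_{\infty,\infty}$, one uses the morphism of short exact sequences
\[
\begin{CD}
1 @>>> GL^0_{\frakf,\infty} @>>> \widetilde{GL}_\infty(\gl_\infty) @>>> \wh{GL}_{\infty,\infty} @>>> 1 \\
@. @VVV @| @VVV @. \\
1 @>>> GL_{\frakf,\infty} @>>> \widetilde{GL}_\infty(\gl_\infty) @>>> GL_{\infty,\infty} @>>> 1
\end{CD}
\]
and the induced map of Lyndon-Hochschild-Serre spectral sequences. By construction, $[D^0_2]$ is the restriction of $[D_2]$ along $GL^0_{\frakf,\infty}\hookrightarrow GL_{\frakf,\infty}$, so naturality of $d_3$ gives $[\wh{E}_3]=d_3[D^0_2]$ equal to the pullback of $[E_3]=d_3[D_2]$. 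The only step requiring real verification is the vanishing $H^1(GL^0_{\frakf,\infty},\bbC^\times)=0$, but this is a direct parallel of the analogous fact for $GL_\infty^0$ used in the proof of Proposition \ref{uniqueness of central extension}; the rest is formal manipulation of the spectral sequence.
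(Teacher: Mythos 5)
Your proposal is correct and develops exactly the argument the paper labels ``similar, even simpler'' (applying the Lyndon--Hochschild--Serre spectral sequence to the exact sequence \eqref{extension of whGLfrakfinfty}, with $H^1(GL^0_{\frakf,\infty},\bbC^\times)=0$ replacing $H^1(GL_{\frakf,\infty},\bbC^\times)=\bbC^\times$, and naturality of the spectral sequences under the morphism of extensions giving the pullback identification). The only small overstatement is calling $[\wh{E}_3]$ a ``generator'' of $H^3(\wh{GL}_{\infty,\infty},\bbC^\times)$: the argument shows $d_3$ is an isomorphism $E_3^{0,2}\to E_3^{3,0}$ and $d_3[D^0_2]\neq 0$, but not that $[D^0_2]$ generates $E_3^{0,2}$, so only nontriviality---which is all the theorem asserts---is established.
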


\subsection{Computation of the cohomology classes of
gerbal
   representations}\label{classes corr}

In this section we will calculate the cohomology class
corresponding to the gerbal representation of $GL_{\infty,\infty}$
on the category $\calC_{\calO_\bbK}^{\on{ss}}$ of Clifford modules
constructed in Theorem \ref{main1}. We will also construct a
gerbal representation of $GL_{\infty,\infty}$ on another abelian
category which also gives this cohomology class.

\subsubsection{The cohomology class of the
gerbal representation constructed in Theorem \ref{main1}}
\label{gerbal2}

Let us recall that the category $\calC_{\calO_\bbK}^{\on{ss}}$
defined in \S \ref{cat 2-Cl} is a semisimple abelian category. The
irreducible objects are vacuum modules $M_L$ over the Clifford
algebra $\Cl_{\calO_\bbK}$. They are labeled by secondary
lattices $L\subset\calO_\bbK$. Let us also recall that
$\Hom_{\calC_{\calO_\bbK}^{\on{ss}}}(M_L,M_{L'})=\det(L|L')$, if
$L$ and $L'$ are commensurable, and $0$ otherwise. In Theorem
\ref{main1}, we constructed a gerbal representation $F$ of
$GL_{\infty,\infty}$ on $\calC_{\calO_\bbK}^{\on{ss}}$. The
obstruction to upgrade this gerbal representation to a genuine
representation is a cohomology class in
$H^3(GL_{\infty,\infty},\calZ(\calC_{\calO_\bbK}^{\on{ss}})^\times)$
(see Theorem \ref{H^3}). Since $\calC_{\calO_\bbK}^{\on{ss}}$ is
$\bbC$-linear, there is a natural embedding
$\bbC^\times\subset\calZ(\calC_{\calO_\bbK}^{\on{ss}})^\times$
which admits a splitting. The second main theorem of this paper is

\begin{thm}\label{main2} The
cohomology class corresponding to the gerbal representation of
$GL_{\infty,\infty}$ on $\calC_{\calO_\bbK}^{\on{ss}}$, as
constructed in Theorem \ref{main1}, is
\[[E_3]\in
H^3(GL_{\infty,\infty},\bbC^\times)\subset
H^3(GL_{\infty,\infty},\calZ(\calC_{\calO_\bbK}^{\on{ss}})^\times),\]
whose existence is proved in Theorem \ref{H^3 of GLinftyinfty}.
\end{thm}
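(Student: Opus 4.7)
The strategy is to apply the gerbal pair formalism of Proposition \ref{variation} to the pair $(\widetilde{GL}_\infty(\gl_\infty),\wh{GL}_{\frakf,\infty})$ established in Theorem \ref{crossed module corresponding to E3}. Setting $G=\widetilde{GL}_\infty(\gl_\infty)$, $H=GL_{\frakf,\infty}$, $\wh H=\wh{GL}_{\frakf,\infty}$ and $K=G/H=GL_{\infty,\infty}$, Proposition \ref{variation} will give a gerbal action of $K$ on $\calC_{\calO_\bbK}^{\on{ss}}$ whose cohomology class lies in the image of $H^3(K,\bbC^\times)$ and whose projection to $E_3^{3,0}$ equals $d_3[D_2]=[E_3]$. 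By Theorem \ref{H^3 of GLinftyinfty}(iii) the projection $H^3(K,\bbC^\times)\cong E_2^{3,0}\twoheadrightarrow E_3^{3,0}$ is an isomorphism (since $E_2^{1,1}=0$), so this already identifies the cohomology class with $[E_3]$. It remains to show (a) that the resulting gerbal action of $K=GL_{\infty,\infty}$ coincides with the one constructed in Theorem \ref{main1}, and (b) that the required input data for Proposition \ref{variation} is present.

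For (b), the first input is a genuine representation $F: G\to\bbG\bbL(\calC_{\calO_\bbK}^{\on{ss}})$. An element $(A,g)\in\widetilde{GL}_\infty(\gl_\infty)$ comes equipped with an actual continuous automorphism $A$ of $\calO_\bbK$ (via formula \eqref{action of GL+inftyglinfty}); by Lemma \ref{sec lat}, $A$ sends secondary lattices to secondary lattices, so we may define $F_{(A,g)}(M_L)=M_{AL}$ and use pushforward by $A$ on Hom-spaces $\det(L|L')\stackrel{\cong}{\to}\det(AL|AL')$. Strict functoriality $F_{(A,g)}F_{(A',g')}=F_{(AA',gg')}$ is immediate from the ring structure on $\widetilde{\gl}_\infty(\gl_\infty)$. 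The second input is, for each $h\in H$, an embedding $\alpha_h:\pi^{-1}(h)\hookrightarrow\Hom_{\bbG\bbL(\calC)}(F_h,\mathbf{1}_\calC)$. By Proposition \ref{another presentation} an element of $\pi^{-1}(h)$ is a datum $e\in\det(hL|L)^\times$ for any secondary lattice $L\subset\calO_\bbK$; and by Lemma \ref{non-isom} (extended to pseudo-commensurable lattices as in \S\ref{dl}) this is the same as an element of $\Hom(M_{hL},M_L)$. Naturality of this identification in $L$, together with the cocycle compatibility of $\det(L|L')$ from Remark \ref{composition of determinantal lines}, defines $\alpha_h$ as a morphism of functors $F_h\Rightarrow\mathbf{1}_\calC$.

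Next we verify the two compatibility diagrams. Compatibility (ii) (multiplicativity $\alpha_{h'h}=\alpha_{h'}\otimes\alpha_h$) is a direct consequence of the group law on $\wh{GL}_{\frakf,\infty}$ in the presentation of Proposition \ref{another presentation}, combined with the cocycle property $\gamma_{L,L',L''}$ of determinantal lines. Compatibility (i), the equivariance of $\alpha$ with respect to the action $\tilde{\rho}$ of $G$ on $\wh H$ constructed in Theorem \ref{crossed module corresponding to E3}, is the heart of the argument and will be the main obstacle. The point is that the constructed $\tilde{\rho}_{(A,g)}$ acts on the determinant-line presentation of $\wh{GL}_{\frakf,\infty}$ precisely by the induced isomorphism $\det(hL|L)^\times\cong\det((AhA^{-1})(AL)|AL)^\times$, which matches conjugation $F_{(A,g)}\otimes\alpha_h\otimes F_{(A,g)^{-1}}$ in $\Hom(F_{\rho_{(A,g)}h},\mathbf{1}_\calC)$. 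Uniqueness of the lift $\tilde{\rho}$ (which follows from $H^1(GL_{\frakf,\infty}^0,\bbC^\times)=0$) forces the two actions to agree.

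Finally, for (a), one checks that the induced gerbal action of $K=GL_{\infty,\infty}$, obtained by composing $F$ with a set-theoretic section $s:K\to G$ (e.g., $s(g)=(\pi(g),g)$), coincides with the gerbal action of Theorem \ref{main1} up to equivalence; this is essentially built into how $\Xi_{g\calO_\bbK,\calO_\bbK}$ and $T_g$ were defined, since the choice of a determinantal theory $\Delta_{g\calO_\bbK,\calO_\bbK}$ used there amounts to a choice of lifting of $g$ to $G$. By Theorem \ref{H^3}(ii) the cohomology class is independent of such choices, so Proposition \ref{variation} together with Theorem \ref{H^3 of GLinftyinfty}(iii) identifies the class as $[E_3]$, as required.
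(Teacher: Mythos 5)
There is a genuine gap in step (b), and it is exactly the point the paper takes pains to circumvent. You define a genuine action of $G=\widetilde{GL}_\infty(\gl_\infty)$ on $\calC_{\calO_\bbK}^{\on{ss}}$ by $F_{(A,g)}(M_L)=M_{AL}$ and then want to produce, for each $h\in H=GL_{\frakf,\infty}$, an embedding
$\alpha_h:\pi^{-1}(h)\hookrightarrow\Hom_{\bbG\bbL(\calC_{\calO_\bbK}^{\on{ss}})}(F_h,\mathbf{1}_\calC)$.
But morphisms in $\bbG\bbL(\calC_{\calO_\bbK}^{\on{ss}})$ are natural \emph{isomorphisms}, and by Lemma \ref{non-isom} one has $\Hom_{\Cl_{\calO_\bbK}}(M_{hL},M_L)=0$ unless $hL$ and $L$ are honestly commensurable. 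Example \ref{ex} exhibits $a\in GL_{\frakf,\infty}$ with $aL_0$ not commensurable with $L_0$, so for such $h=a$ the functor $F_h$ is not isomorphic to $\mathbf{1}_\calC$; the set $\Hom_{\bbG\bbL(\calC_{\calO_\bbK}^{\on{ss}})}(F_h,\mathbf{1}_\calC)$ is \emph{empty}, while $\pi^{-1}(h)$ is a nonempty $\bbC^\times$-torsor, and no embedding exists. Your parenthetical ``Lemma \ref{non-isom} (extended to pseudo-commensurable lattices as in \S\ref{dl})'' is where this slips by: \S\ref{dl} defines the determinant \emph{line} $\det(L|L')$ for pseudo-commensurable pairs by a projective limit, but this line is not $\Hom_{\Cl_{\calO_\bbK}}(M_L,M_{L'})$, which remains zero when $L,L'$ are merely pseudo-commensurable. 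Proposition \ref{variation} therefore cannot be applied directly to $\calC_{\calO_\bbK}^{\on{ss}}$.

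The paper resolves this by introducing the auxiliary semi-simple category $\wh{\calC}_{\calO_\bbK}^{\on{ss}}$, whose objects $\wh M_L$ are again indexed by secondary lattices but whose $\Hom$-spaces are \emph{defined} to be $\det(L|L')$ for all pseudo-commensurable pairs (and $0$ otherwise), making $F_h\cong\mathbf{1}$ for all $h\in GL_{\frakf,\infty}$ by fiat. The full embedding $\Upsilon_{\calO_\bbK}:\calC_{\calO_\bbK}^{\on{ss}}\hookrightarrow\wh{\calC}_{\calO_\bbK}^{\on{ss}}$ intertwines the two gerbal actions, and Proposition \ref{invariant} then transports the computed class back to $\calC_{\calO_\bbK}^{\on{ss}}$. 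Your proof plan is otherwise following the paper's approach (gerbal pair, Proposition \ref{variation}, identification of $d_3[D_2]$ with $[E_3]$, comparison with the action of Theorem \ref{main1}), but omitting the enlargement is not a cosmetic gap: it is the technical heart of the argument. You would need to add the construction of $\wh{\calC}_{\calO_\bbK}^{\on{ss}}$, verify that the gerbal action $\wh F$ of $GL_{\infty,\infty}$ on it is equivalent under $\Upsilon_{\calO_\bbK}$ to $F$, run Proposition \ref{variation} on $\wh{\calC}_{\calO_\bbK}^{\on{ss}}$, and check (the last lemma in the paper's proof) that the gerbal action $\wh G$ coming from the gerbal pair agrees with $\wh F$.
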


The strategy of proving this theorem is to realize this gerbal
representation in another way, using gerbal pairs, so that we
could apply results from \S \ref{second lemma} to calculate the
cohomology class.

Let us recall that we have the $\bbC^\times$-central extension of
$\wh{GL}_{\frakf,\infty}$ (\ref{extension of GLfrakfinfty}),
\[1\to\bbC^\times\to\wh{GL}_{\frakf,\infty}\to
GL_{\frakf,\infty}\to 1\] On the other hand, we have the group
extension (\ref{extension of GLinftyinfty}),
\[1\to GL_{\frakf,\infty}\to
\widetilde{GL}_\infty(\gl_\infty)\to GL_{\infty,\infty}\to 1\]
Furthermore, there is an action of
$\widetilde{GL}_\infty(\gl_\infty)$ on $\wh{GL}_{\frakf,\infty}$
which leaves the central subgroup
$\bbC^\times\subset\wh{GL}_{\frakf,\infty}$ invariant. Therefore,
$(\widetilde{GL}_\infty(\gl_\infty),\wh{GL}_{\frakf,\infty})$ is a
gerbal pair, in the sense of Definition \ref{gerbal pair} (see
Theorem \ref{crossed module corresponding to E3}).

Since $GL_\infty^+(\gl_\infty)$ acts on $\calO_\bbK$ (see formula
(\ref{action of GL+inftyglinfty})), it acts on the category of
$\Cl_{\calO_\bbK}$-modules. It is clear that
$\calC_{\calO_\bbK}^{\on{ss}}$ is invariant under this action.
Indeed, $g\in GL_\infty^+(\gl_\infty)$ will send $M_L$ to
$M_{gL}$. Therefore, $\widetilde{GL}_\infty(\gl_\infty)$ acts on
$\calC_{\calO_\bbK}^{\on{ss}}$ via the surjection
$\widetilde{GL}_\infty(\gl_\infty)\to GL_\infty^+(\gl_\infty)$. We
will denote this action by $G$ in what follows. If for any $a\in
GL_{\frakf,\infty} \subset \widetilde{GL}_\infty(\gl_\infty)$ we
had an isomorphism
$G_a\cong\mathbf{1}_{\calC_{\calO_\bbK}^{\on{ss}}}$, then we would
have the sought-after second construction of gerbal representation
of $GL_{\infty,\infty}$ on $\calC_{\calO_\bbK}^{\on{ss}}$, which
would allow us to calculate its third cohomology class. However,
this is not true, since $G_a$ is not always isomorphic to
$\mathbf{1}_{\calC_{\calO_\bbK}^{\on{ss}}}$. Namely, from Example
\ref{ex} in \S \ref{dl}, we know that $aL$ and $L$ are not
necessarily commensurable with each other, and therefore
$G_a(M_L)=M_{aL}$ are not necessarily isomorphic to $M_L$. To
remedy this problem, we will define a new semisimple abelian
category $\wh{\calC}_{\calO_\bbK}^{\on{ss}}$, whose irreducible
objects are still labeled by the secondary lattices
$L\subset\calO_\bbK$, but whose morphism sets are enlarged a
little bit.

Let $\bbL$ be a lattice in $\bbK$ and $L,L'$ two secondary
lattices of $\bbL$ (see Definitions \ref{lat} and
\ref{2-lattice}). Let us recall that we can define $\det(L|L')$ if
$L,L'$ are pseudo commensurable with each other (see Definition
\ref{pseudo commensurability}). We will denote by
$\wh{\calC}_{\bbL}^{\on{ss}}$ the semisimple abelian category,
whose irreducible objects, denoted by $\wh{M}_L$, are labeled by
the secondary lattices $L\subset\bbL$ and whose morphism sets are
\[\Hom_{\wh{\calC}_{\bbL}^{\on{ss}}}(\wh{M}_{L},\wh{M}_{L'})=
\left\{\begin{array}{ll}\det(L|L')& L,
L' \mbox{ are pseudo commensurable},\\

 0         & \mbox{ otherwise.
}\end{array}\right.\] Clearly, there is a full embedding
$\Upsilon_\bbL:\calC_{\bbL}^{\on{ss}}\to\wh{\calC}_{\bbL}^{\on{ss}}$,
which sends $M_L$ to $\wh{M}_L$.

Let us observe that the same construction as in Theorem
\ref{main1} yields a gerbal representation of $GL_{\infty,\infty}$
on $\wh{\calC}_{\bbL}^{\on{ss}}$, which we will denote by
$\wh{F}$. The embedding $\Upsilon_\bbL$ commutes with the gerbal
representations of $GL_{\infty,\infty}$, i.e.
$\wh{F}_g\circ\Upsilon_\bbL\cong\Upsilon_\bbL\circ F_g$.
Therefore, by Proposition \ref{invariant}, it is enough to
calculate the cohomology class corresponding to the gerbal
representation $\wh{F}$ of $GL_{\infty,\infty}$ on
$\wh{\calC}_{\calO_\bbK}^{\on{ss}}$.

Now we give another realization of this gerbal representation.
Observe that there is a genuine representation of
$GL_\infty^+(\gl_\infty)$ on $\wh{\calC}_{\calO_\bbK}^{\on{ss}}$.
Namely, for $g\in GL_\infty^+(\gl_\infty)$, it will send
$\wh{M}_L$ to $\wh{M}_{gL}$ and $\det(L|L')$ to $\det(gL|gL')$.
Therefore, we obtain a genuine representation of
$\widetilde{GL}_\infty(\gl_\infty)$ on
$\wh{\calC}_{\calO_\bbK}^{\on{ss}}$ via the surjection
$\widetilde{GL}_\infty(\gl_\infty)\to GL_\infty^+(\gl_\infty)$,
which is denoted by $\wh{G}$ in what follows. Furthermore, our
definition of $\wh{\calC}_{\calO_\bbK}^{\on{ss}}$ makes, for any
$a\in GL_{\frakf,\infty}$,
$G_a\cong\mathbf{1}_{\wh{\calC}_{\calO_\bbK}^{\on{ss}}}$. Indeed,
recall the $\bbC^\times$-central extension
$\wh{GL}_{\frakf,\infty}\stackrel{\pi}{\to} GL_{\frakf,\infty}$.
Thus, $\pi^{-1}(a)$ is a $\bbC^\times$-torsor for any $a\in
GL_{\frakf,\infty}$.

\begin{lem}
There is a $\bbC^\times$-equivariant embedding
\begin{equation}
\label{assump}
\pi^{-1}(a)\to\Hom_{\GL(\wh{\calC}_{\calO_\bbK}^{\on{ss}})}
(G_a,\mathbf{1}_{\wh{\calC}_{\calO_\bbK}^{\on{ss}}})
\end{equation}
such that the assumptions of Proposition \ref{variation} hold.
\end{lem}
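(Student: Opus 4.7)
The plan is to define $\alpha_a$ via the concrete presentation of $\wh{GL}_{\frakf,\infty}$ given in Proposition \ref{another presentation}: an element over $a \in GL_{\frakf,\infty}$ is a pair $(a,e)$ with $e \in \det(aL|L)^\times$ for any secondary lattice $L \subset \calO_\bbK$, which is a $\bbC^\times$-torsor over $a$ by the compatibility $\gamma$ of Remark \ref{composition of determinantal lines}. By Lemma \ref{sec lat}, $L$ and $aL$ are automatically pseudo commensurable, so $\det(aL|L)$ is precisely $\Hom_{\wh\calC_{\calO_\bbK}^{\on{ss}}}(\wh{M}_{aL},\wh{M}_L)$ in our category. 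I would define $\alpha_a((a,e))$ to be the natural transformation $G_a \to \mathbf{1}_{\wh\calC_{\calO_\bbK}^{\on{ss}}}$ whose component at $\wh{M}_L$ is $e$ viewed as a morphism $\wh{M}_{aL} \to \wh{M}_L$. The $\bbC^\times$-equivariance of $\alpha_a$ is then immediate from the definition, and naturality of $\alpha_a((a,e))$ with respect to a morphism $f \in \Hom(\wh{M}_L,\wh{M}_{L'}) = \det(L|L')$ reduces, via associativity of $\gamma$, to an identity in $\det(aL|L')$ that encodes exactly the property that different presentations of the same central extension element $(a,e)$ at different secondary lattices are related by the canonical determinantal isomorphisms.

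For condition (ii) of Proposition \ref{variation}, the group law in $\wh{GL}_{\frakf,\infty}$---extracted from Proposition \ref{identification of two central extensions} via the model $(c,g)(c',g') = (c \cdot g(c'), gg')$---yields the product $(h,e) \cdot (h',e') = (h'h,\, e' \cdot h'(e))$. Applying $\alpha_{h'h}$ produces the morphism $\wh{M}_{h'hL} \to \wh{M}_L$ expressed as $e'_L \cdot h'(e)_L$ read through $\gamma_{h'hL,h'L,L}$, whereas the composition along the other branch of the diagram in (ii) is $e_L \circ e'_{hL}$ read through $\gamma_{h'hL,hL,L}$. Equality of these two elements of $\det(h'hL|L)$ is the associativity stated in Remark \ref{composition of determinantal lines}.

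The main obstacle is condition (i). One must show that the action $\tilde\rho_g$ on $\wh{GL}_{\frakf,\infty}$ constructed rather indirectly in Theorem \ref{crossed module corresponding to E3} acts on the determinantal presentation by the natural formula $(h,e) \mapsto (\rho_g(h),\, g(e))$, where $g$ acts on determinant lines through the canonical isomorphism $\det(hL|L) \cong \det(\rho_g(h)\, gL \,|\, gL)$ induced by applying $g$. Granted this, condition (i) is immediate: the conjugation functor $\wh G_g \otimes - \otimes \wh G_{g^{-1}}$ sends a natural transformation $\alpha: F_h \to \mathbf{1}$ to $F_g \circ \alpha \circ F_{g^{-1}}$, whose component at $\wh{M}_L$ is $g(\alpha_{\wh{M}_{g^{-1}L}}) = g(e_{g^{-1}L})$, which matches $g(e)_L = \alpha_{\rho_g(h)}((\rho_g(h),\, g(e)))_{\wh{M}_L}$.

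To establish the desired formula for $\tilde\rho_g$, I would define a candidate action by this transport-of-structure formula, verify directly that it lifts $\rho$, fixes the central $\bbC^\times$, and restricts to the inner conjugation $c$ on $GL_{\frakf,\infty}$, and then invoke the uniqueness clause of Theorem \ref{crossed module corresponding to E3} to conclude that it equals $\tilde\rho_g$. The restriction property---that inner conjugation by $h \in GL_{\frakf,\infty}$ on the determinantal presentation is the $h$-transport action---is a direct unwinding of the multiplication in $\wh{GL}_{\frakf,\infty}$. The subtlety is that the construction of $\tilde\rho_g$ in Theorem \ref{crossed module corresponding to E3} proceeded by choosing liftings of the shift $\sigma$, not by a manifest determinantal formula, so this uniqueness-based indirect identification is the key bridge.
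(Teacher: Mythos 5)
Your plan for conditions (ii) and naturality is sound and essentially matches the reader-exercise the paper leaves implicit, but your handling of condition (i)---the ``key bridge''---has a genuine circularity/well-definedness problem, and this is precisely where the paper spends its effort. You propose to define a candidate transport action $(h,e)\mapsto(\rho_g(h),g(e))$ and then invoke the uniqueness clause of Theorem \ref{crossed module corresponding to E3}. But transport sends $e\in\det(hL_0|L_0)^\times$ to $g(e)\in\det(\rho_g(h)\,gL_0\,|\,gL_0)^\times$, which lives in a \emph{different} determinantal line; to express the result again in the $L_0$-presentation of $\wh{GL}_{\frakf,\infty}$, you need a canonical identification $\det(bL_0|L_0)\cong\det(b\,gL_0|gL_0)$. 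For $g\notin GL_{\frakf,\infty}$ the lattices $gL_0$ and $L_0$ are not even pseudo-commensurable, so Remark \ref{composition of determinantal lines} gives you nothing, and Proposition \ref{another presentation} only tells you the two presentations are isomorphic---the set of isomorphisms is a torsor under $H^1(GL_{\frakf,\infty},\bbC^\times)=\bbC^\times$, so there is no canonical one a priori. Without that identification, your candidate is not a well-defined endomorphism of a fixed group, so you cannot verify it is a gerbal pair action, and the uniqueness argument never gets off the ground.

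The paper's proof runs in the opposite direction precisely to avoid this. It takes the action $\tilde\rho$ already in hand from Theorem \ref{crossed module corresponding to E3}, uses it to build an explicit isomorphism
$\gamma_{a,g}:\det(aL_0|L_0)\to\det(agL_0|gL_0)$ by the two-step composition $\tilde\rho_{g^{-1}}$ followed by transport by $g$, and then proves that $\gamma_{a,g}$ depends only on the image lattice $gL_0$ by showing that $g\mapsto\gamma_{a,g}$ is a character of the stabilizer $GL_\infty^+(R)$ of $L_0$, hence trivial by the acyclicity of Proposition \ref{group acyclic}. That canonicity is the real content of the lemma, and it is exactly the input your proposal lacks. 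Once the paper has the $g$-independent identification $\det(aL_0|L_0)\cong\det(aL|L)$ in hand, the map \eqref{assump} is written down directly and compatibility with $\tilde\rho$ is built in by construction; there is no need to re-derive $\tilde\rho$ from transport and uniqueness.
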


\begin{proof}
Let us fix a secondary lattice $L\subset\calO_\bbK$. For
simplicity, we will use $L_0=\bbC[[t]][[s]]$. By Proposition
\ref{another presentation}, $\pi^{-1}(a)$ may be identified with
$\det(aL_0|L_0)^\times$. We have constructed in Theorem
\ref{crossed module corresponding to E3} an action $\tilde{\rho}$
of $GL_\infty^+(\gl_\infty)$ on $\wh{GL}_{\frakf,\infty}$. This
action gives for each $a$ an isomorphism
\[\gamma_{a,g}:\det(aL_0|L_0)\stackrel{\tilde{\rho}_{g^{-1}}}{\to}
\det(\rho_{g^{-1}}(a)L_0|L_0)\stackrel{g}{\to}\det(agL_0|gL_0)\]
If $g$ fixes $L_0$, then $\gamma_{a,g}$ is just multiplication of
a non-zero number. We claim that in this case
$\gamma_{a,g}=\mathrm{id}$. To prove this, let $R$ be
the subalgebra of $\gl_\infty$ consisting of $A:K\to K$ such that
$A(\calO_K)\subset\calO_K$ (recall that
$K=\bbC\ppart,\calO_K=\bbC[[t]]$). Then $GL_\infty^+(R)$ is the
subgroup of $GL_\infty^+(\gl_\infty)$ that fixes $L_0$. Now
$\gamma_{a,g}\gamma_{a,g'}=\gamma_{a,gg'}$ for $g,g'\in
GL_\infty^+(R)$. But from Proposition \ref{group acyclic},
$\gamma_{a,g}=\mathrm{id}$ for $a\in GL_\infty^+(R)$.

Let $L$ be any secondary lattice of $\calO_\bbK$. By Lemma
\ref{sec lat}, we can choose $g\in GL_\infty^+(\gl_\infty)$ such
that $gL_0=L$, and therefore obtain an isomorphism
$\gamma_{a,g}:\det(aL_0|L_0)\cong\det(aL|L)$, which is independent
of the choice of $g$ by previous discussion. Therefore, we obtain
for any $a\in GL_{\frakf,\infty}$,
$L\in\wh{\calC}_{\calO_\bbK}^{\on{ss}}$, an isomorphism
\[\pi^{-1}(a)\to\det(aL_0|L_0)\to\det(aL|L)=
\Hom_{\wh{\calC}_{\calO_\bbK}^{\on{ss}}}(\wh{M}_{aL},\wh{M}_{L}).\]
It is easy to check that these isomorphisms give the desired map
\eqref{assump} such that the assumptions of Proposition
\ref{variation} hold.
\end{proof}

Applying Proposition \ref{variation}, we obtain a gerbal
representation $\wh{G}$ of $GL_{\infty,\infty}$ on
$\wh{\calC}_{\calO_\bbK}^{\on{ss}}$. The cohomology class
corresponding to this gerbal representation is $[E_3]\in
H^3(GL_{\infty,\infty},\bbC^\times)$ as claimed in Theorem
\ref{H^3 of GLinftyinfty}.

To complete the proof of Theorem \ref{main2}, we need the
following:

\begin{lem} The two gerbal representations, $\wh{F}$ and $\wh{G}$,
of $GL_{\infty,\infty}$ on $\wh{\calC}_{\calO_\bbK}^{\on{ss}}$ are
equivalent, that is, for any $g\in GL_{\infty,\infty}$,
$\wh{F}_g\cong \wh{G}_g$.
\end{lem}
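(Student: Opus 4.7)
The plan is to construct, for each $g \in GL_{\infty,\infty}$, a natural isomorphism $\wh{F}_g \cong \wh{G}_g$ by comparing the two functors on the irreducible objects $\wh{M}_L$ (for $L \subset \calO_\bbK$ a secondary lattice) and then checking naturality. First I would unpack $\wh{G}_g$: choose any lift $\tilde{g} = (a,g) \in \widetilde{GL}_\infty(\gl_\infty)$ of $g$, which exists since $K_0(\gl_\infty^{\on{op}}) = 0$. The image $a \in GL_\infty^+(\gl_\infty)$ acts on $\calO_\bbK$ and preserves secondary lattices, so $\wh{G}_g(\wh{M}_L) = \wh{M}_{aL}$. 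Next I would unpack $\wh{F}_g(\wh{M}_L) = \Xi_{g\calO_\bbK,\calO_\bbK}(T_g(\wh{M}_L)) = \Xi_{g\calO_\bbK,\calO_\bbK}(\wh{M}_{gL})$. Since $g\calO_\bbK$ and $\calO_\bbK$ are not in general in containment, the equivalence $\Xi_{g\calO_\bbK,\calO_\bbK}$ factors through $g\calO_\bbK \cap \calO_\bbK$ and, by the formula defining $\Xi$, yields $\wh{M}_{gL \cap \calO_\bbK}$ tensored with a one-dimensional space $\ell_g$ built from the chosen determinantal theories on the Tate quotients $g\calO_\bbK/(g\calO_\bbK \cap \calO_\bbK)$ and $\calO_\bbK/(g\calO_\bbK \cap \calO_\bbK)$.

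The crucial step is to observe that $aL$ and $gL \cap \calO_\bbK$ are pseudo commensurable secondary lattices of $\calO_\bbK$. This is built into the definition of $\widetilde{GL}_\infty(\gl_\infty)$: by construction $a - \pi(g) \in \gl_\frakf(\gl_\infty)$, where $\pi$ denotes the projection to $\calO_\bbK$. Hence for sufficiently large $n$ the endomorphisms $a$ and $\pi \circ g$ agree on $s^n \calO_\bbK$, which forces $aL \cap s^n\calO_\bbK = (gL \cap \calO_\bbK) \cap s^n\calO_\bbK$ for large $n$. By Definition \ref{pseudo commensurability} the two lattices are pseudo commensurable, and so $\wh{M}_{aL}$ is canonically isomorphic to $\wh{M}_{gL \cap \calO_\bbK}$ up to a choice in the one-dimensional $\det(aL \,|\, gL \cap \calO_\bbK)$. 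A canonical isomorphism $\wh{G}_g(\wh{M}_L) \cong \wh{F}_g(\wh{M}_L)$ then arises from identifying this determinantal line with $\ell_g^{-1}$, using Proposition \ref{another presentation} (which realizes $\wh{GL}_{\frakf,\infty}$ by determinantal lines) together with the construction of the genuine action of $GL_\infty^+(\gl_\infty)$ on $\wh{\calC}_{\calO_\bbK}^{\on{ss}}$ used to define $\wh{G}$.

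The main obstacle will be verifying naturality: the object-wise isomorphisms must assemble into a natural transformation $\wh{F}_g \Rightarrow \wh{G}_g$, i.e., they must commute with the enlarged morphism spaces $\Hom(\wh{M}_L, \wh{M}_{L'}) = \det(L|L')$ for pseudo commensurable $L, L'$. Both $\wh{F}_g$ and $\wh{G}_g$ act on such Hom spaces by canonical determinantal transport, so by the cocycle property of determinants (Remark \ref{composition of determinantal lines}), extended to pseudo commensurable lattices through the inverse limit \eqref{determinantal line}, the two actions differ only by an overall scalar which is then pinned down by our chosen identifications. Finally, one must check independence from auxiliary choices (the lift $\tilde{g}$, the determinantal theories used to build $\wh{F}$, etc.); any two such choices differ by scalars in $\bbC^\times$, which are absorbed into the $\bbC^\times$-ambiguity allowed by Schur's lemma in the semisimple category $\wh{\calC}_{\calO_\bbK}^{\on{ss}}$. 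Together with the equivalence $\Upsilon_{\calO_\bbK} \colon \calC_{\calO_\bbK}^{\on{ss}} \hookrightarrow \wh{\calC}_{\calO_\bbK}^{\on{ss}}$ and Proposition \ref{invariant}, this identification of $\wh{F}_g$ and $\wh{G}_g$ yields the desired conclusion that the cohomology class of the original gerbal representation on $\calC_{\calO_\bbK}^{\on{ss}}$ equals $[E_3]$.
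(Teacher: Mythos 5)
Your strategy is essentially the one in the paper: lift $g$ to an element $(a,g)\in\widetilde{GL}_\infty(\gl_\infty)$, identify $\wh{G}_g(\wh{M}_L)$ with $\wh{M}_{aL}$, describe $\wh{F}_g(\wh{M}_L)$ via the intermediate lattice $g\calO_\bbK\cap\calO_\bbK$, and observe that the defining condition $a-\pi(g)\in\gl_\frakf(\gl_\infty)$ forces $a$ and $g$ to agree on $s^n\calO_\bbK$ for $n$ large, from which pseudo commensurability (and hence the isomorphism of Clifford modules) follows. The key idea is correct, and the appeal to the structure of $\widetilde{GL}_\infty(\gl_\infty)$ is exactly what makes the argument work.

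Two details do need repair. First, $gL\cap\calO_\bbK$ is a secondary lattice of $g\calO_\bbK\cap\calO_\bbK$, but \emph{not} a secondary lattice of $\calO_\bbK$: lattices such as $g\calO_\bbK\cap\calO_\bbK$ need not be open in $\calO_\bbK$, so the openness requirement in Definition \ref{2-lattice} fails. The object $\wh{F}_g(\wh{M}_L)$ is thus not $\wh{M}_{gL\cap\calO_\bbK}\otimes\ell_g$; rather $\wh{F}_g(\wh{M}_L)\cong\wh{M}_{L'}$, where $L'$ is the secondary lattice of $\calO_\bbK$ fitting into
\[0\to gL\cap\calO_\bbK\to L'\to V_{\calO_\bbK\cap g\calO_\bbK,\calO_\bbK}\to 0,\]
and it is $L'$, not $gL\cap\calO_\bbK$, that is pseudo commensurable with $aL$. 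Second, the identity $aL\cap s^n\calO_\bbK=(gL\cap\calO_\bbK)\cap s^n\calO_\bbK$ does not follow as stated from $a|_{s^n\calO_\bbK}=g|_{s^n\calO_\bbK}$: what one obtains is $aL\cap as^n\calO_\bbK = gL\cap gs^n\calO_\bbK = L'\cap as^n\calO_\bbK$, i.e.\ one must intersect with $as^n\calO_\bbK = gs^n\calO_\bbK$, which in general differs from $s^n\calO_\bbK$. Both points are easily fixed and do not change the overall approach.

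Finally, the effort spent on naturality in your last paragraph is unnecessary. The category $\wh{\calC}_{\calO_\bbK}^{\on{ss}}$ is $\bbC$-linear and semisimple with $\End(\wh{M}_L)=\bbC$ and $\Hom(\wh{M}_L,\wh{M}_{L'})$ at most one-dimensional, so two $\bbC$-linear auto-equivalences that send each simple object to isomorphic objects are automatically isomorphic as functors. The paper uses this observation to reduce the lemma immediately to the object-level comparison of $\wh{F}_g(\wh{M}_L)$ and $\wh{G}_g(\wh{M}_L)$.
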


\begin{proof} Let $g\in
GL_{\infty,\infty}$, and $L\subset\calO_\bbK$ be a secondary
lattice. It is clear that we only need to show that
$\wh{F}_g(\wh{M}_L)\cong \wh{G}_g(\wh{M}_L)$.

First, $\wh{F}_g(\wh{M}_L)\cong \wh{M}_{L'}$, where $L'$ is a
secondary lattice of $\calO_\bbK$ fitting into the following exact
sequence
\[0\to gL\cap\calO_\bbK\to L'\to
V_{\calO_\bbK\cap g\calO_\bbK,\calO_\bbK}\to 0.\] (We recall that
$V_{\calO_\bbK\cap g\calO_\bbK,\calO_\bbK}$ was introduced in \S
\ref{dg}.) On the other hand, $\wh{G}_g(\wh{M}_L)\cong
\wh{M}_{aL}$, where $a\in GL_\infty^+(\gl_\infty)$ such that
$(a,g)\in\widetilde{GL}_{\infty}(\gl_\infty)$. Let us recall that
the definition of $\widetilde{GL}_{\infty}(\gl_\infty)$ (see \S
\ref{groups of matrices}) implies that $(a,g)\in
GL_\infty^+(\gl_\infty)\times GL_{\infty,\infty}$ is in
$\widetilde{GL}_{\infty}(\gl_\infty)$ if and only if $\exists n$
such that $g(s^n\calO_\bbK)\subset\calO_\bbK$ and
$a|_{s^n\calO_\bbK}=g|_{s^n\calO_\bbK}$. Therefore, $aL\cap
as^n\calO_\bbK=gL\cap gs^n\calO_\bbK=L'\cap gs^n\calO_\bbK=L'\cap
as^n\calO_\bbK$. That is, $aL$ and $L'$ are pseudo commensurable
with each other. Therefore, $\wh{F}_g(\wh{M}_L)\cong
\wh{G}_g(\wh{M}_L)$.
\end{proof}

Theorem \ref{main2} is now proved. As a corollary, we obtain that
the third cohomology class defined by the determinantal
$B\C^\times$-extension $\bbG\bbL_{\infty,\infty}$ of
$GL_{\infty,\infty}$ is also equal to $[E_3]$.

\subsubsection{The gerbal
representation of $GL_{\infty,\infty}$ on the category of
representations of $\wh{GL}_{\frakf,\infty}$} \label{gerbal
representation}

We can also realize the cohomology class $[E_3]\in
H^3(GL_{\infty,\infty},\bbC^\times)$ as the one corresponding to
the gerbal representation of $GL_{\infty,\infty}$ on a category of
representations of $\wh{GL}_{\frakf,\infty}$.

Denote by $\rep_1(\wh{GL}_{\frakf,\infty})$ the category of level
one complex representations of $\wh{GL}_{\frakf,\infty}$, i.e,
those representations on which the central $\bbC^\times$ acts via
the standard character. As shown in \S \ref{second lemma}, we have
a gerbal representation of $GL_{\infty,\infty}$ on the category of
$\rep_1(\wh{GL}_{\frakf,\infty})$. The natural embedding
$\bbC^\times\subset\calZ(\rep_1(\wh{GL}_{\frakf,\infty}))^\times$
admits a splitting, and therefore the induced map
$H^3(GL_{\infty,\infty},\bbC^\times)\to
H^3(GL_{\infty,\infty},\calZ(\rep_1(\wh{GL}_{\frakf,\infty}))^\times)$
is injective. Combining this with Theorem \ref{H^3 of
GLinftyinfty}, we obtain that the cohomology class corresponding
to this gerbal representation is just $[E_3]\in
H^3(GL_{\infty,\infty},\bbC^\times)$.

\subsubsection{Level one
representations of $\wh{GL}_{\frakf,\infty}$}

In this subsection we sketch a construction of some level one
representations of $\wh{GL}_{\frakf,\infty}$, using modules over a
Clifford algebra. They form a subcategory of
$\rep_1(\wh{GL}_{\frakf,\infty})$, which is invariant under the
gerbal action of $GL_{\infty,\infty}$.  Therefore, the gerbal
representation of $GL_{\infty,\infty}$ on this subcategory also
realizes the class $[E_3]$. This subsection is independent from
the rest of the paper, and its main goal is to describe a
conjectural relation between the categories
$\rep_1(\wh{GL}_{\frakf,\infty})$ and
$\wh{\calC}_{\calO_\bbK}^{\on{ss}}$.

Recall that in \S \ref{central extensions} we constructed level
one representations of $\wh{GL}_\infty$ by using discrete Clifford
modules over $\Cl_K$. We would like to make similar constructions
here to obtain some interesting level one representations of
$\wh{GL}_{\frakf,\infty}$. Note however that these representations
are no longer discrete.

Recall the notation $\bbK=\bbC\ppart\ppars$ and
$\bbK^*=\bbC\ppart\ppars dtds$. The natural pairing on
$\bbK\oplus\bbK^*$ is given by
\[(f,\omega)=\res_{t=0}\res_{s=0}f\omega \qquad f\in\bbK,
\omega\in\bbK^*.\] Recall that $\calO_\bbK=\bbC\ppart[[s]]$ and
$\Cl_{\calO_\bbK}=\Cl(\calO_\bbK\oplus\bbK^*/\calO_\bbK^\perp)$ is
the corresponding Clifford algebra. We define the action of
$GL_{\frakf,\infty}$ on $\calO_\bbK$ via formula (\ref{action of
GL+inftyglinfty}) and on $\bbK^*/\calO_\bbK^\perp$ so that the
bilinear form on $\calO_\bbK\oplus\bbK^*/\calO_\bbK^\perp$ is
$GL_{\frakf,\infty}$-invariant.

Let $L$ be a secondary lattice of $\calO_\bbK$ (see Definition
\ref{2-lattice}). We have constructed in \S \ref{cat 2-Cl} the
vacuum module $M_L$ over $\Cl_{\calO_\bbK}$. However, since for
$a\in GL_{\frakf,\infty}$, $M_{aL}$ and $M_L$ are not necessarily
isomorphic as $\Cl_{\calO_\bbK}$-modules, the construction we used
in \S \ref{another descr} for $GL_\infty$ cannot not be applied
here. Instead, it turns out that a certain {\em completion} of
$M_L$ carries the action of $\wh{GL}_{\frakf,\infty}$.

Let $L_0=\bbC[[t]][[s]]$ and $L_0'=t^{-1}\bbC[t^{-1}][[s]]$, so
that $\calO_\bbK=L_0\oplus L_0'$. Then
\[M_{L_0}=\ind_{\bigwedge(L_0\oplus
L_0^\perp)}^{\Cl_{\calO_\bbK}}\bbC|0\rangle\cong\bigwedge(L'_0\oplus
L'^\perp_0).\] There is a natural topology on
$\bigwedge(L'_0\oplus L'^\perp_0)$. Namely, a basis of open
neighborhoods of $0$ can be chosen as $\bigwedge(L'_0\oplus
L'^\perp_0)(s^n\calO_\bbK\cap L'_0)$. The completion of $M_{L_0}$
with respect to this topology is denoted by $\wh{M}_{L_0,L'_0}$.
This is a completed Clifford module over a certain completed
Clifford algebra. Namely, we denote by
$\overline{\Cl}_{\calO_\bbK,L'_0}$ the completion of
$\Cl_{\calO_\bbK}$ with respect to the topology in which a basis
of open neighborhoods of $0$ is given by the subspaces $U+V$,
where $U$ is the left ideal generated by open subspaces of
$L_0\oplus L_0^\perp$ and $V$ is the right ideal generated by
$s^n\calO_\bbK\cap L'_0$. It is clear that the multiplication of
$\Cl_{\calO_\bbK}$ extends to a multiplication of
$\overline{\Cl}_{\calO_\bbK,L'_0}$. Let $\wh{\bigwedge(L_0\oplus
L_0^\perp)}$ be the subalgebra of
$\overline{\Cl}_{\calO_\bbK,L'_0}$ topologically generated by
$L_0\oplus L_0^\perp$. Then
\[\wh{M}_{L_0,L'_0}=\ind_{\wh{\bigwedge(L_0\oplus
L_0^\perp)}}^{\overline{\Cl}_{\calO_\bbK,L'_0}}\bbC|0\rangle.\]

The reason that $\wh{M}_{L_0,L'_0}$ is a representation of
$\wh{GL}_{\frakf,\infty}$ is based on the following

\begin{lem} Let
$L_1$ be a secondary lattice of $\calO_\bbK$ that is pseudo
commensurable with $L_0$ (i.e., $\exists n$ such that
$s^n\calO_\bbK\cap L_1=s^n\calO_\bbK\cap L_0$). Let
$\wh{\bigwedge(L_1\oplus L_1^\perp)}$ be the subalgebra of
$\overline{\Cl}_{\calO_\bbK,L'_0}$ that is topologically generated
by $L_1\oplus L_1^\perp$, and let
\[\wh{M}_{L_1,L'_0}=\ind_{\wh{\bigwedge(L_1\oplus
L_1^\perp)}}^{\overline{\Cl}_{\calO_\bbK,L'_0}}\bbC|0\rangle.\]
Then $\wh{M}_{L_1,L'_0}\cong\wh{M}_{L_0,L'_0}$ and there is a
canonical isomorphism
\[\Hom_{\overline{\Cl}_{\calO_\bbK,L'_0}}
(\wh{M}_{L_1,L'_0},\wh{M}_{L_0,L'_0})=\det(L_1|L_0)\] such that if
$L_2$ is another secondary lattice that is pseudo commensurable
with $L_0,L_1$, then similar diagram as in Lemma \ref{Hom set}
holds.
\end{lem}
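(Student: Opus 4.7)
The plan is to mirror the proof of Lemma \ref{Hom set} (together with its uniqueness-of-invariants counterpart in Lemma \ref{non-isom}), with the adjustments needed to handle the completed Clifford module and the weaker pseudo commensurability hypothesis. Using the compatibility diagram for three lattices, I will reduce to the case $L_1\subset L_0$ by factoring through the intersection $L_0\cap L_1$, which is itself a secondary lattice pseudo commensurable with both $L_0$ and $L_1$ (since $s^n\calO_\bbK\cap L_0=s^n\calO_\bbK\cap L_1$ is contained in $L_0\cap L_1$). In the reduced case, the quotients $L_0/(L_0\cap s^n\calO_\bbK)$ and $L_1/(L_1\cap s^n\calO_\bbK)$ are two linearly compact open subspaces of the Tate vector space $\calO_\bbK/s^n\calO_\bbK$ sharing the same preimage in $s^n\calO_\bbK$; any two such subspaces are commensurable, so $L_0/L_1$ is finite-dimensional, and $\det(L_1|L_0)$ from formula \eqref{determinantal line} agrees canonically with the one-dimensional line $\bigwedge^{\mathrm{top}}(L_0/L_1)^{-1}\cong\bigwedge^{\mathrm{top}}(L_1^\perp/L_0^\perp)$.

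The canonical map $\det(L_1|L_0)\to\Hom_{\overline{\Cl}_{\calO_\bbK,L'_0}}(\wh{M}_{L_1,L'_0},\wh{M}_{L_0,L'_0})$ is then constructed explicitly, following the recipe of \eqref{image}. Choose a basis $w_1,\dots,w_d$ of $L_1^\perp/L_0^\perp$ and lift to $\wt{w}_i\in L_1^\perp$; the vector $\wt{w}_1\wedge\cdots\wedge\wt{w}_d\cdot\vac_{L_0}\in\wh{M}_{L_0,L'_0}$ is independent of the liftings (the ambiguity lies in $L_0^\perp$, which annihilates $\vac_{L_0}$) and is annihilated by every element of $L_1\oplus L_1^\perp$. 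Indeed, any element of $L_1\subset L_0$ anticommutes through the wedge with zero pairings and then annihilates $\vac_{L_0}$, while any element of $L_1^\perp$ lies in $L_0^\perp+\sum_i\bbC\wt{w}_i$, whose action either factors through $L_0^\perp\cdot\vac_{L_0}=0$ or duplicates an existing factor in the top wedge and so vanishes. The assignment $\vac_{L_1}\mapsto \wt{w}_1\wedge\cdots\wedge\wt{w}_d\cdot\vac_{L_0}$ then extends by continuity and the universal property of the completed induction to a morphism $\wh{M}_{L_1,L'_0}\to\wh{M}_{L_0,L'_0}$ of $\overline{\Cl}_{\calO_\bbK,L'_0}$-modules, depending on the $\wt{w}_i$ only through the class in $\bigwedge^{\mathrm{top}}(L_1^\perp/L_0^\perp)=\det(L_1|L_0)$.

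The main obstacle is showing that this map is a bijection; equivalently, that the $\bigwedge(L_1\oplus L_1^\perp)$-invariant subspace of $\wh{M}_{L_0,L'_0}$ is at most one-dimensional. In the completed setting one must exclude exotic invariants arising as genuine formal infinite sums. I plan to adapt the finite-dimensional Clifford reduction of Lemma \ref{non-isom} as follows. Filter $\wh{M}_{L_0,L'_0}$ by the annihilators of $s^N\calO_\bbK\cap L'_0$; each successive finite-dimensional quotient is a Fock module over a Clifford algebra $\Cl_W$ on a finite-dimensional $W\subset\calO_\bbK\oplus\bbK^*/\calO_\bbK^\perp$, which we arrange to contain the finite-dimensional piece $L_1^\perp/L_0^\perp$ where $L_1\oplus L_1^\perp$ and $L_0\oplus L_0^\perp$ diverge. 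The image of a putative invariant element in each quotient is then invariant under the projection of $\bigwedge(L_1\oplus L_1^\perp)\cap\Cl_W$, a Lagrangian subalgebra of $\Cl_W$; the standard finite-dim Clifford theory then forces these invariants to be one-dimensional. Passing to the inverse limit as $N\to\infty$ and using continuity of the $\overline{\Cl}_{\calO_\bbK,L'_0}$-action yields the required uniform bound.

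Once the canonical isomorphism $\det(L_1|L_0)\cong\Hom(\wh{M}_{L_1,L'_0},\wh{M}_{L_0,L'_0})$ is established, any non-zero element furnishes the desired isomorphism $\wh{M}_{L_1,L'_0}\cong\wh{M}_{L_0,L'_0}$. Finally, the compatibility diagram for three pseudo commensurable secondary lattices $L_0,L_1,L_2$ reduces, after passing through the appropriate intersections, to the nested case $L_2\subset L_1\subset L_0$, where it amounts to the naturality of the wedge-product construction: composition of the canonical vectors matches the associativity isomorphism $\bigwedge^{\mathrm{top}}(L_0/L_1)\otimes\bigwedge^{\mathrm{top}}(L_1/L_2)\cong\bigwedge^{\mathrm{top}}(L_0/L_2)$, which is precisely the compatibility already encoded in the definition \eqref{independent of n} of $\det(\cdot|\cdot)$ under the limit \eqref{determinantal line}.
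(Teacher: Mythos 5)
Your reduction to the nested case $L_1\subset L_0$ via $L_0\cap L_1$ is fatally flawed: the intersection of two pseudo commensurable secondary lattices is in general \emph{not} a secondary lattice of $\calO_\bbK$. In Example \ref{ex}, take $L_0=\bbC[[t]][[s]]$ and $L_1=aL_0$; then $L_0\cap L_1=s\bbC[[t]][[s]]$, whose image in $\calO_\bbK/s\calO_\bbK\cong\bbC\ppart$ is $\{0\}$, which is not open, so $L_0\cap L_1$ fails Definition \ref{2-lattice}. Worse, the reduction, if it worked, would collapse the lemma into a trivial case: your finite-codimension argument inside $\calO_\bbK/s^n\calO_\bbK$ is in fact correct and shows that nested pseudo commensurable secondary lattices are commensurable, so if $L_0\cap L_1$ were a secondary lattice then $L_0$ and $L_1$ would already be commensurable, in which case the uncompleted vacuum modules $M_{L_0}$, $M_{L_1}$ over $\Cl_{\calO_\bbK}$ are isomorphic by Lemma \ref{non-isom} and the entire completion machinery $\overline{\Cl}_{\calO_\bbK,L'_0}$, $\wh{M}_{L,L'_0}$ would be superfluous. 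The content of the lemma is exactly the non-commensurable (but pseudo commensurable) case: $L_1^\perp/L_0^\perp$ is then genuinely infinite-dimensional, $\det(L_1|L_0)$ is only defined as the projective limit \eqref{determinantal line}, and the would-be ``infinite wedge'' invariant vector does not exist in the uncompleted $M_{L_0}$ --- it exists only in the completion, and your construction via a finite top wedge $\wt{w}_1\wedge\cdots\wedge\wt{w}_d\vac_{L_0}$ does not produce it.

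The paper's sketch does not reduce to a nested case. It identifies the Hom-space with the $\wh{\bigwedge(L_1\oplus L_1^\perp)}$-invariants of $\wh{M}_{L_0,L'_0}$, establishes that $(\wh{M}_{L_0,L'_0})^{\wh{\bigwedge(L_0\oplus L_0^\perp)}}=\bbC|0\rangle$, and then constructs the map $\det(L_1|L_0)\to(\wh{M}_{L_0,L'_0})^{\wh{\bigwedge(L_1\oplus L_1^\perp)}}$ directly as a projective limit: for each $m\geq n$, Lemma \ref{Hom set} applied to the commensurable lattices $L_1/(L_1\cap s^m\calO_\bbK)$ and $L_0/(L_0\cap s^m\calO_\bbK)$ inside the Tate space $\calO_\bbK/s^m\calO_\bbK$ produces a compatible embedding into the quotient $\wh{M}_{L_0,L'_0}/(s^m\calO_\bbK\cap L'_0)\wh{M}_{L_0,L'_0}$, and the inverse limit of these embeddings is the required map into the completed module. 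Your proof needs to be reorganized around this projective-limit construction of the invariant vector itself, not merely around a projective-limit argument for the one-dimensionality of the space of invariants.
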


Let us sketch the proof of this lemma. First, we identify
$\Hom_{\overline{\Cl}_{\calO_\bbK,L'_0}}(\wh{M}_{L_1,L'_0},
\wh{M}_{L_0,L'_0})$ with
$(\wh{M}_{L_0,L'_0})^{\wh{\bigwedge(L_1\oplus L_1^\perp)}}$, the
$\wh{\bigwedge(L_1\oplus L_1^\perp)}$-invariant subspace of
$\wh{M}_{L_0,L'_0}$. It is easy to see that
$(\wh{M}_{L_0,L'_0})^{\wh{\bigwedge(L_0\oplus
L_0^\perp)}}=\bbC|0\rangle$. Next, one can construct an injective
map
\[\det(L_1|L_0)\to(\wh{M}_{L_0,L'_0})^{\wh{\bigwedge(L_1\oplus
L_1^\perp)}}\] Indeed, one can apply Lemma \ref{Hom set} to
construct for any $m\geq n$, an embedding
\[\det(\frac{L_1}{L_1\cap
s^m\calO_\bbK}|\frac{L_0}{L_0\cap
s^m\calO_\bbK})\to\wh{M}_{L_0,L'_0}/(s^m\calO_\bbK\cap L'_0)
\wh{M}_{L_0,L'_0}\] in a compatible way. The projective limit of
this system of maps is the desired one. Therefore, one obtains an
embedding
\[\varphi:\det(L_1|L_0)\to\Hom_{\overline{\Cl}_{\calO_\bbK,L'_0}}
(\wh{M}_{L_1,L'_0},\wh{M}_{L_0,L'_0})\] Finally, one can check for
$c\in\det(L_1|L_0)^\times$, $\varphi(c)$ is an isomorphism.
Therefore, $(\wh{M}_{L_0,L'_0})^{\wh{\bigwedge(L_0\oplus
L_0^\perp)}}=\bbC|0\rangle$ forces that $\varphi$ is an
isomorphism. This completes the sketch of proof of the lemma.

\medskip

Now observe that the action of $GL_{\frakf,\infty}$ on
$\Cl_{\calO_\bbK}$ is continuous with respect to the topology we
have introduced. Therefore, $GL_{\frakf,\infty}$ acts on
$\overline{\Cl}_{\calO_\bbK,L'_0}$. The same construction as in \S
\ref{central extensions} applies here and gives
$\wh{M}_{L_0,L'_0}$ the structure of a representation of
$\wh{GL}_{\frakf,\infty}$. From this representation we construct a
subcategory of level one representations of
$\wh{GL}_{\frakf,\infty}$. Recall that $GL_\infty^+(\gl_\infty)$
acts on $\rep_1(\wh{GL}_{\frakf,\infty})$. Denote this action by
$H$. Therefore, for each $g\in GL_{\infty}^+(\gl_\infty)$, there
is a level one representations of $\wh{GL}_{\frakf,\infty}$,
$H_g(\wh{M}_{L_0,L'_0})$. The semisimple abelian category
generated by these objects, which is denoted by $\mathfrak
R\frake\frakp$, is the ``smallest'' abelian subcategory of
$\rep_1(\wh{GL}_{\frakf,\infty})$ that contains
$\wh{M}_{L_0,L'_0}$ and is invariant under the gerbal
representation of $GL_{\infty,\infty}$ on
$\rep_1(\wh{GL}_{\frakf,\infty})$. It is clear that the gerbal
representation of $GL_{\infty,\infty}$ on $\mathfrak
R\frake\frakp$ realizes $[E_3]$.

\medskip

Finally, we explain a conjectural connection between $\mathfrak
R\frake\frakp$ and $\wh{\calC}_{\calO_\bbK}^{\on{ss}}$. Observe
that in order to define the completed Clifford algebra
$\overline{\Cl}_{\calO_\bbK,L'_0}$, we had to choose a splitting
$\calO_\bbK=L_0\oplus L'_0$, which we used to define a topology on
$\Cl_{\calO_\bbK}$. If we choose a different splitting
$\calO_\bbK=L_0\oplus L''_0$, we will obtain a different completed
Clifford algebra $\overline{\Cl}_{\calO_\bbK,L''_0}$ and the
module $\wh{M}_{L_0,L''_0}$ over it. However, for different
choices of splitting $\calO_\bbK$, the corresponding completed
Clifford algebras are canonically isomorphic. Namely, they are all
isomorphic to the completed Clifford algebra modeled on the
orthogonal topological vector space $(L_0\oplus
L_0^\perp)\oplus(\calO_\bbK/L_0\oplus(\bbK^*/\calO_\bbK^\perp)/
L_0^\perp)$.
 Furthermore, both $M_{L_0,L'_0}$ and $M_{L_0,L''_0}$
are identified with the vacuum module induced from the trivial
module over $\bigwedge(L_0\oplus L_0^\perp)$. The real reason that
the splitting may potentially come into the play is that the
action of $GL_{\frakf,\infty}$ on this completed Clifford algebra
depends on the choice of the splitting. Therefore, a priori,
$\wh{M}_{L_0,L''_0}$ and $\wh{M}_{L_0,L'_0}$ may not be isomorphic
as representations of $\wh{GL}_{\frakf,\infty}$. However, we
expect that these two representations are indeed canonically
isomorphic. If this is the case, then we can drop the subscript
$L'_0$ and simply denote them by $\wh{M}_{L_0}$. Now the
conjectural connection between $\wh{\calC}_{\calO_\bbK}^{\on{ss}}$
and $\mathfrak R\frake\frakp$ is the following: there is a
$GL_\infty^+(\gl_\infty)$-equivariant equivalence of categories
$\wh{\calC}_{\calO_\bbK}^{\on{ss}}\to\mathfrak R\frake\frakp$
which sends the object $\wh{M}_{L_0}$ in
$\wh{\calC}_{\calO_\bbK}^{\on{ss}}$ to the same named object in
$\mathfrak R\frake\frakp$. Hence we obtain a homomorphism of
the corresponding gerbal representations of $GL_{\infty,\infty}$.

\end{document}